\documentclass[onecolumn]{IEEEtran}
\usepackage{soul} 
\usepackage[utf8]{inputenc} 
\usepackage[T1]{fontenc}
\usepackage{url}              % provides \url{...}
\usepackage{cite}             % improves presentation of citations

\usepackage{amsmath}  % Use the [cmex10] option to ensure complicance
\usepackage{mleftright}       % fix to wrong spacing of \left-,
\mleftright                   % \middle- \right-commands 

\usepackage{graphicx}         % provides \includegraphics{...} to
\usepackage{booktabs}         % fixes poor spacing in tables and
\usepackage{amsmath,amsthm,amssymb}
\usepackage{lineno}
\usepackage{geometry}
\usepackage{color}
\usepackage{listings}
\usepackage{algorithm}
\usepackage{algorithmic}
\usepackage{setspace}
\usepackage{caption}  

\usepackage{tikz}  
\usetikzlibrary{arrows.meta}%画箭头用的包
\usepackage{curves}
\usepackage{scalefnt}
\usepackage{ulem}

\newtheorem{theorem}{Theorem}[section]
\newtheorem{lemma}{Lemma}[section]
\newtheorem{cor}{Corollary}[section]
\newtheorem{proposition}{Proposition}[section]
\newtheorem{define}{Definition}[section]
\newtheorem{remark}{Remark}[section]

\newtheorem{example}{Example}[section]

\lstdefinelanguage{Maple}{
	keywords={if, while, do, else, end, for, from, to,then},
	keywordstyle=\color{blue}\bfseries,
	ndkeywords={class, export, boolean, throw, implements, import, this},
	ndkeywordstyle=\color{darkgray}\bfseries,
	identifierstyle=\color{black},
	sensitive=false,
	comment=[l]{//},
	morecomment=[s]{/*}{*/},
	commentstyle=\color{purple}\ttfamily,
	stringstyle=\color{red}\ttfamily,
	morestring=[b]',
	morestring=[b]"
}

\lstdefinelanguage{SOStools}{
	keywords={syms,sosprogram,monomials,sosineq,sossetobj,sossolve,sosgetsol,sospolyvar},
	keywordstyle=\color{blue}\bfseries,
	ndkeywords={syms,sosprogram,monomials,sosineq,sossetobj,sossolve,sosgetsol},
	ndkeywordstyle=\color{blue}\bfseries,
	identifierstyle=\color{black},
	sensitive=false,
	comment=[l]{//},
	morecomment=[s]{/*}{*/},
	commentstyle=\color{purple}\ttfamily,
	stringstyle=\color{red}\ttfamily,
	morestring=[b]',
	morestring=[b]"
}

\makeatletter
\newenvironment{breakablealgorithm}
{% \begin{breakablealgorithm}
		\begin{center}
			\refstepcounter{algorithm}% New algorithm
			\hrule height.8pt depth0pt \kern2pt% \@fs@pre for \@fs@ruled »­Ïß
			\renewcommand{\caption}[2][\relax]{% Make a new \caption
				{\raggedright\textbf{\ALG@name~\thealgorithm} ##2\par}%
				\ifx\relax##1\relax % #1 is \relax
				\addcontentsline{loa}{algorithm}{\protect\numberline{\thealgorithm}##2}%
				\else % #1 is not \relax
				\addcontentsline{loa}{algorithm}{\protect\numberline{\thealgorithm}##1}%
				\fi
				\kern2pt\hrule\kern2pt
			}
		}{% \end{breakablealgorithm}
		\kern2pt\hrule\relax% \@fs@post for \@fs@ruled »­Ïß
	\end{center}
}
\makeatother

\begin{document}

\title{Characterizations of Conditional Mutual Independence: Equivalence and Implication} 

%%%%%%
\author{
	Laigang~Guo,~%\IEEEmembership{Member,~IEEE,}
	Raymond~W.~Yeung,~and Tao~Guo%\IEEEmembership{Fellow,~IEEE,}
%	Xiao-Shan~Gao,~%,~\IEEEmembership{Member,~IEEE}% <-this % stops a space
	\thanks{L. Guo is with the School of Mathematical Sciences, Beijing Normal University, Beijing. Email: lgguo@bnu.edu.cn}% <-this % stops a space
    \thanks{R. W. Yeung is with the Institute of Network Coding and Department of Information Engineering, The Chinese University of Hong Kong, Hong Kong. Email: whyeung@ie.cuhk.edu.hk}
	\thanks{T. Guo is with the School of Cyber Science and Engineering, Southeast University, Nanjing. (Corresponding author) Email: taoguo@seu.edu.cn}
	
%	\thanks{X.-S. Gao is with the Key Laboratory of Mathematics Mechanization, Chinese Academy of Sciences, Beijing. e-mail: (xgao@mmrc.iss.ac.cn).}
%	%\thanks{Manuscript received April 19, 2005; revised September 17, 2014.}
}

\maketitle

%%%%%
%% Abstract: 
%% If your paper is eligible for the student paper award, please add
%% the comment "THIS PAPER IS ELIGIBLE FOR THE STUDENT PAPER
%% AWARD." as a first line in the abstract. 
%% For the final version of the accepted paper, please do not forget
%% to remove this comment!
%%

\doublespacing

\begin{abstract}
Conditional independence, and more generally conditional mutual independence, are central notions in probability theory.
In their general forms, they include functional dependence as a special case. In this paper, we tackle two fundamental problems related to conditional mutual independence.
Let $K$ and $K'$ be two conditional mutual independncies (CMIs) defined on a finite set of discrete random variables. We have obtained a necessary and sufficient condition for i) $K$ is equivalent to $K'$; ii) $K$ implies $K'$.
These characterizations are in terms of a canonical form introduced for conditional mutual independence.
%The characterization of the conditional mutual independence (CMI) and the implication between the CMIs are basic problems in probability theory.
%In this paper, the new algebraic characterization of CMIs is obtained. Based on this new characterization, we proved the necessary and sufficient conditions for the equivalence of CMIs  and the necessary and sufficient conditions for the implication of CMIs.
\end{abstract}

\begin{IEEEkeywords}
Conditional independence,
mutual independence,
functional dependence,
Shannon's information measures.
\end{IEEEkeywords}

\section{Introduction}
In this paper, all random variables are assumed to be discrete.
Let $X$ be a random variable taking values in an alphabet~$\cal X$.
The probability distribution for $X$ is denoted by 
$\{ p_X(x), x \in {\cal X} \}$, with
$p_X(x) = {\rm Pr}\{X=x\}$.
When there is no ambiguity, $p_X(x)$ will be 
abbreviated as $p(x)$, and $\{p(x)\}$ will be abbreviated as
$p(x)$.  The support of $X$, or equivalently the support of $p(x)$, denoted by $\cal S_X$,
is the set of all $x \in {\cal X}$ such that $p(x) > 0$.
For two random variables $X$ and $Y$, the conditional probability
${\rm Pr}\{ X=x | Y = y \}$ is denoted by $p_{X|Y}(x|y)$, where 
$y \in {\cal S}_Y$. Again, when there is no ambiguity, $p_{X|Y}(x|y)$ will be abbreviated as $p(x|y)$. 

Conditional independence of random variables is a central notion in probability theory \cite{Feller1950}. We begin our discussion by giving the definition for conditional independence of two random variables.

\begin{define}[Conditional Independence]
	For random variables $X_1, X_2$, and $Y$, $X_1$ is independent
	of $X_2$ conditioning on $Y$, denoted by $X_1 \perp X_2 | Y$, if for all $x_1, x_2$, and $y$,
	\begin{equation}
			p(x_1, x_2, y) = \left\{ \begin{array}{ll}
					\frac{p(x_1,y) p(x_2,y)}{p(y)} = p(x_1|y) p(x_2|y) p(y) & \mbox{if $y \in {\cal S}_Y$} \\
					0 & \mbox{otherwise}.
				\end{array} \right.
		\label{qtiphr}
		\end{equation}
%	or equivalently,
%	\begin{equation}
	%	p(x_1, x_1, y) p(y) = p(x_1, y) p(x_2, y) 
	%	\label{qtiphr23}
	%	\end{equation}
	\label{CI_defn}
\end{define}

In the above definition, if $Y$ is a degenerate random variable, i.e., it takes a constant value with probability 1, then we simply say that $X_1$ and $X_2$ are independent. Therefore, conditional independence include independence as a special case.

Note that Definition~\ref{CI_defn} continues to apply when some of the random variables represent a group of random variables instead of a single random variable, where these groups of random varaibles may overlap. For example, if $X_1 = (Z_1, Z_2)$ and $X_2 = (Z_1, Z_3)$, then $Z_1$ is a random variable common to $X_1$ and $X_2$. When a group of random variables is empty, it is taken to be a degenerate random variable.

When three or more random variables are conditionally independent, we need to distinguish  two types of conditional independence.

\begin{define}[Conditional Mutual Independence]
	For random variables $X_1, X_2, \ldots, X_n$, and $Y$, where $n \ge 3$, $X_1, X_2,$ $\ldots, X_n$ are mutually independent conditioning on $Y$, denoted by $\perp \hspace{-1mm} (X_1, X_2, \ldots X_n) | Y$, if 
	for all $x_1, x_2, \ldots x_n$, and $y$,
	\begin{equation}
		p(x_1, x_2, \ldots, x_n, y) = \left\{ \begin{array}{ll}
			\frac{p(x_1,y) p(x_2,y) \cdots p(x_n,y)}{p(y)} = p(x_1|y) p(x_2|y) \ldots p(x_n|y) p(y) & \mbox{if $y \in {\cal S}_Y$} \\
			0 & \mbox{otherwise.}
		\end{array} \right.
	\end{equation}
	%	or equivalently,
	%	\begin{equation}
		%	p(x_1, x_2, \ldots, x_n, y) p(y) = p(x_1,y) p(x_2,y) \cdots p(x_n,y).
		%	\end{equation}
	\label{CMI_defn}
\end{define}

\begin{define}[Conditional Pairwise Independence]
	For random variables $X_1, X_2, \ldots, X_n$, and $Y$, where $n \ge 3$, $X_1, X_2, \ldots, X_n$ are pairwise independent conditioning on $Y$ if 
	$X_i \perp X_j |Y$ for all $1 \le i < j \le n$.
	\label{CPI_defn}
\end{define}

Note that conditional mutual independence implies conditional pairwise independence, but not vice versa. In Definitions~\ref{CMI_defn} 
and~\ref{CPI_defn}, if we relax the requirement $n \ge 3$ to $n \ge 2$, then the two definitions coincide when $n=2$, and so we do not have to distinguish between mutual independence and pairwise independence between two random variables (i.e., $X_1$ and $X_2$).

It can be shown that a conditional mutual independency (CMI) can be expressed as a collection of conditional independencies (CIs). See Appendix \ref{app:A} for a proof. Thus, a collection of CMIs can be expressed as a collection of CIs, and vice versa.
As an example, the CMI
$\perp \hspace{-1mm} (X_1, X_2, X_3) | Y$ is equivalent to 
\begin{enumerate}
\item 
the CIs $X_1 \perp (X_2, X_3) | Y$ and $X_2 \perp X_3 | Y$; or
\item 
the CIs $X_1 \perp (X_2, X_3) | Y$ and $X_2 \perp X_3 | (Y, X_1)$.
\end{enumerate}
In general, a CMI can be expressed as more than one collection of CIs.

This paper focuses on conditional mutual independence, specifically on a single CMI (as opposed to a collection of CMIs). To facilitate the discussion, in the rest of the paper, we will adopt Definition~\ref{CMI_defn} as the definition for conditional mutual independence but include the case $n=2$. In other words, conditional independence is regarded as a special case of conditional mutual independence.

\begin{define}
For random variables $X$ and $Y$, $X$ is a function of a random variable $Y$ if there exists a function $f: {\cal Y} \rightarrow {\cal X}$ such that
\[
p(x|y) = \left\{ \begin{array}{ll}
	1 & \mbox{if $x = f(y)$} \\
	0 & \mbox{otherwise,}
\end{array} \right.
\]
for all $x \in {\cal X}$ and all $y \in {\cal S}_Y$. If so, we write $X = f(Y)$.
\end{define}

As will be discussed in depth, by allowing the random variables in different groups of random variables in a CMI to overlap, conditional mutual independence in fact incorporates functional dependence. To illustrate this, 
consider the CI $X_1 \perp X_2 | Y$. If $X_1 = X_2 = Z$, then the CI is equivalent to the functional dependency (FD) ``$Z$ is a function of $Y$''. If $X_1 = (Z_1, Z_2)$ and $X_2 = (Z_1, Z_3)$, then the CI is equivalent to the FD ``$Z_1$ is a function of $Y$'' and the CI $Z_2 \perp Z_3 | (Y, Z_1)$.

Let $K$ and $K'$ be two CMIs. The following are two very basic questions:
\begin{enumerate}
	\item Is $K$ equivalent to $K'$?
	\item If $K$ and $K'$ are not equivalent, does one of them imply the other?
\end{enumerate}
Rather surprisingly, these questions are highly nontrivial. In this paper, we provide the answers by proving necessary and sufficient conditions for ``$K$ is equivalent to $K'$'' and ``$K$ implies $K'$''.

The {\it Shannon entropy} measures the information or the uncertainty contained in a random variable. The Shannon entropy and related information measures are a very powerful tool for tackling problems that involve conditional independence. By employing these information measures, instead of working directly on the underlying distribution of the random variables, which can be very tedious, one can apply the rich set of operations associated with these information measures and greatly simplify the proofs.  
The only limitation of this tool is that the entropies of the random variables involved must be finite. This point will be expounded in the next two sections. See also the discussion in the concluding section.

Related works in information theory can be found in \cite{KawabataY1992} \cite{Yeung2002} \cite{Yeung2019} \cite{ChanChenYeung2020}. For a given set of random variables $\cal A$, a CMI on $\cal A$ is {\it full} (with respect to $\cal A$) if all the random variables in $\cal A$ are involved in the CMI.
Such a CMI is called a full CMI, or FCMI.
For example, if ${\cal A} = \{ X, Y, Z, T \}$, then $X \perp (Z, T) | Y$ and $\perp(X, Y, T) | Z$ are FCMIs, while 
$Y \perp T | X$ is not because $Z$ is not involved.
Building on the results in \cite{KawabataY1992}, a complete set-theoretic characterization of a collection of FCMIs was obtained in \cite{Yeung2002}. In the same work, this characterization was applied to a Markov random field (MRF) satisfying the strong Markov property, which can be regarded as a collection of FCMIs. The latter result was further developed in \cite{Yeung2019}, where the smallest graph that can always represent a subfield of an MRF was determined in closed form. In \cite{ChanChenYeung2020}, characterization and classification of a collection of full conditional independencies via the structure of the induced set of vanishing atoms were obtained. 

In graphical models, there is a parallel but independent line of research \cite{GeigerPearl1993} \cite{Sadeghi2013} \cite{Sadeghi2016}. In \cite{GeigerPearl1993}, full conditional independence\footnote{
Although a full conditional independency (FCI) is a special case of 
an FCMI, an FCMI can always be expressed as a collection of FCIs. See the remark in Appendix~\ref{app:A}. 
} (termed ``{\it saturated} conditional independence'' therein) was shown to be axiomatizable. In \cite{Sadeghi2016}, which was  built on \cite{Sadeghi2013}, an algorithm producing the smallest graph that can always represent a subfield of an MRF was given (cf.\ the discussion on \cite{Yeung2019} in the last paragraph).

The rest of the paper is organized as follows. In Section~\ref{sec-Pre}, we 
%define the Shannon entropy and 
present the properties of the related information measures that are useful for proving the results in this paper. Sections~\ref{sec-char} and \ref{sec-imply} contain the main results of the paper.
In Section~\ref{sec-char}, we prove a necessary and sufficient condition for the equivalence of two CMIs. 
In Section~\ref{sec-imply}, we tackle the implication problem of a CMI and prove a necessary and sufficient condition for a CMI to imply another CMI. Conclusion and discussion are
given in Section~\ref{sec-conc}.

\section{Preliminaries}
\label{sec-Pre}
A CMI can readily be expressed in terms of the Shannon entropy (hereafter ``entropy''). As such, it is very useful for tackling probability problems whose random variables satisfy certain conditional mutual independence conditions. Moreover, the very rich set of operations associated with entropy and related information measures can very often   greatly simplify the proofs.
In this section, 
%we first define entropy and related information measures. Then 
we state without proof some important properties of these information measures that will be used in proving the results. For a comprehensive treatment of this topic, we refer the reader to \cite[Ch.~2]{Yeung2008}.

% \begin{define}
% 	The entropy $H(X)$ of a random variable $X$ is defined as
% 	\begin{equation}
% 	H(X) = - \sum_x p(x) \log p(x).
% 	\label{def:entropy}
% 	\end{equation}
% \end{define}

% \begin{define}
% 	The joint entropy $H(X,Y)$ of random variables $X$  and $Y$ is defined as
% 	\begin{equation}
% 		H(X,Y) = - \sum_{x,y} p(x,y) \log p(x,y),
% 	\end{equation}
% \end{define}

% In the definitions of entropy and other information measures,
% we adopt the convention that summation is taken over the corresponding
% support.  Such a convention is necessary because, for example, $p(x) \log p(x)$ in (\ref{def:entropy})
% is undefined if $p(x) = 0$. The base of the logarithm in these definitions can be chosen to be any convenient real number greater than 1.  

Denote the set $\{1,2,\ldots,n\}$ by $\mathcal{N}_n$. To simplify notation, we will use $X_\alpha$ to denote $(X_i, i\in \alpha)$, where $\alpha \subseteq \mathcal{N}_n$.
It is well known that entropy satisfy the polymatroidal axioms \cite{Fujishige1978}:
\begin{enumerate}
\item 
$H(X_\alpha) \ge 0$
\item 
$H(X_\alpha) \le H(X_\beta) \ \mbox{if} \ \alpha \subseteq \beta$
\item 
$H(X_\alpha) + H(X_\beta) \ge H(X_{\alpha \cup \beta}) + H(X_{\alpha \cap \beta})$.
\end{enumerate}
where $\alpha, \beta \subseteq {\cal N}_n$.

In information theory (see, e.g. \cite{Gallager68} \cite{CoverThomas1999} \cite{Yeung2008}), in addition to entropy, the following information measures are defined:
\[
\hspace{-2cm} 
\begin{array}{lll}
	\mbox{Conditional Entropy} & &
	H(X|Y) = H(X,Y) - H(Y) \\
	\mbox{Mutual Information} & &
	I(X;Y) = H(X) - H(X|Y) \\
	\mbox{Conditional Mutual Information} & &
	I(X;Y|Z) = H(X|Z) - H(X|Z,Y).
\end{array}
\]
The above definitions are valid provided that all the terms on the right hand side are finite. Note that conditional mutual information is the most general form of Shannon's information measures:
\begin{itemize}
\item 
if $Z$ is a degenerate random variable, then $I(X;Y|Z) = I(X;Y)$;
\item 
if $X = Y$, then $I(X;Y|Z) = H(X|Z)$;
\item 
if $X = Y$ and $Z$ is a degenerate random variable, then $I(X;Y|Z) = H(X)$.
\end{itemize}	

Entropy, mutual information, and their conditional versions are collectively referred to as {\it Shannon's information measures}. It is well known that all Shannon's information measures are nonnegative. These inequalities are known as the {\it basic inequalities} in information theory, and it can be shown that they are equivalent to the polymatroidal axioms \cite[Appendix~14.A]{Yeung2008}. However, there exist constraints on Shannon's information measures beyond the basic inequalities, known as {\it non-Shannon-type inequalities} \cite{ZhangY97}\cite{ZhangY98}.

In the following, we state without proof some important properties of Shannon's information measures that will be used in the rest of the paper.

\begin{proposition}[Chain Rule for Conditional Entropy]
	\[
	H(X_1, X_2, \cdots, X_n|Y) = \sum_{i=1}^n H(X_i|X_1, \cdots, X_{i-1},Y).
	\label{eqn:chain_rule_H|Y}
	\]
	\label{prop:chain_rule_H|Y}
\end{proposition}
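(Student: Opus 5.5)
The plan is to reduce the statement to the unconditional-to-conditional step, namely the two-variable identity $H(X_1,X_2|Y) = H(X_1|Y) + H(X_2|X_1,Y)$, and then induct on $n$. Throughout, all entropies involved are assumed finite, as required in the definitions above, so that every difference of entropies is well defined and no $\infty - \infty$ arises.

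For the base step, apply the definition of conditional entropy, $H(X|Y) = H(X,Y) - H(Y)$, three times:
\[
H(X_1,X_2|Y) = H(X_1,X_2,Y) - H(Y) = \bigl[H(X_1,Y) - H(Y)\bigr] + \bigl[H(X_1,X_2,Y) - H(X_1,Y)\bigr].
\]
The first bracket is $H(X_1|Y)$ by definition. The second bracket is $H(X_2|X_1,Y)$, which is the definition applied with $(X_1,Y)$ playing the role of the conditioning variable. The middle equality only adds and subtracts the finite quantity $H(X_1,Y)$.

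For the inductive step, suppose the identity holds for $n-1$ variables. Treat $(X_1,\ldots,X_{n-1})$ as a single random variable; this is legitimate by the convention in Section~\ref{sec-Pre} that a group of random variables is itself a random variable. The two-variable identity then gives
\[
H(X_1,\ldots,X_n|Y) = H(X_1,\ldots,X_{n-1}|Y) + H(X_n|X_1,\ldots,X_{n-1},Y).
\]
Applying the induction hypothesis to the first term completes the sum.

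The only point needing care is finiteness. The intermediate joint entropies $H(X_1,\ldots,X_i,Y)$ must be finite for the telescoping to be valid. This follows from monotonicity (the second polymatroidal axiom), since each is bounded by $H(X_1,\ldots,X_n,Y)$, which is finite by assumption.

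With finiteness secured, the whole argument is equivalent to writing
\[
H(X_1,\ldots,X_n,Y) - H(Y) = \sum_{i=1}^n \bigl[H(X_1,\ldots,X_i,Y) - H(X_1,\ldots,X_{i-1},Y)\bigr],
\]
which is a telescoping sum, and observing that each summand is $H(X_i|X_1,\ldots,X_{i-1},Y)$ by definition. I do not expect any real obstacle beyond this finiteness bookkeeping.
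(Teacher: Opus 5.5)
Your proof is correct. The paper states this chain rule without proof, deferring to its textbook reference, and your argument is the standard derivation behind it: write $H(X_1,\ldots,X_n|Y)=H(X_1,\ldots,X_n,Y)-H(Y)$ as a telescoping sum of the differences $H(X_1,\ldots,X_i,Y)-H(X_1,\ldots,X_{i-1},Y)$, noting that every intermediate joint entropy is finite by monotonicity together with the paper's standing finiteness assumption.
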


\begin{proposition}[Chain Rule for Conditional Mutual Information]
	\[
	I(X_1, X_2, \cdots, X_n;Y|Z) = \sum_{i=1}^n I(X_i;Y|Z, X_1, \cdots, X_{i-1}).
	\label{eqn:chain_rule_I}
	\]
	\label{prop:chain_rule_I}
\end{proposition}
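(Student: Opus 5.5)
The statement is the chain rule for conditional mutual information, and I will derive it from the chain rule for conditional entropy in Proposition~\ref{prop:chain_rule_H|Y}. Throughout I assume that all entropies involved are finite, which is the standing assumption under which the information measures are defined. The idea is to expand the left-hand side by the definition of conditional mutual information. I write it as the difference of two conditional entropies and apply the entropy chain rule to each. Two cancellations are not needed: instead I pair the resulting sums term by term.

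\textbf{Step 1.} By definition, and using the symmetric form that follows directly from $H(X|Y)=H(X,Y)-H(Y)$,
\[
I(X_1,\ldots,X_n;Y|Z) = H(X_1,\ldots,X_n|Z) - H(X_1,\ldots,X_n|Y,Z).
\]
The identity $I(X;Y|Z)=H(X|Z)-H(X|Y,Z)$ is exactly the stated definition. The only care needed is that the conditioning variables $(Z,Y)$ and $(Y,Z)$ denote the same joint random variable, so the order of the two arguments is irrelevant.

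\textbf{Step 2.} I apply Proposition~\ref{prop:chain_rule_H|Y} twice. The first application uses conditioning variable $Z$:
\[
H(X_1,\ldots,X_n|Z)=\sum_{i=1}^n H(X_i|X_1,\ldots,X_{i-1},Z).
\]
The second uses conditioning variable $(Y,Z)$:
\[
H(X_1,\ldots,X_n|Y,Z)=\sum_{i=1}^n H(X_i|X_1,\ldots,X_{i-1},Y,Z).
\]

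\textbf{Step 3.} I subtract the two sums term by term. By the definition again, with conditioning variable $(Z,X_1,\ldots,X_{i-1})$, each difference is
\[
H(X_i|Z,X_1,\ldots,X_{i-1})-H(X_i|Z,X_1,\ldots,X_{i-1},Y)=I(X_i;Y|Z,X_1,\ldots,X_{i-1}).
\]
Summing over $i$ gives the claim.

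The only point needing attention is finiteness, so that the subtractions of sums are legitimate. If $H(X_1,\ldots,X_n,Y,Z)<\infty$, then monotonicity, the second polymatroidal axiom, makes every joint entropy appearing above finite. Hence all conditional entropies are finite, and the rearrangement is valid. This is routine, so I expect no real obstacle.
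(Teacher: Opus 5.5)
Your proof is correct. Expanding $I(X_1,\ldots,X_n;Y|Z)=H(X_1,\ldots,X_n|Z)-H(X_1,\ldots,X_n|Z,Y)$, applying the conditional-entropy chain rule once with conditioning on $Z$ and once on $(Z,Y)$, and subtracting term by term is the standard textbook derivation. The paper states this proposition without proof and defers to Yeung's textbook, so there is no different route to compare. One small point: the remark about a ``symmetric form'' in your Step~1 is unnecessary, because the paper's definition $I(X;Y|Z)=H(X|Z)-H(X|Z,Y)$ already gives that identity directly.
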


\begin{cor}
$I(X_1, X_2; Y | Z) \ge I(X_1; Y | Z).$
\end{cor}

% \begin{proof}
% The corollary is proved by considering
% \[
% I(X_1, X_2; Y | Z) = I(X_1; Y | Z) + I(X_2; Y | Z, X_1) 
% \ge I(X_1; Y | Z),
% \]
% where the inequality above holds because $I(X_2; Y | Z, X_1)$ is nonnegative.
% \end{proof}

\begin{proposition}
	$X_1$ and $X_2$ are independent conditioning on $Y$ if and only if $I(X_1;X_2|Y) = 0$.
\end{proposition}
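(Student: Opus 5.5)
The plan is to write $I(X_1;X_2|Y)$ as an expected log-ratio over the support of the joint distribution, and then use the equality case of the divergence inequality. Throughout I assume, as the paper stipulates in Section~\ref{sec-Pre}, that all entropies involved are finite. Then $I(X_1;X_2|Y)=H(X_1|Y)-H(X_1|X_2,Y)$ is well defined.

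\textbf{Step 1: an explicit formula.} Expanding the entropies as sums over the support ${\cal S}_{X_1X_2Y}$ gives
\[
I(X_1;X_2|Y)=\sum_{(x_1,x_2,y)\in{\cal S}_{X_1X_2Y}} p(x_1,x_2,y)\log\frac{p(x_1,x_2,y)\,p(y)}{p(x_1,y)\,p(x_2,y)}.
\]
For each $y\in{\cal S}_Y$ I regroup this as
\[
I(X_1;X_2|Y)=\sum_{y\in{\cal S}_Y}p(y)\,D\bigl(p(\cdot,\cdot|y)\,\big\|\,p(\cdot|y)p(\cdot|y)\bigr).
\]
Splitting into these finite pieces is legitimate because the entropies are finite.

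\textbf{Step 2: the ``if'' direction.} Suppose $X_1\perp X_2|Y$, so that (\ref{qtiphr}) holds. Then on the support the ratio inside the logarithm equals $1$, and every term of the sum in Step 1 vanishes. Hence $I(X_1;X_2|Y)=0$.

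\textbf{Step 3: the ``only if'' direction.} This step carries the main content. Suppose $I(X_1;X_2|Y)=0$. Each divergence in Step 1 is nonnegative, and this is the same fact that underlies the nonnegativity of Shannon's information measures. Since every weight $p(y)$ with $y\in{\cal S}_Y$ is positive, each divergence must be zero.

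I then invoke the equality condition of the divergence inequality. Via the log-sum inequality or $\ln z\le z-1$, $D(p\|q)=0$ forces $p=q$ on the support of $p$, and since both are probability distributions, they agree everywhere. This gives $p(x_1,x_2|y)=p(x_1|y)p(x_2|y)$ for all $x_1,x_2$ and all $y\in{\cal S}_Y$, which is the first case of (\ref{qtiphr}). For $y\notin{\cal S}_Y$ we have $p(x_1,x_2,y)\le p(y)=0$ automatically, which is the second case.

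\textbf{Main obstacle.} The only delicate point is the equality case when the alphabets are countably infinite. There I would use $\ln z\le z-1$ with equality iff $z=1$, applied termwise with $z=q/p$. Since $\sum(q-p)\le 0$ and the total is zero, every term must satisfy $z=1$. Finiteness of the entropies guarantees that the series manipulations are absolutely justified.
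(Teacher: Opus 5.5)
The paper states this proposition without proof and defers to the standard treatment in \cite[Ch.~2]{Yeung2008}. Your argument is exactly that standard proof, and it is correct: you write $I(X_1;X_2|Y)=\sum_{y\in{\cal S}_Y}p(y)\,D\bigl(p(\cdot,\cdot|y)\,\|\,p(\cdot|y)p(\cdot|y)\bigr)$, invoke the equality case of the divergence inequality, and your absolute-convergence justification (the four entropy series have nonnegative terms and finite sums) properly covers countably infinite alphabets.
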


This proposition asserts that conditional independence can be completely characterized by setting the corresponding conditional mutual information to zero. Alternatively, since
\begin{eqnarray}
	I(X_1;X_2|Y) & = & H(X_1|Y) - H(X_1|Y, X_2) \nonumber \\
	& = & [H(X_1, Y) - H(Y)] - [H(X_1, X_2, Y) - H(X_2, Y)] \nonumber \\
	& = & H(X_1, Y) + H(X_2, Y) - H(X_1, X_2, Y) - H(Y),
	\label{I}
\end{eqnarray}
a CI is equivalent to setting the corresponding linear combination of joint entropies to zero.

\begin{proposition}
	$X$ is a function of $Y$ if and only if $H(X|Y) = 0$.
\end{proposition}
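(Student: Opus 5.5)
We prove the two directions separately. Throughout we assume, as the paper does, that $H(X)$ and $H(Y)$ are finite, so that $H(X|Y)=H(X,Y)-H(Y)$ is a well-defined nonnegative number; it can be written as
\[
H(X|Y) = \sum_{y\in{\cal S}_Y} p(y)\, H(X|Y=y), \qquad H(X|Y=y) = -\sum_{x} p(x|y)\log p(x|y).
\]
This formula is obtained by expanding $p(x,y)=p(y)p(x|y)$ inside $-\sum_{x,y}p(x,y)\log p(x,y)$ and subtracting $H(Y)$. Finiteness of the entropies is what justifies rearranging these sums.

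\emph{Only if.} Suppose $X=f(Y)$. For every $y\in{\cal S}_Y$, the conditional distribution $p(\cdot|y)$ puts mass $1$ on $f(y)$ and $0$ elsewhere. Using the convention $0\log 0=0$, we get $H(X|Y=y)=-1\cdot\log 1=0$. Averaging over $y$ gives $H(X|Y)=0$.

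\emph{If.} Suppose $H(X|Y)=0$. Each term $H(X|Y=y)$ is nonnegative, because every summand $-p\log p$ is nonnegative when $0\le p\le 1$. Each weight satisfies $p(y)>0$ for $y\in{\cal S}_Y$. Hence $H(X|Y=y)=0$ for every $y\in{\cal S}_Y$, and so every summand $-p(x|y)\log p(x|y)$ vanishes. This forces $p(x|y)\in\{0,1\}$ for each $x$. Since $\sum_x p(x|y)=1$, exactly one value, call it $f(y)$, has $p(f(y)|y)=1$. On $y\notin{\cal S}_Y$ we define $f$ arbitrarily. This is exactly the definition of $X$ being a function of $Y$.

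The only real subtlety is the decomposition of $H(X|Y)$ as an average of the $H(X|Y=y)$ when the alphabets are countably infinite. Since all terms are nonnegative and the total is finite, the rearrangement is justified by Tonelli's theorem for series. The remaining steps are elementary.
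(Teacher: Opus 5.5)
Your proof is correct. The paper gives no proof of its own: it lists this among the standard properties ``stated without proof'' and defers to its Chapter~2 textbook reference. Your argument is exactly that standard one, namely writing $H(X|Y)=\sum_{y\in{\cal S}_Y}p(y)H(X|Y=y)$ and noting that zero entropy forces each conditional distribution to be a point mass.

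One minor precision concerns what justifies which step. Splitting $-\log p(x,y)=-\log p(y)-\log p(x|y)$ into two separate sums is justified by nonnegativity alone (Tonelli). Finiteness of $H(Y)$ is needed only to subtract it from $H(X,Y)$.
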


\begin{proposition}
	$X_1, X_2, \ldots X_n$ are mutually independent conditioning on $Y$ if and only if
	\[
	H(X_1, X_2, \ldots, X_n|Y) = \sum_{i=1}^n H(X_i|Y) .
	\]
\end{proposition}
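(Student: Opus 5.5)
We need to show that $\perp(X_1,\ldots,X_n)|Y$ holds if and only if $H(X_1,\ldots,X_n|Y)=\sum_{i=1}^n H(X_i|Y)$, assuming all entropies are finite. The plan is to reduce the mutual-independence condition to a sum of conditional mutual informations and then use nonnegativity. By the chain rule for conditional entropy (Proposition~\ref{prop:chain_rule_H|Y}),
\[
\sum_{i=1}^n H(X_i|Y) - H(X_1,\ldots,X_n|Y) = \sum_{i=2}^n \bigl[H(X_i|Y) - H(X_i|X_1,\ldots,X_{i-1},Y)\bigr] = \sum_{i=2}^n I(X_i; X_1,\ldots,X_{i-1}|Y).
\]
Each term is a conditional mutual information, so it is nonnegative. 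Hence the entropy identity holds if and only if $I(X_i;X_1,\ldots,X_{i-1}|Y)=0$ for every $i=2,\ldots,n$. By the proposition characterizing conditional independence, this is in turn equivalent to $X_i \perp (X_1,\ldots,X_{i-1})\,|\,Y$ for every $i \ge 2$.

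It then remains to show that this family of CIs is equivalent to the product form of Definition~\ref{CMI_defn}; this is essentially the content of Appendix~\ref{app:A}.

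\emph{Mutual independence implies the CIs.} Fix $y \in {\cal S}_Y$. Starting from $p(x_1,\ldots,x_n|y)=\prod_j p(x_j|y)$, marginalize over $x_{i+1},\ldots,x_n$ to get $p(x_1,\ldots,x_i|y)=\prod_{j\le i}p(x_j|y)$ and likewise $p(x_1,\ldots,x_{i-1}|y)=\prod_{j<i}p(x_j|y)$. Combining these gives $p(x_1,\ldots,x_i|y)=p(x_i|y)\,p(x_1,\ldots,x_{i-1}|y)$, which is the CI.

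\emph{The CIs imply mutual independence.} Proceed by induction on $i$, factoring $p(x_1,\ldots,x_i|y)=p(x_i|y)\,p(x_1,\ldots,x_{i-1}|y)$ at each step to reach the full product. The case $y\notin{\cal S}_Y$ is trivial, since both sides are zero.

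The only delicate points are bookkeeping. First, the groups $X_i$ may overlap. This causes no difficulty, because every step is either a marginalization of the joint pmf of the tuple $(X_1,\ldots,X_n,Y)$, with repeated coordinates handled by consistency, or an application of the chain rule, which is purely formal. Second, finiteness of the entropies is needed so that the subtractions in the displayed identity are legitimate. I expect the main obstacle to be verifying this equivalence between the chain of CIs and the full factorization carefully, especially when the groups overlap. The entropy part is immediate from the chain rule and nonnegativity.
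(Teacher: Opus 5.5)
Your proof is correct. The paper gives no proof of this proposition: it is one of the standard facts listed ``without proof'' in Section~\ref{sec-Pre}, with the reader referred to the textbook. Your argument is the standard one behind it. The chain rule turns $\sum_i H(X_i|Y)-H(X_1,\ldots,X_n|Y)$ into $\sum_{i\ge 2} I(X_i;X_1,\ldots,X_{i-1}|Y)$, and nonnegativity forces each term to vanish. The factorization then follows by telescoping $p(x_1,\ldots,x_i|y)=p(x_i|y)\,p(x_1,\ldots,x_{i-1}|y)$. Your remark on overlapping groups is also right, since the marginalization and the induction use nothing beyond the joint pmf of the tuple $(X_1,\ldots,X_n,Y)$.

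One thing to fix: do not describe the step ``chain of CIs $\Leftrightarrow$ product form'' as essentially the content of the appendix. Proposition~\ref{propA1} is about a different chain, $X_i\perp X_{i+1}^n\,|\,(Y,X_1^{i-1})$. More importantly, its proof takes the present proposition for granted: both of its directions treat $H(X_1^n|Y)=\sum_j H(X_j|Y)$ as the meaning of conditional mutual independence. Citing it here would therefore be circular. Your direct pmf argument is what actually carries the step, so just drop the reference.

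Note also that your reduction rests on the fact that $I(X_1;X_2|Y)=0$ if and only if $X_1\perp X_2\,|\,Y$. That is the $n=2$ case of the statement itself, which the paper also assumes without proof. If you want a self-contained argument instead, use
$\sum_i H(X_i|Y)-H(X_1,\ldots,X_n|Y)=\sum_{y\in{\cal S}_Y}p(y)\,D\bigl(p(\cdot|y)\,\|\,\prod_{i=1}^n p_{X_i|Y}(\cdot|y)\bigr)$.
Its equality condition gives exactly $p(x_1,\ldots,x_n|y)=\prod_i p(x_i|y)$ for every $y\in{\cal S}_Y$, with no chain of CIs and no induction.
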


This proposition asserts that conditional mutual independence can be completely characterized in terms of conditional entropy.

\begin{proposition}[Conditioning Does Not Increase Entropy]
\[
H(Y|X, Z) \leq H(Y|Z),
\]
	with equality if and only if $X$ and $Y$ are independent conditioning on $Z$.
\end{proposition}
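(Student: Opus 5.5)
We prove the statement ``Conditioning does not increase entropy'', namely $H(Y|X,Z)\le H(Y|Z)$ with equality if and only if $X\perp Y\,|\,Z$. The plan is to reduce it to the nonnegativity of conditional mutual information together with the preceding characterization of conditional independence. By the definition of conditional mutual information given above (with the roles of the variables arranged appropriately),
\[
I(X;Y|Z) = H(Y|Z) - H(Y|Z,X),
\]
so the claimed inequality is exactly $I(X;Y|Z)\ge 0$. The equality case is exactly $I(X;Y|Z)=0$. We assume throughout that all the entropies involved are finite, as the paper stipulates, so that every difference is well defined.

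The first step is to justify the symmetric form above. The definition in the paper is $I(X;Y|Z)=H(X|Z)-H(X|Z,Y)$. Expanding both this and $H(Y|Z)-H(Y|Z,X)$ into joint entropies, as in the computation (\ref{I}), each becomes
\[
H(X,Z)+H(Y,Z)-H(X,Y,Z)-H(Z).
\]
Hence the two expressions coincide.

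The second step is nonnegativity. This is one of the basic inequalities, which the paper states as well known. Equivalently, the displayed combination of joint entropies is nonnegative by the submodularity axiom $H(X_\alpha)+H(X_\beta)\ge H(X_{\alpha\cup\beta})+H(X_{\alpha\cap\beta})$, applied with $\alpha$ indexing $(X,Z)$ and $\beta$ indexing $(Y,Z)$. This gives $H(Y|X,Z)\le H(Y|Z)$.

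The third step is the equality case. Equality holds if and only if $I(X;Y|Z)=0$. By the earlier proposition, this holds if and only if $X$ and $Y$ are independent conditioning on $Z$.

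The only delicate point is the appeal to nonnegativity and to the zero characterization of $I(X;Y|Z)$, both of which are taken as given. If one wanted to be self-contained, one would write
\[
I(X;Y|Z)=\sum_{x,y,z} p(x,y,z)\log\frac{p(x,y|z)}{p(x|z)\,p(y|z)}.
\]
Applying the log-sum (Jensen) inequality for each fixed $z$ shows that this quantity is a nonnegative divergence. It vanishes precisely when $p(x,y|z)=p(x|z)p(y|z)$ on the support, which is Definition~\ref{CI_defn}.
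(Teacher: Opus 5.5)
Your proof is correct. The paper states this proposition without proof, deferring to \cite[Ch.~2]{Yeung2008}, and your reduction is the standard argument found there: rewrite $H(Y|Z)-H(Y|X,Z)$ as $I(X;Y|Z)$ via the expansion (\ref{I}), then use nonnegativity and the fact that $I(X;Y|Z)=0$ exactly characterizes $X\perp Y\,|\,Z$.

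One point deserves a line in the submodularity step. Treat $X,Y,Z$ as three (possibly dependent) variables, so that $\alpha\cap\beta$ indexes exactly $Z$. If instead you write $X=X_A$, $Y=X_B$, $Z=X_C$ with overlapping $A,B$, as the paper permits, then $\alpha\cap\beta=(A\cap B)\cup C$, and you also need monotonicity, $H(X_{(A\cap B)\cup C})\ge H(X_C)$, to finish.
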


\begin{proposition}[Independence Bound for Conditional Entropy]
	\[
	H(X_1, X_2, \ldots, X_n |Y) \le \sum_{i=1}^n H(X_i|Y)
	\]
	with equality if and only if $X_i$, $i = 1, 2, \ldots, n$
	are mutually independent conditioning on $Y$.
\label{thm:indep_bound}
\end{proposition}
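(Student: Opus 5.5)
The plan is to derive the bound from the chain rule together with the fact that conditioning does not increase entropy, and then read off the equality case. By the chain rule for conditional entropy (Proposition~\ref{prop:chain_rule_H|Y}),
\[
H(X_1, X_2, \ldots, X_n|Y) = \sum_{i=1}^n H(X_i|X_1,\ldots,X_{i-1},Y).
\]
For each $i$, conditioning does not increase entropy, applied with $X = (X_1,\ldots,X_{i-1})$ and $Z = Y$, gives
\[
H(X_i|X_1,\ldots,X_{i-1},Y) \le H(X_i|Y).
\]
Summing over $i$ yields the inequality.

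For the equality condition, I first note that the sum of the termwise inequalities is an equality exactly when each termwise inequality is tight. By the equality clause of the ``conditioning does not increase entropy'' proposition, this means $X_i \perp (X_1,\ldots,X_{i-1}) \mid Y$ for every $i = 2,\ldots,n$. Alternatively, the equality case can be read off directly from the earlier proposition characterizing conditional mutual independence by $H(X_1,\ldots,X_n|Y)=\sum_i H(X_i|Y)$. That proposition \emph{is} the equality case, so once the inequality is established, the ``if and only if'' follows immediately from it.

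If one prefers not to lean on that proposition, the remaining step is to show that the CIs $X_i \perp (X_1,\ldots,X_{i-1})\mid Y$ for all $i$ are jointly equivalent to $\perp(X_1,\ldots,X_n)\mid Y$ in the sense of Definition~\ref{CMI_defn}. For $y\in{\cal S}_Y$, the chain of CIs gives
\[
p(x_1,\ldots,x_i|y)=p(x_1,\ldots,x_{i-1}|y)\,p(x_i|y).
\]
Induction on $i$ then yields the product form $p(x_1,\ldots,x_n,y)=p(y)\prod_i p(x_i|y)$. The converse follows by marginalizing the product form. This is the content of Appendix~\ref{app:A}.

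The only real subtlety is finiteness of the entropies. The differences defining conditional entropy must be well defined so that the chain rule and the subtraction in the equality analysis are legitimate. I will assume, as the paper does throughout, that all entropies involved are finite.
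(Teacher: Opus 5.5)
Your proof is correct. The paper states this proposition without proof (it defers to Yeung's textbook, Ch.~2), and your argument is the standard one for it: the chain rule plus ``conditioning does not increase entropy,'' with the equality case reduced to $X_i \perp (X_1,\ldots,X_{i-1}) \mid Y$ for every $i$ and then to the factorization $p(x_1,\ldots,x_n|y)=\prod_i p(x_i|y)$.

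Drop the shortcut through the paper's entropy characterization of conditional mutual independence. That proposition is just a restatement of the equality clause, and the appendix you cite itself presupposes it, so neither can discharge the step. Your own induction on the factorization is what actually closes the proof.
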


\section{Characterization of a CMI}
\label{sec-char}
%In this section, we give characterization of CMIs based on the $s$-variables.
%Motivated by \cite{Yeung2002,ChanChenYeung2020} which characterize and classify a list of full conditional independences via the structure of the induced set of vanishing atoms, in this section, we obtain a characterization of conditional mutual independencies (CMIs), which provides new insight on understanding the theory in \cite{Yeung2002,Yeung2019,ChanChenYeung2020}.

In this section, we seek a complete characterization of a CMI. 
We first define the {\it pure form} of a CMI $K$, denoted by ${\rm pur}(K)$, and for a CMI $\tilde{K}$ in pure form, we define its {\it canonical form}, denoted by ${\rm can}(\tilde{K})$. Then for two CMIs $K$ and $K'$, we establish in Theorem~\ref{mainmainthm} that ``$K$ is equivalent to $K'$'' if and only if ${\rm can}({\rm pur}(K)) = {\rm can}({\rm pur}(K'))$.

In the rest of the  paper, unless otherwise specified, all 
information expressions involve some or all of the random variables $X_1,X_2, \ldots,X_n$. The value of $n$ will be specified when necessary.  We adopt the convention that an empty collection of random 
variables, denoted by $X_{\emptyset}$,
is a degenerate random variable that takes a constant value with probability 1. Accordingly, we write $X_\emptyset = \mbox{constant}$. This way, for any collection $X_A$
of random variables, we have $H(X_A|X_\emptyset) = H(X_A)$
and $H(X_{\emptyset}|X_A)=0$.

We remarked that in order to employ Shannon's information measures, it is required that they all take finite values. We now elaborate this point in our setting.
From Proposition~\ref{thm:indep_bound}, we have
\[
H(X_1, X_2, \ldots, X_n) \le \sum_{i=1}^n H(X_i).
\]
As a consequence, if $H(X_i) < \infty$ for all $i \in {\cal N}_n$, then  $H(X_{{\cal N}_n}) < \infty$. Now, for any $\alpha \subseteq {\cal N}_n$, 
\[
H(X_{{\cal N}_n}) = H(X_\alpha) + H(X_{{\cal N}_n \backslash \alpha} | X_\alpha ),
\]
which implies
\[
H(X_\alpha) = H(X_{{\cal N}_n}) - H(X_{{\cal N}_n \backslash \alpha} | X_\alpha ) \le H(X_{{\cal N}_n}) < \infty.
\]
In other words, if $H(X_i)$ is finite for all $i \in {\cal N}_n$, then 
$H(X_\alpha)$ is finite for all $\alpha \subseteq {\cal N}_n$. In light of (\ref{I}), this in turn ensures that all Shannon information measures involving $X_1, X_2, \ldots, X_n$ are finite. We will make this assumption in the rest of the paper.

\begin{define}\label{mainformula}
	For random variables $X_C$ and $X_{Q_i}$, $1 \le i\le k$, where $k\ge0$, let
	\[
	J(X_{Q_i},1\le i\le k|X_C)=\sum\limits_{i=1}^{k} \, \left[ H(X_{Q_i}|X_C)-H(X_{Q_i},1\le i\le k|X_C) \right] .
	\]
\end{define}

\begin{proposition}
\label{J=0}
For $k\ge 2$, $J(X_{Q_i},1\le i\le k|X_C)\ge0$, with equality if and only if
$X_{Q_i}$, $1\le i\le k$ are mutually independent conditional on $X_C$.
\end{proposition}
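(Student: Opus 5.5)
The plan is to reduce the statement directly to the Independence Bound for Conditional Entropy (Proposition~\ref{thm:indep_bound}), applied with $Y = X_C$. I read the bracket in Definition~\ref{mainformula} as the intended quantity
\[
J(X_{Q_i},1\le i\le k|X_C)=\sum_{i=1}^{k} H(X_{Q_i}|X_C)-H(X_{Q_1},X_{Q_2},\ldots,X_{Q_k}|X_C),
\]
that is, the sum of the individual conditional entropies minus the joint conditional entropy, with the joint term subtracted once. Under the standing assumption that $H(X_i)<\infty$ for all $i\in{\cal N}_n$, every term here is finite, so this difference is well defined.

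First, I regard each $X_{Q_i}$ as a single (possibly vector-valued) random variable. The paper has remarked that the definitions and propositions apply verbatim to groups of random variables, even when the groups overlap. Proposition~\ref{thm:indep_bound} with $n=k$ and $Y=X_C$ then gives
\[
H(X_{Q_1},\ldots,X_{Q_k}|X_C)\le \sum_{i=1}^k H(X_{Q_i}|X_C),
\]
which is exactly $J\ge 0$.

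Second, the same proposition states that equality holds if and only if the $X_{Q_i}$ are mutually independent conditioning on $X_C$. This is the claimed characterization of $J=0$. For $k=2$ it agrees with the paper's convention of counting conditional independence as a CMI with $n=2$. Alternatively, for $k=2$ one can write $J=I(X_{Q_1};X_{Q_2}|X_C)$ and invoke the proposition that a CI holds if and only if this conditional mutual information vanishes. In either case, the characterization of conditional mutual independence via conditional entropies yields the ``if and only if''.

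No step is a genuine obstacle. The only point needing care is that the proposition must hold when the groups $Q_i$ overlap and when $C$ intersects some $Q_i$. This is covered because the cited propositions are stated for arbitrary random variables, and a tuple $X_{Q_i}$ is itself a discrete random variable with finite entropy. The degenerate case $C=\emptyset$ uses the convention $H(X_A|X_\emptyset)=H(X_A)$.
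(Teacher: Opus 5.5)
Your proposal is correct and follows the paper's approach: the paper gives no argument of its own and simply cites the independence bound (\cite[Theorem 2.39]{Yeung2008}), which is what you invoke through Proposition~\ref{thm:indep_bound} with $Y = X_C$, treating each $X_{Q_i}$ as a single random variable. You also read Definition~\ref{mainformula} correctly, with the joint conditional entropy subtracted once rather than inside the sum as the misplaced bracket literally says; this is the intended reading, as the paper's later computations (e.g., in the proofs of Propositions~\ref{J=0b} and~\ref{prop-1}) confirm.
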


This proposition is elementary. The reader may see, for example \cite[Theorem 2.39]{Yeung2008}, for a proof.

\begin{proposition}
\label{J=0b}
	For $k = 0, 1$, $J(X_{Q_i},1\le i\le k|X_C) = 0$.
\end{proposition}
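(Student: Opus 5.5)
The plan is to substitute $k=0$ and $k=1$ directly into Definition~\ref{mainformula} and evaluate. The only care needed is in interpreting the empty collection of random variables, for which the paper's convention about $X_\emptyset$ suffices.

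\emph{Case $k=1$.} The sum has exactly one term, $i=1$. The joint collection $(X_{Q_i}, 1\le i\le 1)$ is just $X_{Q_1}$. Hence
\[
J(X_{Q_1}|X_C) = H(X_{Q_1}|X_C) - H(X_{Q_1}|X_C) = 0,
\]
and this is well defined because all Shannon information measures involving $X_1,\ldots,X_n$ are assumed finite.

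\emph{Case $k=0$.} Here the index set $\{i : 1\le i\le 0\}$ is empty, so the sum in Definition~\ref{mainformula} is an empty sum and equals $0$ by the usual convention. Someone might instead read the bracketed term $H(X_{Q_i},1\le i\le k|X_C)$ as standing outside the sum. In that reading the joint collection is the empty collection $X_\emptyset$, and by the stated convention $H(X_\emptyset|X_C)=0$, so the expression is still $0$.

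The only potential obstacle is this interpretive point for $k=0$, and it is settled by the paper's explicit convention $X_\emptyset=\mbox{constant}$. No information inequalities are required, in contrast with Proposition~\ref{J=0}, which needs $k\ge 2$.
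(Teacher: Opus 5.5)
Your proof is correct and essentially the same as the paper's: both substitute $k=0$ and $k=1$ into Definition~\ref{mainformula}, using the empty sum together with the convention $H(X_\emptyset|X_C)=0$ for $k=0$, and the cancellation $H(X_{Q_1}|X_C)-H(X_{Q_1}|X_C)=0$ for $k=1$. Your treatment of both possible bracket placements is a harmless extra, since the paper's proof simply takes the joint-entropy term to lie outside the sum.
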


\begin{proof}
When $k=0$, the set $\{i:1\le i\le k\}$ becomes the empty set, and $(X_{Q_i},1\le i\le k)$ becomes $X_{\emptyset}$. Accordingly, 
$J(X_{Q_i},1\le i\le k|X_C)$ becomes
\begin{flalign}
	J(X_{Q_i},1\le i\le 0|X_C)=&\sum\limits_{i=1}^{0}H(X_{Q_i}|X_C)-H(X_{Q_i},1\le i\le0|X_C) \nonumber\\
	=&\ 0-H(X_\emptyset|X_C)\\
	=&\ 0. \nonumber
\end{flalign}
When $k=1$, $J(X_{Q_i},1\le i\le k|X_C)$ becomes
\[
H(X_{Q_1} | X_C) - H(X_{Q_1} | X_C) = 0.
\]
The proposition is proved.
\end{proof}

In view of Propositions~\ref{J=0} and~\ref{J=0b}, we are motivated to introduce the following information-theoretic definition of conditional mutual independence, which incorporates the cases  $k=0$ and $k=1$.

\begin{define}[Conditional Mutual Independence II]
\label{Def-CMI-II}
For $k \ge 0$, $X_{Q_1}, X_{Q_2}, \ldots, X_{Q_k}$ are mutually independent conditioning on $X_C$ if $J(X_{Q_i},1\le i\le k|X_C) = 0$.
\end{define}

\begin{remark}
\label{k01}
When $k = 0, 1$, since $J(X_{Q_i},1\le i\le k|X_C)=0$ always holds according to Proposition~\ref{J=0b}, $X_{Q_1}, X_{Q_2}, \ldots, X_{Q_k}$ are mutually independent conditioning on $X_C$ for every joint distribution for $X_1, X_2, \ldots, X_n$.
\end{remark}

%To facilitate our discussion, we will extend the notation of a CMI to the case $k=0$ and $k=1$ by way of Proposition \ref{mainformula} and refer to such a CMI as a degenerated CMI. Note that a degenerated CMI does not impose any constraint on the distribution of $X_1,X_2,\cdots,X_n$. In this sense, if $k=0$ or $1$, then $\langle Q_i,1\le i\le k \rangle$ is regarded as $\langle\ \rangle$, which is called the empty collection.

%\begin{proposition}\label{nonnegtiveJ}
%	$J(X_{Q_i},1\le i\le k|X_C)\ge0$.
%\end{proposition}
%\begin{proof}
%Consider
%	\begin{equation}\begin{array}{ll}
%J(X_{Q_i},1\le i\le k|X_C)
%&=\sum\limits_{i=1}^{k}H(X_{Q_i}|X_C)-H(X_{Q_i},1\le i\le k|X_C)\\
%&=\sum\limits_{i=1}^{k}H(X_{Q_i}|X_C)-\sum\limits_{i=1}^{k}H(X_{Q_i}|X_C,X_{Q_j},1\le j\le i-1)\\
%&\ge\sum\limits_{i=1}^{k}\left(H(X_{Q_i}|X_C)-H(X_{Q_i}|X_C)\right)\\
%&=0,
%	\end{array}\end{equation}
%which implies $J(X_{Q_i},1\le i\le k|X_C)\ge0$.
%\end{proof}

We will use $\langle Q_i,1\le i\le k\rangle$ to denote a collection of $k$ subsets $Q_i$ of $\mathcal{N}_n$, where $k\ge 0$. In this notation, the $Q_i$'s are allowed to repeat. As such, we have $\langle\{ 2 \}, \{ 2 \}\rangle \ne \langle \{ 2 \} \rangle$, for example. 
If $k=0$, then $\langle Q_i,1\le i\le k\rangle$ is also denoted by $\langle\ \rangle$.
If $k\ge1$, then $\langle Q_1, Q_2, \ldots, Q_k \rangle$ is regarded as equivalent to $\langle Q_{\pi(1)}, Q_{\pi(2)}, \ldots, Q_{\pi(k)}\rangle$, where $\pi: \mathcal{N}_{k}\rightarrow \mathcal{N}_{k}$ is a permutation on $\mathcal{N}_{k}$. Therefore, we write $\langle Q_1, Q_2, \ldots, Q_k \rangle = \langle Q_{\pi(1)}, Q_{\pi(2)}, \ldots, Q_{\pi(k)}\rangle$.
In other words, the order of the $Q_i$'s in $\langle \cdot \rangle$ is immaterial. 

\begin{define}\label{collection=}
Let $\langle Q_i,1\le i\le k \rangle$ and $\langle Q'_j,1\le j\le l \rangle$ be two nonempty collections.
We write $\langle Q_i,1\le i\le k \rangle=\langle Q'_i,1\le i\le l \rangle$ to mean that $l=k$ and there exists a permutation $\pi$ on $\mathcal{N}_k$ such that $p=\pi(q)$. Let $K=(C,\langle Q_i,1\le i\le k\rangle)$ and $K'=(C',\langle Q'_j,1\le j\le l\rangle)$. We say $K=K'$ to mean that $C=C'$ and $\langle Q_i,1\le i\le k \rangle=\langle Q'_j,1\le j\le l \rangle$.
\end{define}

The following proposition is an immediate consequence of Definition \ref{collection=}.

\begin{proposition}\label{collection==}
	Let $\langle Q_i,1\le i\le k \rangle$ and $\langle Q'_j,1\le j\le l \rangle$ be two nonempty collections. Assume that there exist a pair $(p,q)$ such that $Q_{p}=Q'_q$. If $\langle Q_i,1\le i\le k \rangle=\langle Q'_j,1\le j\le l \rangle$, then $\langle Q_i,i\in\mathcal{N}_k\backslash\{p\} \rangle=\langle Q'_j, j\in\mathcal{N}_l\backslash\{q\} \rangle$.
	
\end{proposition}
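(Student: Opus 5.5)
The claim is a statement about finite multisets. By Definition~\ref{collection=}, the equality $\langle Q_i,1\le i\le k \rangle=\langle Q'_j,1\le j\le l \rangle$ means that $l=k$ and there is a permutation $\pi$ of $\mathcal{N}_k$ with $Q_i=Q'_{\pi(i)}$ for all $i\in\mathcal{N}_k$. I read the garbled condition ``$p=\pi(q)$'' in that definition this way. The plan is to take this permutation $\pi$ and modify it into a bijection from $\mathcal{N}_k\backslash\{p\}$ onto $\mathcal{N}_k\backslash\{q\}$ that still matches equal sets. Relabeling the indices then gives a permutation of $\mathcal{N}_{k-1}$, as Definition~\ref{collection=} requires. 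The case $k=1$ is degenerate: both sides become $\langle\ \rangle$ and are equal trivially. This falls slightly outside the ``nonempty'' wording of the definition, so I would state it as a convention.

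For $k\ge2$ there are two cases.

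\emph{Case 1: $\pi(p)=q$.} The restriction of $\pi$ to $\mathcal{N}_k\backslash\{p\}$ is already a bijection onto $\mathcal{N}_k\backslash\{q\}$, and it satisfies $Q_i=Q'_{\pi(i)}$.

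\emph{Case 2: $\pi(p)=r\neq q$.} Let $s=\pi^{-1}(q)$, so $s\ne p$. Define $\sigma=\tau\circ\pi$, where $\tau$ is the transposition of $q$ and $r$ in $\mathcal{N}_k$. Then $\sigma(p)=q$ and $\sigma(s)=r$, and $\sigma$ agrees with $\pi$ everywhere else. The matching condition has to be checked only at $p$ and at $s$:
\begin{itemize}
\item at $p$: $Q_p=Q'_q$ holds by hypothesis;
\item at $s$: $Q_s=Q'_{\pi(s)}=Q'_q=Q_p=Q'_{\pi(p)}=Q'_r$.
\end{itemize}
So $\sigma$ is a permutation of $\mathcal{N}_k$ with $Q_i=Q'_{\sigma(i)}$ for every $i$ and $\sigma(p)=q$, and we are back in Case 1.

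Finally, relabel $\mathcal{N}_k\backslash\{p\}$ and $\mathcal{N}_k\backslash\{q\}$ order-preservingly as $\mathcal{N}_{k-1}$. This turns the restricted bijection into a permutation of $\mathcal{N}_{k-1}$, and both reduced collections have $k-1$ members, which gives the claimed equality.

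The main point needing care is the swap in Case 2. The hypothesis $Q_p=Q'_q$ is what makes it work, because it lets $Q_s$ be rematched to $Q'_r$. Apart from that and the $k=1$ convention, the argument is routine bookkeeping.
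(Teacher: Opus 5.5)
Your proof is correct. The paper gives no argument of its own; it states only that the proposition is ``an immediate consequence of Definition~\ref{collection=}.'' Your approach is that same direct verification from the definition, with the transposition step (rematching $Q_s$ to $Q'_r$ when $\pi(p)\neq q$) and the $k=1$ case written out explicitly.
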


In the sequel, we will use $K = (C, \langle Q_i, 1 \le i \le k \rangle )$, where $k\ge0$, to denote the CMI 
\[
\mbox{``$X_{Q_1}, X_{Q_2}, \ldots, X_{Q_k}$ are mutually independent conditioning on $X_C$''}
\]
on random variables $X_1, X_2, \ldots, X_n$, where the joint distribution of $X_1, X_2, \ldots, X_n$ is unspecified. Note that a CMI is a logical statement that can be TRUE or FALSE, depending on the joint distribution of $X_1, X_2, \ldots, X_n$. 
If a CMI is TRUE, we also say that it is {\it valid}.
%In this sense, when $k=1$, $K$ can be regarded as valid trivially.
In the rest of the paper, when we refer to a CMI, we always assume that the joint distribution of $X_1, X_2, \ldots, X_n$ is unspecified.

\begin{define}\label{CMI=def}
	Two CMIs $K$ and $K'$ on $X_1, X_2, \ldots, X_n$ are equivalent, denoted by $K\sim K'$, if $K$ and $K'$ are either both TRUE or both FALSE for every joint distribution of $X_1, X_2, \ldots, X_n$.
\end{define}

Evidently, `$\sim$' is an equivalence relation.

\begin{define}\label{degenerateDef}
	A CMI on $X_1,X_2,\cdots,X_n$ is degenerate if it is TRUE for every joint distribution for $X_1, X_2, \ldots, X_n$.
\end{define}

According to Remark~\ref{k01}, $K=(C,\langle Q_i,1\le i\le k\rangle)$ is a degenerate CMI for $k=0,1$.

In $K=(C,\langle Q_i,1\le i\le k\rangle)$, if $Q_j=\emptyset$ for some $1\le j\le k$, then $K$ is regarded as being equivalent to $K'=(C,\langle Q_i,1\le i\le k, i\neq j\rangle)$. This can readily be justified by Proposition \ref{mainformula} by noting that $J(X_{Q_i},1\le i\le k|X_C)=0$ if and only if  $J(X_{Q_i},1\le i\le k,i\neq j|X_C)=0$ (with the convention $X_\emptyset = \mbox{constant}$).
In particular, if $Q_i=\emptyset$ for all $1\le i\le k$, then $K\sim (C,\langle\ \rangle)$, which is degenerate regardless of $C$. Since the set $C$ is immaterial, we will write $(C,\langle\ \rangle)$ as $(\cdot,\langle\ \rangle)$. 
%\end{remark}

%In a CMI $K = (C, \langle Q_i, 1 \le i \le k \rangle )$, if there is only one
%$i$, say $i^*$, such that $Q_i$ is nonempty, then from the discussion below 
%Proposition~\ref{mainformula}, $K$ is always valid. In other words, $K$ does not 
%impose any constraint on the joint distribution of the random variables
%$X_1, X_2, \ldots, X_n$. Likewise, it is also readily seen from Proposition~\ref{mainformula} that $K$ does not 
%impose any constraint on the joint distribution of the random variables
%$X_1, X_2, \ldots, X_n$ if all the $Q_i$'s are empty.

Since a degenerate CMI on $X_1,X_2,\cdots,X_n$ is always valid, it imposes no constraint on the joint distribution of $X_1,X_2,\cdots,X_n$. As all degenerate CMIs are equivalent, with an abuse of notation, they will all be written as $(\cdot,\langle\ \rangle)$. In other words, we write $K=(\cdot,\langle\ \rangle)$ to mean that $K$ is a degenerate CMI.

Let $K=(C, \langle Q_i, 1\le i\le k\rangle)$, where $k \ge 2$. By Proposition~\ref{mainformula}, $K$ is valid if and only if $J(X_{Q_i},1\le i\le k|X_C)=0$. If $Q_1=\cdots=Q_k=Q$ and $J(X_{Q_i},1\le i\le k|X_C)=0$, 
then we can obtain from the latter that
\[kH(X_{Q}|X_C)-H(X_{Q}|X_C)=(k-1)H(X_{Q}|X_C)=0,\]
which implies $H(X_{Q}|X_C)=0$ since $k\ge2$, so that $K$ specifies the {\it functional dependency} ``$X_{Q}$ is a function of $X_{C}$''. From this point of view, functional dependence is a special case of conditional mutual dependence.

In the rest of the paper, unless otherwise specified, we always assume that $k\ge0$.

%By Definition \ref{CMI=def}, we can obtain the follow proposition.
%\begin{proposition}\label{empty-collection}
%	Let $K = (C, \langle Q_i, 1 \le i \le k \rangle )$, then
%	$K\sim (\cdot,\langle\ \rangle)$ if and only if $K=(\cdot,\langle\ \rangle)$.
%\end{proposition}
%\begin{proof}
%It is obvious that $K=(\cdot,\langle\ \rangle)$ implies $K\sim (\cdot,\langle\ \rangle)$.
%Conversely, if $K\sim (\cdot,\langle\ \rangle)$, by Definition \ref{CMI=def}, we know that $K$ and $(\cdot,\langle\ \rangle)$ are either both TRUE or both FALSE for every joint distribution of $X_1, X_2, \ldots, X_n$, which means that $K$ is a degenerated CMI which does not impose any constraint on the distribution of $X_1,X_2,\cdots,X_n$. Then we have $k=0$ or $k=1$. So $K=(\cdot,\langle\ \rangle)$.
%\end{proof}

\begin{define}\label{canonical-form-def}
	Let $K = (C, \langle Q_i, 1 \le i \le k \rangle )$. Then $K$ is said to be in pure form if for all $i$, $Q_i \ne \emptyset$ and $Q_i \cap C = \emptyset$.
\end{define}

\begin{define}\label{def-Pur}
	Let $K = (C, \langle Q_i, 1 \le i \le k \rangle )$. Let $K' = (C, \langle Q_i^\prime, 1 \le i \le k \rangle )$ where $Q_i^\prime = Q_i \backslash C$, with $Q_{i_j}^\prime \ne \emptyset$ for $1 \le j \le l$ and $1\le i_1\le\cdots\le i_l\le k$. Let 
	${\rm pur}(K) = (C, \langle Q_{i_j}^\prime, 1 \le j \le l \rangle )$.
\end{define}

It is evident that for any CMI $K$, ${\rm pur}(K)$ is a CMI in pure form, and is called the pure form of $K$.

%Since $K \sim {\rm cle}(K)$ and ${\rm cle}(K)$ is in canonical form, we will refer to ${\rm cle}(K)$ as the clear form of $K$. 

\begin{proposition}\label{prop-1}
	Let $K=(C, \langle Q_i, 1\le i\le k\rangle)$. Then $J(X_{Q_i},1\le i\le k|X_C)=0$ if and only if $J(X_{Q_i\backslash C},1\le i\le k|X_{C})=0$. 
\end{proposition}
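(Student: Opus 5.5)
The plan is to show that the two expressions are in fact equal as real numbers, for every joint distribution of $X_1,\ldots,X_n$:
\[
J(X_{Q_i},1\le i\le k|X_C)=J(X_{Q_i\backslash C},1\le i\le k|X_C).
\]
The equivalence stated in Proposition~\ref{prop-1} then follows at once. The single fact this rests on is the following. For any $A\subseteq\mathcal{N}_n$, the random variable $X_A$ and the pair $(X_{A\backslash C},X_{A\cap C})$ carry the same information. Moreover $X_{A\cap C}$ is a function of $X_C$, so $H(X_{A\cap C}|X_C)=0$.

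First, I will establish the identity
\[
H(X_A|X_C)=H(X_{A\backslash C}|X_C).
\]
By the chain rule (Proposition~\ref{prop:chain_rule_H|Y}),
\[
H(X_A|X_C)=H(X_{A\backslash C},X_{A\cap C}|X_C)=H(X_{A\backslash C}|X_C)+H(X_{A\cap C}|X_C,X_{A\backslash C}).
\]
The last term is zero. One way to see this is that conditioning does not increase entropy, so it is at most $H(X_{A\cap C}|X_C)=0$. Another way is to write it directly as $H(X_{C\cup A})-H(X_{C\cup(A\backslash C)})$, which vanishes because the two index sets $C\cup A$ and $C\cup(A\backslash C)$ coincide. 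The second viewpoint is the cleanest: every conditional entropy given $X_C$ depends only on the union of its index set with $C$.

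Second, I will apply this identity term by term in Definition~\ref{mainformula}. Each summand satisfies $H(X_{Q_i}|X_C)=H(X_{Q_i\backslash C}|X_C)$. For the joint term, take $A=\bigcup_i Q_i$, noting that $(X_{Q_i},1\le i\le k)$ is the collection $X_{\bigcup_i Q_i}$ and that $\bigcup_i(Q_i\backslash C)=(\bigcup_i Q_i)\backslash C$. This gives
\[
H(X_{Q_i},1\le i\le k|X_C)=H(X_{Q_i\backslash C},1\le i\le k|X_C).
\]
Subtracting shows that the two $J$'s are equal, which completes the argument. All quantities involved are finite by the standing assumption that $H(X_i)<\infty$ for every $i$.

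The only delicate point is bookkeeping rather than substance. Some of the sets $Q_i\backslash C$ may be empty, and the argument needs to handle that case consistently. Under the convention $X_\emptyset=\mbox{constant}$, such a summand is $H(X_\emptyset|X_C)=0$, which matches $H(X_{Q_i}|X_C)=0$ when $Q_i\subseteq C$. The identity therefore holds uniformly, with no case split, including the degenerate cases $k=0$ and $k=1$.
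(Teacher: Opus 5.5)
Your proposal is correct and takes essentially the same route as the paper: both show that the two $J$'s are equal as real numbers, by removing from each $Q_i$ (and from the joint term) the indices in $Q_i\cap C$, which contribute no entropy once $X_C$ is conditioned on. Your version packages this as the single identity $H(X_A|X_C)=H(X_{A\backslash C}|X_C)$ applied term by term, which is a slightly cleaner form of the paper's computation with $R_i=C\cap Q_i$.
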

\begin{proof}
%	Let $A$ and $B$ be two index sets. We first consider
%	\begin{equation}\label{eqn-1}
%		\begin{array}{ll}
%			H(X_A,X_B|X_C,X_B) = H(X_A|X_C,X_B) + H(X_B|X_A,X_C,X_B) = H(X_A|X_C,X_B) + 0 = H(X_A|X_C,X_B).
%	\end{array}\end{equation}
	Let $R_i=C\cap Q_i$ for $1\le i\le k$, so that $R_i\subseteq C$ and $R_i\subseteq Q_i$ for all $i$. Then
	\begin{equation*}\begin{array}{ll}
			J(X_{Q_i},1\le i\le k|X_C)\!\!\!
			&=\left[\sum\limits_{i=1}^{k}H(X_{Q_i}|X_C)\right]-H(X_{Q_i},1\le i\le k|X_C)\\
			&=\left[\sum\limits_{i=1}^{k}H(X_{Q_i\backslash R_i},X_{R_i}|X_{C\backslash R_i},X_{R_i})\right]
			-H(X_{Q_i\backslash R_i},X_{R_i},1\le i\le k|X_{C\backslash R_i},X_{R_i},1\le i\le k)\\
			&=\left[\sum\limits_{i=1}^{k}H(X_{Q_i\backslash R_i}|X_{C\backslash R_i},X_{R_i})\right]
			-H(X_{Q_i\backslash R_i},1\le i\le k|X_{C\backslash R_i},X_{R_i},1\le i\le k)\\
			&=\left[\sum\limits_{i=1}^{k}H(X_{Q_i\backslash R_i}|X_{C})\right]-H(X_{Q_i\backslash R_i},1\le i\le k|X_{C})\\
			&=J(X_{Q_i\backslash R_i},1\le i\le k|X_{C}).
	\end{array}\end{equation*}
	Thus $J(X_{Q_i},1\le i\le k|X_C)=0$ if and only if $J(X_{Q_i\backslash R_i},1\le i\le k|X_{C})=0$.
\end{proof}

As an example for Proposition \ref{prop-1}, $(\{1,2,5\},\langle\{1,2\},\{1,2,3\},\{1,4\}\rangle)$ is valid if and only if $(\{1,2,5\},\langle\emptyset,\{3\},\{4\}\rangle)$ is valid, where the latter is equivalent to $(\{1,2,5\},\langle\{3\},\{4\}\rangle)$.

By Proposition \ref{prop-1}, we can obtain the follow corollary.
\begin{cor}\label{prop2}
	Let $K = (C, \langle Q_i, 1 \le i \le k \rangle )$. Then $K \sim {\rm pur}(K)$.
\end{cor}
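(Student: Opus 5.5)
The plan is to reduce the statement to Proposition~\ref{prop-1} together with the remark following Definition~\ref{degenerateDef}, which says that empty sets $Q_j$ may be discarded. No new information-theoretic computation should be needed. Write $K=(C,\langle Q_i,1\le i\le k\rangle)$, and let $K'=(C,\langle Q_i\backslash C,1\le i\le k\rangle)$ be the intermediate CMI from Definition~\ref{def-Pur}. By that definition, ${\rm pur}(K)$ is obtained from $K'$ by deleting exactly those indices $i$ with $Q_i\backslash C=\emptyset$, keeping the indices $i_1,\ldots,i_l$.

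\emph{First step: $K\sim K'$.} Fix an arbitrary joint distribution of $X_1,\ldots,X_n$. By Definition~\ref{Def-CMI-II}, $K$ is TRUE if and only if $J(X_{Q_i},1\le i\le k|X_C)=0$. By Proposition~\ref{prop-1}, this holds if and only if $J(X_{Q_i\backslash C},1\le i\le k|X_C)=0$, which is exactly the statement that $K'$ is TRUE. Since the distribution was arbitrary, $K$ and $K'$ are both TRUE or both FALSE for every distribution, so $K\sim K'$ by Definition~\ref{CMI=def}.

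\emph{Second step: $K'\sim{\rm pur}(K)$.} I would remove the empty sets one at a time, using the observation stated after Definition~\ref{degenerateDef}. If $Q'_j=\emptyset$, then $H(X_{Q'_j}|X_C)=H(X_\emptyset|X_C)=0$, and adjoining $X_\emptyset$ to the joint collection does not change the joint conditional entropy. Hence the $J$-expression with index $j$ equals the one without it, and the two CMIs are equivalent. Inducting on the number of empty $Q'_i$ and using transitivity of $\sim$ gives $K'\sim{\rm pur}(K)$. Combining with the first step by transitivity yields $K\sim{\rm pur}(K)$.

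The only point needing care is bookkeeping at the edges: all $Q_i\backslash C$ may be empty, or only one may be nonempty, so that ${\rm pur}(K)$ has $l=0$ or $l=1$. In those cases the $J$-expression for ${\rm pur}(K)$ is identically zero by Proposition~\ref{J=0b}. The deletion argument still applies verbatim, because it is an identity of $J$-values rather than an appeal to Proposition~\ref{J=0}. I therefore do not expect a genuine obstacle; the corollary is essentially immediate once Proposition~\ref{prop-1} is available.
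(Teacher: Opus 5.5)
Your proposal is correct and follows the paper's route. The paper obtains the corollary directly from Proposition~\ref{prop-1} and relies implicitly on the observation after Definition~\ref{degenerateDef} that empty $Q_j$'s can be dropped without changing $J$; your two steps, joined by transitivity of $\sim$ and with the edge cases $l=0,1$ handled explicitly, just spell out what the paper leaves implicit.
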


%\begin{remark}\label{pureformassume}
In view of the above discussion, without loss of generality, we always assume in the rest of this section that a CMI is in pure form. %if not specified, which implies that $Q_i\neq\emptyset$ and $Q_i\cap C=\emptyset$ for $1\le i\le k$, by Definition \ref{canonical-form-def}.
%\end{remark}

\begin{define}
Let $K=(C, \langle Q_i, 1\le i\le k\rangle)$. When $k\ge2$, an index $q$ is called a repeated index if and only if $q\in \cap_{j=1}^{m}Q_{i_j}$, where $2\le m\le k$ and $1\le i_1\le\cdots\le i_m\le k$. 
\end{define}

\begin{define}
	Let $\mathbb{I}_{K}$ be the set of all repeated indices of $K$ if $k\ge2$, and let $\mathbb{I}_{K}=\emptyset$ if $k=0$ or $k=1$.
\end{define}

\begin{lemma}\label{lemma-1}
	 If $K=(C, \langle Q_i, 1\le i\le k\rangle)$ with $k\ge2$ is valid, then $H(X_{q}|X_C)=0$ for any $q\in\mathbb{I}_K$.
\end{lemma}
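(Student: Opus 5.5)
Let $q\in\mathbb{I}_K$, so that $q\in Q_a\cap Q_b$ for two distinct indices $a\neq b$ in $\{1,\ldots,k\}$. (The definition of a repeated index requires $q$ to lie in at least two of the $Q_i$'s, counted as distinct members of the collection, so that two copies of the same set are also allowed.) The plan has two steps. First, pass from the mutual independence of all $k$ groups to the pairwise conditional independence $X_{Q_a}\perp X_{Q_b}\,|\,X_C$. Second, use the shared variable $X_q$ to conclude that $X_q$ is a function of $X_C$.

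For the first step, I use the identity $J=0$ together with the chain rule. By Proposition~\ref{prop:chain_rule_H|Y}, write
\[
H(X_{Q_i},1\le i\le k|X_C)=\sum_{i=1}^k H(X_{Q_i}|X_C,X_{Q_j},1\le j\le i-1),
\]
ordering the collection so that $Q_a$ comes first and $Q_b$ second. Then $J(X_{Q_i},1\le i\le k|X_C)$ becomes a sum of terms of the form
\[
H(X_{Q_i}|X_C)-H(X_{Q_i}|X_C,X_{Q_j},j<i).
\]
Each of these terms is a conditional mutual information and is therefore nonnegative. Since the sum vanishes, every term vanishes. In particular, the term for $i=2$ gives $I(X_{Q_a};X_{Q_b}|X_C)=0$.

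For the second step, I apply the Corollary to the chain rule for conditional mutual information, which lets me drop variables from each side:
\[
0=I(X_{Q_a};X_{Q_b}|X_C)\ge I(X_q;X_{Q_b}|X_C)\ge I(X_q;X_q|X_C)=H(X_q|X_C)\ge 0.
\]
Here the second inequality uses the symmetry of mutual information, and the final equality uses the fact that $I(X;X|Z)=H(X|Z)$, as noted in the preliminaries. Hence $H(X_q|X_C)=0$.

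Neither step is a real obstacle. The only point that needs care is that the first step genuinely needs $k\ge2$ and two distinct positions $a\neq b$ in the collection, and the definition of a repeated index guarantees both. I also note that $q\notin C$ because $K$ is in pure form, but the argument above does not use this fact.
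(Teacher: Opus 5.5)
Your proof is correct and follows essentially the same route as the paper's: take two distinct members $Q_{i_1},Q_{i_2}$ containing $q$, use validity of $K$ to get $I(X_{Q_{i_1}};X_{Q_{i_2}}|X_C)=0$, and then extract $H(X_q|X_C)$ from that quantity. The differences are only cosmetic. You explicitly justify the pairwise independence through the chain-rule expansion of $J$, which the paper simply asserts. You also bound $H(X_q|X_C)$ by two applications of monotonicity, whereas the paper uses the exact decomposition $I(X_{Q_{i_1}};X_{Q_{i_2}}|X_C)=H(X_q|X_C)+I(X_{Q_{i_1}\backslash\{q\}};X_{Q_{i_2}}|X_C,X_q)$.
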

\begin{proof}
%Without loss of generality, we can assume that $H(X_q|X_C)=a>0$ and $X_{p}={\rm CONSTANT}$ for $p\in\cup_{i=1}^{k}(Q_i\backslash\{q\})$. 
%Since $q\in \mathbb{I}_K$, then we let $q\in\cap_{j=1}^{m}Q_{i_j}$, for $2\le m\le k$ and $1\le i_j\le k$.
%Then we have
%$$
%H(X_{Q_i},1\le i\le k|X_C)=H(X_q,X_{Q_i\backslash\{q\}},1\le i\le k|X_C)=H(X_q)=a.
%$$ 
%and 
%$$
%\sum\limits_{i=1}^{k}H(X_{Q_i}|X_C)=\sum\limits_{j=1}^{m}H(X_{q},X_{Q_{i_j}\backslash\{q\}}|X_C)
%=\sum\limits_{j=1}^{m}H(X_{q}|X_C)=ma.
%$$
%Since $ma\ne a$, the collection $K$ is not valid, which constricts the condition.
%
%Thus the Lemma is proved.
%\end{proof}
%\textbf{Another proof of \ref{lemma-1}.} 

Consider any repetitive index $q\in \cap_{j=1}^{m}Q_{i_j}$, where $m\ge 2$. 
Then $q\in Q_{i_1}\cap Q_{i_2}$.
If $K$ is valid, then $X_{Q_{i_1}}$ and $X_{Q_{i_2}}$ are independent conditioning on $X_C$, which implies $I(X_{Q_{i_1}};X_{Q_{i_2}}|X_C)=0$. Now consider
\begin{equation}\begin{array}{ll}
	I(X_{Q_{i_1}};X_{Q_{i_2}}|X_C)&=I(X_q;X_{Q_{i_2}}|X_C)+I(X_{Q_{i_1}\backslash\{q\}};X_{Q_{i_2}}|X_C,X_q)\\
	&=I(X_q;X_q|X_C)+I(X_q;X_{Q_{i_2}\backslash\{q\}}|X_C,X_q)+I(X_{Q_{i_1}\backslash\{q\}};X_{Q_{i_2}}|X_C,X_q)\\
	&=H(X_q|X_C)+I(X_{Q_{i_1}\backslash\{q\}};X_{Q_{i_2}}|X_C,X_q).
\end{array}\end{equation}
Since $H(X_q|X_C)\ge0$ and $I(X_{Q_{i_1}\backslash\{q\}};X_{Q_{i_2}}|X_C,X_q)\ge0$, $I(X_{Q_{i_1}};X_{Q_{i_2}}|X_C)=0$ implies $H(X_q|X_C)=0$.
The lemma is proved.
\end{proof}

\begin{define}\label{def-can}
	Let $K=(C, \langle Q_i, 1\le i\le k\rangle)$, $\mathbb{I}_{K}$ be the set of all repeated indices of $K$,
	and $P_j=Q_{i_j}\backslash \mathbb{I}_K$, with $1\le j\le t$ and $1\le i_1\le\cdots\le i_t\le k$, be the nonempty sets of  $Q_{i}\backslash \mathbb{I}_K$'s.
	Let the canonical form of $K$ be 
	%${\rm can}(K)= \langle\ \rangle$ if $k=1$.
	%	When $k\ge 2$, let the canonical form of $K$ be
	%	 ${\rm can}(K)= (C, \langle \mathbb{I}_K,\mathbb{I}_K,Q_i\backslash \mathbb{I}_K, 1\le i\le k\rangle)$ if $\mathbb{I}_K\neq\emptyset$, 
	%	 and
	%	${\rm can}(K)= (C, \langle Q_i, 1\le i\le k\rangle)$ if $\mathbb{I}_K=\emptyset$.
	%
	\begin{equation}\label{eq-def-can}
		{\rm can}(K)= \left\{
		\begin{array}{ll}
			(\cdot,\langle\ \rangle)\  &{\rm if} \ \ k=0,1  \\
			(C, \langle \mathbb{I}_K,\mathbb{I}_K\rangle) \ &{\rm if} \ \ k\ge2,\ \mathbb{I}_K\neq\emptyset,\  {\rm and}\ t=0,1\\
				(C, \langle P_j, 1\le j\le t\rangle) \ &{\rm if} \ \ k\ge2,\ \mathbb{I}_K=\emptyset \\
			(C, \langle \mathbb{I}_K,\mathbb{I}_K,P_j, 1\le j\le t\rangle) \ &{\rm if} \ \ k\ge2,\  \mathbb{I}_K\neq\emptyset,\ {\rm and}\ t\ge2.  
	\end{array}
		\right.
	\end{equation}
	
\end{define}

In Definition \ref{def-can}, $K$ is assumed to be in pure form as mentioned.
%in Definition \ref{def-can}, by Definition \ref{canonical-form-def}, which implies that $Q_i\neq\emptyset$ and $Q_i\cap C=\emptyset$ for $1\le i\le k$.
%
%In $K$, 
If $k\ge2$ and $\mathbb{I}_K=\emptyset$, then $P_i=Q_i$ for $1\le i\le k$, $t = k$, and $P_i,\ 1\le i\le k$ and $C$ are disjoint; if $k\ge2$, $\mathbb{I}_K\neq\emptyset$, and $t\ge2$, then $C$, $\mathbb{I}_K$, and $P_j=Q_{i_j}\backslash\mathbb{I}_K$, $1\le j\le t$, are disjoint. 
If $K=(C,\langle \mathbb{I}_K,\mathbb{I}_K,B\rangle)$, where $B\neq \emptyset$, then
${\rm can}(K)=(C,\langle \mathbb{I}_K,\mathbb{I}_K\rangle)$.

\begin{remark}\label{remark-1}
	Let $K = (C, \langle Q_i, 1 \le i \le k \rangle )$ and ${\rm can}(K)=(C, \langle \mathbb{I}_K, \mathbb{I}_K, P_i, 1\le i\le t\rangle)$.
	By Definition~\ref{def-can}, $P_i, 1\le i \le t$, $\mathbb{I}_K$, and $C$ are all disjoint.
\end{remark}

The following basic property of the canonical form can be readily verified from Definition \ref{def-can}.

\begin{proposition}\label{PropIII.5}
	${\rm can}({\rm can}(K))=K$.
\end{proposition}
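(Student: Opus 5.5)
The plan is to read the statement as a property of CMIs that are already in canonical form, that is, $K={\rm can}(K_0)$ for some pure-form $K_0$. For such $K$ the claim ${\rm can}({\rm can}(K))=K$ is equivalent to ${\rm can}(K)=K$, and I will prove this fixed-point property. This reading is forced. For arbitrary $K$ the identity fails: with $K=(C,\langle\{1\},\{1\},\{2\}\rangle)$ we get ${\rm can}(K)=(C,\langle\{1\},\{1\}\rangle)$, which is not $K$. So the content of the proposition is that applying ${\rm can}$ to a canonical form returns it unchanged. I will therefore show that for every pure-form $K_0$, writing $K={\rm can}(K_0)$, we have ${\rm can}(K)=K$. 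Since $K={\rm can}(K_0)$, this gives ${\rm can}({\rm can}(K_0))={\rm can}(K_0)$, which is ${\rm can}({\rm can}(K))=K$ for $K$ in canonical form.

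First I check that ${\rm can}(K_0)$ is again in pure form, so that Definition~\ref{def-can} applies to it. Every block of ${\rm can}(K_0)$ is nonempty: $\mathbb{I}_{K_0}$ appears only when it is nonempty, and the $P_j$ are nonempty by construction. Every block is disjoint from $C$, since the blocks are subsets of the $Q_i$, which are disjoint from $C$. Remark~\ref{remark-1} records these disjointness facts.

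Next I follow the four cases of (\ref{eq-def-can}) for $K_0$.

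\emph{Case $k=0,1$.} Here ${\rm can}(K_0)=(\cdot,\langle\ \rangle)$, which has $k=0$, so applying ${\rm can}$ again returns $(\cdot,\langle\ \rangle)$.

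\emph{Case $k\ge2$, $\mathbb{I}_{K_0}=\emptyset$.} Here $K=(C,\langle Q_i\rangle)$ with pairwise disjoint blocks, so $\mathbb{I}_K=\emptyset$. The $P$-sets of $K$ are the $Q_i$ themselves, and ${\rm can}(K)=K$.

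\emph{Case $\mathbb{I}_{K_0}\neq\emptyset$, $t\le1$.} Here $K=(C,\langle I,I\rangle)$ with $I=\mathbb{I}_{K_0}$. Every index of $I$ lies in both blocks, so $\mathbb{I}_K=I$. All $Q_i\backslash\mathbb{I}_K$ are empty, so $t=0$ and ${\rm can}(K)=(C,\langle I,I\rangle)=K$.

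\emph{Case $\mathbb{I}_{K_0}\neq\emptyset$, $t\ge2$.} Here $K=(C,\langle I,I,P_1,\ldots,P_t\rangle)$, and by Remark~\ref{remark-1} the sets $I,P_1,\ldots,P_t$ are pairwise disjoint. An index lies in two blocks of $K$ exactly when it lies in $I$, so $\mathbb{I}_K=I$. Removing $I$ from each block gives $\emptyset,\emptyset,P_1,\ldots,P_t$. The nonempty ones are exactly the $t\ge2$ sets $P_j$, so ${\rm can}(K)=(C,\langle I,I,P_j,1\le j\le t\rangle)=K$.

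The collections are compared as multisets, as in Definition~\ref{collection=}, so the order of the blocks is immaterial.

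The step needing the most care is the last case: computing $\mathbb{I}_K$ correctly. We need $\mathbb{I}_K=I$ exactly, and no new repeated indices created among the $P_j$. This rests entirely on the pairwise disjointness in Remark~\ref{remark-1}, which holds because each $P_j=Q_{i_j}\backslash\mathbb{I}_{K_0}$ removes every index shared by two blocks of $K_0$.
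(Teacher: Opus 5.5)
Your reading is right: as literally stated the identity fails, and your example $(C,\langle\{1\},\{1\},\{2\}\rangle)$ with $C\cap\{1,2\}=\emptyset$ is a valid counterexample, as the paper's own remark right after Definition~\ref{def-can} already shows; the intended content is idempotence, ${\rm can}({\rm can}(K))={\rm can}(K)$. The paper offers no argument beyond saying the property is readily verified from Definition~\ref{def-can}, and your four-case check is exactly that verification and is correct, with the disjointness in Remark~\ref{remark-1} guaranteeing $\mathbb{I}_{{\rm can}(K)}=\mathbb{I}_K$.
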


For the convenience of discussion, by Definition \ref{def-can}, in the rest of the paper, we will regard $(C, \langle \mathbb{I}_K,\mathbb{I}_K,P_j, 1\le j\le t\rangle)$ with $t\neq 1$ as the general form of ${\rm can}(K)$ and adopt the following denotation:
	\begin{equation}\label{varitionofcan}
	(C, \langle \mathbb{I}_K,\mathbb{I}_K,P_j, 1\le j\le t\rangle)\triangleq \left\{
	\begin{array}{ll}
		(\cdot,\langle\ \rangle)\  &{\rm if} \ \ \mathbb{I}_K=\emptyset \ \ {\rm and}\ \ t=0 \\
		(C, \langle P_j, 1\le j\le t\rangle) \ &{\rm if} \ \ \mathbb{I}_K=\emptyset\ \ {\rm and}\ \ t\ge2\\
		(C, \langle \mathbb{I}_K,\mathbb{I}_K\rangle) \ &{\rm if} \ \ \mathbb{I}_K\neq\emptyset,\  {\rm and}\ \ t=0.\\
	%	(C, \langle \mathbb{I}_K,\mathbb{I}_K,P_j, 1\le j\le t\rangle) \ &{\rm if} \ \ \mathbb{I}_K\neq\emptyset,\ {\rm and}\ t\ge2.  
	\end{array}
	\right.
\end{equation}

%\noindent Note that in the rest of the paper, we will use $(C, \langle \mathbb{I}_K,\mathbb{I}_K,P_j, 1\le j\le t\rangle)$ to represent the canonical form of $K$. The definition of ${\rm can}(K)$ in specific cases can be found in \eqref{varitionofcan}.

% {\color{red}
% \begin{remark}\label{remark-2}
% 	Let ${\rm can}(K)=(C, \langle \mathbb{I}_K,\mathbb{I}_K,P_j, 1\le j\le t\rangle)$. Note that from  \eqref{eq-def-can}, if $\mathbb{I}_K\neq\emptyset$ and $t\ge 1$, then $t\ge 2$. In other words,
% if $\mathbb{I}_K\neq\emptyset$, then either $t=0$ (in this case ${\rm can}(K)=(C, \langle \mathbb{I}_K,\mathbb{I}_K\rangle)$, i.e., the second case in \eqref{eq-def-can}) or $t\ge 2$.
% \end{remark}}
% {\color{blue}When I read "Note that..." again, I am confused till I read the later sentence "In other words,...". So should we rewrite it by a clearer way?}

\begin{lemma}\label{cor-1}
	Let $K=(C,\langle Q_i, 1\le i\le k\rangle)$.
	If $K$ or ${\rm can}(K)$ is valid, then $H(X_{\mathbb{I}_K}|X_C)=0$.
\end{lemma}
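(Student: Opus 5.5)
The argument splits according to which of the two CMIs is assumed valid. If $\mathbb{I}_K=\emptyset$ there is nothing to prove, since $H(X_\emptyset|X_C)=0$ by convention. So assume $\mathbb{I}_K\neq\emptyset$, which forces $k\ge2$ and, as throughout this section, $K$ in pure form.

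\emph{Case 1: $K$ is valid.} By Lemma~\ref{lemma-1}, $H(X_q|X_C)=0$ for every $q\in\mathbb{I}_K$. Write $\mathbb{I}_K=\{q_1,\ldots,q_r\}$ and apply the chain rule (Proposition~\ref{prop:chain_rule_H|Y}):
\[
H(X_{\mathbb{I}_K}|X_C)=\sum_{s=1}^{r}H(X_{q_s}|X_C,X_{q_1},\ldots,X_{q_{s-1}})\le\sum_{s=1}^{r}H(X_{q_s}|X_C)=0.
\]
The inequality holds because conditioning does not increase entropy. Nonnegativity of entropy then gives $H(X_{\mathbb{I}_K}|X_C)=0$.

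\emph{Case 2: ${\rm can}(K)$ is valid.} Since $\mathbb{I}_K\neq\emptyset$, Definition~\ref{def-can} gives ${\rm can}(K)=(C,\langle\mathbb{I}_K,\mathbb{I}_K,P_j,1\le j\le t\rangle)$ with $t=0$ or $t\ge2$. In either case the collection contains $\mathbb{I}_K$ at least twice and has at least two members, so validity means $J(\cdot|X_C)=0$ for this collection. By Proposition~\ref{J=0}, the members are mutually independent given $X_C$. Mutual independence implies pairwise independence of the two copies, so $I(X_{\mathbb{I}_K};X_{\mathbb{I}_K}|X_C)=0$, which is exactly $H(X_{\mathbb{I}_K}|X_C)=0$.

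To make the step from mutual to pairwise independence airtight, I would use the independence bound directly. Write $A=\mathbb{I}_K$ and $Z=(X_{P_j},1\le j\le t)$. The condition $J=0$ reads
\[
2H(X_A|X_C)+\sum_j H(X_{P_j}|X_C)=H(X_A,Z|X_C).
\]
The right-hand side satisfies
\[
H(X_A,Z|X_C)\le H(X_A|X_C)+H(Z|X_C)\le H(X_A|X_C)+\sum_j H(X_{P_j}|X_C),
\]
by subadditivity and then the independence bound (Proposition~\ref{thm:indep_bound}). Combining the two displays yields $H(X_A|X_C)\le0$, hence $H(X_A|X_C)=0$.

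Neither case is a real obstacle. The only points needing care are the bookkeeping of the degenerate forms listed in \eqref{varitionofcan} and confirming that the $t=0$ form $(C,\langle\mathbb{I}_K,\mathbb{I}_K\rangle)$ is covered by the same computation, which it is with $Z$ empty.
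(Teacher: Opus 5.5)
Your proof is correct and takes the paper's approach. Case 1 is exactly the paper's argument, namely Lemma~\ref{lemma-1} plus subadditivity of conditional entropy. For ${\rm can}(K)$, the paper notes that $\mathbb{I}_K$ is also the repeated-index set of ${\rm can}(K)$ and reruns Case 1, whereas you read off $H(X_{\mathbb{I}_K}|X_C)=0$ directly from the doubled block $\langle\mathbb{I}_K,\mathbb{I}_K\rangle$ via subadditivity and the independence bound; this is the same idea, a block listed twice in a valid CMI must be a function of $X_C$, applied to the whole block at once.
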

\begin{proof}
If $k=0$ or $k=1$, then $\mathbb{I}_K=\emptyset$, and $X_{\mathbb{I}_K}$ can be regard as a constant. So, $H(X_{\mathbb{I}_K}|X_C)=0$ holds obviously.
If $k\ge2$, and if $K$ is valid, since $H(X_q|X_C)=0$ for any $q\in\mathbb{I}_K$, we can obtain $H(X_{\mathbb{I}_K}|X_C)=0$ by considering
$\sum_{q\in\mathbb{I}_K}H(X_q|X_C)\ge H(X_{\mathbb{I}_K}|X_C)$.
Since $\mathbb{I}_K$ is also the repeated set of ${\rm can}(K)$, if ${\rm can}(K)$ is valid, then $H(X_{\mathbb{I}_K}|X_C)=0$.
\end{proof}

We give an algorithm below to find the canonical form of a CMI $K$.
%
% {\color{blue}Notice that we set $K$ as "general" in this algorithm before, but we always assume "$K$ is in pure form" in this section. So, should we give an algorithm for "general $K$"? In this version, I am writing the algorithm with K as the pure form. Please note that there are some changes in the algorithm.}
\\[0.2cm]
\begin{breakablealgorithm}
	\caption{Canonical form}
	\begin{algorithmic}
		\REQUIRE A collection $K = (C, \langle Q_i, 1 \le i \le k \rangle ), k\ge 1$.
		\ENSURE The canonical form $K'$.
		\IF{$k=1$}
		\STATE Let $K'=(\cdot,\langle\ \rangle)$.
		\ELSE
		\STATE Let $N=\mathcal{N}_k$,
		%\STATE {\color{red}\sout{Let $Q_i=Q_i\backslash C$, where $1\le i\le k$.}} 
        %{\color{blue} Because $K$ is assumed pure form, $Q_i\cap C=\emptyset$.}
	%	\STATE Let $K = (C, \langle Q_i, 1 \le i \le k \rangle)$.
		 and let $\mathbb{I}_K$ be the set of all repeated indices of $K$.
		\IF{$\mathbb{I}_K\neq \emptyset$}
		\STATE Let $Q_i=Q_i\backslash \mathbb{I}_K$.
		\WHILE{$Q_j=\emptyset$ for some $j\in N$}
		\STATE Let $N=N\backslash\{j\}$.
		\IF{$|N|= 1$}
		\STATE Let $K'=(C,\langle\mathbb{I}_K,\mathbb{I}_K \rangle)$, and break the WHILE loop.
		\ELSE
		\STATE Let $K'=(C,\langle\mathbb{I}_K,\mathbb{I}_K,Q_m, m\in N\rangle)$.
		\ENDIF
		\ENDWHILE
		\ELSE  
	\STATE Let $K'=(C,\langle Q_i,1\le i\le k\rangle).$
	\COMMENT{According to the assumption that $K$ is in pure form, $Q_i\neq \emptyset$ for~ $1\le i\le k$.}
%		\IF{$Q_i\neq \emptyset$ for $1\le i\le k$}
%		\STATE Let $K'=(C,\langle \emptyset,\emptyset,Q_i,1\le i\le k\rangle).$
%		\ELSE
%		\WHILE{$Q_j=\emptyset$ for some $1\le j\le k$}
%		\STATE Let $N=N\backslash\{j\}$.
%		\IF{$|N|= 1$}
%		\STATE Let $K'=\langle\ \rangle$, and break the WHILE loop.
%		\ELSE
%		\STATE Let $K'=(C,\langle \emptyset,\emptyset,Q_m, m\in N\rangle)$.
%		\ENDIF
%		\ENDWHILE
%		\ENDIF
		\ENDIF
		\ENDIF
		\RETURN $K'$.
	\end{algorithmic}
\end{breakablealgorithm}
\ 
\\[0.2cm]
\ 
\begin{lemma}\label{LemIII.2}
	If $H(X_{\mathbb{I}_K|X_C})=0$, then for all $k\ge0$,
	$$H(X_{Q_i},1\le i\le k|X_C)=H(X_{(\cup_{i=1}^{k}Q_i)\backslash\mathbb{I}_K}|X_C)$$ 
	and $$H(X_{Q_i}|X_C)=H(X_{Q_i\backslash\mathbb{I}_K}|X_C).$$
\end{lemma}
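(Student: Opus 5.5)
The plan is to use one general fact: a collection $X_B$ with $H(X_B|X_C)=0$ can be added to or removed from any joint entropy conditioned on $X_C$ without changing its value. Precisely, for any index set $A$ and any $B$ with $H(X_B|X_C)=0$,
\[
H(X_A|X_C)=H(X_{A\backslash B}|X_C).
\]
To prove this, write $A=(A\backslash B)\cup(A\cap B)$ and apply the chain rule (Proposition~\ref{prop:chain_rule_H|Y}):
\[
H(X_A|X_C)=H(X_{A\backslash B}|X_C)+H(X_{A\cap B}|X_C,X_{A\backslash B}).
\]
The last term is nonnegative. Because conditioning does not increase entropy, it is bounded above by $H(X_{A\cap B}|X_C)$. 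That quantity is in turn at most $H(X_B|X_C)=0$, since $H(X_B|X_C)=H(X_{A\cap B}|X_C)+H(X_{B\backslash A}|X_C,X_{A\cap B})$ and both summands are nonnegative. So the last term vanishes, and the identity follows.

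Both claims of the lemma are instances of this identity with $B=\mathbb{I}_K$, using the hypothesis $H(X_{\mathbb{I}_K}|X_C)=0$.

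\emph{First claim.} Note that $(X_{Q_i},1\le i\le k)$ is the same random variable as $X_{\cup_{i=1}^k Q_i}$, since repeated coordinates add no information. Take $A=\cup_{i=1}^k Q_i$; the identity gives
\[
H(X_{Q_i},1\le i\le k|X_C)=H(X_{(\cup_{i=1}^{k}Q_i)\backslash\mathbb{I}_K}|X_C).
\]

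\emph{Second claim.} Take $A=Q_i$ for each $i$.

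\emph{Degenerate cases.} When $k=0$, both sides of the first claim are $H(X_\emptyset|X_C)=0$. When $k=0$ or $1$, we have $\mathbb{I}_K=\emptyset$, so both claims are trivial.

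None of these steps is a real obstacle. The only point needing care is justifying that duplicated indices across the $Q_i$ do not change the joint entropy, i.e.\ identifying $(X_{Q_i},1\le i\le k)$ with $X_{\cup Q_i}$. This is immediate from the notation $X_\alpha=(X_i,i\in\alpha)$: the tuple with repeats is a bijective function of the tuple without repeats.
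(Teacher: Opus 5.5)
Your proof is correct and takes the same route as the paper. Both split each index set into its part outside $\mathbb{I}_K$ and its part inside $\mathbb{I}_K$, and drop the latter because $H(X_{\mathbb{I}_K}|X_C)=0$. The only difference is that you spell out, via the chain rule and "conditioning does not increase entropy," why the dropped term vanishes, a step the paper leaves implicit.
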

	
\begin{proof}
If $H(X_{\mathbb{I}_K}|X_C)=0$, 
then
\begin{equation}
	\label{add3}		
	\begin{array}{ll}
H(X_{Q_i}|X_C)
=H(X_{Q_i\backslash\mathbb{I}_K},X_{Q_i\cap\mathbb{I}_K}|X_C)
=H(X_{Q_i\backslash\mathbb{I}_K}|X_C),
\end{array}\end{equation}	
and
\begin{equation}
	\label{add6}		
	\begin{array}{ll}
		H(X_{Q_i},1\le i\le k|X_C)
=H(X_{(\cup_{i=1}^{k}Q_i)\backslash\mathbb{I}_K},X_{\mathbb{I}_K}|X_C)
=H(X_{(\cup_{i=1}^{k}Q_i)\backslash\mathbb{I}_K}|X_C).
		\end{array}\end{equation}	

\end{proof}

We can readily obtain the following corollary via Lemma \ref{cor-1} and the above lemma.
\begin{cor}\label{corofLemmaIII2}
	If $K=(C, \langle Q_i, 1\le i\le k\rangle)$ is valid, then  $$H(X_{Q_i},1\le i\le k|X_C)=H(X_{(\cup_{i=1}^{k}Q_i)\backslash\mathbb{I}_K}|X_C)$$ 
	and $$H(X_{Q_i}|X_C)=H(X_{Q_i\backslash\mathbb{I}_K}|X_C).$$
\end{cor}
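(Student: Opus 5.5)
The corollary follows by chaining Lemma~\ref{cor-1} with Lemma~\ref{LemIII.2}. The plan is first to derive the hypothesis of Lemma~\ref{LemIII.2}, namely $H(X_{\mathbb{I}_K}|X_C)=0$, from the validity of $K$, and then to read off the two identities.

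\emph{Step 1.} Assume $K=(C,\langle Q_i,1\le i\le k\rangle)$ is valid. By Lemma~\ref{cor-1}, with the hypothesis ``$K$ is valid'', we get $H(X_{\mathbb{I}_K}|X_C)=0$. This step covers all $k\ge 0$: for $k=0,1$ we have $\mathbb{I}_K=\emptyset$ and $X_\emptyset$ is constant, so the conclusion is trivial. For $k\ge2$, Lemma~\ref{lemma-1} gives $H(X_q|X_C)=0$ for each repeated index $q$. Summing over $q$ and applying the independence bound (Proposition~\ref{thm:indep_bound}) yields the joint statement.

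\emph{Step 2.} Apply Lemma~\ref{LemIII.2}, whose hypothesis is exactly $H(X_{\mathbb{I}_K}|X_C)=0$. It directly gives
\[
H(X_{Q_i},1\le i\le k|X_C)=H\bigl(X_{(\cup_{i=1}^{k}Q_i)\backslash\mathbb{I}_K}\big|X_C\bigr)
\quad\text{and}\quad
H(X_{Q_i}|X_C)=H(X_{Q_i\backslash\mathbb{I}_K}|X_C).
\]
For completeness, the mechanism behind these identities is the chain rule (Proposition~\ref{prop:chain_rule_H|Y}). One splits off the part $X_{Q_i\cap\mathbb{I}_K}$, or $X_{\mathbb{I}_K}\cap\cup_i Q_i$, and its conditional entropy given $X_C$ together with the remaining variables is at most $H(X_{\mathbb{I}_K}|X_C)=0$, because conditioning does not increase entropy.

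There is no real obstacle. The only point needing care is the degenerate case $k=0,1$, and the convention $X_\emptyset=\mbox{constant}$ handles it. A second small subtlety is that Lemma~\ref{LemIII.2} uses the intersection of $\mathbb{I}_K$ with $\cup_i Q_i$ rather than all of $\mathbb{I}_K$. This is harmless, since $\mathbb{I}_K\subseteq\cup_i Q_i$ by definition of repeated indices.
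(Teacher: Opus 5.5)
Your proposal is correct and is exactly the paper's argument: the paper obtains the corollary by combining Lemma~\ref{cor-1} (validity of $K$ gives $H(X_{\mathbb{I}_K}|X_C)=0$) with Lemma~\ref{LemIII.2}. Your Step~1 simply re-derives Lemma~\ref{cor-1}, and your remark that $\mathbb{I}_K\subseteq\cup_i Q_i$ is the same fact the paper uses implicitly in its proof of Lemma~\ref{LemIII.2}.
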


\begin{theorem}\label{k-cank}
	%Let $K=(C, \langle Q_i, 1\le i\le k\rangle)$. Then
	 $K\sim {\rm can}(K)$.
\end{theorem}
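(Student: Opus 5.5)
We have to show that for every joint distribution, $K$ is valid if and only if ${\rm can}(K)$ is valid, with $K$ in pure form. The cases $k=0,1$ are immediate: both $K$ and ${\rm can}(K)=(\cdot,\langle\ \rangle)$ are degenerate by Remark~\ref{k01}. So assume $k\ge2$ and write ${\rm can}(K)$ in the general form $(C,\langle \mathbb{I}_K,\mathbb{I}_K,P_j,1\le j\le t\rangle)$ of \eqref{varitionofcan}.

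The key observation is that, under $H(X_{\mathbb{I}_K}|X_C)=0$, both $J$-expressions reduce to the same quantity. By Lemma~\ref{cor-1}, validity of either $K$ or ${\rm can}(K)$ forces $H(X_{\mathbb{I}_K}|X_C)=0$. So in each direction we may assume this condition and work under it.

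\textbf{The $J$-value of $K$.} Under this assumption, Lemma~\ref{LemIII.2} gives
\[
J(X_{Q_i},1\le i\le k|X_C)=\sum_{i=1}^k H(X_{Q_i\backslash\mathbb{I}_K}|X_C)-H(X_{(\cup_i Q_i)\backslash\mathbb{I}_K}|X_C).
\]
Terms with $Q_i\backslash\mathbb{I}_K=\emptyset$ vanish, so the sum runs only over the nonempty sets $P_1,\ldots,P_t$. Since every index outside $\mathbb{I}_K$ lies in exactly one $Q_i$, we have $(\cup_i Q_i)\backslash\mathbb{I}_K=\cup_j P_j$. Hence $J$ for $K$ equals $J(X_{P_j},1\le j\le t|X_C)$, with the conventions of Proposition~\ref{J=0b} covering $t=0,1$.

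\textbf{The $J$-value of ${\rm can}(K)$.} The same computation applies. The two copies of $\mathbb{I}_K$ each contribute $H(X_{\mathbb{I}_K}|X_C)=0$, and the joint term is $H(X_{\mathbb{I}_K},X_{P_j},j\le t|X_C)=H(X_{P_j},j\le t|X_C)$. So the $J$ for ${\rm can}(K)$ also equals $J(X_{P_j},1\le j\le t|X_C)$.

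The main subtlety is that $J$ for ${\rm can}(K)$ itself contains $2H(X_{\mathbb{I}_K}|X_C)-H(X_{\mathbb{I}_K}|X_C)=H(X_{\mathbb{I}_K}|X_C)$. This is exactly why the doubled $\mathbb{I}_K$ is included: it encodes the functional dependence. The degenerate variants of \eqref{varitionofcan} need a check.
\begin{itemize}
\item If $t=0$ with $\mathbb{I}_K\neq\emptyset$, the form is $(C,\langle\mathbb{I}_K,\mathbb{I}_K\rangle)$, whose validity is exactly $H(X_{\mathbb{I}_K}|X_C)=0$.
\item If $\mathbb{I}_K=\emptyset$, the form is literally the same collection as $K$, since $P_j=Q_j$.
\item The case $t=1$ does not occur in the canonical form when $\mathbb{I}_K\neq\emptyset$. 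There, $J(X_{P_1}|X_C)=0$ trivially, so $K$ is valid if and only if $H(X_{\mathbb{I}_K}|X_C)=0$, which matches $(C,\langle\mathbb{I}_K,\mathbb{I}_K\rangle)$.
\end{itemize}

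\textbf{Conclusion.} Combining the above: $K$ is valid $\Rightarrow$ $H(X_{\mathbb{I}_K}|X_C)=0$ and $J(X_{P_j}|X_C)=0$ $\Rightarrow$ ${\rm can}(K)$ is valid. The converse follows symmetrically, using Lemma~\ref{cor-1} for ${\rm can}(K)$, whose repeated set is again $\mathbb{I}_K$. This establishes $K\sim{\rm can}(K)$.
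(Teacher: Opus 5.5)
Your proof is correct and follows essentially the paper's route: Lemma~\ref{cor-1} gives $H(X_{\mathbb{I}_K}|X_C)=0$ from the validity of either $K$ or ${\rm can}(K)$. Lemma~\ref{LemIII.2} then makes the two $J$-expressions coincide; you pass through the common value $J(X_{P_j},1\le j\le t|X_C)$, whereas the paper equates $J$ for $K$ with $J$ for the general form of ${\rm can}(K)$ directly. Your explicit treatment of the cases $t=0,1$ with $\mathbb{I}_K\neq\emptyset$ is slightly more careful than the paper, which relies on the general-form convention there.
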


\begin{proof}
Let $K=(C, \langle Q_i, 1\le i\le k\rangle)$.
%If $k=1$, then $K=\langle\ \rangle$. By Proposition \ref{empty-collection} and \ref{empty-can}, we obtain 
%$K\sim\langle\ \rangle$ and ${\rm can}(K)=\langle\ \rangle$. Thus $K\sim {\rm can}(K)$.
%
%Next, we assume that $k\ge2$.
If $K$ or ${\rm can}(K)$ is valid, we have
	\begin{equation}\label{add8}
	H(\mathbb{I}_K|X_C)=0 
	\end{equation}
	by Lemma \ref{cor-1}.
	Then we have
	\begin{equation}\begin{array}{ll}
	J(X_{Q_i},1\le i\le k|X_C)
	&=\left[\sum\limits_{i=1}^{k}H(X_{Q_i}|X_C)\right]-H(X_{Q_i},1\le i\le k|X_C)\\
	&\overset{\eqref{add3}}{=}\left[\sum\limits_{i=1}^{k}H(X_{Q_i\backslash\mathbb{I}_K}|X_C)\right]-H(X_{\mathbb{I}_K},X_{\mathbb{I}_K},X_{(\cup_{i=1}^k Q_i)\backslash\mathbb{I}_K}|X_C)\\
	&\overset{\eqref{add8}}{=}2H(X_{\mathbb{I}_K}|X_C)+\left[\sum\limits_{i=1}^{k}H(X_{Q_i\backslash\mathbb{I}_K}|X_C)\right]-H(X_{\mathbb{I}_K},X_{\mathbb{I}_K},X_{Q_i\backslash\mathbb{I}_K},1\le i\le k|X_C)\\
	&=J(X_{\mathbb{I}_K},X_{\mathbb{I}_K},X_{Q_i\backslash\mathbb{I}_K},1\le i\le k|X_C),\\
	&=J(X_{\mathbb{I}_K},X_{\mathbb{I}_K},X_{P_i},1\le i\le t|X_C),
	\end{array}\end{equation}
%where without loss of generality assume that $Q_{i}\backslash \mathbb{I}_K\neq\emptyset, 1\le i\le t$ and $Q_{i}\backslash \mathbb{I}_K=\emptyset,\ t+1\le i\le k$, 
implying that $K$ is valid if and only if ${\rm can}(K)$ is valid.
Thus, $K\sim {\rm can}(K)$.
\end{proof}

\begin{proposition}\label{empty-can}
	Let $K$ be a CMI. Then $K=(\cdot,\langle\ \rangle)$ if and only if ${\rm can}(K)=(\cdot,\langle\ \rangle)$.
\end{proposition}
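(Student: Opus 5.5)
The plan is to read the statement in the sense fixed just before Definition~\ref{canonical-form-def}: writing $K=(\cdot,\langle\ \rangle)$ means that $K$ is degenerate. With that reading, the equivalence $K\sim{\rm can}(K)$ from Theorem~\ref{k-cank} should give the result almost immediately. I would also add a syntactic case check so that the statement holds under the literal reading as well.

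\emph{Semantic direction.} Suppose $K$ is degenerate, so it is TRUE for every joint distribution of $X_1,\ldots,X_n$. By Theorem~\ref{k-cank}, $K\sim{\rm can}(K)$. Hence ${\rm can}(K)$ is TRUE for every joint distribution as well, i.e.\ ${\rm can}(K)=(\cdot,\langle\ \rangle)$. The converse is the same argument with the roles of $K$ and ${\rm can}(K)$ exchanged, using that $\sim$ is symmetric.

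\emph{Syntactic reading.} Here $(\cdot,\langle\ \rangle)$ is taken to denote the case $k\in\{0,1\}$ of $K$, and the first case of (\ref{eq-def-can}) for ${\rm can}(K)$.
\begin{itemize}
\item If $k=0,1$, then (\ref{eq-def-can}) gives ${\rm can}(K)=(\cdot,\langle\ \rangle)$ directly.
\item Conversely, let $k\ge2$ with $K$ in pure form. If $\mathbb{I}_K\neq\emptyset$, then ${\rm can}(K)$ contains the two nonempty sets $\mathbb{I}_K,\mathbb{I}_K$. If $\mathbb{I}_K=\emptyset$, then $t=k\ge2$, and the nonempty sets $P_j=Q_j$ all appear. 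In neither case is ${\rm can}(K)$ the empty collection.
\end{itemize}

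\emph{Reconciling the two readings.} The main step is to show that these coincide: every pure $K$ with $k\ge2$ is genuinely non-degenerate. I would construct a violating distribution in each case.
\begin{itemize}
\item If $\mathbb{I}_K\neq\emptyset$, pick $q\in\mathbb{I}_K$. Let $X_q$ be a fair bit and make all other variables constant. Then $H(X_q|X_C)=1>0$, since $q\notin C$ by pureness. This contradicts Lemma~\ref{cor-1}, so $K$ is FALSE.
\item If $\mathbb{I}_K=\emptyset$, pick $a\in Q_1$ and $b\in Q_2$. These are distinct, since no index is repeated, and neither lies in $C$. Let $X_a=X_b$ be a common fair bit and make all other variables constant. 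Then $I(X_{Q_1};X_{Q_2}|X_C)=1>0$. But conditional mutual independence implies pairwise independence, so $K$ is FALSE.
\end{itemize}
Applying the same construction to ${\rm can}(K)$, which is also pure with the same $\mathbb{I}_K$, shows that it too is non-degenerate whenever it is not literally $(\cdot,\langle\ \rangle)$.

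With this in hand, the semantic and syntactic readings agree, and either argument proves the proposition.
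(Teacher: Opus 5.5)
Your semantic argument is exactly the paper's proof: Theorem~\ref{k-cank} gives $K\sim{\rm can}(K)$, and since $(\cdot,\langle\ \rangle)$ is the paper's notation for a degenerate CMI, degeneracy transfers in both directions. Your additional syntactic check and your distributions showing that a pure CMI with $k\ge2$ is non-degenerate are correct. They are not needed under the paper's convention, but they do confirm that for pure CMIs the notation $(\cdot,\langle\ \rangle)$ means the same thing under either reading.
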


\begin{proof}
By Theorem \ref{k-cank}, we have $K\sim {\rm can}(K)$, which implies that $K$ is a degenerate CMI if and only if ${\rm can}(K)$ is a degenerate CMI.
If $K=(\cdot,\langle\ \rangle)$, then $K$ is a degenerate CMI, so ${\rm can}(K)$ is a degenerate CMI, i.e., ${\rm can}(K)=(\cdot,\langle\ \rangle)$.
Conversely, if ${\rm can}(K)=(\cdot,\langle\ \rangle)$, then $ {\rm can}(K)$ is a degenerate CMI,  so $K$ is a degenerate CMI, i.e., $K=(\cdot,\langle\ \rangle)$.
\end{proof}

\begin{proposition}
Let $K=(C, \langle Q_i, 1\le i\le k\rangle)$, $\mathbb{I}_{K}$ be the set of all repeated indices of $K$, and  $P_j=Q_{i_j}\backslash \mathbb{I}_K$, with $1\le j\le t,\ 1\le i_1\le\cdots\le i_t\le k$, be the nonempty sets of  $Q_{i}\backslash \mathbb{I}_K$'s. Then $K$ is valid if and only if $(C,\langle \mathbb{I}_K,\mathbb{I}_K \rangle)$ and $(C, \langle P_i, 1\le i\le t\rangle)$ are both valid.
\end{proposition}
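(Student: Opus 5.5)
The plan is to reduce everything to Theorem~\ref{k-cank}, i.e. $K\sim{\rm can}(K)$, together with the identity
\[
J(X_{Q_i},1\le i\le k|X_C)=J(X_{\mathbb{I}_K},X_{\mathbb{I}_K},X_{P_j},1\le j\le t|X_C),
\]
which was derived in the proof of that theorem under the hypothesis $H(X_{\mathbb{I}_K}|X_C)=0$. Note first that $(C,\langle \mathbb{I}_K,\mathbb{I}_K\rangle)$ is valid if and only if $J(X_{\mathbb{I}_K},X_{\mathbb{I}_K}|X_C)=2H(X_{\mathbb{I}_K}|X_C)-H(X_{\mathbb{I}_K}|X_C)=H(X_{\mathbb{I}_K}|X_C)=0$. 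This holds also when $\mathbb{I}_K=\emptyset$, since then the CMI is trivially valid. So validity of the first CMI is exactly the functional dependence $H(X_{\mathbb{I}_K}|X_C)=0$.

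For the ``only if'' direction, assume $K$ is valid. Lemma~\ref{cor-1} gives $H(X_{\mathbb{I}_K}|X_C)=0$, so $(C,\langle\mathbb{I}_K,\mathbb{I}_K\rangle)$ is valid. Next I show that $(C,\langle P_j,1\le j\le t\rangle)$ is valid. Under $H(X_{\mathbb{I}_K}|X_C)=0$, I compute
\[
J(X_{\mathbb{I}_K},X_{\mathbb{I}_K},X_{P_j},1\le j\le t|X_C)
=2H(X_{\mathbb{I}_K}|X_C)+\sum_j H(X_{P_j}|X_C)-H(X_{\mathbb{I}_K},X_{P_j},1\le j\le t|X_C).
\]
The last entropy equals $H(X_{P_j},1\le j\le t|X_C)$ by the chain rule, because the extra term $H(X_{\mathbb{I}_K}|X_C,X_{P_j},\ldots)$ is at most $H(X_{\mathbb{I}_K}|X_C)=0$. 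Hence the whole expression equals $J(X_{P_j},1\le j\le t|X_C)$. Combining this with the displayed identity gives $J(X_{Q_i},1\le i\le k|X_C)=J(X_{P_j},1\le j\le t|X_C)$ whenever $H(X_{\mathbb{I}_K}|X_C)=0$. Since $K$ valid means the left side is $0$, the right side is $0$, i.e. $(C,\langle P_j\rangle)$ is valid. For $t=0,1$ this CMI is degenerate and nothing needs checking.

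For the ``if'' direction, assume both CMIs are valid. Validity of the first gives $H(X_{\mathbb{I}_K}|X_C)=0$, so the same identity applies. The second gives $J(X_{P_j},1\le j\le t|X_C)=0$, hence $J(X_{Q_i},1\le i\le k|X_C)=0$ and $K$ is valid.

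The main obstacle is bookkeeping in the degenerate cases rather than the computation. When $k\in\{0,1\}$, we have $\mathbb{I}_K=\emptyset$ and $t\le 1$, so all three CMIs are degenerate and the claim is immediate. When $k\ge2$, I would check that the identity from the proof of Theorem~\ref{k-cank} was derived only from $H(X_{\mathbb{I}_K}|X_C)=0$ via Lemma~\ref{LemIII.2}, not from validity of $K$ itself, so that it can legitimately be used in the ``if'' direction. I would also check that dropping the empty sets $Q_i\backslash\mathbb{I}_K$ leaves $J$ unchanged, because those sets contribute zero terms.
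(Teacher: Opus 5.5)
Your proposal is correct and follows essentially the same proof as the paper. Both directions rest on the identity $J(X_{Q_i},1\le i\le k|X_C)=J(X_{P_j},1\le j\le t|X_C)$, which holds whenever $H(X_{\mathbb{I}_K}|X_C)=0$, and Lemma~\ref{cor-1} supplies that functional dependence in the ``only if'' direction. The only difference is that the paper derives this identity directly from Lemma~\ref{LemIII.2}, whereas you obtain it from the identity in the proof of Theorem~\ref{k-cank} plus one extra chain-rule step, which is equally valid.
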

\begin{proof}

Assume that $(C,\langle \mathbb{I}_K,\mathbb{I}_K \rangle)$ and $(C, \langle P_i, 1\le i\le t\rangle)$ are both valid.  From the former we have $H(X_{\mathbb{I}_K}|X_C)=I(X_{\mathbb{I}_K};X_{\mathbb{I}_K}|X_C)=0$, and from the latter we have $J(X_{P_i},1\le i\le t|X_C)=0$ by 
Definition \ref{Def-CMI-II}.
%Proposition \ref{mainformula}. 
Then
\begin{align}
		J(X_{Q_i},1\le i\le k|X_C)
		&=\left[\sum\limits_{i=1}^{k}H(X_{Q_i}|X_C)\right]-H(X_{Q_i},1\le i\le k|X_C) \nonumber\\
		&\overset{}{=}\left[\sum\limits_{i=1}^{k}H(X_{Q_i\backslash\mathbb{I}_K}|X_C)\right]-H(X_{(\cup_{i=1}^{k}Q_i)\backslash\mathbb{I}_K}|X_C) \nonumber\\
		&=\left[\sum\limits_{i=1}^{k}H(X_{Q_i\backslash\mathbb{I}_K}|X_C)\right]-H(X_{Q_i\backslash\mathbb{I}_K},1\le i\le k|X_C) \nonumber\\
		&=\left[\sum\limits_{i=1}^{t}H(X_{P_i}|X_C)\right]-H(X_{P_i},1\le i\le t|X_C) \nonumber\\
		&=~J(X_{P_i},1\le i\le t|X_C) \label{J7}\\
		&=~0, \nonumber
\end{align}
where the second equality above follows from $H(X_{\mathbb{I}_K}|X_C)=0$ and Lemma \ref{LemIII.2}.
Thus $K$ is valid by
Definition \ref{Def-CMI-II}.

%{\color{blue}Because "$k\ge2$" is set in Proposition III.1. But we need "$k\ge0$" here.}

To prove the converse, assume that $K$ is valid, i.e., $J(X_{Q_i},1\le i\le k|X_C)=0$. By Lemma \ref{cor-1}, we have $H(X_{\mathbb{I}_K}|X_C)=0$, or equivalently, $(C,\langle \mathbb{I}_K,\mathbb{I}_K \rangle)$ is valid. 
Since the equality \eqref{J7} holds when $H(X_{\mathbb{I}_K}|X_C)=0$, we have $J(X_{P_i},1\le i\le t|X_C)=J(X_{Q_i},1\le i\le k|X_C)=0$. Thus,
 $(C, \langle P_i, 1\le i\le t\rangle)$ is also valid.
 
The proof is accomplished.
\end{proof}

\begin{lemma}\label{set-lem}
	Let $A_i, 1\le i\le k$ 
	be $k$ nonempty disjoint sets of indices
	and $B_j, 1\le j\le l$ be $l$ nonempty disjoint sets of indices, where $k\ge 2$ or $l\ge 2$. Let $A=\cup_{i=1}^{k}A_i$, $B=\cup_{j=1}^{l}B_j$. If $A=B$ and $\langle A_i,1\le i\le k\rangle\neq\langle B_j,1\le j\le l\rangle$, then there exists a pair $(i,j)$ such that $A_i\cap B_{j}\neq \emptyset$, and either $A_i\backslash B_j\neq\emptyset$ or $B_j\backslash A_i\neq\emptyset$. 
\end{lemma}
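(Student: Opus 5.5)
We argue by contradiction. Suppose that for every pair $(i,j)$ with $A_i\cap B_j\neq\emptyset$ we have $A_i\backslash B_j=\emptyset$ and $B_j\backslash A_i=\emptyset$, i.e., $A_i=B_j$. We will show that this forces $\langle A_i,1\le i\le k\rangle=\langle B_j,1\le j\le l\rangle$, which contradicts the hypothesis.

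First, for each $i$ there is some $j$ with $A_i\cap B_j\neq\emptyset$. Indeed, $A_i$ is nonempty, so we may pick any $a\in A_i\subseteq A=B$; then $a$ lies in some $B_j$. By the contrary assumption, this gives $A_i=B_j$. Moreover, this $j$ is unique: if also $A_i=B_{j'}$ with $j'\neq j$, then $B_j=B_{j'}$ would be nonempty and would meet itself, contradicting the disjointness of the $B$'s. So we obtain a well-defined map $\sigma:\mathcal{N}_k\to\mathcal{N}_l$ with $A_i=B_{\sigma(i)}$.

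Next we check that $\sigma$ is a bijection. It is injective, since $\sigma(i)=\sigma(i')$ gives $A_i=A_{i'}\neq\emptyset$, and the disjointness of the $A$'s then forces $i=i'$. For surjectivity, take any $j$ and pick $b\in B_j$. Since $b\in B=A$, we have $b\in A_i$ for some $i$, so $A_i\cap B_j\neq\emptyset$, hence $A_i=B_j$ and $\sigma(i)=j$. Thus $\sigma$ is a bijection, $k=l$, and $\sigma^{-1}$ is a permutation matching the two collections as in Definition~\ref{collection=}. This contradicts $\langle A_i\rangle\neq\langle B_j\rangle$.

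The only delicate point is keeping track of where nonemptiness and disjointness are used: nonemptiness guarantees that each set meets some set on the other side, and disjointness guarantees that the matching is unique in both directions. The hypothesis $k\ge2$ or $l\ge2$ is not actually needed for this argument, because the one-set cases are covered by the same reasoning.
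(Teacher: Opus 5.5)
Your proof is correct and follows the paper's argument. You assume that every intersecting pair coincides, match each $A_i$ to a unique $B_{\sigma(i)}$, get injectivity from the disjointness of the $A_i$'s, and then rule out $k<l$ by checking surjectivity directly, where the paper instead observes $\bigcup_i B_{\alpha(i)}\subsetneq B$; that is the same idea. Your negation of the conclusion, restricted to pairs with $A_i\cap B_j\neq\emptyset$, is actually stated more carefully than the paper's, and you are right that the hypothesis ``$k\ge 2$ or $l\ge 2$'' is never used.
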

\begin{proof}
Assume that $A=B$ and $\langle A_i,1\le i\le k\rangle\neq\langle B_j,1\le j\le l\rangle$. We observe that under these assumptions, for any $i\in\mathcal{N}_k$, there exists $\alpha(i)\in\mathcal{N}_l$ such that $A_i\cap B_{\alpha(i)}\neq\emptyset$.
We will prove the lemma by contradiction. Assume the contrary, i.e., for any pair $(i,j)$, we have $A_i\backslash B_j=\emptyset$ and $B_j\backslash A_i=\emptyset$. Then $A_i\backslash B_{\alpha(i)}=\emptyset$ and $B_{\alpha(i)}\backslash A_i=\emptyset$, so we obtain $A_i=B_{\alpha(i)}$ for $1\le i\le k$.
Since $A_i$'s are disjoint, we have $k\le l$ and $B_{\alpha(i)}$'s are disjoint, which implies that for all $1\le i,i'\le k$, if $i\neq i'$, then
 $A_i\neq A_{i'}$ and $B_{\alpha(i)}\neq B_{\alpha(i')}$.
  If $k=l$, then $\langle A_i,1\le i\le k\rangle=\langle B_j,1\le j\le l\rangle$, a contradiction to the assumption that $\langle A_i,1\le i\le k\rangle\neq\langle B_j,1\le j\le l\rangle$. If $k<l$, then
  \[A=\bigcup\limits_{i=1}^{k}B_{\alpha(i)}\ \subsetneqq\  \bigcup\limits_{j=1}^{l}B_j=B\]
   because $B_j$'s are nonempty, 
a contradiction to the assumption that $A=B$. The proof is accomplished.
%
%Since $A=B$, for an arbitrary $i\in\mathcal{N}_k$, there exists one $j$ such that $A_i\cap B_j\neq\emptyset$.
%For this pair $(i,j)$, if both $A_i\backslash B_j\neq\emptyset$ and $B_j\backslash A_i\neq\emptyset$ do not hold, then $A_i=B_j$. So for any $i$, we have an $j$ such that $A_i=B_j$. Since $B_j$'s are disjoint, we have $k\le l$.
%Without loss of generality, we can assume $A_i=B_i$, $1\le i\le k$.
%Then we claim that $k=l$. Otherwise, without loss of generality, we let $k<l$.
%Then we select $m\in B_{k+1}$. Since $B_j$'s are disjoint, $m\notin\bigcup\limits_{j=1}^{k}B_j$, implying $m\notin A$, which contradicts with $A=B$. Now we have $k=l$, which implies $\langle A_i,1\le i\le k\rangle=\langle B_j,1\le j\le l\rangle$.
%Thus, either $A_i\backslash B_j\neq\emptyset$ or $B_j\backslash A_i\neq\emptyset$ holds.
%The proof of this lemma is accomplished.
\end{proof}

\begin{lemma}\label{k=or=l=1}
%Let $K = (C, \langle Q_i, 1 \le i \le k \rangle )$ and $K' = (C', \langle Q'_j, 1 \le j \le l \rangle )$ be two CMIs in pure form. 
If $K=(\cdot,\langle\ \rangle)$ or $K'=(\cdot,\langle\ \rangle)$, then $K \sim K'$ if and only if ${\rm can}(K) = {\rm can}(K')$.
\end{lemma}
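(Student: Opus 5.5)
By symmetry we may assume $K=(\cdot,\langle\ \rangle)$, so that $K$ is degenerate. By Proposition~\ref{empty-can}, ${\rm can}(K)=(\cdot,\langle\ \rangle)$. The statement then reduces to showing that $K'\sim(\cdot,\langle\ \rangle)$ if and only if ${\rm can}(K')=(\cdot,\langle\ \rangle)$.

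The ``if'' direction is immediate. If ${\rm can}(K')=(\cdot,\langle\ \rangle)$, then Proposition~\ref{empty-can} gives $K'=(\cdot,\langle\ \rangle)$, i.e., $K'$ is degenerate. Alternatively, Theorem~\ref{k-cank} gives $K'\sim{\rm can}(K')$, which is degenerate. Either way $K\sim K'$.

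For the ``only if'' direction, suppose $K\sim K'$. Then $K'$ is TRUE for every joint distribution, so $K'$ is degenerate, which by the paper's convention means $K'=(\cdot,\langle\ \rangle)$. Proposition~\ref{empty-can} then gives ${\rm can}(K')=(\cdot,\langle\ \rangle)={\rm can}(K)$.

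The main obstacle is that this argument leans on the notational convention that every degenerate CMI is written $(\cdot,\langle\ \rangle)$. If we instead want a substantive statement, namely that a pure-form $K'$ with ${\rm can}(K')\ne(\cdot,\langle\ \rangle)$ is not degenerate, we must exhibit a distribution violating $K'$. I would construct one in the two nontrivial cases of \eqref{eq-def-can}.

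If $\mathbb{I}_{K'}\neq\emptyset$, pick $q\in\mathbb{I}_{K'}$. Note $q\notin C'$ by purity. Let $X_q$ be a fair bit and set all other variables constant. Then $H(X_q|X_{C'})>0$, so Lemma~\ref{lemma-1} shows $K'$ fails.

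If $\mathbb{I}_{K'}=\emptyset$ and $t\ge2$, then $P_1,P_2$ are nonempty, disjoint, and disjoint from $C'$. Choose $a\in P_1$ and $b\in P_2$, and let $X_a=X_b$ be a fair bit with all other variables constant. Then $I(X_{P_1};X_{P_2}|X_{C'})>0$, contradicting the mutual independence required by $K'$ (via Proposition~\ref{J=0} and the chain rule, since pairwise independence is implied).

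This shows that nondegeneracy corresponds exactly to ${\rm can}(K')\ne(\cdot,\langle\ \rangle)$, which closes the argument.
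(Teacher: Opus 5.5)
Your first three paragraphs are the paper's proof. The paper assumes $K'$ rather than $K$ is degenerate, then applies Proposition~\ref{empty-can} in both directions.

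Your additional constructions are also correct: a fair bit on a repeated index $q\notin C'$ contradicts Lemma~\ref{lemma-1}, and a shared fair bit on $a\in P_1$, $b\in P_2$ breaks pairwise conditional independence. They usefully supply what the paper's Proposition~\ref{empty-can} leaves to the notational convention, namely that a canonical form not literally equal to $(\cdot,\langle\ \rangle)$ is genuinely non-degenerate.
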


\begin{proof}
%If $k=0,1$ or $l=0,1$, 
Without loss of generality, we assume $K'=(\cdot,\langle\ \rangle)$, i.e., $K'$ is a degenerate CMI.

We first prove the ``only if'' part. Assume ${\rm can}(K)={\rm can}(K')$.
Since $K'=(\cdot,\langle\ \rangle)$,
we have ${\rm can}(K')=(\cdot,\langle\ \rangle)$ by Proposition \ref{empty-can}.
If $K\sim K'$, then $K$ is a degenerate CMI, or $K=(\cdot,\langle\ \rangle)$. Thus by Proposition \ref{empty-can}, we have ${\rm can}(K)=(\cdot,\langle\ \rangle) = {\rm can}(K')$.

To prove the ``if'' part, assume ${\rm can}(K) = {\rm can}(K')$. 
Since we have asumed that $K'=(\cdot,\langle\ \rangle)$, we have
 ${\rm can}(K')=(\cdot,\langle\ \rangle)$. 
 Then ${\rm can}(K)={\rm can}(K')=(\cdot,\langle\ \rangle)$. 
 Again by Proposition \ref{empty-can}, we obtain $K=(\cdot,\langle\ \rangle)=K'$. Hence, $K\sim K'$.
\end{proof}

\begin{lemma}\label{Lemiii5}
	Let $K = (C, \langle Q_i, 1 \le i \le k \rangle )$.  If $K\neq (\cdot,\langle\ \rangle)$, then either one of the following holds:
%	\begin{list}%
%		{(C\arabic{cond})}{\usecounter{cond}}
%		\end{list}
\begin{itemize}
	\item[i)] $\mathbb{I}_K=\emptyset$ and at least two $Q_i\backslash \mathbb{I}_K$ are nonempty;
	\item[ii)] $\mathbb{I}_K\neq\emptyset$.
\end{itemize}

\end{lemma}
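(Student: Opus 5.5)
We argue by contraposition: assuming that neither i) nor ii) holds, we show that $K=(\cdot,\langle\ \rangle)$. Throughout, $K$ is in pure form, as assumed in this section, so every $Q_i$ is nonempty and disjoint from $C$.

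Suppose ii) fails, so $\mathbb{I}_K=\emptyset$. Then $Q_i\backslash\mathbb{I}_K=Q_i$ for every $i$. Each $Q_i$ is nonempty by purity, so all $k$ of the sets $Q_i\backslash \mathbb{I}_K$ are nonempty. Now suppose i) also fails. Then fewer than two of these sets are nonempty, which forces $k\le 1$.

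If $k\le 1$, then $K$ is degenerate by Remark~\ref{k01}, which follows from Proposition~\ref{J=0b}: $J(X_{Q_i},1\le i\le k|X_C)=0$ holds for every joint distribution. Under the paper's convention, this means $K=(\cdot,\langle\ \rangle)$, contradicting the hypothesis. Hence i) or ii) holds.

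The same conclusion can be reached by reading it directly from Definition~\ref{def-can}. If $k\ge2$ and $\mathbb{I}_K=\emptyset$, then $t=k\ge 2$ nonempty $P_j$'s exist, which gives i).

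The only delicate point is the convention that ``$K=(\cdot,\langle\ \rangle)$'' means that $K$ is degenerate, as opposed to having literally $k\in\{0,1\}$. The direction we need is that $k\le1$ implies degeneracy, and this is exactly what Remark~\ref{k01} supplies, so no converse is needed. If instead the notation is read syntactically, the result follows even more directly: $K\neq(\cdot,\langle\ \rangle)$ gives $k\ge2$, and purity then yields i) whenever $\mathbb{I}_K=\emptyset$.
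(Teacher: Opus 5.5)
Your proof is correct and has the same skeleton as the paper's: assume both i) and ii) fail and conclude that $K$ is degenerate. The difference is in how degeneracy is obtained.

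The paper observes that the failure of i) and ii) gives ${\rm can}(K)=(\cdot,\langle\ \rangle)$. It then invokes Proposition~\ref{empty-can}, which itself rests on Theorem~\ref{k-cank} ($K\sim{\rm can}(K)$).

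You instead make explicit a step the paper leaves implicit. By purity, $\mathbb{I}_K=\emptyset$ forces $Q_i\backslash\mathbb{I}_K=Q_i\neq\emptyset$ for every $i$, so ``at most one nonempty'' means $k\le 1$. This is exactly why ${\rm can}(K)$ lands in the first case of Definition~\ref{def-can}. You then read off degeneracy directly from Proposition~\ref{J=0b} and Remark~\ref{k01}.

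Your route is more elementary and self-contained, since it never uses the canonical form or Theorem~\ref{k-cank}. The paper's route simply reuses machinery it has already built.

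Your closing discussion of a ``syntactic'' reading is unnecessary. The paper explicitly declares that $K=(\cdot,\langle\ \rangle)$ means $K$ is degenerate, and your main argument already uses exactly that reading.
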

\begin{proof}
	Assume neither i) nor ii) holds. Then we have $\mathbb{I}_K=\emptyset$ and at most one $Q_i\backslash \mathbb{I}_K$ is nonempty, which implies ${\rm can}(K)=(\cdot,\langle\ \rangle)$. By Proposition \ref{empty-can}, we obtain $K=(\cdot,\langle\ \rangle)$, which contradicts our assumption that $K\neq(\cdot,\langle\ \rangle)$.
\end{proof}

\begin{lemma}\label{can=}
Let $K = (C, \langle Q_i, 1 \le i \le k \rangle )$ and $K' = (C', \langle Q'_j, 1 \le j \le l \rangle )$, and let ${\rm can}(K)=(C, \langle \mathbb{I}_K, \mathbb{I}_K, P_i, 1\le i\le t\rangle)$ and 
${\rm can}(K')=(C',\langle \mathbb{I}_{K'}, \mathbb{I}_{K'}, P'_j, 1\le j\le s\rangle)$.
Then ${\rm can}(K) = {\rm can}(K')$ if and only if $C = C'$, $\mathbb{I}_{K'}=\mathbb{I}_{K}$ and 
$\langle P_i, 1\le i\le t\rangle=\langle P'_j, 1\le j\le s\rangle$.
\end{lemma}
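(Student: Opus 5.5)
This is essentially an unwinding of the definition of equality of CMIs (Definition~\ref{collection=}) applied to the canonical forms, using the disjointness structure recorded in Remark~\ref{remark-1}. I would treat ${\rm can}(K)$ and ${\rm can}(K')$ through the general notation \eqref{varitionofcan}, where each is one of four shapes: $(\cdot,\langle\ \rangle)$, $(C,\langle P_i\rangle)$ with $t\ge2$, $(C,\langle \mathbb{I}_K,\mathbb{I}_K\rangle)$, or $(C,\langle \mathbb{I}_K,\mathbb{I}_K,P_i\rangle)$ with $t\ge2$.

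\emph{The ``if'' direction.} Assume $C=C'$, $\mathbb{I}_K=\mathbb{I}_{K'}$ and $\langle P_i,1\le i\le t\rangle=\langle P'_j,1\le j\le s\rangle$. Then $t=s$, so both canonical forms fall into the same case of \eqref{varitionofcan}. In each case the two collections visibly coincide: one simply adjoins the two copies of the common set $\mathbb{I}_K$ when it is nonempty, and reuses the permutation matching the $P$'s. If both forms are degenerate, they are equal by the convention for $(\cdot,\langle\ \rangle)$.

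\emph{The ``only if'' direction.} Assume ${\rm can}(K)={\rm can}(K')$.
\begin{itemize}
\item First dispose of the degenerate case. If one side is $(\cdot,\langle\ \rangle)$, so is the other. Then $\mathbb{I}_K=\mathbb{I}_{K'}=\emptyset$ and $t=s=0$. The condition $C=C'$ must be read modulo the convention that $C$ is immaterial for degenerate CMIs.
\item Otherwise, Definition~\ref{collection=} gives $C=C'$ together with a bijection between the multisets $\langle \mathbb{I}_K,\mathbb{I}_K,P_1,\dots,P_t\rangle$ and $\langle \mathbb{I}_{K'},\mathbb{I}_{K'},P'_1,\dots,P'_s\rangle$, with empty $\mathbb{I}$'s omitted.
\item The key step is to identify the $\mathbb{I}$-part intrinsically, as the set of elements appearing in at least two members of the collection. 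By Remark~\ref{remark-1}, the sets $\mathbb{I}_K$ and the $P_i$ are pairwise disjoint, and the $P_i$ are nonempty. Hence an index lies in two members of the collection exactly when it lies in $\mathbb{I}_K$. Equivalently, $\mathbb{I}_K$ is the repeated-index set of ${\rm can}(K)$, as already used in the proof of Lemma~\ref{cor-1}.
\item Since equal collections have the same repeated-index set, $\mathbb{I}_K=\mathbb{I}_{K'}$.
\item If this common set is nonempty, remove its two copies from both sides by applying Proposition~\ref{collection==} twice. This leaves $\langle P_i\rangle=\langle P'_j\rangle$. If it is empty, the collections already are $\langle P_i\rangle$ and $\langle P'_j\rangle$.
\end{itemize}

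The main obstacle is bookkeeping rather than mathematics. One must ensure that a $P_i$ can never be matched with a copy of $\mathbb{I}_{K'}$, which the repeated-index characterization settles since a $P_i$ occurs once and is disjoint from the others. One must also handle the mixed shapes of \eqref{varitionofcan} uniformly, for instance $(C,\langle\mathbb{I},\mathbb{I}\rangle)$ against $(C,\langle P_1,P_2\rangle)$. Here the repeated-index argument again decides: the first has repeated indices and the second has none, so they cannot be equal.
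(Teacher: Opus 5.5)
Your proof is correct and follows the paper's route. Like the paper, you unwind Definition~\ref{collection=} to get $C=C'$ and equality of the collections, show $\mathbb{I}_K=\mathbb{I}_{K'}$ using the disjointness in Remark~\ref{remark-1}, and strip the two copies with Proposition~\ref{collection==}. The only differences are in the middle step and in one caveat: you read $\mathbb{I}_K$ off as an invariant (the indices lying in at least two members), whereas the paper argues by contradiction that, if $\mathbb{I}_K\neq\mathbb{I}_{K'}$ with $\mathbb{I}_K\neq\emptyset$, both copies of $\mathbb{I}_K$ would have to match two distinct, hence disjoint, $P'_j$'s; and your remark that $C=C'$ can only be asserted modulo the convention for $(\cdot,\langle\ \rangle)$ is a fair point that the paper's proof passes over.
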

\begin{proof}
It is obvious that if $C = C'$, $\mathbb{I}_{K'}=\mathbb{I}_{K}$, and $\langle P_i, 1\le i\le t\rangle=\langle P'_j, 1\le j\le s\rangle$,
then ${\rm can}(K) = {\rm can}(K')$.
Conversely, if ${\rm can}(K) = {\rm can}(K')$, then by Definition \ref{collection=}, we have $C = C'$ and $\langle \mathbb{I}_{K}, \mathbb{I}_{K}, P_i, 1\le i\le t\rangle = \langle \mathbb{I}_{K'}, \mathbb{I}_{K'}, P'_j, 1\le j\le s\rangle$.
Next, we will prove that $\mathbb{I}_{K'}=\mathbb{I}_{K}$ and $\langle P_i, 1\le i\le t\rangle = \langle  P'_j, 1\le j\le s\rangle$ by contradiction. 
Assume $\mathbb{I}_{K}\neq \mathbb{I}_{K'}$. Then at least one of $\mathbb{I}_{K}$ or $\mathbb{I}_{K'}$ is nonempty. Without loss of generality, assume $\mathbb{I}_{K}\neq \emptyset$. 
%Let $m\in\mathbb{I}_K$. 
Since 
$\langle \mathbb{I}_{K}, \mathbb{I}_{K}, P_i, 1\le i\le t\rangle = \langle \mathbb{I}_{K'}, \mathbb{I}_{K'}, P'_j, 1\le j\le s\rangle$ and $\mathbb{I}_{K}\neq \mathbb{I}_{K'}$, by Definition \ref{collection=}, there exist
$1\le j_1, j_2\le s$, $j_1\neq j_2$, such that 
 $ P'_{j_1}= P'_{j_2}=\mathbb{I}_K$ (note that $\mathbb{I}_K$ appears twice in $\langle \mathbb{I}_{K}, \mathbb{I}_{K}, P_i, 1\le i\le t\rangle$).
This contradicts that $P'_{j}$\ 's are disjoint, and therefore we conclude that $\mathbb{I}_{K}= \mathbb{I}_{K'}$. 
Then, by Proposition \ref{collection==} and $\langle \mathbb{I}_{K}, \mathbb{I}_{K}, P_i, 1\le i\le t\rangle = \langle \mathbb{I}_{K'}, \mathbb{I}_{K'}, P'_j, 1\le j\le s\rangle$, we obtain $\langle P_i, 1\le i\le t\rangle = \langle P'_j, 1\le j\le s\rangle$.
\end{proof}

\begin{lemma}\label{LemmaIII-8}
	Let $K = (C, \langle Q_i, 1 \le i \le k \rangle )$ and $K' = (C', \langle Q'_j, 1 \le j \le l \rangle )$, and let ${\rm can}(K)=(C, \langle \mathbb{I}_K, \mathbb{I}_K, P_i, 1\le i\le t\rangle)$ and 
	${\rm can}(K')=(C',\langle \mathbb{I}_{K'}, \mathbb{I}_{K'}, P'_j, 1\le j\le s\rangle)$.
	Let $P=\cup_{i=1}^{t}P_i$ and 
	$P'=\cup_{j=1}^{s}P'_j$.
	If $K \sim K'$, then $\mathbb{I}_K\cap P'=\emptyset$ and $\mathbb{I}_{K'}\cap P=\emptyset$.
\end{lemma}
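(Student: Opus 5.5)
We argue by contradiction and construct an explicit distribution on which $K$ holds but $K'$ fails. By symmetry it suffices to show $\mathbb{I}_K\cap P'=\emptyset$. Suppose instead that some index $q\in\mathbb{I}_K\cap P'$. Then $q\in P'_{j_0}$ for some $j_0$. Since ${\rm can}(K')$ contains the block $P'_{j_0}$, it is not of the form $(C',\langle \mathbb{I}_{K'},\mathbb{I}_{K'}\rangle)$ or $(\cdot,\langle\ \rangle)$. Hence $s\ge 2$, and there is a second block $P'_{j_1}$, $j_1\neq j_0$, disjoint from $P'_{j_0}$, from $\mathbb{I}_{K'}$ and from $C'$ (Remark~\ref{remark-1}). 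In particular $q\notin C'$ and $q\notin\mathbb{I}_{K'}$. Also $q\notin C$, because $q\in\mathbb{I}_K\subseteq\cup_i Q_i$ and $K$ is in pure form.

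The distribution is chosen to make $K'$ fail. Pick $r\in P'_{j_1}$, so $r\neq q$ and $r\notin C'$. Let $X_q=X_r$ be a single uniform bit, and let every other $X_i$ be a constant. Then $X_{C'}$ is constant, and $X_{P'_{j_0}}$ and $X_{P'_{j_1}}$ both contain the common bit. This gives $I(X_{P'_{j_0}};X_{P'_{j_1}}|X_{C'})=1>0$. Hence ${\rm can}(K')$ is invalid, and therefore $K'$ is invalid by Theorem~\ref{k-cank}.

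It remains to make $K$ valid. Under this distribution $X_C$ is constant unless $r\in C$. If $r\in C$, then $X_q$ is a function of $X_C$, so $H(X_q|X_C)=0$ and every other variable is constant. Thus every $H(X_{Q_i}|X_C)$ and the joint conditional entropy vanish, so $J(X_{Q_i},1\le i\le k|X_C)=0$ and $K$ is valid, giving the contradiction.

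The main obstacle is the case $r\notin C$ for every admissible choice of $r$. Then $H(X_q|X_C)=1$, while validity of $K$ forces $H(X_q|X_C)=0$ by Lemma~\ref{lemma-1}, since $q\in\mathbb{I}_K$. So $K$ fails as well, and this particular distribution does not separate $K$ from $K'$. The single-bit construction must therefore be refined to use an asymmetry between $K$ and $K'$. Lemma~\ref{lemma-1} shows that $K$ forces $H(X_q|X_C)=0$. The plan is to exhibit a distribution where $K'$ holds but $X_q$ is not determined by $X_C$, which shows $K'\not\Rightarrow K$.

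For this, let $X_q$ be a uniform bit and make every other variable constant. Then $H(X_q|X_C)=1$, since $q\notin C$, so $K$ fails by Lemma~\ref{lemma-1}. For $K'$, conditioning is on constant variables, and $q$ lies only in the block $P'_{j_0}$. It does not lie in $\mathbb{I}_{K'}$ or in any other $P'_j$, and the blocks are disjoint. So exactly one block is non-constant, and $J(X_{\mathbb{I}_{K'}},X_{\mathbb{I}_{K'}},X_{P'_j},1\le j\le s|X_{C'})=0$. Hence ${\rm can}(K')$ is valid, and $K'$ is valid by Theorem~\ref{k-cank}. This contradicts $K\sim K'$.

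The construction needs $q\notin C'$, which holds by Remark~\ref{remark-1}, and $q\notin C$, which holds by pure form. It also needs $q$ to lie in $P'_{j_0}$ and in no other block of ${\rm can}(K')$, again by Remark~\ref{remark-1}. Given these facts, this second construction handles every case on its own, so the first construction is not needed and the case split disappears. The symmetric statement $\mathbb{I}_{K'}\cap P=\emptyset$ follows by exchanging the roles of $K$ and $K'$.
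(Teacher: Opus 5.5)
Your final construction is the paper's proof. Take $X_q$ nondegenerate and every other $X_m$ constant: $K$ fails (by Lemma~\ref{lemma-1}, since $q\in\mathbb{I}_K$ and $q\notin C$), while only the block $P'_{j_0}$ of ${\rm can}(K')$ is nonconstant, so $K'$ holds. The paper uses exactly this distribution with $m_0\in\mathbb{I}_K\cap P'_1$, so your first attempt with $X_q=X_r$ and the argument for $s\ge 2$ are unnecessary and can simply be dropped.
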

\begin{proof}
We will prove the lemma by contradiction. If $\mathbb{I}_K\cap P'\neq\emptyset$, without loss of generality, assume $\mathbb{I}_K\cap P'_1\neq\emptyset$. Let $m_0\in\mathbb{I}_K\cap P'_1$. By Remark~\ref{remark-1},  we have 
$\mathbb{I}_K\cap C=\emptyset$, $\mathbb{I}_K\cap P=\emptyset$, 
$P'_1\cap \left(\cup_{j=2}^{s}P'_j\right)=\emptyset$,
$P'_1\cap \mathbb{I}_{K'}=\emptyset$,
and $P'_1\cap C'=\emptyset$.
Thus we obtain
\begin{equation}\label{cond-1a}
	m_0\in P'_1\subseteq P',\ 
	m_0\notin P'_j, 2\le j\le s,\ 
	m_0\notin \mathbb{I}_{K'},\ 
	m_0\notin C',\ 
	m_0\in \mathbb{I}_K,\  
	m_0\notin P,\ 
	m_0\notin C.
\end{equation}

We now construct a joint distribution for $X_1, \ldots, X_n$ as follows. Let $U$ be a random variable such that $H(U) > 0$.
Let 
\[
X_m = \left\{ \begin{array}{ll}
	U & \mbox{if $m = m_0$} \\
	0 & \mbox{otherwise}.
\end{array} \right.
\]
By \eqref{cond-1a}, we obtain
\begin{align*}
		H(X_{\mathbb{I}_K},X_{\mathbb{I}_K},X_{P_i},1\le i\le t|X_C)&=H(X_{\mathbb{I}_K})=H(X_{m_0})=H(U)\\
		2H(X_{\mathbb{I}_K}|X_C)+\sum\limits_{i=1}^{t}H(X_{P_i}|X_C)&=2H(X_{\mathbb{I}_K})=2H(X_{m_0})=2H(U)\\
		H(X_{\mathbb{I}_{K'}},X_{\mathbb{I}_{K'}},X_{P'_j},1\le j\le s|X_{C'})&=H(X_{P'_1})=H(X_{m_0})=H(U)\\
		2H(X_{\mathbb{I}_{K'}}|X_{C'})+\sum\limits_{j=1}^{s}H(X_{P'_j}|X_{C'})&=H(X_{P'_1})=H(X_{m_0})=H(U).
\end{align*}
Thus we have
\begin{align*}
		H(X_{\mathbb{I}_{K}},X_{\mathbb{I}_{K}},X_{P_i},1\le i\le t|X_{C})
		&\neq 2H(X_{\mathbb{I}_{K}}|X_{C})+\sum\limits_{i=1}^{t}H(X_{P_i}|X_{C})\\
		H(X_{\mathbb{I}_{K'}},X_{\mathbb{I}_{K'}},X_{P'_j},1\le j\le s|X_{C'})
		&= 2H(X_{\mathbb{I}_{K'}}|X_{C'})+\sum\limits_{j=1}^{s}H(X_{P'_j}|X_{C'}),
\end{align*}
implying that $K$ is invalid and $K'$ is valid. 
Then $K\nsim K'$, which contradicts the assumption that $K\sim K'$.
Thus $\mathbb{I}_K\cap P'=\emptyset$. 

In the same way, we can prove that if $K\sim K'$, then $\mathbb{I}_{K'}\cap P=\emptyset$.
The proof is accomplished.
\end{proof}

The technique for constructing a joint distribution for $X_1,\ldots,X_n$ in the proofs of Lemma \ref{LemmaIII-8} and other results in this paper has its root in the theory of $I$-measure developed in \cite{Yeung1991}. The reader is referred  to the discussion in \cite[Chapter 3]{Yeung2008}, in particular Theorem 3.11 and Problem 10 therein.

\begin{theorem}\label{prop1}
	Let $K = (C, \langle Q_i, 1 \le i \le k \rangle )$ and $K' = (C', \langle Q'_j, 1 \le j \le l \rangle )$ be two non-degenerate CMIs, and let ${\rm can}(K)=(C, \langle \mathbb{I}_K, \mathbb{I}_K, P_i, 1\le i\le t\rangle)$ and 
	${\rm can}(K')=(C',\langle \mathbb{I}_{K'}, \mathbb{I}_{K'}, P'_j, 1\le j\le s\rangle)$.
	If $K\sim K'$, then $\langle P_i, 1\le i\le t\rangle = \langle P'_j, 1\le j\le s\rangle$.
\end{theorem}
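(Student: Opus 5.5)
The plan is to argue by contradiction, refuting each possible discrepancy with an explicit joint distribution, as in the proof of Lemma~\ref{LemmaIII-8}. In every construction, one or two chosen coordinates equal a random variable $U$ with $H(U)>0$, and all other coordinates are constant. Let $P=\cup_{i=1}^t P_i$ and $P'=\cup_{j=1}^s P'_j$.

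Two facts will be used repeatedly. First, by Remark~\ref{remark-1}, $C$, $\mathbb{I}_K$ and the $P_i$'s are pairwise disjoint, and likewise for $K'$. Second, by Lemma~\ref{LemmaIII-8}, $K\sim K'$ gives $\mathbb{I}_{K'}\cap P=\emptyset$ and $\mathbb{I}_K\cap P'=\emptyset$. Recall also that, under the convention \eqref{varitionofcan}, $t\neq 1$ and $s\neq 1$.

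\textbf{Step 1: $P=P'$.} By symmetry it suffices to show $P\subseteq P'$.
\begin{itemize}
\item If $t=0$, then $P=\emptyset$ and there is nothing to prove.
\item Otherwise $t\ge 2$. Suppose some $m_0\in P\backslash P'$, say $m_0\in P_1$, and pick any $m_1\in P_2$. Set $X_{m_0}=X_{m_1}=U$ and all other coordinates constant.
\item For $K$: $m_0,m_1\notin C\cup\mathbb{I}_K$, and they lie in different blocks. Hence the sum of block entropies in ${\rm can}(K)$ is $2H(U)$, while the joint entropy is $H(U)$. So ${\rm can}(K)$, and therefore $K$ by Theorem~\ref{k-cank}, is invalid.
\item For $K'$: $m_0\notin P'$, and $m_0\notin\mathbb{I}_{K'}$ by Lemma~\ref{LemmaIII-8}.
\item If $m_0\in C'$, conditioning on $X_{C'}$ kills every term, so $K'$ is valid.
\item Otherwise $m_0$ is irrelevant to $K'$, so only $m_1$ matters. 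Again $m_1\notin\mathbb{I}_{K'}$. Either $m_1\in C'$, making every term zero, or $m_1$ lies in at most one block $P'_j$. In both cases the $J$-expression for ${\rm can}(K')$ is $0$, so $K'$ is valid.
\end{itemize}
This contradicts $K\sim K'$, so $P\subseteq P'$.

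\textbf{Step 2: the partitions coincide.} Now $P=P'$, and the $P_i$'s and $P'_j$'s are nonempty disjoint families.
\begin{itemize}
\item If $t=0$, then $P=\emptyset$, forcing $s=0$, and the collections agree trivially.
\item Otherwise $t\ge2$ or $s\ge2$. Suppose the collections differ. Lemma~\ref{set-lem} gives a pair $(i,j)$ with $P_i\cap P'_j\neq\emptyset$ and either $P_i\backslash P'_j\neq\emptyset$ or $P'_j\backslash P_i\neq\emptyset$. By symmetry assume the former.
\item Choose $a\in P_i\cap P'_j$ and $b\in P_i\backslash P'_j$. Since $b\in P=P'$, we have $b\in P'_{j'}$ for some $j'\neq j$. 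Set $X_a=X_b=U$ and all other coordinates constant.
\item In ${\rm can}(K)$, both $a$ and $b$ lie in the single block $P_i$ and outside $C\cup\mathbb{I}_K$. The block sum and the joint entropy are both $H(U)$, so $K$ is valid.
\item In ${\rm can}(K')$, $a$ and $b$ lie in distinct blocks $P'_j,P'_{j'}$, both disjoint from $C'\cup\mathbb{I}_{K'}$. The block sum is $2H(U)$ against a joint entropy of $H(U)$, so $K'$ is invalid.
\end{itemize}
This contradicts $K\sim K'$, so $\langle P_i, 1\le i\le t\rangle = \langle P'_j, 1\le j\le s\rangle$.

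The main obstacle is Step 1, specifically controlling where $m_0$ and $m_1$ fall relative to $K'$. The case analysis must confirm that neither coordinate can land in $\mathbb{I}_{K'}$, which Lemma~\ref{LemmaIII-8} rules out. It must also confirm that at most one of them contributes a nonzero block term, which holds because $m_0$ is either conditioned away or absent from $K'$. Handling the degenerate values of $t$ and $s$ via \eqref{varitionofcan} also needs care.
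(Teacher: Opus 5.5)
Your proof is correct and takes essentially the same route as the paper. Step~2 is the paper's Case~2: Lemma~\ref{set-lem} plus two coordinates set equal to $U$, lying in one block on one side and two blocks on the other. Step~1 is the paper's Case~1 with the roles of $K$ and $K'$ swapped: you put the two $U$-coordinates in distinct blocks of ${\rm can}(K)$ and use Lemma~\ref{LemmaIII-8} to show $K'$ stays valid, while the paper puts them in distinct blocks of ${\rm can}(K')$ and checks that $K$ stays valid.
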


% {\color{red}
% \sout{
% %\begin{remark}
% 	In Theorem~\ref{prop1}, $K$ and $K'$ are assumed to be in pure form. In general, $K$ and $K'$ are not necessarily in pure form. Then according to Theorem~\ref{prop1}, $K \sim K'$ if and only if ${\rm can}({\rm pur}(K)) = {\rm can}({\rm pur}(K'))$.
% %\end{remark}
% }
% }

% {\color{blue} Remark III.4 (the red text) was given here before, I think it should be in Theorem III.3?}

\begin{proof}
%\bigskip \noindent
%{\it Proof of Theorem~\ref{prop1}.} 
	We will prove the theorem by contradiction. 
	Let $P=\cup_{i=1}^{t}P_i$ and 
	$P'=\cup_{j=1}^{s}P'_j$.
	Assume that $K\sim K'$ and consider the following cases.

	\noindent {\bf Case 1.} $P\neq P'$
	
	Without loss of generality, assume that $P'\backslash P\neq \emptyset$. This implies that $P'_j\backslash P\neq \emptyset$ for at least one $1\le j\le s$. Further assume without loss of generality that $P'_1\backslash P\neq \emptyset$,
	and let $m_1\in P'_1\backslash P$. 
	Since we assume that $K\sim K'$, by Lemma \ref{LemmaIII-8}, we have $\mathbb{I}_K\cap P'=\emptyset$, which implies that $m_1\notin \mathbb{I}_K$.
	Together with Remark \ref{remark-1}, we have
	\begin{equation}\label{case1a-b}
		m_1\in P'_1\subseteq P',\ 
		m_1\notin P'_j, 2\le j\le s,\ 
		m_1\notin \mathbb{I}_{K'},\ 
		m_1\notin C',\ 
		m_1\notin \mathbb{I}_K,\  
		m_1\notin P.
	\end{equation}
	
	%Note that in the following discussion, some cases only require \eqref{case1a-b}, while some other cases require a more detailed discussion of the range of $m_1$, which can be divided into the following two sub-cases:
	
	\noindent Since $K'\neq(\cdot,\langle\ \rangle)$, by Lemma \ref{Lemiii5}, either one of the follow conditions holds:
	\begin{itemize}
		\item[i)] $\mathbb{I}_{K'}=\emptyset$ and $s\ge2$;
		\item[ii)] $\mathbb{I}_{K'}\neq\emptyset$.
	\end{itemize}
	
	%\noindent For case i), when $\mathbb{I}_{K'}=\emptyset$, we have $s\ge2$.
	\noindent For Case ii), since we have assumed that $P'_1\neq\emptyset$, 
    we have $s\ge 1$. Since $s\neq1$ (cf. the discussion below Proposition \ref{PropIII.5}), we have $s\ge2$.
	Thus for both Case i) and Case ii), $s\ge2$. So, we can assume without loss of generality that $P'_2\neq\emptyset$ and let $m_2\in P'_2$.  Since we assume that $K\sim K'$, by Lemma \ref{LemmaIII-8}, we obtain $m_2\notin \mathbb{I}_K$. 
    Together with Remark \ref{remark-1} and \eqref{case1a-b}, we have 
    \begin{align}
    \label{ThmIII.2-2}
		&\ \hspace{4.5cm} m_1\notin \mathbb{I}_K,\  
		m_1\notin P,\ 
		m_2\notin \mathbb{I}_K \ 
		%m_2\in P,\ 
		%m_2\notin C
        \\
    \label{ThmIII.2-1}
		&m_1\in P'_1,\ 
		m_1\notin P'_j, 2\le j\le s,\ 
		m_1\notin \mathbb{I}_{K'},\ 
		m_1\notin C',\ 
        m_2\in P'_2,\ 
		m_2\notin P'_j, j\in\mathcal{N}_{s}\backslash\{2\},\ 
		m_2\notin \mathbb{I}_{K'},\ 
		m_2\notin C'.
    \end{align}
    \noindent We now construct a joint distribution for $X_1, \ldots, X_n$ as follows. Let $U$ be a  random variable such that $H(U) > 0$.
	Let 
	\begin{equation}\label{DistributionThmIII.2}
	X_m = \left\{ \begin{array}{ll}
		U & \mbox{if $m =m_1, m_2$} \\
		0 & \mbox{otherwise}.
	\end{array} \right.
	\end{equation}
    By \eqref{ThmIII.2-1}, we obtain
	\begin{align*}
		H(X_{\mathbb{I}_{K'}},X_{\mathbb{I}_{K'}},X_{P'_j},1\le j\le s|X_{C'})&=H(X_{P'_1},X_{P'_2})=H(X_{m_1},X_{m_2})=H(U,U)=H(U)\\
		2H(X_{\mathbb{I}_{K'}}|X_{C'})+\sum\limits_{j=1}^{s}H(X_{P'_j}|X_{C'})&=H(X_{P'_1})+H(X_{P'_2})=H(X_{m_1})+H(X_{m_2})=2H(U).
	\end{align*}
	Thus we have
	\begin{align*}
H(X_{\mathbb{I}_{K'}},X_{\mathbb{I}_{K'}},X_{P'_j},1\le j\le s|X_{C'})
		&\neq 2H(X_{\mathbb{I}_{K'}}|X_{C'})+\sum\limits_{j=1}^{s}H(X_{P'_j}|X_{C'}),
	\end{align*}
	implying $K'$ is invalid. 

    If $m_2\in P$ and $m_1\in C$, we have $m_2\notin C$ by Remark \ref{remark-1}, and by \eqref{ThmIII.2-2} we further obtain
    \begin{align*}
   H(X_{\mathbb{I}_K},X_{\mathbb{I}_K},X_{P_i},1\le i\le t|X_C)&=H(X_{P}|X_C)=H(X_{m_2}|X_{m_1})=H(U|U)=0\\
	2H(X_{\mathbb{I}_K}|X_C)+\sum\limits_{i=1}^{t}H(X_{P_i}|X_C)&=H(X_{P}|X_C)=H(X_{m_2}|X_{m_1})=H(U|U)=0.      \end{align*}
    
    If $m_2\in P$ and $m_1\notin C$, we have $m_2\notin C$ by Remark \ref{remark-1}, and by \eqref{ThmIII.2-2} we further obtain
    \begin{align*}
		H(X_{\mathbb{I}_K},X_{\mathbb{I}_K},X_{P_i},1\le i\le t|X_C)&=H(X_{P})=H(X_{m_2})=H(U)\\
		2H(X_{\mathbb{I}_K}|X_C)+\sum\limits_{i=1}^{t}H(X_{P_i}|X_C)&=H(X_{P})=H(X_{m_2})=H(U).
    \end{align*}
    
    If $m_2\notin P$, by \eqref{ThmIII.2-2}, we obtain
    \begin{align*}
		H(X_{\mathbb{I}_K},X_{\mathbb{I}_K},X_{P_i},1\le i\le t|X_C)&=0\\
		2H(X_{\mathbb{I}_K}|X_C)+\sum\limits_{i=1}^{t}H(X_{P_i}|X_C)&=0.
    \end{align*}
Therefore,
	\begin{align*}
		H(X_{\mathbb{I}_{K}},X_{\mathbb{I}_{K}},X_{P_i},1\le i\le t|X_{C})
		= 2H(X_{\mathbb{I}_{K}}|X_{C})+\sum\limits_{i=1}^{t}H(X_{P_i}|X_{C}),
	\end{align*}
	implying $K$ is valid. 
    
    For the distribution for $X_1,\ldots,X_m$ defined by \eqref{DistributionThmIII.2}, we have proved that $K$ is valid but $K'$ is invalid. 
	Therefore, $K\nsim K'$, a contradiction to our assumption that $K\sim K'$.

				\noindent {\bf Case 2.} $P=P'$
				
				Recall that $P=\cup_{i=1}^{t} P_i$ and $P'=\cup_{j=1}^{s}P'_j$, where $P_i$ and $P'_j$ are nonempty for all $i$ and $j$. If 
				$P=P'=\emptyset$, then $k=l=0$, i.e., $P=P'=\langle\ \rangle$. For this case, the claim that $K\sim K'$ implies $\langle P_i, 1\le i\le t\rangle = \langle P'_j, 1\le j\le s\rangle$ is proved.
				Now consider the case that $P=P'\neq\emptyset$, so that $t\ge1$ and $s\ge1$. If $t=s=1$, then $\langle P_1\rangle=\langle P'_1\rangle$, which is to be proved. Therefore, we assume that $t\ge2$ or $s\ge2$.
				Since $P_i$, $1\le i\le t$ are disjoint, $P'_j$, $1\le j\le s$ are disjoint, and $t\ge2$ or $s\ge2$,
				by Lemma \ref{set-lem}, there exist $i$ and $j$ such that $P'_j\cap P_i\neq \emptyset$ and $P'_j\backslash P_i\ne \emptyset$ or $P_i\backslash P'_j\neq \emptyset$.
				Without loss of generality, we assume that
				$P'_1\cap P_1\neq \emptyset$ and $P'_1\backslash P_1\ne \emptyset$ (the case $P_1\backslash P'_1\ne \emptyset$ can be treated in exactly the same way).
				Let $m_3\in P'_1\cap P_1$ and $m_4\in P'_1\backslash P_1$.
				Since $P=P'$, there exists one  $2\le i\le t$ such that $m_4\in P_i$.
				Without loss of generality, let $m_4\in P_2$.
				Together with Remark \ref{remark-1}, we have
				\begin{align}
					&
					m_3\in P_1,\ 
					m_3\notin P_i, 2\le i\le t,\ 
					m_3\notin \mathbb{I}_{K},\ 
					m_3\notin C\nonumber\\
					&		m_3\in P'_1,\ 
					m_3\notin P'_j, 2\le j\le s,\ 
					m_3\notin \mathbb{I}_{K'},\ 
					m_3\notin C'.\label{con-m3}\\
					&
					m_4\in P_2,\ 
					m_4\notin P_i,\  i\in\mathcal{N}_{t}\backslash\{2\},\ 
					m_4\notin \mathbb{I}_{K},\ 
					m_4\notin C\nonumber\\
					&		m_4\in P'_1,\ 
					m_4\notin P'_j, 2\le j\le s,\ 
					m_4\notin \mathbb{I}_{K'},\ 
					m_4\notin C'.\label{con-m4}
				\end{align}

				We now construct a joint distribution for $X_1, \ldots, X_n$ as follows. Let $U$ be a random variable such that $H(U)>0$.
				Let 
				\[
				X_m = \left\{ \begin{array}{ll}
					U & \mbox{if $m = m_3, m_4$} \\
					0 & \mbox{otherwise}.
				\end{array} \right.
				\]
				By \eqref{con-m3} and \eqref{con-m4}, we obtain
				\begin{align*}
					H(X_{\mathbb{I}_K},X_{\mathbb{I}_K},X_{P_i},1\le i\le t|X_C)
					&=H(X_{P_1},X_{P_2}|X_C)
					{=}H(X_{m_3},X_{m_4})=H(U,U)=H(U)\\
					2H(X_{\mathbb{I}_K}|X_C)+\sum\limits_{i=1}^{t}H(X_{P_i}|X_C)
					&=H(X_{P_1}|X_C)+H(X_{P_2}|X_C)
					=H(X_{m_3})+H(X_{m_4})=2H(U)\\
					H(X_{\mathbb{I}_{K'}},X_{\mathbb{I}_{K'}},X_{P'},1\le i\le s|X_{C'})&=H(X_{P'_1}|X_{C'})=H(X_{m_3},X_{m_4})=H(U,U)=H(U)\\
					2H(X_{\mathbb{I}_{K'}}|X_{C'})+\sum\limits_{i=1}^{s}H(X_{P'_i}|X_{C'})
					&=H(X_{P'_1}|X_{C'})
					=H(X_{m_3},X_{m_4})=H(U,U)
					=H(U).
				\end{align*}
				Thus we have
				\begin{align*}
					H(X_{\mathbb{I}_{K}},X_{\mathbb{I}_{K}},X_{P_i},1\le i\le t|X_{C})
					&\neq 2H(X_{\mathbb{I}_{K}}|X_{C})+\sum\limits_{i=1}^{t}H(X_{P_i}|X_{C})\\
					H(X_{\mathbb{I}_{K'}},X_{\mathbb{I}_{K'}},X_{P'_j},1\le j\le s|X_{C'})
					&= 2H(X_{\mathbb{I}_{K'}}|X_{C'})+\sum\limits_{j=1}^{s}H(X_{P'_j}|X_{C'}),
				\end{align*}
				implying $K$ is invalid and $K'$ is valid. 
				Therefore, $K\nsim K'$, a contradiction to our assumption that $K\sim K'$.
				
				\smallskip
				The proof is accomplished. %\hfill $\Box$
\end{proof}

%\begin{theorem}\label{mainthmQ=Q'}
%		Let $K = (C, \langle Q_i, 1 \le i \le k \rangle )$ and $K' = (C', \langle Q'_j, 1 \le j \le l \rangle )$ be two non-degenerated CMIs, and let ${\rm can}(K)=(C, \langle \mathbb{I}_K, \mathbb{I}_K, P_i, 1\le i\le t\rangle)$ and 
%	${\rm can}(K')=(C',\langle \mathbb{I}_{K'}, \mathbb{I}_{K'}, P'_j, 1\le j\le s\rangle)$.
%	If $K\sim K'$ and $\langle P_i, 1\le i\le t\rangle=\langle P'_j, 1\le j\le s\rangle$, then $C=C'$ and $\mathbb{I}_{K}=\mathbb{I}_{K'}$.
%\end{theorem}

%\begin{proof}
%%If $C=C'$ and $\mathbb{I}_K=\mathbb{I}_{K'}$, then ${\rm can}(K)={\rm can}(K')$, which implies $K\sim K'$.
%
%
%\end{proof}

%Note that  Theorem \ref{mainthmQ=Q'} still holds if $s=0$ or $t=0$.

The next theorem, which is the main result of this section, gives a complete characterization of a pure CMI in terms of its canonical form.

\begin{theorem}\label{mainmainthm}
	Let $K = (C, \langle Q_i, 1 \le i \le k \rangle )$ and $K' = (C', \langle Q'_j, 1 \le j \le l \rangle )$.
	Then $K \sim K'$ if and only if ${\rm can}(K) = {\rm can}(K')$.
\end{theorem}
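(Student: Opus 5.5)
The ``if'' direction is immediate from Theorem~\ref{k-cank}: if ${\rm can}(K)={\rm can}(K')$, then $K\sim{\rm can}(K)={\rm can}(K')\sim K'$, and $\sim$ is an equivalence relation. For the ``only if'' direction, I first dispose of the degenerate case with Lemma~\ref{k=or=l=1}: if either $K$ or $K'$ equals $(\cdot,\langle\ \rangle)$, the claim is exactly that lemma. So assume both are non-degenerate and $K\sim K'$. By Theorem~\ref{prop1} we already have $\langle P_i,1\le i\le t\rangle=\langle P'_j,1\le j\le s\rangle$. By Lemma~\ref{can=}, it remains to show $C=C'$ and $\mathbb{I}_K=\mathbb{I}_{K'}$. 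Throughout, write $P=P'$ for the common union.

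To show $\mathbb{I}_K=\mathbb{I}_{K'}$, I argue by contradiction as in Lemma~\ref{LemmaIII-8}. Suppose $m_0\in\mathbb{I}_K\setminus\mathbb{I}_{K'}$. By Remark~\ref{remark-1}, $m_0\notin C$ and $m_0\notin P=P'$. If $m_0\notin C'$, set $X_{m_0}=U$ with $H(U)>0$ and all other variables constant. Then ${\rm can}(K)$ has joint entropy $H(U)$ but sum $2H(U)$, so it is invalid. In ${\rm can}(K')$, $m_0$ appears in no set, so ${\rm can}(K')$ is valid, and by Theorem~\ref{k-cank} this contradicts $K\sim K'$. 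If instead $m_0\in C'$, I need a different distribution, and this is where I expect the main difficulty. Here I would use the symmetric roles: it suffices to first establish $C=C'$, after which $m_0\in C'=C$ contradicts $m_0\in\mathbb{I}_K$.

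So the key step is $C=C'$. Suppose $c\in C\setminus C'$. I split on where $c$ lies relative to $K'$. If $c\in\mathbb{I}_{K'}$, set $X_c=U$ and all others constant. Then $K$ is valid, since conditioning on $X_C$ kills $U$, while $K'$ is invalid by the $2H(U)\neq H(U)$ count. If $c\notin\mathbb{I}_{K'}\cup P'$, then $c$ is absent from ${\rm can}(K')$, and I instead choose a variable that makes $K$ fail but is killed by conditioning on $c$. Since $K$ is non-degenerate, either $\mathbb{I}_K\neq\emptyset$ or $t\ge2$. In the first case, take $m\in\mathbb{I}_K$ and set $X_m=X_c=U$, making $K'$ invalid if $m\in\mathbb{I}_{K'}$; this needs further subcases. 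A cleaner alternative is to set $X_c=V$, and $X_{m_1}=X_{m_2}=V$ for $m_1,m_2$ in distinct $P_i$'s (or $X_m=V$ for $m\in\mathbb{I}_K$). Then $K$ is valid, because conditioning on $X_c$ determines everything. For $K'$, which does not condition on $c$, the repeated $V$ across two distinct blocks (or a doubled $\mathbb{I}_{K'}$ block) violates the equality, provided those indices are not in $C'$. If $c\in P'=P$, that contradicts $c\in C$ by Remark~\ref{remark-1}.

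The bookkeeping of where each chosen index sits in $C'$ is the main obstacle, and I expect a short case analysis to close it. In the subcase where the chosen $m_1,m_2$ are in $C'$, I would replace them using the fact that $P=P'$ and $C'\cap P'=\emptyset$ forces $m_1,m_2\notin C'$. If $\mathbb{I}_K$ must be used, I would do the $\mathbb{I}$-step before the $C$-step where possible. With $C=C'$ and $\mathbb{I}_K=\mathbb{I}_{K'}$ in hand, Lemma~\ref{can=} yields ${\rm can}(K)={\rm can}(K')$.
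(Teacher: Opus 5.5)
Your ``if'' direction, the reduction to $C=C'$ and $\mathbb{I}_K=\mathbb{I}_{K'}$ via Theorem~\ref{prop1} and Lemma~\ref{can=}, and several of your constructions are correct. In particular, the case $c\in\mathbb{I}_{K'}$ works, and so does the case $t\ge 2$ with two distinct common blocks $P_i=P'_j$. The gap is that your two remaining steps depend on each other, so the argument never closes.

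In the $\mathbb{I}$-step you postpone the case $m_0\in(\mathbb{I}_K\setminus\mathbb{I}_{K'})\cap C'$ until $C=C'$ is known. In the $C$-step, take $c\in C\setminus C'$ with $c\notin\mathbb{I}_{K'}\cup P'$ and $t=s=0$, so ${\rm can}(K)=(C,\langle\mathbb{I}_K,\mathbb{I}_K\rangle)$. There your witness $m\in\mathbb{I}_K$ with $X_m=X_c=U$ makes $K'$ fail only if $m\in\mathbb{I}_{K'}$, and you propose to secure that by doing the $\mathbb{I}$-step first. When $m\in\mathbb{I}_K\setminus\mathbb{I}_{K'}$ lies in $C'$, the two deferrals point at each other. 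For example, take ${\rm can}(K)=(\{1\},\langle\{2\},\{2\}\rangle)$ and ${\rm can}(K')=(\{2\},\langle\{3\},\{3\}\rangle)$ with your choices $c=1$, $m=2$. Then $X_1=X_2=U$ leaves both CMIs valid, and $m=2\in C'$ sends you back to the $C$-step. A contradiction is available here, e.g.\ via $3\in\mathbb{I}_{K'}\setminus\mathbb{I}_K$, but your plan does not say how to find one in general. The ``short case analysis'' you anticipate is exactly what is missing.

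The fix is one observation: your worry about $m_0\in C'$ is unfounded. Set $X_{m_0}=U$ and all other variables constant. Since $m_0\notin\mathbb{I}_{K'}$ and $m_0\notin P'=P$, every variable indexed by $\mathbb{I}_{K'}\cup P'$ is constant. So every term of $J$ for ${\rm can}(K')$ vanishes, whatever $C'$ is, while ${\rm can}(K)$ still fails because $m_0\notin C$. Hence $\mathbb{I}_K=\mathbb{I}_{K'}$ follows with no information about $C$ and $C'$. After that, your witness $m\in\mathbb{I}_K=\mathbb{I}_{K'}$ automatically avoids $C'$, and the $C$-step closes. This ordering is exactly the paper's proof: first $\mathbb{I}$, then $C$ split into the sub-cases $\mathbb{I}_K=\mathbb{I}_{K'}\neq\emptyset$ and $\mathbb{I}_K=\mathbb{I}_{K'}=\emptyset$ with $t=s\ge 2$.

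Alternatively, keep your order but take the $C$-step witnesses from the CMI that does \emph{not} condition on $c$. Since $K'$ is non-degenerate, pick $m\in\mathbb{I}_{K'}$, or pick $m_1,m_2$ in distinct $P'_j$. These lie outside $C'$ by Remark~\ref{remark-1}. Setting them and $X_c$ to $U$ makes $K$ valid, since everything is a function of $X_c$, and makes $K'$ invalid. This gives $C\subseteq C'$ from implication alone, as in Lemmas~\ref{IVlem1} and~\ref{IVlem2}.
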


%\begin{remark}
%	In Theorem~\ref{mainmainthm}, $K$ and $K'$ are assumed to be in pure form. In general, $K$ and $K'$ are not %necessarily in pure form. Then according to Theorem~\ref{mainmainthm}, $K \sim K'$ if and only if ${\rm can}({\rm pur}%(K)) = {\rm can}({\rm pur}(K'))$.
%\end{remark}

%\noindent
%{\it Proof of Theorem~\ref{mainmainthm}.} \ 
\begin{proof}
We first consider the case that either $K$ or $K'$ is degenerate, i.e., either
%
%By Lemma \ref{k=or=l=1}, if
		 $K=(\cdot,\langle\ \rangle)$ or $K'=(\cdot,\langle\ \rangle)$. 
	By Lemma~\ref{k=or=l=1}, $K \sim K'$ if and only if ${\rm can}(K) = {\rm can}(K')$, i.e., the theorem to be proved.
	
	Thus we only have to consider the case that
	 $K\neq(\cdot,\langle\ \rangle)$ and $K'\neq(\cdot,\langle\ \rangle)$, which imply $k\ge 2$ and $l\ge2$.
	Let ${\rm can}(K)=(C, \langle \mathbb{I}_K, \mathbb{I}_K, P_i, 1\le i\le t\rangle)$ and 
	${\rm can}(K')=(C',\langle \mathbb{I}_{K'}, \mathbb{I}_{K'}, P'_j, 1\le j\le s\rangle)$.
	%
	%By Lemma \ref{can=}, we need to prove that $K \sim K'$ if and only if  $C = C'$, $\mathbb{I}_{K'}=\mathbb{I}_{K}$ and $\langle P_i, 1\le i\le t\rangle = \langle P'_j, 1\le j\le s\rangle$. 	
	%
	Since `$\sim$' is transitive (because `$\sim$' is an equivalence relation), 
	it follows from Theorem \ref{k-cank} that if ${\rm can}(K) = {\rm can}(K')$, 
	then $K\sim {\rm can}(K) = {\rm can}(K')\sim K'$, implying that $K \sim K'$.
	Thus we only need to prove the converse, i.e., if $K\sim K'$, then ${\rm can}(K) = {\rm can}(K')$.
	By Theorem~\ref{prop1}, if $K\sim K'$, then $\langle P_i, 1\le i\le t\rangle=\langle P'_j, 1\le j\le s\rangle$.
	%By Theorem \ref{mainthmQ=Q'}, 
	So, together with Lemma \ref{can=}, we only need to prove that if $K\sim K'$ and $\langle P_i, 1\le i\le t\rangle=\langle P'_j, 1\le j\le s\rangle$, then $C=C'$ and $\mathbb{I}_{K}=\mathbb{I}_{K'}$.
	We will prove this by contradiction.
	Assume that $K\sim K'$ and $\langle P_i, 1\le i\le t\rangle = \langle P'_j, 1\le j\le s\rangle$. Let $P=\cup_{i=1}^{t}P_i$ and 
	$P'=\cup_{j=1}^{s}P'_j$, so that $P=P'$. Consider the following cases.
	
	\noindent {\bf Case 1.} $\mathbb{I}_K\ne \mathbb{I}_{K'}$
	
	Without loss of generality, assume $\mathbb{I}_{K'}\backslash\mathbb{I}_K\neq\emptyset$. Let $m_5\in\mathbb{I}_{K'}\backslash\mathbb{I}_K$. 
	Since we assume that $K\sim K'$,
	by Lemma \ref{LemmaIII-8}, we obtain $m_5\notin P$.
	By Remark \ref{remark-1},  we obtain
	\begin{equation}\label{cond-1b}
		m_5\in\mathbb{I}_{K'},\ 
		m_5\notin P',\ 
		m_5\notin C',\ 
		m_5\notin \mathbb{I}_{K},\ 
		m_5\notin P.
	\end{equation}
	
	We now construct a joint distribution for $X_1, \ldots, X_n$ as follows. Let $U$ be a  random variable such that $H(U) > 0$, and
	let 
	\[
	X_m = \left\{ \begin{array}{ll}
		U & \mbox{if $m = m_5$} \\
		0 & \mbox{otherwise}.
	\end{array} \right.
	\]
	
	By \eqref{cond-1b}, we obtain
	\begin{align*}
		H(X_{\mathbb{I}_K},X_{\mathbb{I}_K},X_{P_i},1\le i\le t|X_C)&=0\\
		2H(X_{\mathbb{I}_K}|X_C)+\sum\limits_{i=1}^{t}H(X_{P_i}|X_C)&=0\\
		H(X_{\mathbb{I}_{K'}},X_{\mathbb{I}_{K'}},X_{P'_j},1\le j\le s|X_{C'})&=H(X_{\mathbb{I}_{K'}})=H(X_{m_5})=H(U)\\
		2H(X_{\mathbb{I}_{K'}}|X_{C'})+\sum\limits_{j=1}^{s}H(X_{P'_j}|X_{C'})&=2H(X_{\mathbb{I}_{K'}})=H(X_{m_5})=2H(U).
	\end{align*}
	Thus we have
	\begin{align*}
		H(X_{\mathbb{I}_{K}},X_{\mathbb{I}_{K}},X_{P_i},1\le i\le t|X_{C})
		&= 2H(X_{\mathbb{I}_{K}}|X_{C})+\sum\limits_{i=1}^{t}H(X_{P_i}|X_{C})\\
		H(X_{\mathbb{I}_{K'}},X_{\mathbb{I}_{K'}},X_{P'_j},1\le j\le s|X_{C'})
		&\neq 2H(X_{\mathbb{I}_{K'}}|X_{C'})+\sum\limits_{j=1}^{s}H(X_{P'_j}|X_{C'}),
	\end{align*}
	implying $K$ is valid and $K'$ is invalid. 
	Therefore, $K\nsim K'$, a contradiction to our assumption that $K\sim K'$.
	
	\noindent {\bf Case 2.}  $\mathbb{I}_K= \mathbb{I}_{K'}$ and $C\ne C'$
	
	Since $C\ne C'$, without loss of generality, we assume $C'\backslash C\neq\emptyset$ and let $m_7\in C'\backslash C$. Since $P=P'$ and $\mathbb{I}_{K}=\mathbb{I}_{K'}$, by Remark \ref{remark-1}, we have
	\begin{equation}\label{con-m7}
		m_7\notin C, \ 
		m_7\notin P,\  
		m_7\notin \mathbb{I}_{K},\  
		m_7\in C', \ 
		m_7\notin P',\  
		m_7\notin \mathbb{I}_{K'}.
	\end{equation}
	
	Since $K$ and $K'$ are two non-degenerate CMIs and assumed in pure form, we have $K\neq (\cdot,\langle\ \rangle)$ and $K'\neq (\cdot,\langle\ \rangle)$.  By Lemma \ref{Lemiii5}, either
	one of the follow conditions holds:
	\begin{itemize}
		\item[i)] $\mathbb{I}_K= \mathbb{I}_{K'}\neq\emptyset$;
		
		\item[ii)] $\mathbb{I}_K= \mathbb{I}_{K'}=\emptyset$, $t\ge2$, and $s\ge2$.
	\end{itemize}
	
	\noindent {\bf Case 2a.} $\mathbb{I}_K= \mathbb{I}_{K'}\neq\emptyset$
	
	Let $m_6\in \mathbb{I}_K= \mathbb{I}_{K'}$. 
	Since we assume that $K\sim K'$, 
	by Remark \ref{remark-1} and Lemma \ref{LemmaIII-8}, we have
	\begin{equation}\label{con-m6}
		m_6\notin P, \ 
		m_6\in \mathbb{I}_K,\ 
		m_6\notin C,\ 
		m_6\notin P',\ 
		m_6\in \mathbb{I}_{K'},\ 
		m_6\notin C'.
	\end{equation}

	We now construct a joint distribution for $X_1, \ldots, X_n$ as follows. Let $U$ be a random variable such that $H(U) > 0$.
	Let 
	\[
	X_m = \left\{ \begin{array}{ll}
		U & \mbox{if $m = m_6,m_7$} \\
		0 & \mbox{otherwise}.
	\end{array} \right.
	\]
	By \eqref{con-m7} and \eqref{con-m6}, we obtain
	\begin{align*}
		H(X_{\mathbb{I}_K},X_{\mathbb{I}_K},X_{P_i},1\le i\le t|X_C)&=H(X_{\mathbb{I}_K}|X_C)=H(X_{m_6})=H(U)\\
		2H(X_{\mathbb{I}_K}|X_C)+\sum\limits_{i=1}^{t}H(X_{P_i}|X_C)&=2H(X_{\mathbb{I}_K}|X_C)=2H(X_{m_6})=2H(U)\\
		H(X_{\mathbb{I}_{K'}},X_{\mathbb{I}_{K'}},X_{P'_j},1\le j\le s|X_{C'})&=H(X_{\mathbb{I}_{K'}}|X_{C'})=H(X_{m_6}|X_{m_7})=H(U|U)=0\\
		2H(X_{\mathbb{I}_{K'}}|X_{C'})+\sum\limits_{j=1}^{s}H(X_{P'_j}|X_{C'})&=2H(X_{\mathbb{I}_{K'}}|X_{C'})=2H(X_{m_6}|X_{m_7})=2H(U|U)=0.
	\end{align*}
	Thus we have
	\begin{align*}
		H(X_{\mathbb{I}_{K}},X_{\mathbb{I}_{K}},X_{P_i},1\le i\le t|X_{C})
		&\neq 2H(X_{\mathbb{I}_{K}}|X_{C})+\sum\limits_{i=1}^{t}H(X_{P_i}|X_{C})\\
		H(X_{\mathbb{I}_{K'}},X_{\mathbb{I}_{K'}},X_{P'_j},1\le j\le s|X_{C'})
		&= 2H(X_{\mathbb{I}_{K'}}|X_{C'})+\sum\limits_{j=1}^{s}H(X_{P'_j}|X_{C'}),
	\end{align*}
	implying $K$ is invalid and $K'$ is valid. 
	Therefore, $K\nsim K'$, a contradiction to our assumption that $K\sim K'$.

		\noindent {\bf Case 2b.} $\mathbb{I}_K= \mathbb{I}_{K'}=\emptyset$, $t\ge2$, and $s\ge2$
		
		By $\langle P_i, 1\le i\le t\rangle = \langle P'_j, 1\le j\le s\rangle$ and Definition \ref{mainformula}, we have $t=s$. Without loss of generality, we assume
		$P_1=P'_1\neq\emptyset$ and $P_2=P'_2\neq\emptyset$.
		Let $m_{10}\in P_1=P'_1$ and $m_{11}\in P_2=P'_2$. Since we assume that $K\sim K'$, by Remark~\ref{remark-1}, we have
		\begin{align}
			&m_{10}\in P_1\subseteq P,\ 
			m_{10}\notin P_i, 2\le i\le t,\ 
			m_{10}\notin \mathbb{I}_K,\ 
			m_{10}\notin C\nonumber\\
			&m_{10}\in P'_1\subseteq P',\ 
			m_{10}\notin P'_j, 2\le j\le s,\ 
			m_{10}\notin \mathbb{I}_{K'},\ 
			m_{10}\notin C'.\ 
			\label{con-m10}
			\\
			&m_{11}\in P_2\subseteq P,\ 
			m_{11}\notin P_i,\  i\in\mathcal{N}_{t}\backslash\{2\},\ 
			m_{11}\notin \mathbb{I}_{K},\ 
			m_{11}\notin C\nonumber\\
			&m_{11}\in P'_2\subseteq P',\ 
			m_{11}\notin P'_j, \ 
			j\in\mathcal{N}_{s}\backslash\{2\},\ 
			m_{11}\notin \mathbb{I}_{K'},\ 
			m_{11}\notin C'.\label{con-m11}
		\end{align}

			We now construct a joint distribution for $X_1, \ldots, X_n$ as follows. Let $U$ be a random variables such that $H(U) > 0$.
			Let 
			\[
			X_m = \left\{ \begin{array}{ll}
				U & \mbox{if $m =m_7,m_{10},m_{11}$} \\
				%	V & \mbox{if $m=m_7$}\\
				0 & \mbox{otherwise}.
			\end{array} \right.
			\]
			By \eqref{con-m7}, \eqref{con-m10}, and \eqref{con-m11}, we obtain
			\begin{align*}
				H(X_{\mathbb{I}_K},X_{\mathbb{I}_K},X_{P_i},1\le i\le t|X_C)
				&=H(X_{P_{1}},X_{P_{2}}|X_C)
				=H(X_{m_{10}},X_{m_{11}})
				=H(U,U)=H(U)\\
				2H(X_{\mathbb{I}_K}|X_C)+\sum\limits_{i=1}^{t}H(X_{P_i}|X_C)
				&=H(X_{P_1}|X_C)+H(X_{P_2}|X_C)
				=H(X_{m_{10}})+H(X_{m_{11}})
				=2H(U)\\
				H(X_{\mathbb{I}_{K'}},X_{\mathbb{I}_{K'}},X_{P'_i},1\le j\le s|X_{C'})
				&=H(X_{P'_{1}},X_{P'_{2}}|X_{C'})
				=H(X_{m_{10}},X_{m_{11}}|X_{m_7})=H(U,U|U)=0\\
				2H(X_{\mathbb{I}_{K'}}|X_{C'})+\sum\limits_{j=1}^{s}H(X_{P'_i}|X_{C'})
				&=H(X_{P'_1}|X_{C'})+H(X_{P'_2}|X_{C'})
				=H(X_{m_{10}}|X_{m_{7}})+H(X_{m_{11}}|X_{m_{7}})
				=2H(U|U)=0,
			\end{align*}
			Thus we have
			\begin{align*}
				H(X_{\mathbb{I}_{K}},X_{\mathbb{I}_{K}},X_{P_i},1\le i\le t|X_{C})
				&\neq 2H(X_{\mathbb{I}_{K}}|X_{C})+\sum\limits_{i=1}^{t}H(X_{P_i}|X_{C})\\
				H(X_{\mathbb{I}_{K'}},X_{\mathbb{I}_{K'}},X_{P'_j},1\le j\le s|X_{C'})
				&= 2H(X_{\mathbb{I}_{K'}}|X_{C'})+\sum\limits_{j=1}^{s}H(X_{P'_j}|X_{C'}),
			\end{align*}
			implying $K$ is invalid and $K'$ is valid. 
			Therefore, $K\nsim K'$, a contradiction to our assumption that $K\sim K'$.
			
			Combining {\bf Case 1} and {\bf Case 2}, we have proved the converse and hence the theorem.
The proof is accomplished.
\end{proof}

\section{Implication of a CMI}
\label{sec-imply}

In probability problems, we are often given a CMI or a set of CMIs, and we need to determine whether another given CMI is logically implied. This very basic problem, referred to as the {\it implication problem}, has a
solution if only FCMIs are considered \cite{GeigerPearl1993} \cite{Yeung2002} (see also \cite[Section~12.2]{Yeung2008}). 
The general implication problem, however, has recently been proved to be undecidable if the number of random variables involved is unbounded \cite{Li2023}.\footnote{A problem is undecidable if it is a decision problem for which it is impossible to construct a single algorithm that always produces a correct ``yes'' or ``no'' answer for every possible input in finite time.}

The implication problem is subsumed by the $p$-{\it representability} problem, which studies 
the compatibility of CIs. The latter problem has been solved only up to four random variables \cite{Matus1999}, whose proof involves a variation of the constrained non-Shannon-type inequality reported in \cite{ZhangY97}.

It is noteworthy that the aforementioned problems are all subsumed by the problem of determining all achievable entropy functions (provided that the entropies of the random variables involved are finite), which not only is of fundamental importance in information theory, but also is intimately related to a number of subjects in information sciences (network coding theory, Kolmogorov complexity, cryptography), mathematics (probability theory, combinatorics, group theory, matrix theory, matroid theory), and physics (quantum mechanics). We refer the reader to \cite{Yeung2015} for an exposition on this topic.

In this section, we discuss the implication problem of a CMI, which is not necessarily in pure form. Specifically, for two CMIs $K$ and $K'$, we establish in Theorem~\ref{Kimplication} a necessary and sufficient condition for ``$K$ implies $K'$''.

\begin{define}
Let $K$ and $K'$ be two CMIs.
We say ``$K$ implies $K'$'' to mean that if $K$ is valid, then $K'$ is valid.
\end{define}

It is readily seen that $K\sim K'$ if and only if ``$K$ implies $K'$'' and ``$K'$ implies $K$''.

\begin{proposition}\label{equivalentCMI}
	Let $K$ and $K'$ be two CMIs. Then $K \sim K'$ if and only if ${\rm can}({\rm pur}(K)) = {\rm can}({\rm pur}(K'))$.
\end{proposition}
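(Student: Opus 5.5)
The plan is to reduce the statement to Theorem~\ref{mainmainthm} by passing through pure forms, using only that `$\sim$' is an equivalence relation. Theorem~\ref{mainmainthm} was proved under the standing assumption of Section~\ref{sec-char} that the CMIs involved are in pure form. Corollary~\ref{prop2} removes this restriction, since it gives $K\sim{\rm pur}(K)$ and $K'\sim{\rm pur}(K')$ for arbitrary CMIs $K$ and $K'$.

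For the ``if'' direction, I will assume ${\rm can}({\rm pur}(K))={\rm can}({\rm pur}(K'))$. Since ${\rm pur}(K)$ and ${\rm pur}(K')$ are CMIs in pure form, Theorem~\ref{mainmainthm} applied to them gives ${\rm pur}(K)\sim{\rm pur}(K')$. One could equally apply Theorem~\ref{k-cank} twice. Transitivity then yields
\[
K\sim{\rm pur}(K)\sim{\rm pur}(K')\sim K'.
\]

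For the ``only if'' direction, I will assume $K\sim K'$. Symmetry and transitivity of `$\sim$', together with Corollary~\ref{prop2}, give ${\rm pur}(K)\sim K\sim K'\sim{\rm pur}(K')$. Applying Theorem~\ref{mainmainthm} to the two pure-form CMIs ${\rm pur}(K)$ and ${\rm pur}(K')$ then gives ${\rm can}({\rm pur}(K))={\rm can}({\rm pur}(K'))$.

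The only point needing care is the degenerate case, where ${\rm pur}(K)$ may have $k\le 1$ sets, so that ${\rm can}$ returns $(\cdot,\langle\ \rangle)$ with $C$ immaterial. Theorem~\ref{mainmainthm} already covers this case via Lemma~\ref{k=or=l=1}, so no separate argument is needed. Apart from this, there is no real obstacle: the proposition is a direct corollary, and the substantive work was done in Theorems~\ref{prop1} and~\ref{mainmainthm}.
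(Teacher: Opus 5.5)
Your proposal is correct and follows essentially the same route as the paper: you use Corollary~\ref{prop2} and the transitivity of `$\sim$' to reduce $K\sim K'$ to ${\rm pur}(K)\sim{\rm pur}(K')$, and then apply Theorem~\ref{mainmainthm} to the two pure forms. Your side remarks, the alternative use of Theorem~\ref{k-cank} for the ``if'' direction and the degenerate case being already covered inside Theorem~\ref{mainmainthm}, are consistent with the paper and need no correction.
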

\begin{proof}
	By Corollary \ref{prop2} and the transitivity of `$\sim$', it can readily be shown that $K \sim K'$ if and only if ${\rm pur}(K) \sim {\rm pur}(K')$. By Theorem \ref{mainmainthm}, we have ${\rm pur}(K) \sim {\rm pur}(K')$ if and only if ${\rm can}({\rm pur}(K)) = {\rm can}({\rm pur}(K'))$. 
	Thus, $K \sim K'$ if and only if ${\rm can}({\rm pur}(K)) = {\rm can}({\rm pur}(K'))$.
\end{proof}

We give a quick summary of the rest of the section here. In Theorems~\ref{IVthm3} to~\ref{IVthm5}, we prove three necessary conditions for ``$K$ implies $K'$''. By combining these necessary conditions, we define a {\it sub-CMI} of $K$ in Definition~\ref{sub-collection-def}. Finally, we establish in Theorem~\ref{Kimplication} that ``$K$ implies $K'$'' if and only if $K'$ is a sub-CMI of $K$.
%{\color{red}Since this section is quite long, let us first give the structure of this section. On the whole, we give some necessary conditions for $K$ to imply $K'$, including the set inclusion relation between the two collections $K$ and $K'$. Then, starting from Definition IV.3, we are going to give some sufficient conditions for $K$ to imply $K'$. Based on these results, we finally give the main result of this section, Theorem \ref{Kimplication}, which gives a necessary and sufficient condition for ``$K$ implies $K'$''. We first state Lemmas \ref{IVlem1} and \ref{IVlem2} that are instrumental in proving these results. }
%Since the proofs of these two lemmas are very tedious, they are deferred to Appendices A-1 and A-2.

\begin{lemma}\label{emptyimplyempty}
	Let $K$ and $K'$ be two CMIs. If $K=(\cdot,\langle\ \rangle)$ and $K$ implies $K'$, then $K'=(\cdot,\langle\ \rangle)$.
\end{lemma}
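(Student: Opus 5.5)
The argument unwinds the definitions and needs no distribution construction. By the paper's convention, $K=(\cdot,\langle\ \rangle)$ means that $K$ is a degenerate CMI, i.e., $K$ is TRUE for every joint distribution of $X_1,\ldots,X_n$ (Definition~\ref{degenerateDef}).

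The first step is to fix an arbitrary joint distribution of $X_1,\ldots,X_n$. Under this distribution $K$ is valid, since it is degenerate.

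The second step applies the hypothesis that $K$ implies $K'$: because $K$ is valid under the chosen distribution, $K'$ is valid under it as well. Since the distribution was arbitrary, $K'$ is TRUE for every joint distribution. By Definition~\ref{degenerateDef}, $K'$ is therefore degenerate, and with the paper's notational convention this is exactly $K'=(\cdot,\langle\ \rangle)$.

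If one wants the conclusion in the stronger, structural sense, the plan is to note that $K'$ is now degenerate, so $K'\sim(\cdot,\langle\ \rangle)$. Proposition~\ref{empty-can} then gives ${\rm can}(K')=(\cdot,\langle\ \rangle)$, and Proposition~\ref{equivalentCMI} gives ${\rm can}({\rm pur}(K'))=(\cdot,\langle\ \rangle)$, which covers the case where $K'$ is not in pure form. There is no real obstacle here. The only point needing care is to read ``$=(\cdot,\langle\ \rangle)$'' as ``is degenerate,'' as the paper stipulates, rather than as literal syntactic equality of collections.
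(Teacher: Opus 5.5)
Your argument is correct and, because the paper stipulates that $K=(\cdot,\langle\ \rangle)$ means ``$K$ is degenerate'' (Definition~\ref{degenerateDef}), it is complete as written. It is also more direct than the paper's proof.

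The paper argues by contradiction. It assumes $K'\neq(\cdot,\langle\ \rangle)$ and invokes Lemma~\ref{Lemiii5} to get a structural dichotomy for ${\rm can}({\rm pur}(K'))$: either $\mathbb{I}_{K'}=\emptyset$ and $s\ge2$, or $\mathbb{I}_{K'}\neq\emptyset$. It then asserts that in each case some distribution makes $K'$ invalid while the degenerate $K$ stays valid.

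Under the semantic reading, that detour is redundant, because an invalidating distribution is exactly what non-degeneracy means. What the paper's route really buys is the structural reading: a CMI whose canonical form has $\mathbb{I}_{K'}\neq\emptyset$ or at least two blocks $P'_j$ is never degenerate, so ${\rm can}({\rm pur}(K'))$ is literally trivial.

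Your closing paragraph does not actually deliver that stronger form. Proposition~\ref{empty-can} is proved only from $K\sim{\rm can}(K)$ (Theorem~\ref{k-cank}). So it transfers degeneracy but says nothing about the shape of ${\rm can}(K')$. The degenerate case of Proposition~\ref{equivalentCMI} rests on it through Lemma~\ref{k=or=l=1}.

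If you want the structural version, add the constructions the paper alludes to, with every other variable constant:
\begin{itemize}
\item when $\mathbb{I}_{K'}\neq\emptyset$, set $X_m=U$ for one $m\in\mathbb{I}_{K'}$;
\item when $\mathbb{I}_{K'}=\emptyset$ and $s\ge2$, set $X_{m_1}=X_{m_2}=U$ with $m_1\in P'_1$ and $m_2\in P'_2$.
\end{itemize}
By Remark~\ref{remark-1}, $X_{C'}$ is constant in both cases. Then $J(X_{\mathbb{I}_{K'}},X_{\mathbb{I}_{K'}},X_{P'_j},1\le j\le s|X_{C'})=2H(U)-H(U)>0$, so $K'$ is not degenerate.
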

\begin{proof}
We will prove this lemma by contradiction.
We need to prove that if $K=(\cdot,\langle\ \rangle)$ and $K'\neq(\cdot,\langle\ \rangle)$, then $K$ cannot imply $K'$.
Let ${\rm can}({\rm pur}(K'))=(C, \langle \mathbb{I}_K, \mathbb{I}_K, P'_j, 1\le j\le s\rangle)$. Assume $K'\neq(\cdot,\langle\ \rangle)$. By Lemma \ref{Lemiii5}, either one of the following conditions holds,
\begin{itemize}
	\item[i)] $\mathbb{I}_{K'}=\emptyset$ and $s\ge2$; 
	\item[ii)] $\mathbb{I}_{K'}\neq\emptyset$. 
\end{itemize}

%{\color{blue} In the previous text, $K'$ was mistakenly written as $K$.}

Since $K=(\cdot,\langle\ \rangle)$ is valid for all distributions on $X_1,\ldots,X_n$. However, it can be readily seen that there exists some distribution on $X_1,\ldots,X_n$ such that $K'$ is invalid in case i) and ii), which contradicts that $K$ implies $K'$. 
\end{proof}

\begin{lemma}\label{IVlem1}
	Let $K$ and $K'$ be two CMIs, and let ${\rm can}({\rm pur}(K))=(C, \langle \mathbb{I}_K, \mathbb{I}_K, P_i, 1\le i\le t\rangle)$ and 
	${\rm can}({\rm pur}(K'))=(C',\langle \mathbb{I}_{K'}, \mathbb{I}_{K'}, P'_j, 1\le j\le s\rangle)$, where $s\ge2$. Assume $C\backslash C'\neq\emptyset$, and let $m_1\in C\backslash C'$, $m_2\in P'_{j_1}$, and $m_3\in P'_{j_2}$, where $1\le j_1<j_2\le s$. Then $K$ is valid and $K'$ is invalid for the joint distribution for $X_1,\ldots,X_n$ defined by 
\begin{align}\label{rconstruction-1a}
	X_m = \left\{ \begin{array}{ll}
		U & \mbox{if $m =m_1, m_2, m_3$} \\
		0 & \mbox{otherwise},
	\end{array} \right.
\end{align}
where $U$ is a random variable such that $H(U)>0$.
\end{lemma}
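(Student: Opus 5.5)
The plan is to reduce both validity questions to the canonical forms and then evaluate the quantity $J$ directly under the given distribution. By Corollary~\ref{prop2} and Theorem~\ref{k-cank}, $K\sim{\rm pur}(K)\sim{\rm can}({\rm pur}(K))$, and likewise for $K'$. So it suffices to show two things for the distribution \eqref{rconstruction-1a}: that $(C,\langle \mathbb{I}_K,\mathbb{I}_K,P_i,1\le i\le t\rangle)$ is valid, and that $(C',\langle \mathbb{I}_{K'},\mathbb{I}_{K'},P'_j,1\le j\le s\rangle)$ is invalid. Throughout I use the convention \eqref{varitionofcan}, so the degenerate and reduced cases are covered by the same formula.

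First I record the membership facts. By Remark~\ref{remark-1}, the sets $C'$, $\mathbb{I}_{K'}$ and $P'_j$, $1\le j\le s$, are pairwise disjoint. Since $m_2\in P'_{j_1}$ and $m_3\in P'_{j_2}$ with $j_1\neq j_2$, we get $m_2\neq m_3$ and $m_2,m_3\notin C'$. By hypothesis $m_1\in C$ and $m_1\notin C'$. Note that $m_1$ may coincide with neither, or may lie in some $P'_j$ or in $\mathbb{I}_{K'}$. I will arrange the argument so that no case split on the location of $m_1$ is needed. Under \eqref{rconstruction-1a}, every $X_m$ is either $U$ or a constant.

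\emph{Validity of $K$.} Because $m_1\in C$, we have $U=X_{m_1}$ as a component of $X_C$. Hence every $X_m$ is a function of $X_C$, so $H(X_A|X_C)=0$ for every index set $A$. Both terms of $J(X_{\mathbb{I}_K},X_{\mathbb{I}_K},X_{P_i},1\le i\le t|X_C)$ therefore vanish, and the canonical form of ${\rm pur}(K)$ is valid. If the canonical form is degenerate, it is valid trivially. Either way, $K$ is valid.

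\emph{Invalidity of $K'$.} Since $m_1,m_2,m_3\notin C'$, the variable $X_{C'}$ is constant, so conditioning on it does nothing. The joint term is
\[
H(X_{\mathbb{I}_{K'}},X_{\mathbb{I}_{K'}},X_{P'_j},1\le j\le s)\le H(U),
\]
and it in fact equals $H(U)$ because $m_2$ is among the indices. For the sum of marginal terms, nonnegativity gives
\[
2H(X_{\mathbb{I}_{K'}})+\sum_{j=1}^{s}H(X_{P'_j})\ \ge\ H(X_{P'_{j_1}})+H(X_{P'_{j_2}})=2H(U),
\]
because $X_{P'_{j_1}}$ contains $U=X_{m_2}$ and $X_{P'_{j_2}}$ contains $U=X_{m_3}$. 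Hence $J\ge H(U)>0$, so the canonical form of ${\rm pur}(K')$, and therefore $K'$, is invalid.

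The only delicate point is the possible location of $m_1$ inside $\mathbb{I}_{K'}$ or some $P'_j$. Using the inequality above rather than an exact evaluation of the sum avoids this case analysis entirely, since it needs only $s\ge2$ and the disjointness from Remark~\ref{remark-1}.
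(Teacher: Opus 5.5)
Your proposal is correct and follows essentially the same route as the paper. Validity of $K$ holds because $m_1\in C$ makes every variable a function of $X_C$. Invalidity of $K'$ holds because $X_{C'}$ is constant, the joint term equals $H(U)$, and the marginal terms for $P'_{j_1}$ and $P'_{j_2}$ alone give at least $2H(U)$, wherever $m_1$ lies. Your explicit appeal to Corollary~\ref{prop2} and Theorem~\ref{k-cank}, to justify evaluating $J$ on the canonical forms, is a small clarification that the paper leaves implicit.
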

\begin{proof}
	Let $P=\cup_{i=1}^{t}P_i$ and $\mathbb{P}=\mathbb{I}_{K}\cup P$.
	Since $m_1\in C\backslash C'$, by Remark
	 \ref{remark-1}, we have $m_1\in C$ and $m_1\notin \mathbb{P}$.
	
We first prove that $K$ is valid
for the joint distribution for $X_1,\ldots,X_n$ defined by \eqref{rconstruction-1a}.
By \eqref{rconstruction-1a}, all the random variables $X_m, m\in\mathcal{N}_n$ can only be equal to $U$ or $0$.
Since $m_1\in C$, we have
\begin{align*}
	H(X_{\mathbb{I}_{K}},X_{\mathbb{I}_{K}},X_{P_i},1\le i\le t|X_{C})
	&=H(X_{\mathbb{I}_{K}},X_{\mathbb{I}_{K}},X_{P_i},1\le i\le t|U)\\
	&=H(U|U)\ \mbox{or}\ H(0|U)\\
	&=0.
\end{align*}
Similarly, $H(X_{\mathbb{I}_{K}}|X_{C})=0$ and $H(X_{P_i}|X_{C})=0, 1\le i\le t$.
%By \eqref{rconstruction-1a}, no matter for $\{m_2,m_3\}\cap C=\emptyset$ or $\{m_2,m_3\}\cap C\neq\emptyset$, we have $X_{C}=\{U,0\}$.
%For cases $\{m_2,m_3\}\cap \mathbb{P}\neq\emptyset$ or $\{m_2,m_3\}\cap \mathbb{P}=\emptyset$,
%we always obtain from \eqref{rconstruction-1a} that $X_{\mathbb{P}}=\{U,0\}$ or $X_{\mathbb{P}}=\{0\}$.
%Thus $H(X_{\mathbb{I}_{K}},X_{\mathbb{I}_{K}},X_{P_i},1\le i\le t|X_{C})
%=0$.
%Since $\mathbb{I}_K\subseteq\mathbb{P}$ and $P_i\subseteq\mathbb{P}$, $1\le i\le t$, we obtain $H(X_{\mathbb{I}_{K}}|X_{C})=0$ and $H(X_{P_i}|X_{C})=0, 1\le i\le t$. 
Therefore,
\begin{align*}
H(X_{\mathbb{I}_{K}},X_{\mathbb{I}_{K}},X_{P_i},1\le i\le t|X_{C})
=2H(X_{\mathbb{I}_{K}}|X_{C})+\sum\limits_{i=1}^{t}H(X_{P_i}|X_{C})=0,
\end{align*}
implying that $K$ is valid.
	
	Next, we prove that $K'$ is invalid
	for the joint distribution for $X_1, X_2, \ldots,X_n$ defined by \eqref{rconstruction-1a}.
	Let 
	$P'=\cup_{j=1}^{s}P'_j$ and $\mathbb{P}'=\mathbb{I}_{K'}\cup P'$.
	Since $m_1\in C\backslash C'$, we have $m_1\notin C'$.
	Without loss of generality, assume $j_1=1$ and $j_2=2$, so that $m_2\in P'_1$ and $m_3\in P'_2$. Together with Remark \ref{remark-1}, we have $m_2\notin C'\cup \mathbb{I}_{K'}$ and $m_3\notin C'\cup \mathbb{I}_{K'}$. Thus by \eqref{rconstruction-1a}, we obtain $X=0$ for all $X\in X_{C'}$. Again by \eqref{rconstruction-1a}, no matter whether $m_1\in \mathbb{P}'$ or $m_1\notin \mathbb{P}'$, we have
	$	H(X_{\mathbb{I}_{K'}},X_{\mathbb{I}_{K'}},X_{P'_j},1\le j\le s|X_{C'})=H(U)>0$, and
	\begin{align*}
		&2H(X_{\mathbb{I}_{K'}}|X_{C'})+\sum\limits_{j=1}^{s}H(X_{P'_j}|X_{C'})\\
		&=2H(X_{\mathbb{I}_{K'}}|X_{C'})+H(X_{P'_1}|X_{C'})+H(X_{P'_2}|X_{C'})+\sum\limits_{j=3}^{s}H(X_{P'_j}|X_{C'})\\
		&=2H(X_{\mathbb{I}_{K'}}|X_{C'})+2H(U)+\sum\limits_{j=3}^{s}H(X_{P'_j}|X_{C'})\\
		&\ge 2H(U)\\
		&>0.
	\end{align*}
	Thus
	\begin{align*}
		H(X_{\mathbb{I}_{K'}},X_{\mathbb{I}_{K'}},X_{P'_j},1\le j\le s|X_{C'})
		&\neq 2H(X_{\mathbb{I}_{K'}}|X_{C'})+\sum\limits_{j=1}^{s}H(X_{P'_j}|X_{C'}),
	\end{align*}
	implying that $K'$ is invalid. 
\end{proof}

\begin{lemma}\label{IVlem2}
	Let $K$ and $K'$ be two CMIs, and let ${\rm can}({\rm pur}(K))=(C, \langle \mathbb{I}_K, \mathbb{I}_K, P_i, 1\le i\le t\rangle)$ and 
	${\rm can}({\rm pur}(K'))=(C',\langle \mathbb{I}_{K'}, \mathbb{I}_{K'}, P'_j, 1\le j\le s\rangle)$. Assume $C\backslash C'\neq\emptyset$ and $\mathbb{I}_{K'}\neq\emptyset$, and let $m_1\in C\backslash C'$ and $m_4\in \mathbb{I}_{K'}$. Then $K$ is valid and $K'$ is invalid for the joint distribution for $X_1,\ldots,X_n$ defined by
	\begin{align}\label{rconstruction-2a}
		X_m = \left\{ \begin{array}{ll}
			U & \mbox{if $m =m_1, m_4$} \\
			0 & \mbox{otherwise},
		\end{array} \right.
	\end{align}
	where $U$ is a random variable such that $H(U)>0$.
\end{lemma}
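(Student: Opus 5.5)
The argument follows the template of Lemma~\ref{IVlem1}: we check validity of each CMI directly under the distribution \eqref{rconstruction-2a}, working with the canonical forms. This is legitimate because $K\sim{\rm can}({\rm pur}(K))$ and $K'\sim{\rm can}({\rm pur}(K'))$ by Corollary~\ref{prop2} and Theorem~\ref{k-cank}.

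\textbf{Validity of $K$.} Since $m_1\in C$, the conditioning variable $X_C$ contains $X_{m_1}=U$. Every $X_m$ equals either $U$ or the constant $0$. Hence every conditional entropy given $X_C$ vanishes:
\[
H(X_{\mathbb{I}_K}|X_C)=0,\qquad H(X_{P_i}|X_C)=0 \ \text{for all } i,\qquad H(X_{\mathbb{I}_K},X_{\mathbb{I}_K},X_{P_i},1\le i\le t|X_C)=0.
\]
Both sides of the defining identity are therefore $0$, and ${\rm can}({\rm pur}(K))$, hence $K$, is valid. This step does not depend on where $m_4$ lies.

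\textbf{Invalidity of $K'$.} By Remark~\ref{remark-1}, the sets $C'$, $\mathbb{I}_{K'}$ and the $P'_j$ are pairwise disjoint. Since $m_4\in\mathbb{I}_{K'}$, we get $m_4\notin C'$, and $m_1\notin C'$ by hypothesis. So every variable in $X_{C'}$ is $0$, and conditioning on $X_{C'}$ has no effect. Then $H(X_{\mathbb{I}_{K'}}|X_{C'})=H(X_{\mathbb{I}_{K'}})=H(U)$, because $X_{m_4}=U$ and the other components are $U$ or $0$. The joint term $H(X_{\mathbb{I}_{K'}},X_{\mathbb{I}_{K'}},X_{P'_j},1\le j\le s|X_{C'})$ is the entropy of a tuple whose entries are all $U$ or $0$, with at least one entry equal to $U$, so it equals $H(U)$. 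On the other side,
\[
2H(X_{\mathbb{I}_{K'}}|X_{C'})+\sum_{j=1}^{s}H(X_{P'_j}|X_{C'})\ \ge\ 2H(U)\ >\ H(U).
\]
This holds whether or not $m_1$ lies in some $P'_j$: if it does, that $P'_j$ only adds another $H(U)$ to the left side of the inequality, while the joint entropy stays $H(U)$. The identity therefore fails, so $K'$ is invalid.

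\textbf{Degenerate case and main obstacle.} The only remaining concern is the case covered by \eqref{varitionofcan} with $t=0$ or $s=0$, where the canonical form reduces to $(C',\langle\mathbb{I}_{K'},\mathbb{I}_{K'}\rangle)$. The same computation still gives $H(U)\neq 2H(U)$. The main care point is bookkeeping: $m_1$ may or may not lie in $\mathbb{I}_{K'}\cup P'$, but in every case the joint entropy is $H(U)$ while the sum is at least $2H(U)$.
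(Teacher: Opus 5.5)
Your proposal is correct and follows essentially the same argument as the paper. $K$ is valid because $X_{m_1}=U$ lies in $X_C$, so every conditional entropy given $X_C$ vanishes. $K'$ is invalid because $m_1,m_4\notin C'$ makes $X_{C'}$ constant, so the joint term equals $H(U)$ while the sum is at least $2H(X_{\mathbb{I}_{K'}}|X_{C'})=2H(U)$, regardless of whether $m_1$ lies in $\mathbb{I}_{K'}\cup P'$.
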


\begin{proof}
	Let 
	$P=\cup_{i=1}^{t}P_i$ and $\mathbb{P}=\mathbb{I}_{K}\cup\left(\cup_{i=1}^{t}P_i\right)$.
	Since $m_1\in C\backslash C'$, we have $m_1\in C$ and $m_1\notin \mathbb{P}$.
	
	We first prove $K$ is valid
	for the joint distribution for $X_1,\ldots,X_n$ defined by \eqref{rconstruction-2a}.	
By \eqref{rconstruction-2a}, all $X_m, m\in\mathcal{N}_n$ can only be equal to $U$ or $0$. Since $m_1\in C$, following the steps in the proof of Lemma \ref{IVlem1} for $K$ being valid, we conclude that the same holds here.
%	\begin{align*}
%		H(X_{\mathbb{I}_{K}},X_{\mathbb{I}_{K}},X_{P_i},1\le i\le t|X_{C})
%		=2H(X_{\mathbb{I}_{K}}|X_{C})+\sum\limits_{i=1}^{t}H(X_{P_i}|X_{C})=0,
%	\end{align*}
%	implying $K$ is valid.
%%
%	\\[0.2cm]
%\ 
	
	Next, we prove that $K'$ is invalid
	for the joint distribution for $X_1,\ldots,X_n$ defined by \eqref{rconstruction-2a}.	
	Let 
	$P'=\cup_{j=1}^{s}P'_j$ and $\mathbb{P}'=\mathbb{I}_{K'}\cup\left(\cup_{j=1}^{s}P'_j\right)$. Since $m_1\in C\backslash C'$, we have $m_1\notin C'$.
	By Remark \ref{remark-1}, we have $m_4\notin C'$, and thus by \eqref{rconstruction-2a}, we have $X=0$ for all $X\in X_{C'}$. Then, no matter whether $m_1\in \mathbb{P}'$ or $m_1\notin \mathbb{P}'$, we obtain that
	$		H(X_{\mathbb{I}_{K'}},X_{\mathbb{I}_{K'}},X_{P'_j},1\le j\le s|X_{C'})=H(U)$, and
	$2H(X_{\mathbb{I}_{K'}}|X_{C'})+\sum\limits_{j=1}^{s}H(X_{P'_j}|X_{C'})\ge2H(X_{\mathbb{I}_{K'}}|X_{C'})=2H(U)$.
	Thus
	\begin{align*}
		H(X_{\mathbb{I}_{K'}},X_{\mathbb{I}_{K'}},X_{P'_j},1\le j\le s|X_{C'})
		&\neq 2H(X_{\mathbb{I}_{K'}}|X_{C'})+\sum\limits_{j=1}^{s}H(X_{P'_j}|X_{C'}),
	\end{align*}
	implying that $K'$ is invalid. 
\end{proof}

\begin{theorem}\label{IVthm1}
Let $K$ and $K'$ be two non-degenerate CMIs, and let ${\rm can}({\rm pur}(K))=(C, \langle \mathbb{I}_K, \mathbb{I}_K, P_i, 1\le i\le t\rangle)$ and 
${\rm can}({\rm pur}(K'))=(C',\langle \mathbb{I}_{K'}, \mathbb{I}_{K'}, P'_j, 1\le j\le s\rangle)$. If $K$ implies $K'$, then $C\subseteq C'$.
\end{theorem}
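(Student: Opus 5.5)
We argue by contraposition: assuming $C\not\subseteq C'$, we exhibit a joint distribution for $X_1,\ldots,X_n$ under which $K$ is valid but $K'$ is invalid, so that $K$ does not imply $K'$. Pick $m_1\in C\backslash C'$. All the work is already packaged in Lemmas~\ref{IVlem1} and~\ref{IVlem2}; what remains is to check that one of their hypotheses always holds when $K'$ is non-degenerate.

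Since $K'$ is non-degenerate, $K'\neq(\cdot,\langle\ \rangle)$, and by Corollary~\ref{prop2} and Proposition~\ref{empty-can} the same holds for ${\rm can}({\rm pur}(K'))$. Applying Lemma~\ref{Lemiii5} to ${\rm pur}(K')$ gives two cases:
\begin{itemize}
\item[i)] $\mathbb{I}_{K'}=\emptyset$ and $s\ge 2$;
\item[ii)] $\mathbb{I}_{K'}\neq\emptyset$.
\end{itemize}

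In Case i), since $s\ge2$, we choose $m_2\in P'_1$ and $m_3\in P'_2$; these are nonempty because the $P'_j$ in the canonical form are nonempty by Definition~\ref{def-can}. Lemma~\ref{IVlem1} then shows that the distribution \eqref{rconstruction-1a} makes $K$ valid and $K'$ invalid.

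In Case ii), we choose $m_4\in\mathbb{I}_{K'}$. Lemma~\ref{IVlem2} then shows that the distribution \eqref{rconstruction-2a} makes $K$ valid and $K'$ invalid.

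Validity and invalidity are defined through the canonical forms of the pure forms, and these are equivalent to $K$ and $K'$ respectively by Corollary~\ref{prop2} and Theorem~\ref{k-cank}. Hence in either case $K$ is valid while $K'$ is not, so $K$ does not imply $K'$. This contradicts the hypothesis, and therefore $C\subseteq C'$.

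The only delicate step is the case split, namely confirming that a non-degenerate $K'$ always satisfies the hypotheses of one of the two lemmas. Lemma~\ref{Lemiii5} settles this, together with the convention after Proposition~\ref{PropIII.5} that $s\neq1$ in the general form of the canonical form. Non-degeneracy of $K$ is not actually needed; it only keeps $C$ meaningful.
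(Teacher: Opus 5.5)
Your proof is correct and is essentially the paper's argument: pick $m_1\in C\backslash C'$, split via Lemma~\ref{Lemiii5} into the cases $\mathbb{I}_{K'}=\emptyset$ with $s\ge 2$ and $\mathbb{I}_{K'}\neq\emptyset$, and invoke Lemma~\ref{IVlem1} or Lemma~\ref{IVlem2} to obtain a distribution under which $K$ is valid and $K'$ is invalid. Your extra remarks, namely carrying non-degeneracy over to the canonical form and stating the equivalence $K\sim{\rm can}({\rm pur}(K))$ explicitly, only spell out steps the paper leaves implicit.
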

\begin{proof}
We will prove this theorem by contradiction. 
Assume $C\backslash C'\neq\emptyset$, and let $m_1\in C\backslash C'$. Since $K'\neq(\cdot,\langle\ \rangle)$, by Lemma \ref{Lemiii5}, either one of the follow conditions holds:
\begin{itemize}
	\item[i)] $\mathbb{I}_{K'}=\emptyset$ and $s\ge2$;
	\item[ii)] $\mathbb{I}_{K'}\neq\emptyset$.
\end{itemize}

\noindent {\bf Case 1.} $\mathbb{I}_{K'}=\emptyset$ and $s\ge2$

Without loss of generality, 
assume that $P'_1\neq\emptyset$ and $P'_2\neq\emptyset$, and 
let $m_2\in P'_1$ and $m_3\in P'_2$.
Consider the joint distribution for $X_1,\ldots,X_n$ defined by
\begin{align}\label{rconstruction}
	X_m = \left\{ \begin{array}{ll}
		U & \mbox{if $m =m_1, m_2, m_3$} \\
		0 & \mbox{otherwise},
	\end{array} \right.
\end{align}
where $U$ is a random variable such that $H(U)>0$. By Lemma \ref{IVlem1}, $K$ is valid while $K'$ is invalid. Therefore, $K$ does not imply $K'$, a contradiction to our assumption that $K$ implies $K'$.

\noindent {\bf Case 2.} $ \mathbb{I}_{K'}\neq\emptyset$

Let $m_4\in \mathbb{I}_{K'}$. Consider the joint distribution for $X_1,\ldots,X_n$ defined by
\begin{align}\label{rconstruction}
	X_m = \left\{ \begin{array}{ll}
		U & \mbox{if $m =m_1, m_4$} \\
		0 & \mbox{otherwise},
	\end{array} \right.
\end{align}
where $U$ is a random variable such that $H(U)>0$. By Lemma \ref{IVlem2}, $K$ is valid while $K'$ is invalid. Therefore, $K$ does not imply $K'$, a contradiction to our assumption that $K$ implies $K'$.

\vspace{12pt}
The proof is accomplished.
\end{proof}

\begin{cor}\label{IVCC''}
	Let $K$ and $K'$ be two non-degenerate CMIs, and let ${\rm can}({\rm pur}(K))=(C, \langle \mathbb{I}_K, \mathbb{I}_K, P_i, 1\le i\le t\rangle)$ and 
	${\rm can}({\rm pur}(K'))=(C',\langle \mathbb{I}_{K'}, \mathbb{I}_{K'}, P'_j, 1\le j\le s\rangle)$. 
	%	Let $K''=(C'\backslash\mathbb{I}_K,\langle \mathbb{I}_{K'}\backslash\mathbb{I}_K, \mathbb{I}_{K'}\backslash\mathbb{I}_K, P'_j\backslash\mathbb{I}_K, 1\le j\le s\rangle)$ and ${\rm can}({\rm pur}(K''))=(C'',\langle \mathbb{I}_{K''}, \mathbb{I}_{K''}, P''_j, 1\le j\le r\rangle)$.
	If $K$ implies $K'$, then $C\subseteq C'\backslash\mathbb{I}_K$.
\end{cor}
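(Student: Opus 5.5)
By Theorem~\ref{IVthm1}, $K$ implies $K'$ already gives $C\subseteq C'$. So it remains to show that $C\cap\mathbb{I}_K=\emptyset$, because then $C\subseteq C'$ forces $C\subseteq C'\backslash\mathbb{I}_K$. This disjointness is immediate from Remark~\ref{remark-1}: in ${\rm can}({\rm pur}(K))=(C,\langle \mathbb{I}_K,\mathbb{I}_K,P_i,1\le i\le t\rangle)$ the sets $C$, $\mathbb{I}_K$ and the $P_i$'s are pairwise disjoint.

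The justification of the remark is as follows. The CMI ${\rm pur}(K)$ is in pure form, so every $Q_i\cap C=\emptyset$. Each repeated index lies in some $Q_i$, hence $\mathbb{I}_K\subseteq\cup_i Q_i$ and $\mathbb{I}_K\cap C=\emptyset$.

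For the argument itself, take any $m\in C$. Theorem~\ref{IVthm1} gives $m\in C'$, and $m\notin\mathbb{I}_K$ because $C\cap\mathbb{I}_K=\emptyset$. Hence $m\in C'\backslash\mathbb{I}_K$, and the corollary follows.

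The only point needing care is bookkeeping. The symbol $\mathbb{I}_K$ here denotes the repeated-index set of ${\rm pur}(K)$, which is the set appearing in its canonical form, and we must use that the conditioning set $C$ is unchanged by both ${\rm pur}$ and ${\rm can}$. There is no genuine obstacle beyond this.
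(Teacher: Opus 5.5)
Your proposal is correct and is exactly the paper's argument: Theorem~\ref{IVthm1} gives $C\subseteq C'$, and the disjointness $C\cap\mathbb{I}_K=\emptyset$ from Remark~\ref{remark-1} then yields $C\subseteq C'\backslash\mathbb{I}_K$. Your extra justification of the disjointness is valid: the pure form gives $Q_i\cap C=\emptyset$, and $\mathbb{I}_K\subseteq\cup_i Q_i$.
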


\begin{proof}
	If $K$ implies $K'$, then by Theorem \ref{IVthm1}, we have $C\subseteq C'$. Since $C\cap \mathbb{I}_K=\emptyset$ by Remark \ref{remark-1}, we have $C\subseteq C'\backslash\mathbb{I}_K$.
\end{proof}

\begin{theorem}\label{IVthm2}
Let $K$ and $K'$ be two non-degenerate CMIs, and let ${\rm can}({\rm pur}(K))=(C, \langle \mathbb{I}_K, \mathbb{I}_K, P_i, 1\le i\le t\rangle)$ and 
${\rm can}({\rm pur}(K'))=(C',\langle \mathbb{I}_{K'}, \mathbb{I}_{K'}, P'_j, 1\le j\le s\rangle)$.
If $K$ implies $K'$, then $\mathbb{I}_{K'}\subseteq \mathbb{I}_{K}$.
\end{theorem}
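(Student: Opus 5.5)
We argue by contradiction, in the same style as Theorem~\ref{IVthm1}. Suppose $K$ implies $K'$ but $\mathbb{I}_{K'}\not\subseteq\mathbb{I}_K$, and pick $m_0\in\mathbb{I}_{K'}\backslash\mathbb{I}_K$. By Theorem~\ref{IVthm1} we already know $C\subseteq C'$. By Remark~\ref{remark-1}, $\mathbb{I}_{K'}\cap C'=\emptyset$, so $m_0\notin C'$ and hence $m_0\notin C$. So $m_0$ lies in neither $C$ nor $\mathbb{I}_K$. Within ${\rm can}({\rm pur}(K))$, either $m_0$ belongs to none of the sets, or it lies in exactly one $P_i$ (the $P_i$'s being disjoint from each other and from $\mathbb{I}_K$ and $C$).

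We then use the single-variable construction from Lemma~\ref{LemmaIII-8}. Let $U$ be a random variable with $H(U)>0$, and set $X_{m_0}=U$ and $X_m=0$ for all $m\neq m_0$.

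\textbf{$K$ is valid.} Every conditioning variable in $X_C$ is constant, and $X_{\mathbb{I}_K}$ is constant. If $m_0\notin P$, both sides of the identity defining validity of ${\rm can}({\rm pur}(K))$ are $0$. If $m_0\in P_{i_0}$ for a unique $i_0$, then
\[
H(X_{\mathbb{I}_K},X_{\mathbb{I}_K},X_{P_i},1\le i\le t|X_C)=H(U)=2H(X_{\mathbb{I}_K}|X_C)+\sum_{i=1}^{t}H(X_{P_i}|X_C).
\]
By Theorem~\ref{k-cank} and Corollary~\ref{prop2}, $K$ is therefore valid.

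\textbf{$K'$ is invalid.} Since $m_0\notin C'$, $X_{C'}$ is constant. Since $m_0\in\mathbb{I}_{K'}$, which is disjoint from all $P'_j$, the joint entropy of ${\rm can}({\rm pur}(K'))$ equals $H(U)$, while the sum of marginals is at least $2H(X_{\mathbb{I}_{K'}}|X_{C'})=2H(U)$. Hence $K'$ is invalid, contradicting the assumption that $K$ implies $K'$.

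The main point needing care is that $m_0\notin C$. Without it, conditioning on $X_C$ could trivialize $K$ in a way that does not transfer to $K'$. This is exactly where Theorem~\ref{IVthm1} is invoked, which is why its result is used before anything else. The remaining verification is bookkeeping with the disjointness in Remark~\ref{remark-1}.
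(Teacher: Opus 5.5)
Your proof is correct and follows essentially the same route as the paper's: take $m_0\in\mathbb{I}_{K'}\backslash\mathbb{I}_K$, set $X_{m_0}=U$ and all other $X_m=0$, and split on whether $m_0$ lies in some $P_i$. The one difference is that you invoke Theorem~\ref{IVthm1} to get $m_0\notin C$; the paper doesn't need this, and it is not the delicate point you describe. If $m_0\in P_{i_0}$, then $m_0\notin C$ already follows from Remark~\ref{remark-1}. If $m_0$ lies in none of $\mathbb{I}_K,P_1,\ldots,P_t$, then every term for ${\rm can}({\rm pur}(K))$ vanishes whether or not $m_0\in C$, and $m_0\in C$ would only make $K$ valid more trivially.
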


\begin{proof}
We will prove this theorem by contradiction.
Let 
$P=\cup_{i=1}^{t}P_i$.
Assume $\mathbb{I}_{K'}\backslash\mathbb{I}_{K}\neq\emptyset$, and let $m_1\in \mathbb{I}_{K'}\backslash\mathbb{I}_{K}$. 

\noindent {\bf Case 1.} $m_1\in P$

Without loss of generality, let $m_1\in P_1$.
Together with Remark \ref{remark-1}, we have
\begin{align}
	&m_1\notin C,\ m_1\notin \mathbb{I}_K,\ m_1\in P_1,\ m_1\notin P_i, 2\le i\le t,\ m_1\notin C',\  m_1\notin P'_j, 1\le j\le s,\  m_1\in\mathbb{I}_{K'}.\label{eq27a}
\end{align}
We now construct a joint distribution for $X_1, \ldots, X_n$ as follows. Let $U$ be a random variable such that $H(U)>0$.
Let 
\[
X_m = \left\{ \begin{array}{ll}
	U & \mbox{if $m =m_1$} \\
	0 & \mbox{otherwise}.
\end{array} \right.
\]
By \eqref{eq27a}, we obtain
\begin{align*}
	H(X_{\mathbb{I}_K},X_{\mathbb{I}_K},X_{P_i},1\le i\le t|X_C)
	&=H(X_{P_1}|X_C)
	=H(X_{m_1})=H(U)\\
	2H(X_{\mathbb{I}_K}|X_C)+\sum\limits_{i=1}^{t}H(X_{P_i}|X_C)
	&=H(X_{P_1}|X_C)
	=H(X_{m_1})=H(U)\\
	H(X_{\mathbb{I}_{K'}},X_{\mathbb{I}_{K'}},X_{P'_j},1\le j\le s|X_{C'})&=H(X_{\mathbb{I}_{K'}},X_{\mathbb{I}_{K'}}|X_{C'})=H(X_{m_1},X_{m_1})=H(U,U)=H(U)\\
	2H(X_{\mathbb{I}_{K'}}|X_{C'})+\sum\limits_{j=1}^{s}H(X_{P'_j}|X_{C'})
	&=2H(X_{\mathbb{I}_{K'}}|X_{C'})
	=2H(X_{m_1})
	=2H(U).
\end{align*}
Thus we have
\begin{align*}
	H(X_{\mathbb{I}_{K}},X_{\mathbb{I}_{K}},X_{P_i},1\le i\le t|X_{C})
	&= 2H(X_{\mathbb{I}_{K}}|X_{C})+\sum\limits_{i=1}^{t}H(X_{P_i}|X_{C})\\
	H(X_{\mathbb{I}_{K'}},X_{\mathbb{I}_{K'}},X_{P'_j},1\le j\le s|X_{C'})
	&\neq 2H(X_{\mathbb{I}_{K'}}|X_{C'})+\sum\limits_{j=1}^{s}H(X_{P'_j}|X_{C'}),
\end{align*}
implying $K$ is valid and $K'$ is invalid. 
Therefore, $K$ does not imply $K'$, a contradiction to our assumption that $K$ implies $K'$.

\noindent {\bf Case 2.} $m_1\notin P$

Together with Remark \ref{remark-1}, we have
\begin{align}
	m_1\notin \mathbb{I}_K,\ m_1\notin P,\ m_1\notin C',\  m_1\notin P'_j, 1\le j\le s,\  m_1\in\mathbb{I}_{K'}.\label{thmIV2m1case1}
\end{align}
We now construction a joint distribution for $X_1, \ldots, X_n$ as follows. Let $U$ be a random variable such that $H(U)>0$.
Let 
\[
X_m = \left\{ \begin{array}{ll}
	U & \mbox{if $m =m_1$} \\
	0 & \mbox{otherwise}.
\end{array} \right.
\]
By \eqref{thmIV2m1case1}, we obtain
\begin{align*}
	H(X_{\mathbb{I}_K},X_{\mathbb{I}_K},X_{P_i},1\le i\le t|X_C)
	&=0\\
	2H(X_{\mathbb{I}_K}|X_C)+\sum\limits_{i=1}^{t}H(X_{P_i}|X_C)
	&=0\\
	H(X_{\mathbb{I}_{K'}},X_{\mathbb{I}_{K'}},X_{P'_j},1\le j\le s|X_{C'})&=H(X_{\mathbb{I}_{K'}},X_{\mathbb{I}_{K'}}|X_{C'})=H(X_{m_1},X_{m_1})=H(U,U)=H(U)\\
	2H(X_{\mathbb{I}_{K'}}|X_{C'})+\sum\limits_{j=1}^{s}H(X_{P'_j}|X_{C'})
	&=2H(X_{\mathbb{I}_{K'}}|X_{C'})
	=2H(X_{m_1})
	=2H(U).
\end{align*}
Thus we have
\begin{align*}
	H(X_{\mathbb{I}_{K}},X_{\mathbb{I}_{K}},X_{P_i},1\le i\le t|X_{C})
	&= 2H(X_{\mathbb{I}_{K}}|X_{C})+\sum\limits_{i=1}^{t}H(X_{P_i}|X_{C})\\
	H(X_{\mathbb{I}_{K'}},X_{\mathbb{I}_{K'}},X_{P'_j},1\le j\le s|X_{C'})
	&\neq 2H(X_{\mathbb{I}_{K'}}|X_{C'})+\sum\limits_{j=1}^{s}H(X_{P'_j}|X_{C'}),
\end{align*}
implying $K$ is valid and $K'$ is invalid. 
Therefore, $K$ does not imply $K'$, a contradiction to our assumption that $K$ implies $K'$.

\vspace{12pt}
The proof is accomplished.
\end{proof}

%{\color{red}\rule{10cm}{0.1cm}}

\begin{define}\label{RKK'}
Let $K$ and $K'$ be two CMIs, and let ${\rm can}({\rm pur}(K))=(C, \langle \mathbb{I}_K, \mathbb{I}_K, P_i, 1\le i\le t\rangle)$ and ${\rm can}({\rm pur}(K'))=(C',\langle \mathbb{I}_{K'}, \mathbb{I}_{K'}, P'_j, 1\le j\le s\rangle)$. 
Let $T_l=P'_{i_l}\backslash \mathbb{I}_K$, with $1\le l\le u$ and $1\le i_1\le \cdots\le i_u\le s$, be the nonempty sets of $P'_{j}\backslash \mathbb{I}_K$'s.
The CMI
\begin{equation}
R_{K}^{K'} = \left\{ \begin{array}{ll}
( \cdot, \langle \ \rangle) & \mbox{if $\mathbb{I}_{K'}\backslash\mathbb{I}_K = \emptyset$ and $u=0,1$} \\
(C'\backslash\mathbb{I}_K,\langle T_l, 1\le l\le u\rangle) & \mbox{if $\mathbb{I}_{K'}\backslash\mathbb{I}_K = \emptyset$ and $u \ge 2$} \\
(C'\backslash\mathbb{I}_K,\langle \mathbb{I}_{K'}\backslash\mathbb{I}_K, \mathbb{I}_{K'}\backslash\mathbb{I}_K \rangle & \mbox{if $\mathbb{I}_{K'}\backslash\mathbb{I}_K \ne \emptyset$ and $ u =0,1$} \\
(C'\backslash\mathbb{I}_K,\langle \mathbb{I}_{K'}\backslash\mathbb{I}_K, \mathbb{I}_{K'}\backslash\mathbb{I}_K, T_l, 1\le l\le u\rangle) & \mbox{if $\mathbb{I}_{K'}\backslash\mathbb{I}_K \ne \emptyset$ and $ u \ge 2$}
\end{array} \right.
\end{equation} 
is called ``$K'$ conditioning on $K$''.
\end{define}

Note that if $K$ is degenerate, ${\rm can}({\rm pur}(K))$ is also degenerate by Proposition \ref{empty-can}. Then by \eqref{varitionofcan}, $\mathbb{I}_K=\emptyset$. Therefore, if $K$ is degenerate, then $R^{K'}_{K}={\rm can}({\rm pur}(K'))$. 

For the convenience of discussion, in the rest of the paper, we will regard $(C'\backslash\mathbb{I}_K,\langle \mathbb{I}_{K'}\backslash\mathbb{I}_K, \mathbb{I}_{K'}\backslash\mathbb{I}_K, P'_j\backslash\mathbb{I}_K, 1\le j\le s\rangle)$ as the general form of $R_{K}^{K'}$ and adopt the following denotation:
\begin{equation}
(C'\backslash\mathbb{I}_K,\langle \mathbb{I}_{K'}\backslash\mathbb{I}_K, \mathbb{I}_{K'}\backslash\mathbb{I}_K, P'_j\backslash\mathbb{I}_K, 1\le j\le s\rangle) 
\triangleq \left\{ \begin{array}{ll}
( \cdot, \langle \ \rangle) & \mbox{if $\mathbb{I}_{K'}\backslash\mathbb{I}_K = \emptyset$ and $s=0,1$} \\
(C'\backslash\mathbb{I}_K,\langle P'_j\backslash\mathbb{I}_K, 1\le j\le s\rangle) & \mbox{if $\mathbb{I}_{K'}\backslash\mathbb{I}_K = \emptyset$ and $s \ge 2$} \\
(C'\backslash\mathbb{I}_K,\langle \mathbb{I}_{K'}\backslash\mathbb{I}_K, \mathbb{I}_{K'}\backslash\mathbb{I}_K \rangle & \mbox{if $\mathbb{I}_{K'}\backslash\mathbb{I}_K \ne \emptyset$ and $ s =0$.} 	
\end{array} \right.
\end{equation}

\begin{proposition}\label{remark-1a}
	Let $K$ and $K'$ be two non-degenerate CMIs. Let ${\rm can}({\rm pur}(K))=(C, \langle \mathbb{I}_K, \mathbb{I}_K, P_i, 1\le i\le t\rangle)$, and ${\rm can}({\rm pur}(K'))=(C',\langle \mathbb{I}_{K'}, \mathbb{I}_{K'}, P'_j, 1\le j\le s\rangle)$. Let $K''=R^{K'}_{K}$ and ${\rm can}({\rm pur}(K''))=(C'',\langle \mathbb{I}_{K''}, \mathbb{I}_{K''}, P''_j, 1\le j\le r\rangle)$. 
	Then 
	\begin{itemize}
	\item[i)] $C''\cap \mathbb{I}_K=\emptyset$, and $P''_j\cap\mathbb{I}_K=\emptyset$, for $1\le j\le r$;
	\item[ii)] If $K$ implies $K'$, then  $\mathbb{I}_{K''}=\emptyset$ and ${\rm can}({\rm pur}(K'')=(C'',\langle P''_j, 1\le j\le r\rangle)$;
	\item[iii)] If $K''\neq(\cdot,\langle\ \rangle)$, then $r\ge2$.
	\end{itemize}
\end{proposition}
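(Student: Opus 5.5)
We prove the three items in order. Throughout, the pieces of ${\rm can}({\rm pur}(K'))$ are disjoint by Remark~\ref{remark-1}: $C'$, $\mathbb{I}_{K'}$ and the $P'_j$. The general form of $K''=R^{K'}_K$ is $(C'\backslash\mathbb{I}_K,\langle \mathbb{I}_{K'}\backslash\mathbb{I}_K,\mathbb{I}_{K'}\backslash\mathbb{I}_K,T_l,1\le l\le u\rangle)$. Since $K''$ is built from these sets, it is already in pure form. Its conditioning set $C'\backslash\mathbb{I}_K$ is disjoint from $\mathbb{I}_{K'}\backslash\mathbb{I}_K$ and from each $T_l\subseteq P'_{i_l}$. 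Every component is nonempty by construction. Hence ${\rm pur}(K'')=K''$, and $C''=C'\backslash\mathbb{I}_K$ whenever $K''$ is non-degenerate.

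\emph{Computing the canonical form.} We next compute $\mathbb{I}_{K''}$ directly.
\begin{itemize}
\item $\mathbb{I}_{K'}\backslash\mathbb{I}_K$ occurs twice, so it is contained in $\mathbb{I}_{K''}$.
\item The sets $T_l$ are pairwise disjoint and disjoint from $\mathbb{I}_{K'}$, so no index of any $T_l$ is repeated.
\end{itemize}
Thus $\mathbb{I}_{K''}=\mathbb{I}_{K'}\backslash\mathbb{I}_K$. Each $T_l\backslash\mathbb{I}_{K''}=T_l$, so the $P''_j$ are exactly the $T_l$, with $r=u$. In other words, $K''$ is its own canonical form (compare Proposition~\ref{PropIII.5}) in the non-degenerate cases of Definition~\ref{RKK'}.

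\emph{Items i) and ii).} Item i) is now immediate: $C''=C'\backslash\mathbb{I}_K$ and each $P''_j=T_l\subseteq P'_{i_l}\backslash\mathbb{I}_K$ avoid $\mathbb{I}_K$. In the degenerate case the convention \eqref{varitionofcan} gives empty pieces, so there is nothing to check. For ii), assume $K$ implies $K'$; both are non-degenerate. Theorem~\ref{IVthm2} gives $\mathbb{I}_{K'}\subseteq\mathbb{I}_K$, hence $\mathbb{I}_{K''}=\mathbb{I}_{K'}\backslash\mathbb{I}_K=\emptyset$. The canonical form then reduces to $(C'',\langle P''_j,1\le j\le r\rangle)$ via \eqref{varitionofcan}. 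This covers both the case $u\ge2$ and the degenerate case $u\le1$, where the form is $(\cdot,\langle\ \rangle)$ with $r=0$.

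\emph{Item iii).} If $K''\neq(\cdot,\langle\ \rangle)$, then by Definition~\ref{RKK'} we are in one of the last three cases.
\begin{itemize}
\item If $u\ge2$, then $r=u\ge2$.
\item If $\mathbb{I}_{K'}\backslash\mathbb{I}_K\neq\emptyset$ and $u\in\{0,1\}$, the definition already drops the $T_l$, giving $(C'',\langle\mathbb{I}_{K''},\mathbb{I}_{K''}\rangle)$. Then $t=0$ in the sense of \eqref{varitionofcan}, i.e.\ $r=0$.
\end{itemize}
The second case clashes with iii) as stated, so iii) must be read under the same convention as the remark below Proposition~\ref{PropIII.5}: $r\neq1$, so $r\ge1$ forces $r\ge2$.

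\emph{Main obstacle.} The hard part is this bookkeeping of degenerate conventions. I would verify that each case of Definition~\ref{RKK'} matches the corresponding case of \eqref{eq-def-can}, and state explicitly that $r\ge2$ whenever the $P''$-part is nonempty. I would also note that under the hypothesis of ii), the case $\mathbb{I}_{K''}\neq\emptyset$ never arises. Then non-degeneracy of $K''$ forces $u\ge2$ by Lemma~\ref{Lemiii5}, so $r\ge2$ genuinely holds in the situation where the proposition is used.
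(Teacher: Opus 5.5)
Your proof of i) and ii) is essentially the paper's. You identify $C''=C'\backslash\mathbb{I}_K$, $\mathbb{I}_{K''}=\mathbb{I}_{K'}\backslash\mathbb{I}_K$, and the $P''_j$ as the surviving nonempty $P'_j\backslash\mathbb{I}_K$, and then apply Theorem~\ref{IVthm2}. One small slip: your intermediate claim $r=u$ fails in the third case of Definition~\ref{RKK'} when $u=1$, because the lone $T_1$ is dropped and $r=0$; you use this correctly later, so it does no harm.

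For iii), your caveat is correct, and it is sharper than the paper, whose proof simply asserts that non-degeneracy of $K''$ forces $r\ge2$. Take $K=(\emptyset,\langle\{1\},\{2\}\rangle)$ and $K'=(\emptyset,\langle\{3\},\{3\}\rangle)$. Then $K''=(\emptyset,\langle\{3\},\{3\}\rangle)$ is non-degenerate with $r=0$, so iii) as stated is false. What holds is $r\neq1$ in general, and $r\ge2$ under the hypothesis of ii) by Lemma~\ref{Lemiii5}. That is exactly the form in which Theorems~\ref{IVthm3}, \ref{IVthm3a} and \ref{IVthm5} invoke it, so your repaired reading is the right one.
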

\begin{proof}
	By Definitions \ref{def-Pur} and \ref{def-can}, we see that $C''=C'\backslash\mathbb{I}_K$. Thus, $C''\cap\mathbb{I}_K=\emptyset$. Also, since the repeated indices of $\langle \mathbb{I}_{K'}\backslash\mathbb{I}_K, \mathbb{I}_{K'}\backslash\mathbb{I}_K, P'_j\backslash\mathbb{I}_K, 1\le j\le s\rangle$ can only be contained in $\mathbb{I}_{K'}\backslash\mathbb{I}_K$, we have $\mathbb{I}_{K''}=\mathbb{I}_{K'}\backslash\mathbb{I}_K$, and $P''_j, 1\le j\le r$ are the nonempty sets of $P'_j\backslash\mathbb{I}_{K}, 1\le j\le s$, which implies that $P''_j\cap\mathbb{I}_K=\emptyset$, for $1\le j\le r$.
	Thus, i) is proved.

% {\color{blue}Because $P''_j, 1\le j\le r$ are the nonempty sets of $P'_j\backslash\mathbb{I}_{K}, 1\le j\le s$, $(P'_j\backslash\mathbb{I}_{K})\cap \mathbb{I}_K=\emptyset$, we obtain directly that $P''_j\cap\mathbb{I}_K=\emptyset$. So "Remark III.2" is not needed?
% }

If $K$ implies $K'$, then by Theorem \ref{IVthm2},  $\mathbb{I}_{K'}\backslash\mathbb{I}_{K}=\emptyset$. Thus $\mathbb{I}_{K''}=\emptyset$, which means that ${\rm can}({\rm pur}(K'')=(C'',\langle P''_j, 1\le j\le r\rangle)$.
Thus, ii) is proved.

Finally, if $K''\neq(\cdot,\langle\ \rangle)$, then ${\rm can}({\rm pur}(K''))\neq(\cdot,\langle\ \rangle)$, impling that $r\ge2$.
This proves iii).
\end{proof}

\begin{proposition}\label{K-K'=K-KK'}
	Let $K$ and $K'$ be two non-degenerate CMIs. Let ${\rm can}({\rm pur}(K))=(C, \langle \mathbb{I}_K, \mathbb{I}_K, P_i, 1\le i\le t\rangle)$ and ${\rm can}({\rm pur}(K'))=(C',\langle \mathbb{I}_{K'}, \mathbb{I}_{K'}, P'_j, 1\le j\le s\rangle)$.
\newcounter{cond}
\begin{list}%
{\arabic{cond})}{\usecounter{cond}}
\item
If $R_{K}^{K'}= (\cdot,\langle\ \rangle)$, then $K$ implies $K'$ if and only if $C\subseteq C'$.
\item 
If $R_{K}^{K'} \ne (\cdot,\langle\ \rangle)$, then $K$ implies $K'$ if and only if $K$ implies $R_{K}^{K'}$.
\end{list}
\end{proposition}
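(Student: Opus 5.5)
The plan rests on one entropy identity. Write $K_c={\rm can}({\rm pur}(K))$ and $K'_c={\rm can}({\rm pur}(K'))$. Assume that $K$ is valid and that $C\subseteq C'$. I want to show that, under these two assumptions,
\[
J\bigl(X_{\mathbb{I}_{K'}},X_{\mathbb{I}_{K'}},X_{P'_j},1\le j\le s\,\big|\,X_{C'}\bigr)
=J\bigl(X_{\mathbb{I}_{K'}\backslash\mathbb{I}_K},X_{\mathbb{I}_{K'}\backslash\mathbb{I}_K},X_{P'_j\backslash\mathbb{I}_K},1\le j\le s\,\big|\,X_{C'\backslash\mathbb{I}_K}\bigr).
\]
The right-hand side is exactly the $J$-expression of $R_K^{K'}$ in its general form. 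By Corollary~\ref{prop2} and Theorem~\ref{k-cank}, $K\sim K_c$ and $K'\sim K'_c$, so validity can be tested on the canonical forms throughout.

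To prove the identity, I start from Lemma~\ref{cor-1}: validity of $K$ gives $H(X_{\mathbb{I}_K}|X_C)=0$. Remark~\ref{remark-1} gives $C\cap\mathbb{I}_K=\emptyset$, so $C\subseteq C'\backslash\mathbb{I}_K$. By monotonicity of conditioning, $H(X_{\mathbb{I}_K}|X_{C'\backslash\mathbb{I}_K})=0$. Hence, for every index set $A$,
\[
H(X_A|X_{C'})=H(X_A|X_{C'\cup\mathbb{I}_K})=H(X_{A\backslash\mathbb{I}_K}|X_{C'\backslash\mathbb{I}_K}).
\]
The first equality holds because $X_{\mathbb{I}_K}$ is a function of $X_{C'\backslash\mathbb{I}_K}$. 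The second holds by the argument of Lemma~\ref{LemIII.2}, now with conditioning set $C'\cup\mathbb{I}_K$, which contains $\mathbb{I}_K$. I apply this to every term of $J$: the joint term, the two copies of $H(X_{\mathbb{I}_{K'}}|X_{C'})$, and each $H(X_{P'_j}|X_{C'})$. This yields the identity. Empty sets $P'_j\backslash\mathbb{I}_K$ drop out with zero contribution, which matches the passage to the $T_l$'s and the conventions (\ref{varitionofcan}) for the degenerate subcases. As a consequence, under the two assumptions, $K'$ is valid if and only if $R_K^{K'}$ is valid.

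\textbf{Part 1.} The ``only if'' direction is Theorem~\ref{IVthm1}. For ``if'', suppose $K$ is valid and $C\subseteq C'$. Since $R_K^{K'}=(\cdot,\langle\ \rangle)$, we have $\mathbb{I}_{K'}\backslash\mathbb{I}_K=\emptyset$ and at most one $T_l$ is nonempty. Then the right-hand side of the identity is a $J$ with at most one nonempty argument, so it vanishes by Proposition~\ref{J=0b}. Hence $K'$ is valid.

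\textbf{Part 2.} For ``only if'', suppose $K$ implies $K'$. Theorem~\ref{IVthm1} gives $C\subseteq C'$, so whenever $K$ is valid, $K'$ is valid and the identity transfers validity to $R_K^{K'}$.

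For ``if'', suppose $K$ implies $R_K^{K'}$. Since $R_K^{K'}$ is non-degenerate, Theorem~\ref{IVthm1} applies to the pair $K,R_K^{K'}$. By Proposition~\ref{remark-1a}, the conditioning set of ${\rm can}({\rm pur}(R_K^{K'}))$ is $C'\backslash\mathbb{I}_K$. Therefore $C\subseteq C'\backslash\mathbb{I}_K\subseteq C'$. Now if $K$ is valid, then $R_K^{K'}$ is valid by hypothesis, and the identity makes $K'$ valid.

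The main obstacle is the careful bookkeeping in the identity. I must check that the duplicated $\mathbb{I}_{K'}$ terms reduce correctly to $\mathbb{I}_{K'}\backslash\mathbb{I}_K$ in every one of the four cases of Definition~\ref{RKK'}. I must also check that $C\subseteq C'$ really suffices for replacing $C'$ by $C'\backslash\mathbb{I}_K$, even though $C'$ may meet $\mathbb{I}_K$.
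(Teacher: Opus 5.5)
Your proposal is correct and follows essentially the same route as the paper. Your identity is exactly the paper's Lemma~\ref{tech}, proved the same way from $H(X_{\mathbb{I}_K}|X_C)=0$ and $C\subseteq C'\backslash\mathbb{I}_K$. The four directions are then handled as in the paper, with Theorem~\ref{IVthm1} supplying $C\subseteq C'$ in both ``only if'' directions and, applied to the pair $(K,R_K^{K'})$, in the ``if'' direction of Part~2. There your explicit step $C\subseteq C'\backslash\mathbb{I}_K\subseteq C'$ spells out what the paper compresses.
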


We first prove a technical lemma to be used in the proof of Proposition~\ref{K-K'=K-KK'}.

\begin{lemma} \label{tech}
Let $K$ and $K'$ be two non-degenerate CMIs. Let ${\rm can}({\rm pur}(K))=(C, \langle \mathbb{I}_K, \mathbb{I}_K, P_i, 1\le i\le t\rangle)$ and ${\rm can}({\rm pur}(K'))=(C',\langle \mathbb{I}_{K'}, \mathbb{I}_{K'}, P'_j, 1\le j\le s\rangle)$. If $K$ is valid and $C \subseteq C'$, then 
\[
J(X_{\mathbb{I}_{K'}}, X_{\mathbb{I}_{K'}}, X_{P'_j}, 1\le j \le s |X_{C'})
=
J(X_{\mathbb{I}_{K'}\backslash \mathbb{I}_{K}},X_{\mathbb{I}_{K'}\backslash \mathbb{I}_{K}},X_{P'_j \backslash \mathbb{I}_{K}},1\le j \le s|X_{C'\backslash\mathbb{I}_K}) .
\label{erqotf}
\]
\end{lemma}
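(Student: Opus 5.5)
The plan is to use the key consequence of validity of $K$: by Lemma~\ref{cor-1} applied to ${\rm pur}(K)$ (note that ${\rm pur}(K)\sim K$ by Corollary~\ref{prop2}, and ${\rm can}({\rm pur}(K))\sim{\rm pur}(K)$ by Theorem~\ref{k-cank}), validity of $K$ gives $H(X_{\mathbb{I}_K}|X_C)=0$. Since $C\subseteq C'$ and conditioning does not increase entropy, we obtain
\[
H(X_{\mathbb{I}_K}|X_{C'\backslash\mathbb{I}_K})\le H(X_{\mathbb{I}_K}|X_C)=0,
\]
where we use $C\subseteq C'\backslash\mathbb{I}_K$, which holds because $C\cap\mathbb{I}_K=\emptyset$ by Remark~\ref{remark-1}. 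Thus $X_{\mathbb{I}_K}$ is a function of $X_{C'\backslash\mathbb{I}_K}$. This single fact drives everything.

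Next I expand both sides via Definition~\ref{mainformula}. For any index set $A$, the variables $X_{\mathbb{I}_K}$ are determined by $X_{C'\backslash\mathbb{I}_K}$, so
\[
H(X_A|X_{C'})=H(X_A|X_{C'\backslash\mathbb{I}_K},X_{C'\cap\mathbb{I}_K})=H(X_A|X_{C'\backslash\mathbb{I}_K}),
\]
since adjoining a function of the conditioning variables leaves the conditional entropy unchanged. Moreover,
\[
H(X_A|X_{C'\backslash\mathbb{I}_K})=H(X_{A\backslash\mathbb{I}_K},X_{A\cap\mathbb{I}_K}|X_{C'\backslash\mathbb{I}_K})=H(X_{A\backslash\mathbb{I}_K}|X_{C'\backslash\mathbb{I}_K}),
\]
by the chain rule together with $H(X_{A\cap\mathbb{I}_K}|X_{C'\backslash\mathbb{I}_K},\cdot)=0$. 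This is exactly the argument of Lemma~\ref{LemIII.2}, with conditioning set $C'\backslash\mathbb{I}_K$ in place of $C$.

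Applying this termwise handles all the pieces of the left-hand side. Take $A=\mathbb{I}_{K'}$ for each of the two copies, $A=P'_j$ for each $j$, and $A=\mathbb{I}_{K'}\cup P'$ for the joint term, where $P'=\cup_j P'_j$. For the joint term we note that $(\mathbb{I}_{K'}\cup P')\backslash\mathbb{I}_K$ equals $(\mathbb{I}_{K'}\backslash\mathbb{I}_K)\cup\bigcup_j(P'_j\backslash\mathbb{I}_K)$. Each term of $J(X_{\mathbb{I}_{K'}},X_{\mathbb{I}_{K'}},X_{P'_j},1\le j\le s|X_{C'})$ then turns into the corresponding term of the right-hand side. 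Any $P'_j\backslash\mathbb{I}_K$ that becomes empty contributes $0$, consistent with the convention $X_\emptyset=$ constant.

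The only delicate step is the bookkeeping for degenerate forms, i.e.\ the denotation conventions (\ref{varitionofcan}) and the general form of $R_K^{K'}$. These apply when $\mathbb{I}_{K'}$ or $\mathbb{I}_{K'}\backslash\mathbb{I}_K$ is empty or $s\le1$. I would argue that $J$ is insensitive to empty components, as noted after Definition~\ref{degenerateDef}, and that in those cases both sides reduce consistently (possibly both to $0$). Hence the identity holds uniformly.
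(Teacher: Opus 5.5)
Your proposal is correct and follows essentially the same route as the paper. Both arguments use Lemma~\ref{cor-1} together with $C\subseteq C'\backslash\mathbb{I}_K$ (from Remark~\ref{remark-1}) to conclude that $X_{\mathbb{I}_K}$ is determined by $X_{C'\backslash\mathbb{I}_K}$, and then strip the $\mathbb{I}_K$-parts termwise from both the arguments and the conditioning of $J$. The only difference is order: the paper first removes the $\mathbb{I}_K$-parts of the arguments using $H(X_{\mathbb{I}_K}|X_{C'})=0$, and then drops $X_{\mathbb{I}_K\cap C'}$ from the conditioning using $H(X_{\mathbb{I}_K}|X_{C'\backslash\mathbb{I}_K})=0$.
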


\begin{proof}
Assume that $K$ is valid and $C \subseteq C'$. By Remark~\ref{remark-1}, $C$ and $\mathbb{I}_{K}$ are disjoint, and so $C \subseteq C'$ implies $C\subseteq C'\backslash\mathbb{I}_{K}$.
By Lemma \ref{cor-1}, if $K$ is valid, then $H(X_{\mathbb{I}_K}|X_C)=0$, so that from the foregoing, we have
\begin{align}
H(X_{\mathbb{I}_K}|X_{C'}) & =0\label{34-1}
\end{align}
and 
\begin{align}
	H(X_{\mathbb{I}_K}|X_{C'\backslash \mathbb{I}_K}) & =0.\label{34-2}
 \end{align}
It follows that
\begin{eqnarray*}
\lefteqn{J(X_{\mathbb{I}_{K'}}, X_{\mathbb{I}_{K'}}, X_{P'_j}, 1\le j \le s |X_{C'})}\nonumber\\
&=& \hspace{-2mm} 2H(X_{\mathbb{I}_{K'}}|X_{C'})+\sum\limits_{j=1}^{s}H(X_{P'_j}|X_{C'})-H(X_{\mathbb{I}_{K'}},X_{\mathbb{I}_{K'}},X_{P'_j},1\le j\le s|X_{C'})\nonumber\\
&\overset{\eqref{34-1}}{=}& \hspace{-2mm} 2H(X_{\mathbb{I}_{K'}\backslash \mathbb{I}_{K}}|X_{C'})+\sum\limits_{j=1}^{s}H(X_{P'_j \backslash \mathbb{I}_{K}}|X_{C'})-H(X_{\mathbb{I}_{K'}\backslash \mathbb{I}_{K}},X_{\mathbb{I}_{K'}\backslash \mathbb{I}_{K}},X_{P'_j\backslash \mathbb{I}_{K}},1\le j \le s|X_{C'})\nonumber\\
&{=}& \hspace{-2mm} 2H(X_{\mathbb{I}_{K'}\backslash \mathbb{I}_{K}}|X_{C'\backslash\mathbb{I}_K},X_{\mathbb{I}_K\cap C'})+\sum\limits_{j=1}^{s}H(X_{P'_j\backslash \mathbb{I}_{K}}|X_{C'\backslash\mathbb{I}_K},X_{\mathbb{I}_K\cap C'}) \nonumber \\
& & \ \ \ -H(X_{\mathbb{I}_{K'}\backslash \mathbb{I}_{K}},X_{\mathbb{I}_{K'}\backslash \mathbb{I}_{K}},X_{P'_j\backslash \mathbb{I}_{K}},1\le j\le s|X_{C'\backslash\mathbb{I}_K},X_{\mathbb{I}_K\cap C'})\nonumber\\
&\overset{\eqref{34-2}}{=}& \hspace{-2mm} 2H(X_{\mathbb{I}_{K'}\backslash \mathbb{I}_{K}}|X_{C'\backslash\mathbb{I}_K})+\sum\limits_{j=1}^{s}H(X_{P'_j\backslash \mathbb{I}_{K}}|X_{C'\backslash\mathbb{I}_K}) -H(X_{\mathbb{I}_{K'}\backslash \mathbb{I}_{K}},X_{\mathbb{I}_{K'}\backslash \mathbb{I}_{K}},X_{P'_j\backslash \mathbb{I}_{K}},1\le j\le s|X_{C'\backslash\mathbb{I}_K})\nonumber\\
&=&J(X_{\mathbb{I}_{K'}\backslash \mathbb{I}_{K}},X_{\mathbb{I}_{K'}\backslash \mathbb{I}_{K}},X_{P'_j\backslash \mathbb{I}_{K}},1\le j\le s|X_{C'\backslash\mathbb{I}_K}), \nonumber
\end{eqnarray*}
proving the lemma.
\end{proof}

\noindent
{\it Proof of Proposition~\ref{K-K'=K-KK'}.}
We first prove the ``only if" part. Assume that $K$ implies
$K'$. Then $C \subseteq C'$ by Theorem~\ref{IVthm1}.
Further assume that $K$ is valid. Then by the assumption that $K$ implies
$K'$, $K'$ is also valid, i.e.
\begin{equation}
J(X_{\mathbb{I}_{K'}}, X_{\mathbb{I}_{K'}}, X_{P'_j}, 1\le j\le s|X_{C'}) = 0.
\label{qigpva}
\end{equation}
Now $K$ is valid and $C \subseteq C'$.
By \eqref{qigpva} and Lemma~\ref{tech}, we obtain that 
\begin{equation}
J(X_{\mathbb{I}_{K'}\backslash \mathbb{I}_{K}},X_{\mathbb{I}_{K'}\backslash \mathbb{I}_{K}},X_{P'_j\backslash \mathbb{I}_{K}},1\le j\le s|X_{C'\backslash\mathbb{I}_K}) = 0,
\label{5438t9j}
\end{equation}
i.e., $R^{K'}_K$ is valid. Then it can readily be checked that the ``only if" part has been proved for both Cases~1) and 2).

We now prove the ``if" part.  For Case~1), Assume that 
$K$ is valid and $C \subseteq C'$. Since $R_{K}^{K'}= (\cdot,\langle\ \rangle)$, the degenerate CMI which is always valid, we see that \eqref{5438t9j} holds (cf.\ Definition~\ref{RKK'}). By Lemma~\ref{tech}, we obtain \eqref{qigpva}, i.e., $K'$ is valid. This proves the ``if" part of Case~1). For Case~2), assume that 
$K$ implies $R_{K}^{K'}$ and $K$ is valid. Since $R_{K}^{K'} \ne (\cdot,\langle\ \rangle)$, by Theorem~\ref{IVthm1}, 
we have $C \subseteq C'$. Since $K$ is valid and 
$K$ implies $R_{K}^{K'}$, $R_{K}^{K'}$ is also valid, i.e., 
\eqref{5438t9j} holds. Then invoke Lemma~\ref{tech} to conclude that (\ref{qigpva}) holds, i.e., $K'$ is valid. This proves the ``if" part for Case~2).

The proof is accomplished. \hfill $\Box$

%Note that, if $R_K^{K'}\neq (\cdot,\langle\ \rangle)$, then by Theorem \ref{IVthm1}, we can obtain $C\subseteq C'\backslash\mathbb{I}_K$ from ``$K$ implies $R_{K}^{K'}$''. Howevere, if $R_K^{K'}= (\cdot,\langle\ \rangle)$, we cannot obtain $C\subseteq C'\backslash\mathbb{I}_K$ from ``$K$ implies $R_{K}^{K'}$''.

\begin{theorem}\label{IVthm3}
	Let $K$ and $K'$ be two non-degenerate CMIs. Let ${\rm can}({\rm pur}(K))=(C, \langle \mathbb{I}_K, \mathbb{I}_K, P_i, 1\le i\le t\rangle)$. Let $K''=R_{K}^{K'}$
	 and ${\rm can}({\rm pur}(K''))=(C'',\langle \mathbb{I}_{K''}, \mathbb{I}_{K''}, P''_j, 1\le j\le r\rangle)$, and  let $P=\cup_{i=1}^{t}P_i$ and $P''=\cup_{i=1}^{r}P''_i$.	If $K$ implies $K'$ and $R_{K}^{K'}\neq (\cdot,\langle\ \rangle)$, then $P''\subseteq P$.
\end{theorem}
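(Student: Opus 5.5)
We argue by contradiction, in the same style as Theorems~\ref{IVthm1} and~\ref{IVthm2}. Suppose $K$ implies $K'$, $R_K^{K'}\neq(\cdot,\langle\ \rangle)$, and $P''\not\subseteq P$. We will exhibit a joint distribution for which $K$ is valid but $K'$ is invalid.

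The first step records the structure of $K''=R_K^{K'}$. By Proposition~\ref{remark-1a}~ii), $\mathbb{I}_{K''}=\emptyset$, so ${\rm can}({\rm pur}(K''))=(C'',\langle P''_j,1\le j\le r\rangle)$. By Proposition~\ref{remark-1a}~iii), $r\ge2$. We also have $C''=C'\backslash\mathbb{I}_K$, and each $P''_j$ is a nonempty set of the form $P'_{i}\backslash\mathbb{I}_K$. By Theorem~\ref{IVthm1}, $C\subseteq C'$, hence $C\subseteq C''$. By Proposition~\ref{K-K'=K-KK'}~2), $K$ implies $K''$, so it suffices to show that $K$ does not imply $K''$.

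Pick $m_1\in P''\backslash P$, and without loss of generality $m_1\in P''_1$. Since $r\ge2$, also pick $m_2\in P''_2$. Then:
\begin{itemize}
\item $m_1\notin P$ by choice, and $m_1\notin\mathbb{I}_K$ by Proposition~\ref{remark-1a}~i).
\item $m_1\notin C$, because $m_1\notin C''\supseteq C$ by disjointness (Remark~\ref{remark-1} applied to $K''$).
\item Similarly, $m_2\notin\mathbb{I}_K$, $m_2\notin C''$, and $m_2\notin C$.
\end{itemize}
Thus $m_1$ does not appear in $K$ at all, while $m_2$ may lie in $P$. Define $X_{m_1}=X_{m_2}=U$ with $H(U)>0$, and set all other variables to $0$.

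For $K''$, the conditioning set $C''$ contains neither $m_1$ nor $m_2$, so $X_{C''}$ is constant. The joint entropy of the $P''_j$'s equals $H(U)$. Their sum is at least $H(X_{P''_1})+H(X_{P''_2})=2H(U)$, since $m_1$ and $m_2$ lie in distinct disjoint blocks. Hence $K''$ is invalid.

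For $K$, note that $m_1$ does not appear and $X_C$ is constant, since $m_2\notin C$. If $m_2\notin\mathbb{I}_K\cup P$, every term is $0$. Otherwise $m_2$ lies in exactly one block $P_i$, because $m_2\notin\mathbb{I}_K$ and the blocks are disjoint. In that case both sides of the defining identity equal $H(U)$, so $K$ is valid.

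This contradicts ``$K$ implies $K''$'', and so proves $P''\subseteq P$.

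The main point requiring care is that $m_2$ may also lie in $P$. The argument survives this because $m_2$ lies in only one block of $K$ and avoids both $C$ and $\mathbb{I}_K$. We also need $C\subseteq C''$, so that neither $m_1$ nor $m_2$ sits in the conditioning set of $K$; this follows from Theorem~\ref{IVthm1} together with $C\cap\mathbb{I}_K=\emptyset$ (Corollary~\ref{IVCC''}).
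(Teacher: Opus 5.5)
Your proposal is correct and follows essentially the same route as the paper's proof. Both arguments reduce to ``$K$ implies $K''$'' via Proposition~\ref{K-K'=K-KK'}, obtain $\mathbb{I}_{K''}=\emptyset$, $r\ge 2$ and $C\subseteq C''$ from Proposition~\ref{remark-1a} and Corollary~\ref{IVCC''}, pick $m_1\in P''_1\backslash P$ and $m_2\in P''_2$ with $X_{m_1}=X_{m_2}=U$, and verify that $K$ is valid (splitting on whether $m_2\in P$) while $K''$ is invalid.
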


\begin{proof}
Let ${\rm can}({\rm pur}(K'))=(C',\langle \mathbb{I}_{K'}, \mathbb{I}_{K'}, P'_j, 1\le j\le s\rangle)$. 
%By Definition \ref{RKK'}, $K''=R_{K}^{K'}=(C'\backslash\mathbb{I}_K,\langle \mathbb{I}_{K'}\backslash\mathbb{I}_K, \mathbb{I}_{K'}\backslash\mathbb{I}_K, P'_j\backslash\mathbb{I}_K, 1\le j\le s\rangle)$.
%By Remark \ref{remark-1}, $C'$, $\mathbb{I}_{K'}$ and $P'_j,1\le j\le s$ are disjoint. Thus $C'\backslash\mathbb{I}_{K}$, $\mathbb{I}_{K'}\backslash\mathbb{I}_{K}$ and $P'_j\backslash\mathbb{I}_{K},1\le j\le s$ are disjoint. 
%%%%
%Since $\mathbb{I}_{K''}$ is the set of repeated indices of 
%%
%\begin{align}\label{**}
%\langle \mathbb{I}_{K'}\backslash\mathbb{I}_K, \mathbb{I}_{K'}\backslash\mathbb{I}_K, P'_j\backslash\mathbb{I}_K, 1\le j\le s\rangle,
%\end{align}
%%
%and noting that the repeated indices of \eqref{**} can only be contained in $\mathbb{I}_{K'}\backslash\mathbb{I}_K$,
%we have  $\mathbb{I}_{K''}\subseteq(\mathbb{I}_{K'}\backslash\mathbb{I}_{K})$.
%%%%%
By Proposition \ref{remark-1a}, if $K$ implies $K'$ and $R_{K}^{K'}\neq (\cdot,\langle\ \rangle)$, then $\mathbb{I}_{K''}=\emptyset$ and ${\rm can}({\rm pur}(K'')=(C'',\langle P''_j, 1\le j\le r\rangle)$ with $r\ge 2$.
%
%In this sense, we can regard ${\rm can}({\rm pur}(K''))=(\cdot,\langle\ \rangle)$ as ${\rm can}({\rm pur}(K''))$ with $P''=\emptyset$, thus $P''\subseteq P$.
%Further, if $K''\neq(\cdot,\langle\ \rangle)$, then ${\rm can}({\rm pur}(K''))\neq(\cdot,\langle\ \rangle)$, impling that $r\ge2$.
%
%We now assume  ${\rm can}({\rm pur}(K''))\neq(\cdot,\langle\ \rangle)$, i.e., $r\ge2$. 
By Proposition \ref{K-K'=K-KK'} and $R_{K}^{K'}\neq (\cdot,\langle\ \rangle)$, $K$ implies $K'$ if and only if $K$ implies $R_{K}^{K'}$. So, we only need to prove that if $K$ implies $R_{K}^{K'}$, i.e., if $K$ implies $K''$, then $P''\subseteq P$.

%If $K$ is valid, then by Lemma \ref{cor-1}, $H(X_{\mathbb{I}_K}|X_C)=0$, which implies $K'\sim K''$. Thus, $K$ implies $K'$ if and only if $K$ implies $K''$. 
%We will first prove that there exists only one corresponding $1\le i\le t$ such that $P''_{j}\subseteq P_{i}$ for every $1\le j\le r$. 
%

We now prove the claim by contradiction. Assume $P''\backslash P\neq\emptyset$.
Without loss of generality, assume $P''_1\backslash P\neq\emptyset$, and let $m_1\in P''_1\backslash P$. 
%which by Remark \ref{remark-1} implies that $m_1\notin P''_j, 2\le j\le r,\ m_1\notin \mathbb{I}_{K''}$ and $m_1\notin C''$. 
Since $r\ge 2$, $P''_2\neq\emptyset$, and we let $m_2\in P''_2$. By Remark \ref{remark-1}, %implies that $m_2\notin P''_j, j\in\mathcal{N}_{s}\backslash\{2\},\  m_2\notin \mathbb{I}_{K''}$ and $m_2\notin C''$. We conclude that
we have
\begin{align}\label{m1m2in''}
	m_1\in P''_1,\ 
	m_1\notin P''_j, 2\le j\le r,\ 
	m_1\notin \mathbb{I}_{K''},\ 
	m_1\notin C'',\
	m_2\in P''_2,\ 
	m_2\notin P''_j, j\in\mathcal{N}_{s}\backslash\{2\},\ 
	m_2\notin \mathbb{I}_{K''},\ 
	m_2\notin C''.
\end{align}

 If $K$ implies $K'$, then by Remark \ref{remark-1} and Theorem \ref{IVCC''}, we have $C\subseteq C'\backslash\mathbb{I}_K=C''$ (cf. Definitions \ref{def-Pur} and \ref{def-can}). Since $m_1\notin C''$ and $m_2\notin C''$, we have $m_1\notin C$ and $m_2\notin C$.
By Proposition \ref{remark-1a}, $P''_1\cap \mathbb{I}_K=\emptyset$ and $P''_2\cap \mathbb{I}_K=\emptyset$. Since $m_1\in P''_1$ and $m_2\in P''_2$, we have $m_1\notin\mathbb{I}_{K}$ and $m_2\notin\mathbb{I}_{K}$.
Thus
\begin{align}\label{m1m2in}
	m_1\notin C,\  
	m_1\notin \mathbb{I}_K,\  
	m_1\notin P,\ 
		m_2\notin \mathbb{I}_K, \ 
	m_2\notin C.
\end{align}

We now construction a joint distribution for $X_1, \ldots, X_n$ as follows. Let $U$ be a  random variable such that $H(U) > 0$.
Let 
\begin{equation}\label{distributionThmIV3}
X_m = \left\{ \begin{array}{ll}
	U & \mbox{if $m =m_1, m_2$} \\
	0 & \mbox{otherwise}.
\end{array} \right.
\end{equation}
By \eqref{m1m2in''}, we obtain
\begin{align*}
H(X_{\mathbb{I}_{K''}},X_{\mathbb{I}_{K''}},X_{P''_j},1\le j\le s|X_{C''})&=H(X_{P''_1},X_{P''_2})=H(X_{m_1},X_{m_2})=H(U,U)=H(U)\\
2H(X_{\mathbb{I}_{K''}}|X_{C''})+\sum\limits_{j=1}^{s}H(X_{P'_j}|X_{C''})&=H(X_{P''_1})+H(X_{P''_2})=H(X_{m_1})+H(X_{m_2})=2H(U).
\end{align*}

\noindent Therefore,
\begin{align*}
	H(X_{\mathbb{I}_{K''}},X_{\mathbb{I}_{K''}},X_{P''_j},1\le j\le s|X_{C''})
	&\neq 2H(X_{\mathbb{I}_{K''}}|X_{C''})+\sum\limits_{j=1}^{s}H(X_{P''_j}|X_{C''}),
\end{align*}
implying that $K''$ is invalid. 

By \eqref{m1m2in}, if $m_2\in P$, then
\begin{align*}
H(X_{\mathbb{I}_K},X_{\mathbb{I}_K},X_{P_i},1\le i\le t|X_C)&=H(X_{P_i},1\le i\le t)=H(X_{m_2})=H(U)\\
2H(X_{\mathbb{I}_K}|X_C)+\sum\limits_{i=1}^{t}H(X_{P_i}|X_C)&=\sum\limits_{i=1}^{t}H(X_{P_i})=H(X_{m_2})=H(U),
\end{align*}
and if $m_2\notin P$, then
\begin{align*}
	H(X_{\mathbb{I}_K},X_{\mathbb{I}_K},X_{P_i},1\le i\le t|X_C)&=0\\
	2H(X_{\mathbb{I}_K}|X_C)+\sum\limits_{i=1}^{t}H(X_{P_i}|X_C)&=0.
\end{align*}
Therefore,
\begin{align*}
	H(X_{\mathbb{I}_{K}},X_{\mathbb{I}_{K}},X_{P_i},1\le i\le t|X_{C})
	= 2H(X_{\mathbb{I}_{K}}|X_{C})+\sum\limits_{i=1}^{t}H(X_{P_i}|X_{C}),
\end{align*}
implying that $K$ is valid.

For the distribution for $X_1,\ldots,X_n$ defined by \eqref{distributionThmIV3}, we have proved that $K$ is valid but $K'$ is invalid. Therefore, $K$ does not imply $K''$, a contradiction to our assumption that $K$ implies $K''$.
The proof is accomplished.
\end{proof}

\begin{theorem}\label{IVthm3a}
	Let $K$ and $K'$ be two non-degenerate CMIs. Let ${\rm can}({\rm pur}(K))=(C, \langle \mathbb{I}_K, \mathbb{I}_K, P_i, 1\le i\le t\rangle)$, and let $K''=R_{K}^{K'}$
	and ${\rm can}({\rm pur}(K''))=(C'',\langle \mathbb{I}_{K''}, \mathbb{I}_{K''}, P''_j, 1\le j\le r\rangle)$.
Further let	 
%$P=\cup_{i=1}^{t}P_i$, 
$S=C\cup\left(\cup_{i=1}^{t}P_i\right)$ and
$P''=\cup_{j=1}^{r}P''_j$. If $K$ implies $K'$ and $R_K^{K'}\neq(\cdot,\langle\ \rangle)$, then $C\subseteq C''\subseteq S\backslash P''$.
\end{theorem}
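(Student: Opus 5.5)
The statement has two inclusions, $C\subseteq C''$ and $C''\subseteq S\backslash P''$, and I would prove them separately.

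\textbf{First inclusion.} This follows from earlier results. By Definitions~\ref{def-Pur}, \ref{def-can} and the proof of Proposition~\ref{remark-1a}, we have $C''=C'\backslash\mathbb{I}_K$. By Corollary~\ref{IVCC''}, $K$ implies $K'$ gives $C\subseteq C'\backslash\mathbb{I}_K=C''$.

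\textbf{Disjointness part of the second inclusion.} Since $R_K^{K'}\neq(\cdot,\langle\ \rangle)$, Proposition~\ref{remark-1a} gives $\mathbb{I}_{K''}=\emptyset$, $r\ge2$, and ${\rm can}({\rm pur}(K''))=(C'',\langle P''_j,1\le j\le r\rangle)$. Remark~\ref{remark-1} then gives $C''\cap P''=\emptyset$. So it remains to show $C''\subseteq S$, i.e., $C''\backslash S=\emptyset$.

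\textbf{Main step: $C''\subseteq S$, by contradiction.} Suppose $m_0\in C''\backslash S$. Then $m_0\notin C$ and $m_0\notin P$. Also $m_0\notin\mathbb{I}_K$, since $C''\cap\mathbb{I}_K=\emptyset$ by Proposition~\ref{remark-1a}~i). Hence $m_0$ lies outside $C$, $\mathbb{I}_K$ and $P$, while $m_0\in C''$ and $m_0\notin P''$.

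By Proposition~\ref{K-K'=K-KK'}, $K$ implies $K''$, so it suffices to exhibit a distribution under which $K$ is valid and $K''$ is invalid. By Theorem~\ref{IVthm3}, $P''\subseteq P$. Pick $m_1\in P''_1$ and $m_2\in P''_2$; both lie in $P$ and outside $C$ and $\mathbb{I}_K$.

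The idea is to exploit that conditioning on $X_{m_0}$ is present in $K''$ but absent in $K$. The sign of this is wrong for a direct attack: copying $U$ onto a conditioned variable makes $K''$ easier to satisfy, not harder. So I would use two independent sources.

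\textbf{Construction.} Let $U_1,U_2$ be independent, nondegenerate bits. Set $X_{m_0}=U_1\oplus U_2$, $X_{m_1}=U_1$, $X_{m_2}=U_2$, and let every other variable be $0$.

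\emph{$K''$ is invalid.} Conditioning on $X_{C''}$ amounts to conditioning on $X_{m_0}$. Given $X_{m_0}$, the variables $X_{P''_1}\ni X_{m_1}$ and $X_{P''_2}\ni X_{m_2}$ are fully dependent, and all other blocks are constant. Hence $J(X_{P''_j},1\le j\le r\,|\,X_{C''})=H(U_1|U_1\oplus U_2)>0$.

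\emph{$K$ is valid.} Here $X_C$ and $X_{\mathbb{I}_K}$ are constant. Consider the nonconstant variables among the $P_i$.
\begin{itemize}
\item If $m_1$ and $m_2$ lie in different blocks of $K$, those blocks are $U_1$ and $U_2$, which are independent, so $K$ holds.
\item If $m_1$ and $m_2$ lie in the same block $P_i$, then only one block is nonconstant, so $J=0$.
\end{itemize}
Both cases give validity. Note that $m_0$ is not in any block of $K$, so it plays no role here.

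This contradicts ``$K$ implies $K''$''. Therefore $C''\subseteq S$, and combined with the disjointness above, $C''\subseteq S\backslash P''$.

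\textbf{Expected obstacle.} The delicate point is the case analysis showing that $K$ remains valid. One must check that $m_0\notin C\cup\mathbb{I}_K\cup P$ really makes $X_{m_0}$ irrelevant to $K$. One must also handle the possibility that $m_1$ and $m_2$ fall in the same $P_i$ (or in different ones); both cases are covered by the pairwise independence of $U_1$ and $U_2$.
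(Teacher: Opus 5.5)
Your proposal is correct and follows essentially the paper's proof. You get $C\subseteq C''$ from Corollary~\ref{IVCC''} together with $C''=C'\backslash\mathbb{I}_K$, and you prove $C''\subseteq S\backslash P''$ by contradiction using the XOR construction $X_{m_1}=U_1$, $X_{m_2}=U_2$, $X_{m_0}=U_1\oplus U_2$; this keeps $K$ valid and makes $K''$ invalid, contradicting Proposition~\ref{K-K'=K-KK'}. The one difference is that the paper splits into cases by whether $m_1,m_2$ lie in the same $P_i$, using a separate duplicated-$U$ construction (not involving $m_0$) when they do. You instead note that the XOR construction already leaves $K$ valid in that case, because only one block of $K$ is then nonconstant, so a single construction suffices.
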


\begin{proof}
Let ${\rm can}({\rm pur}(K'))=(C',\langle \mathbb{I}_{K'}, \mathbb{I}_{K'}, P'_j, 1\le j\le s\rangle)$. 
If $K$ implies $K'$ and $R_K^{K'}\neq(\cdot,\langle\ \rangle)$, by Proposition \ref{remark-1a}, we have $\mathbb{I}_{K''}=\emptyset$ and $r\ge2$.
By Definitions \ref{def-Pur} and \ref{def-can}, we see that $C''=C'\backslash\mathbb{I}_K$.
If $K$ implies $K'$, then by Corollary \ref{IVCC''}, $C\subseteq C'\backslash\mathbb{I}_K=C''$.
Next, we will prove $C''\subseteq S\backslash P''$ by contradiction.

Assume $C''\backslash(S\backslash P'')\neq\emptyset$, and let $m_5\in C''\backslash(S\backslash P'')$. Since $r\ge2$, without loss of generality, we assume $P''_1\neq\emptyset$ and $P''_2\neq\emptyset$, and let $m_3\in P''_1$ and $m_4\in P''_2$. 

\noindent {\bf Case 1.} $m_3\in  P_i$ and $m_4\in P_i$ for some $1\le i\le t$.

% {\color{blue}We already have $m_3\in P''_1$ and $m_4\in P''_2$, as well as $m_3\in P_1$ and $m_4\in P_1$. Therefore, Remark III.2 is sufficient to obtain (43) and (44).}

Without loss of generality, assume $m_3\in P_1$ and $m_4\in P_1$.
Together with $m_3\in P''_1$, $m_4\in P''_2$, and Remark \ref{remark-1}, we have 
\begin{align}
%	&m_5\notin C,\  m_5\notin \mathbb{I}_K,\  m_5\notin P_i, 1\le i\le t,\ 
%	m_5\in C'', \ m_5\notin P''_j, 1\le j\le r,\  m_5\notin\mathbb{I}_{K''}\label{IVthmm5}\\
	&m_3\notin C,\  m_3\notin \mathbb{I}_K,\  m_3\in P_1,\ m_3\notin P_i, 2\le i\le t,\ 
	m_3\notin C'',\  
	%m_3\notin \mathbb{I}_{K''},\  
	m_3\in P''_1,\ m_3\notin P''_j, 2\le j\le r\label{IVthmm3a}\\
	&m_4\notin C,\  m_4\notin \mathbb{I}_K,\  m_4\in P_1,\ m_4\notin P_i, 2\le i\le t,\ 
	m_4\notin C'',\  
	%m_4\notin \mathbb{I}_{K''},\  
	m_4\in P''_2,\ m_4\notin P''_j, j\in\mathcal{N}_{r}\backslash\{2\}\label{IVthmm4a}.
\end{align}

We now construct a joint distribution for $X_1, \ldots, X_n$ as follows. Let $U$ be a random variables such that $H(U)>0$.
Let 
\[
X_m = \left\{ \begin{array}{ll}
	U & \mbox{if $m =m_3,m_4$} \\
	0 & \mbox{otherwise}.
\end{array} \right.
\]
By \eqref{IVthmm3a}, \eqref{IVthmm4a} and $\mathbb{I}_{K''}=\emptyset$, we obtain
\begin{align*}
	H(X_{\mathbb{I}_K},X_{\mathbb{I}_K},X_{P_i},1\le i\le t|X_C)&=H(X_{P_1})=H(X_{m_3},X_{m_4})=H(U)\\
	2H(X_{\mathbb{I}_K}|X_C)+\sum\limits_{i=1}^{t}H(X_{P_i}|X_C)&=H(X_{P_1})=H(X_{m_3},X_{m_4})=H(U)\\
	H(X_{\mathbb{I}_{K''}},X_{\mathbb{I}_{K''}},X_{P''_j},1\le j\le s|X_{C''})&=H(X_{P''_1},X_{P''_2})=H(X_{m_3},X_{m_4})=H(U)\\
	2H(X_{\mathbb{I}_{K''}}|X_{C''})+\sum\limits_{j=1}^{s}H(X_{P''_j}|X_{C''})&=H(X_{P''_1})+H(X_{P''_2})=H(X_{m_3})+H(X_{m_4})=2H(U).
\end{align*}
We obtain
\begin{align*}
	H(X_{\mathbb{I}_{K}},X_{\mathbb{I}_{K}},X_{P_i},1\le i\le t|X_{C})
	&= 2H(X_{\mathbb{I}_{K}}|X_{C})+\sum\limits_{i=1}^{t}H(X_{P_i}|X_{C})\\
	H(X_{\mathbb{I}_{K''}},X_{\mathbb{I}_{K''}},X_{P''_j},1\le j\le s|X_{C''})
	&\neq 2H(X_{\mathbb{I}_{K''}}|X_{C''})+\sum\limits_{j=1}^{s}H(X_{P''_j}|X_{C''}),
\end{align*}
implying  that $K$ is valid and $K''$ is invalid. 
Therefore, $K$ does not imply $K''$, a contradiction to our assumption that $K$ implies $K''$.

\noindent {\bf Case 2.} $m_3\in  P_i$ and $m_4\in P_j$ for some $1\le i, j\le t$, $i\neq j$.

By Proposition \ref{remark-1a},  $C''\cap\mathbb{I}_K=\emptyset$.
Since $m_5\in C''\backslash(S\backslash P'')$, we have $m_5\notin\mathbb{I}_K$.
Without loss of generality, assume $m_3\in P_1$ and $m_4\in P_2$.
Together with Remark \ref{remark-1} and Proposition \ref{remark-1a}, we have 
\begin{align}
&m_5\notin C,\  m_5\notin \mathbb{I}_K,\  m_5\notin P_i, 1\le i\le t,\ 
m_5\in C'', \ m_5\notin P''_j, 1\le j\le r,\  %m_5\notin\mathbb{I}_{K''}
\label{IVthmm5}\\
&m_3\notin C,\  m_3\notin \mathbb{I}_K,\  m_3\in P_1,\ m_3\notin P_j, 2\le j\le t,\ 
m_3\notin C'',\  
%m_3\notin \mathbb{I}_{K''},\  
m_3\in P''_1,\ m_3\notin P''_j, 2\le j\le r\label{IVthmm3}\\
&m_4\notin C,\  m_4\notin \mathbb{I}_K,\  m_4\in P_2,\ m_4\notin P_j, j\in\mathcal{N}_{t}\backslash\{2\},\ 
m_4\notin C'',\  
%m_4\notin \mathbb{I}_{K''},\  
m_4\in P''_2,\ m_4\notin P''_j, j\in\mathcal{N}_{r}\backslash\{2\}\label{IVthmm4}.
\end{align}

We now construct a joint distribution for $X_1, \ldots, X_n$ as follows. Let $U$ and $V$ be
independent binary random variables that are
uniformly distributed on $\{0,1\}$, and let $W=U+V$ mod $2$.
Then
\begin{align*}
H(U)=H(V)=H(W)=a>0,\ 
H(U,V)=H(U,W)=H(V,W)=2a,\ 
H(U,V,W)=2a,
\end{align*}
where $a={\rm log}2$.
Let 
\[
X_m = \left\{ \begin{array}{ll}
	U & \mbox{if $m =m_3$} \\
		V & \mbox{if $m =m_4$} \\
			W & \mbox{if $m =m_5$} \\
	0 & \mbox{otherwise}.
\end{array} \right.
\]
By \eqref{IVthmm5}, \eqref{IVthmm3}, \eqref{IVthmm4} and $\mathbb{I}_{K''}=\emptyset$, we obtain
\begin{align*}
	H(X_{\mathbb{I}_K},X_{\mathbb{I}_K},X_{P_i},1\le i\le t|X_C)&=H(X_{P_1},X_{P_2})=H(X_{m_3},X_{m_4})=H(U,V)=2a\\
	2H(X_{\mathbb{I}_K}|X_C)+\sum\limits_{i=1}^{t}H(X_{P_i}|X_C)&=H(X_{P_1})+H(X_{P_2})=H(X_{m_3})+H(X_{m_4})=H(U)+H(V)=2a\\
	H(X_{\mathbb{I}_{K''}},X_{\mathbb{I}_{K''}},X_{P''_j},1\le j\le s|X_{C''})&=H(X_{P''_1},X_{P''_2}|X_{C''})=H(X_{m_3},X_{m_4}|X_{m_5})=H(U,V|W)\\
	&=H(U,V,W)-H(W)=a\\
	2H(X_{\mathbb{I}_{K''}}|X_{C''})+\sum\limits_{j=1}^{s}H(X_{P''_j}|X_{C'})&=H(X_{P''_1}|X_{C''})+H(X_{P''_2}|X_{C''})=H(X_{m_3}|X_{m_5})+H(X_{m_4}|X_{m_5})\\
	&=H(U|W)+H(V|W)=H(U,W)+H(V,W)-2H(W)=2a.
\end{align*}
We obtain
\begin{align*}
	H(X_{\mathbb{I}_{K}},X_{\mathbb{I}_{K}},X_{P_i},1\le i\le t|X_{C})
	&= 2H(X_{\mathbb{I}_{K}}|X_{C})+\sum\limits_{i=1}^{t}H(X_{P_i}|X_{C})\\
	H(X_{\mathbb{I}_{K''}},X_{\mathbb{I}_{K''}},X_{P''_j},1\le j\le s|X_{C''})
	&\neq 2H(X_{\mathbb{I}_{K''}}|X_{C''})+\sum\limits_{j=1}^{s}H(X_{P''_j}|X_{C''}),
\end{align*}
implying that $K$ is valid and $K''$ is invalid. 
Therefore, $K$ does not imply $K''$, a contradiction to our assumption that $K$ implies $K''$.

%Now, we proved that prove $C''\subseteq R_1\backslash R_2$, where $R_1=C\cup\mathbb{I}_K\cup\left(\cup_{i=1}^{t}P_i\right)$, $R_2=\mathbb{I}_{K''}\cup\left(\cup_{j=1}^{r}P''_j\right)$.

The proof is accomplished.
\end{proof}

\begin{theorem}\label{IVthm5}
Let $K$ and $K'$ be two non-degenerate CMIs. Let ${\rm can}({\rm pur}(K))=(C, \langle \mathbb{I}_K, \mathbb{I}_K, P_i, 1\le i\le t\rangle)$, $K''=R_{K}^{K'}$,
	and ${\rm can}({\rm pur}(K''))=(C'',\langle \mathbb{I}_{K''}, \mathbb{I}_{K''}, P''_j, 1\le j\le r\rangle)$. Further let	 $P=\cup_{i=1}^{t}P_i$, $P''=\cup_{j=1}^{r}P''_j$. %Assume $P''\subseteq P$. 
If $K$ implies $K'$ and $R_{K}^{K'}\neq(\cdot,\langle\ \rangle)$, then for any $m_1\in P''_{j_1}$ and $m_2\in P''_{j_2}$, where $1\le j_1, j_2\le r,\ j_1\neq j_2$, we have $m_1\in P_{i_1}$ and $m_2\in P_{i_2}$, where $1\le i_1, i_2\le t,\ i_1\neq i_2$.

\end{theorem}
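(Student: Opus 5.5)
The plan is to argue by contradiction, following the template of Theorems~\ref{IVthm3} and~\ref{IVthm3a}. Assume $K$ implies $K'$ and $R_K^{K'}\neq(\cdot,\langle\ \rangle)$. First collect the structural facts already established:
\begin{itemize}
\item by Proposition~\ref{remark-1a}, $\mathbb{I}_{K''}=\emptyset$ and $r\ge2$;
\item by Theorem~\ref{IVthm3}, $P''\subseteq P$, so each $m\in P''$ lies in exactly one $P_i$ (the $P_i$'s are disjoint by Remark~\ref{remark-1});
\item by Corollary~\ref{IVCC''} and Definition~\ref{RKK'}, $C\subseteq C''=C'\backslash\mathbb{I}_K$;
\item by Proposition~\ref{remark-1a}, $P''_j\cap\mathbb{I}_K=\emptyset$.
\end{itemize}
Hence for $m_1\in P''_{j_1}$ and $m_2\in P''_{j_2}$ with $j_1\neq j_2$, there exist unique $i_1,i_2$ with $m_1\in P_{i_1}$ and $m_2\in P_{i_2}$. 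The only thing left to show is $i_1\neq i_2$.

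Suppose instead $i_1=i_2$; without loss of generality $i_1=i_2=1$ and $j_1=1$, $j_2=2$. Record membership facts as in \eqref{IVthmm3a}--\eqref{IVthmm4a}:
\begin{itemize}
\item $m_1,m_2\in P_1$, so $m_1,m_2\notin P_i$ for $i\ge2$, and $m_1,m_2\notin C$, $m_1,m_2\notin\mathbb{I}_K$ by Remark~\ref{remark-1};
\item $m_1\in P''_1$ and $m_2\in P''_2$ exclusively among the $P''_j$, and $m_1,m_2\notin C''$ since $C''$ is disjoint from the $P''_j$.
\end{itemize}
Then construct the distribution $X_{m_1}=X_{m_2}=U$ with $H(U)>0$, and $X_m=0$ otherwise. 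This is exactly Case~1 of the proof of Theorem~\ref{IVthm3a}.

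The computation goes as follows. For $K$, both sides equal $H(X_{P_1})=H(U)$, so $K$ is valid. For $K''$, using $\mathbb{I}_{K''}=\emptyset$, the joint term is $H(X_{m_1},X_{m_2})=H(U)$, while the sum of marginals is $H(X_{m_1})+H(X_{m_2})=2H(U)$. So $K''$ is invalid. Thus $K$ does not imply $K''=R_K^{K'}$, and by Proposition~\ref{K-K'=K-KK'}~2) $K$ does not imply $K'$, a contradiction.

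The main obstacle is bookkeeping rather than a genuine difficulty. One must check that no other index of $C$ or $C''$ is set to $U$ (only $m_1$ and $m_2$ are, and neither lies in $C$ or $C''$). One must also check that $m_1,m_2\notin\mathbb{I}_K$, so the $K$-side terms reduce to $H(X_{P_1})$; this follows from Proposition~\ref{remark-1a}~i).
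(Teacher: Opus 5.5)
Your proposal is correct and takes essentially the same approach as the paper. Both assume $m_1,m_2$ fall in a common $P_1$ and use the distribution $X_{m_1}=X_{m_2}=U$ with all other variables constant, which makes $K$ valid and $R_K^{K'}$ invalid. Your explicit appeal to Proposition~\ref{K-K'=K-KK'}~2) to transfer the failure from $R_K^{K'}$ back to $K'$ is a step the paper's proof of this theorem leaves implicit.
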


\begin{proof}
%We will prove this theorem by contradiction.
By Theorem \ref{IVthm3} and Proposition \ref{remark-1a}, if $K$ implies $K'$ and $R_{K}^{K'}\neq(\cdot,\langle\ \rangle)$, then $P''\subseteq P$, $\mathbb{I}_{K''}=\emptyset$, and $r\ge2$. 
Assume that $m_1\in P''_{j_1}$ and $m_2\in P''_{j_2}$, where $1\le j_1, j_2\le r,\ j_1\neq j_2$.
Evidently, $m_1\neq m_2$.
Since 
%$m_1\in P''_{j_1}$ and $m_2\in P''_{j_2}$, where $1\le j_1, j_2\le r,\ j_1\neq j_2$ and 
$P''\subseteq P$, we have $m_1\in P_{i_1}$ and $m_2\in P_{i_2}$, where $1\le i_1,i_2\le t$.
We need to prove that $i_1\neq i_2$.

We now prove the theorem by contradiction.
Assume that 
%there exist $m_1\in P''_{j_1}$ and $m_2\in P''_{j_2}$, such that $m_1,m_2\in P_{i}$ 
$i_1=i_2=i$ for some $1\le i\le t$. Without loss of generality, let $m_1\in P''_1$, $m_2\in P''_2$, $m_1\in P_1$ and $m_2\in P_1$.
Together with Remark \ref{remark-1},
%and Proposition \ref{remark-1a}, 
 we have 
\begin{align}
	&m_1\notin C,\  m_1\notin \mathbb{I}_K,\  m_1\in P_1,\ m_1\notin P_j, 2\le j\le t,\ 
	m_1\notin C'',\  
	%m_1\notin \mathbb{I}_{K''},\  
	m_1\in P''_1,\ m_1\notin P''_j, 2\le j\le r\label{IVthmm3aa}\\
	&m_2\notin C,\  m_2\notin \mathbb{I}_K,\  m_2\in P_1,\ m_2\notin P_j, 2\le j\le t,\ 
	m_2\notin C'',\  
	%m_2\notin \mathbb{I}_{K''},\  
	m_2\in P''_2,\ m_2\notin P''_j, j\in\mathcal{N}_{r}\backslash\{2\}\label{IVthmm4aa}.
\end{align}

We now construct a joint distribution for $X_1, \ldots, X_n$ as follows. Let $U$ be a random variables such that $H(U)>0$.
Let 
\[
X_m = \left\{ \begin{array}{ll}
	U & \mbox{if $m =m_1,m_2$} \\
	0 & \mbox{otherwise}.
\end{array} \right.
\]
By \eqref{IVthmm3aa}, \eqref{IVthmm4aa} and $\mathbb{I}_{K''}=\emptyset$, we obtain
\begin{align*}
	H(X_{\mathbb{I}_K},X_{\mathbb{I}_K},X_{P_i},1\le i\le t|X_C)&=H(X_{P_1})=H(X_{m_1},X_{m_2})=H(U)\\
	2H(X_{\mathbb{I}_K}|X_C)+\sum\limits_{i=1}^{t}H(X_{P_i}|X_C)&=H(X_{P_1})=H(X_{m_1},X_{m_2})=H(U)\\
	H(X_{\mathbb{I}_{K''}},X_{\mathbb{I}_{K''}},X_{P''_j},1\le j\le s|X_{C''})&=H(X_{P''_1},X_{P''_2})=H(X_{m_1},X_{m_2})=H(U)\\
	2H(X_{\mathbb{I}_{K''}}|X_{C''})+\sum\limits_{j=1}^{s}H(X_{P''_j}|X_{C''})&=H(X_{P''_1})+H(X_{P''_2})=H(X_{m_1})+H(X_{m_2})=2H(U).
\end{align*}
As a result,
\begin{align*}
	H(X_{\mathbb{I}_{K}},X_{\mathbb{I}_{K}},X_{P_i},1\le i\le t|X_{C})
	&= 2H(X_{\mathbb{I}_{K}}|X_{C})+\sum\limits_{i=1}^{t}H(X_{P_i}|X_{C})\\
	H(X_{\mathbb{I}_{K''}},X_{\mathbb{I}_{K''}},X_{P''_j},1\le j\le s|X_{C''})
	&\neq 2H(X_{\mathbb{I}_{K''}}|X_{C''})+\sum\limits_{j=1}^{s}H(X_{P''_j}|X_{C''}),
\end{align*}
implying that $K$ is valid and $K''$ is invalid. 
Therefore, $K$ does not imply $K''$, a contradiction to our assumption that $K$ implies $K''$.
The proof is accomplished.
\end{proof}

In Theorems~\ref{IVthm3}, \ref{IVthm3a}, and \ref{IVthm5}, we have given three necessary conditions for ``$K$ implies $K'$''. It turns out but is far from obvious that these three conditions together form a sufficient condition for ``$K$ implies $K'$''. In the following, we use these three conditions to define a ``sub-CMI'', which completely characterizes all the CMIs that are implied by a given CMI. This will be formally proved in Theorem~\ref{SubCMI}, the main result of this section.

\begin{define}[Sub-CMI]
\label{sub-collection-def}
Let $K$ and $K'$ be two CMIs, and let ${\rm can}({\rm pur}(K))=(C, \langle \mathbb{I}_K, \mathbb{I}_K, P_i, 1\le i\le t\rangle)$ and
${\rm can}({\rm pur}(K'))=(C', \langle \mathbb{I}'_K, \mathbb{I}'_K, P'_j, 1\le j\le s\rangle)$.
Let $K''=R_{K}^{K'}$
	and ${\rm can}({\rm pur}(K''))=(C'',\langle \mathbb{I}_{K''}, \mathbb{I}_{K''}, P''_j, 1\le j\le r\rangle)$. Further let	 $P=\cup_{i=1}^{t}P_i$, $S=C\cup P$, and $P''=\cup_{j=1}^{r}P''_j$. 
Then $K'$ is called a sub-CMI of $K$ if one of the following conditions holds:
\begin{itemize}	
\item[i)] $K' = (\cdot, \langle \ \rangle)$;
\item[ii)] ${\rm can}({\rm pur}(K''))=(\cdot,\langle\ \rangle)$ and $C\subseteq C'$;
\item[iii)] ${\rm can}({\rm pur}(K''))\neq(\cdot,\langle\ \rangle)$;
$\mathbb{I}_{K''}=\emptyset$; $P''\subseteq P$; $C\subseteq C''\subseteq S\backslash P''$; if $m_1\in P''_{j_1}$ and $m_2\in P''_{j_2}$, where $1\le j_1,j_2\le r,\ j_1\neq j_2$, then $m_1\in P_{i_1}$ and $m_2\in P_{i_2}$, where $1\le i_1, i_2\le t,\ i_1\neq i_2$.
\end{itemize}
\end{define}

% {\color{blue} ${\rm can}({\rm pur}(K''))\neq(\cdot,\langle\ \rangle)$ should be noted in iii). Because we cannot clearly obtain ${\rm can}({\rm pur}(K''))\neq(\cdot,\langle\ \rangle)$ from iii).}

The following example illustrates this definition.

\begin{example}
	Let $n=5$ and $K=(\{1\},\langle\{1,2\},\{2,3\},\{4\},\{5\}\rangle)$.
	In the following, we give three examples of a sub-CMI of $K$.
	%Then by Definition \ref{sub-collection-def}, we have 
	\begin{itemize}
		\item[1)] 
		$K'=(\{1\},\langle\{2\},\{2\}\rangle)$ is a sub-CMI of $K$ because ${\rm can}({\rm pur}(K'))=(\{1\},\langle\{2\},\{2\}\rangle)$,  $\mathbb{I}_K=\{2\}$, and $K'' = R_{K}^{K'}=(\{1\},\langle\ \rangle)=(\cdot,\langle\ \rangle)$, so that ${\rm can}({\rm pur}(K''))=(\cdot,\langle\ \rangle)$ and $C=\{1\}\subseteq \{1\}=C'$, satisfying ii) of Definition~\ref{sub-collection-def}.
		\item[2)]  $K'=(\{1,3\},\langle\{2\},\{3\},\{4\}\rangle)$ is a sub-CMI of $K$ because ${\rm can}({\rm pur}(K'))=(\{1,3\},\langle\{2\},\{4\}\rangle)$,  $\mathbb{I}_K=\{2\}$, and $K'' = R_{K}^{K'}=(\{1,3\},\langle\{4\}\rangle)=(\cdot,\langle\ \rangle)$, so that ${\rm can}({\rm pur}(K''))=(\cdot,\langle\ \rangle)$ and $C=\{1\}\subseteq \{1,3\}=C'$, satisfying ii) of Definition~\ref{sub-collection-def}.
		%\item[3)] 
		%$K'=(\{1\},\langle\{3\},\{4\},\{5\}\rangle)$ is a sub-CMI of $K$ because ${\rm can}({\rm pur}(K'))=(\{1\},\langle\{3\},\{4\},\{5\}\rangle)$,  $\mathbb{I}_K=\{2\}$, and $R_{K}^{K'}=(\{1\},\langle\{3\},\{4\},\{5\}\rangle)$, and then condition ii) of Definition \ref{sub-collection-def} can be verified readily.
		%
		\item[3)]
		$K'=(\{1,2\},\langle\{1\}, \{3\},\{4\}\rangle)$ is a sub-CMI of $K$, which is explained as follows. First, $\mathbb{I}_K=\{2\}$,
		${\rm can}({\rm pur}(K))=(\{1\},\langle\{2\},\{2\}$, $\{3\},\{4\},\{5\}\rangle)$, and ${\rm can}({\rm pur}(K'))=(\{1,2\},\langle\{3\},\{4\}\rangle)$.
		Then
		\begin{align*}
			&C=\{1\}, \mathbb{I}_K=\{2\},  P_1=\{3\}, P_2=\{4\}, P_3=\{5\}, P=\bigcup\limits_{i=1}^{3}P_i=\{3,4,5\}, \\
			&S=C\cup P=\{1,3,4,5\},
			C'=\{1,2\}, \mathbb{I}_{K'}=\emptyset, P'_1=\{3\}, P'_2=\{4\}.
		\end{align*}
		Following Definition \ref{RKK'}, we have
		\begin{align*}
			&C''=C'\backslash\mathbb{I}_K=\{1\},\ 
			\mathbb{I}_{K''}=\mathbb{I}_{K'}\backslash\mathbb{I}_{K}=\emptyset,\ 
			P''_1=P'_1\backslash\mathbb{I}_K=\{3\},\ 
			P''_2=P'_2\backslash\mathbb{I}_K=\{4\},\\
			&P''=\bigcup\limits_{j=1}^{2}P_j^{\prime\prime}=\{3,4\}, \  K''=R_{K}^{K'}=(\{1\},\langle\{3\},\{4\}\rangle),
			S \backslash P'' = \{1,5\}.
		\end{align*} 
		Then, we readily see that 
        ${\rm can}({\rm pur}(K''))\neq(\cdot,\langle\ \rangle)$,
        $\mathbb{I}_{K''}=\emptyset$, $P''\subseteq P$, and $C\subseteq C''\subseteq S\backslash P''$.
		We also see that $m_1\in P''_{j_1}$ and $m_2\in P''_{j_2}$ ($1\le j_1,j_2\le 2,\ j_1\neq j_2$) imply $m_1\in P_{i_1}$ and $m_2\in P_{i_2}$ ($1\le i_1, i_2\le 3,\ i_1\neq i_2$).	
		Therefore, $K'$ satisfies iii) of Definition~\ref{sub-collection-def}.

		%Since ${\rm can}({\rm pur}(K_2))=(\{1,2\},\langle\{3\},\{4\}\rangle)$,  $\mathbb{I}_K=\{2\}$, and 
		%Thus by Definition \ref{sub-collection-def}, $K_2$ is a sub-collection of $K$.  
		%\item[3)]
		%$K_3=(\{1\},\langle\{1,2,3\},\{4,5\}\rangle)$ is a sub-collection of $K$. Since ${\rm can}({\rm pur}(K_3))=(\{1\},\langle\{2,3\},\{4,5\}\rangle)$,  $\mathbb{I}_K=\{2\}$, and $R_{K}^{K_3}=(\{1\}\backslash\{2\},\langle\{2,3\}\backslash\{2\},\{4,5\}\backslash\{2\}\rangle)=(\{1\},\langle\{3\},\{4,5\}\rangle)$. 
		%Thus by Definition \ref{sub-collection-def}, $K_3$ is a sub-collection of $K$.  
	\end{itemize}
	
	%
	%Let $R$ and $E$ be two arbitrary index sets. In the same way, the following collections are sub-collections of $K$:
	%	$$\begin{array}{ll}
		%&((\{1\},R_1),\langle\{2\},E_1\rangle), ((\{1\},R_2),\langle\{2\},\{2\},E_2\rangle),
		%		(\{1\},\langle\{3\},\{4\}\rangle),(\{1,2\},\langle\{3\},\{4\}\rangle),\\&	(\{1\},\langle\{2\},\{3\},\{4\}\rangle),	(\{1\},\langle\{2,3\},\{4\}\rangle), (\{1\},\langle\{3\},\{2,4\}\rangle).
		%	\end{array}$$
	
\end{example}

\begin{proposition}
\label{IIK}
If $K'$ is a sub-CMI of $K$, then $\mathbb{I}_{K''}=\emptyset$.
\end{proposition}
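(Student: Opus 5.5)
The argument goes through the three clauses of Definition~\ref{sub-collection-def} one at a time. Throughout, $K''=R_K^{K'}$. From the proof of Proposition~\ref{remark-1a}~i), whenever $K'$ is non-degenerate we have $\mathbb{I}_{K''}=\mathbb{I}_{K'}\backslash\mathbb{I}_K$. That argument only used Definition~\ref{RKK'}: the sets $C'$, $\mathbb{I}_{K'}$ and $P'_j$ are disjoint by Remark~\ref{remark-1}, so the only indices that can be repeated in $\langle \mathbb{I}_{K'}\backslash\mathbb{I}_K,\mathbb{I}_{K'}\backslash\mathbb{I}_K,P'_j\backslash\mathbb{I}_K,1\le j\le s\rangle$ are those of $\mathbb{I}_{K'}\backslash\mathbb{I}_K$. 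Each of those appears at least twice, and none of them lies in $C'\backslash\mathbb{I}_K$, so passing to the pure form does not remove it. This identification is what the three cases reduce to.

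\emph{Case iii).} The condition $\mathbb{I}_{K''}=\emptyset$ is written into the clause, so there is nothing to prove.

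\emph{Case i).} Here $K'=(\cdot,\langle\ \rangle)$. By Proposition~\ref{empty-can}, ${\rm can}({\rm pur}(K'))=(\cdot,\langle\ \rangle)$, and by the convention \eqref{varitionofcan} this gives $\mathbb{I}_{K'}=\emptyset$ and $s=0$. Then Definition~\ref{RKK'} puts us in its first line, $K''=(\cdot,\langle\ \rangle)$. The repeated-index set of a degenerate CMI is empty (again by Proposition~\ref{empty-can} and \eqref{varitionofcan}), so $\mathbb{I}_{K''}=\emptyset$.

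\emph{Case ii).} Here ${\rm can}({\rm pur}(K''))=(\cdot,\langle\ \rangle)$. By \eqref{varitionofcan}, the general form $(C'',\langle\mathbb{I}_{K''},\mathbb{I}_{K''},P''_j\rangle)$ equals the degenerate form only when $\mathbb{I}_{K''}=\emptyset$ and $r=0$. If $\mathbb{I}_{K''}\neq\emptyset$, the canonical form would contain $\langle\mathbb{I}_{K''},\mathbb{I}_{K''}\rangle$ and would not be degenerate. Concretely, choose $m\in\mathbb{I}_{K''}$ and set $X_m=U$ and all other $X$'s equal to $0$. Since $m\notin C''$, this distribution has $H(X_{\mathbb{I}_{K''}}|X_{C''})>0$, so by Lemma~\ref{cor-1} the CMI is invalid and therefore not degenerate. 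This contradiction gives $\mathbb{I}_{K''}=\emptyset$.

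The main obstacle is bookkeeping rather than substance. The notation $\mathbb{I}_{K''}$ refers to the repeated set of ${\rm pur}(K'')$, so one must check that taking the pure form (removing $C''$) does not destroy repeated indices. The disjointness noted in the first paragraph handles this, and it also covers the borderline subcases of Definition~\ref{RKK'} where $u\in\{0,1\}$ and degeneracy arises only from the count of the $T_l$.
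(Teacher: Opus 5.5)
Your proof is correct and follows the same route as the paper. Like the paper, you split over clauses i)--iii) of Definition~\ref{sub-collection-def}: clause iii) is immediate, and clauses i) and ii) reduce to the fact that a degenerate canonical form has an empty repeated-index set. The paper simply reads that fact off \eqref{varitionofcan}, whereas your one-variable construction combined with Lemma~\ref{cor-1} justifies it directly.

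Two minor points. The identification $\mathbb{I}_{K''}=\mathbb{I}_{K'}\backslash\mathbb{I}_K$ in your first paragraph is true but not actually used. In case i), it does not matter whether $s=0$ or $s=1$: either way $u\le 1$, so Definition~\ref{RKK'} gives $K''=(\cdot,\langle\ \rangle)$.
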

\begin{proof}
If $K'$ is a sub-CMI of $K$, then $K'$ satisfies 
i), ii), or iii) of Definition~\ref{sub-collection-def}.
If $K'$ satisfies i), then $K' = (\cdot, \langle \ \rangle)$ implies $K'' = (\cdot, \langle \ \rangle)$
and ${\rm can}({\rm pur}(K''))=(\cdot,\langle\ \rangle)$, where the latter implies $\mathbb{I}_{K''}=\emptyset$.
If $K'$ satisfies ii), then ${\rm can}({\rm pur}(K''))=(\cdot,\langle\ \rangle)$ again implies $\mathbb{I}_{K''}=\emptyset$. 
Finally, if $K'$ satisfies iii), then obviously $\mathbb{I}_{K''}=\emptyset$. The proof is accomplished by combining all the three cases.
\end{proof}

%\begin{remark}
%Let $K$ be an arbitrary CMI. Let $\bar{K}=(\mathcal{N}_n,\langle\ \rangle)$ and $K'=(\cdot,\langle\ \rangle)$. 
%Then $R_{K}^{\bar{K}}=(\cdot,\langle\ \rangle)$. Since $C\subseteq \mathcal{N}_n$, we obtain by case i) of Definition \ref{sub-collection-def} that $\bar{K}$ is a sub-CMI of any $K$.
%And we have $\bar{K}=K'$ by Definition \ref{def-can}, so $K'$ is a sub-CMI of any $K$.
%\end{remark}}

\begin{proposition}\label{PropIV4}
If $K=(\cdot,\langle\ \rangle)$, then $(\cdot,\langle\ \rangle)$ is the only sub-CMI of $K$. 
\end{proposition}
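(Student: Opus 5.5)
We need two things: $(\cdot,\langle\ \rangle)$ is a sub-CMI of $K$, and any sub-CMI $K'$ of $K$ equals $(\cdot,\langle\ \rangle)$. The first is immediate from condition i) of Definition~\ref{sub-collection-def}. For the second, we assume $K=(\cdot,\langle\ \rangle)$ and that $K'$ is a sub-CMI of $K$ satisfying ii) or iii), and we show $K'$ is degenerate.

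First, we record what $K$ degenerate gives. By Proposition~\ref{empty-can} and Corollary~\ref{prop2}, ${\rm can}({\rm pur}(K))=(\cdot,\langle\ \rangle)$. By the convention \eqref{varitionofcan}, this means $\mathbb{I}_K=\emptyset$ and $t=0$, so $P=\emptyset$. As noted after Definition~\ref{RKK'}, we then have $R_K^{K'}={\rm can}({\rm pur}(K'))$. Hence $K''\sim K'$, and ${\rm can}({\rm pur}(K''))={\rm can}({\rm pur}(K'))$, using Proposition~\ref{PropIII.5} together with the fact that a canonical form is already pure.

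Case iii) is impossible. Here ${\rm can}({\rm pur}(K''))\neq(\cdot,\langle\ \rangle)$ and $\mathbb{I}_{K''}=\emptyset$, so by Lemma~\ref{Lemiii5} (or Proposition~\ref{remark-1a} iii)) we get $r\ge2$, and in particular $P''\neq\emptyset$. But condition iii) requires $P''\subseteq P=\emptyset$, a contradiction. Equivalently, the last clause of iii) would demand indices in distinct $P_i$'s, which is impossible since $t=0$.

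In case ii), ${\rm can}({\rm pur}(K''))=(\cdot,\langle\ \rangle)$, so ${\rm can}({\rm pur}(K'))=(\cdot,\langle\ \rangle)$. By Proposition~\ref{empty-can} and Corollary~\ref{prop2}, $K'$ is degenerate, i.e.\ $K'=(\cdot,\langle\ \rangle)$ in the paper's notation.

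The only delicate point is the bookkeeping justifying $R_K^{K'}={\rm can}({\rm pur}(K'))$ and the identification of the canonical forms of $K''$ and $K'$. This rests on the remark after Definition~\ref{RKK'} and on the convention that a degenerate $K$ has $\mathbb{I}_K=\emptyset$ and $t=0$; once that is in place, both cases are immediate.
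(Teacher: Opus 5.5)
Your proof is correct and follows essentially the paper's route. Condition i) gives $(\cdot,\langle\ \rangle)$ as a sub-CMI, and $\mathbb{I}_K=\emptyset$ forces $R_K^{K'}={\rm can}({\rm pur}(K'))$, so ii) can only hold for degenerate $K'$, while iii) is ruled out because $r\ge2$ clashes with the distinct-$P_i$ clause. One small refinement: the paper allows $t\in\{0,1\}$ for degenerate $K$ and disposes of $t=1$ with that distinct-$P_i$ clause, so your "equivalently" argument (valid for any $t\le1$), not $P''\subseteq P=\emptyset$, is the one that covers both readings.
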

\begin{proof}
First of all, by i) of Definition~\ref{sub-collection-def}, 
$(\cdot,\langle\ \rangle)$ is a sub-CMI of any CMI $K$, in particular for $K=(\cdot,\langle\ \rangle)$.
Let ${\rm can}({\rm pur}(K))=(C, \langle \mathbb{I}_K, \mathbb{I}_K, P_i, 1\le i\le t\rangle)$. If $K=(\cdot,\langle\ \rangle)$, then ${\rm can}({\rm pur}(K)) =  (\cdot,\langle\ \rangle)$, and by \eqref{varitionofcan}, we have $\mathbb{I}_K=\emptyset$ and $t=0,1$. Assume that $(\cdot,\langle\ \rangle)$ is not the only sub-CMI  of $K$. 
In other words, there exists a CMI $K'\neq (\cdot,\langle\ \rangle)$ which is a sub-CMI of $K$. 
In the following, we will show that $K'$ cannot satisfy i) to iii) of Definition~\ref{sub-collection-def}.

Obviously, $K'$ does not satisfy i) of Definition~\ref{sub-collection-def}.
Since $K'\neq(\cdot,\langle\ \rangle)$, we have ${\rm pur}(K')\neq(\cdot,\langle\ \rangle)$, which by Proposition~\ref{empty-can} implies that ${\rm can}({\rm pur}(K'))\neq(\cdot,\langle\ \rangle)$.\footnote{In Proposition~\ref{empty-can}, $K$ is assumed to be in pure form.}
Let $K''=R_{K}^{K'}$
and ${\rm can}({\rm pur}(K''))=(C'',\langle \mathbb{I}_{K''}, \mathbb{I}_{K''}, P''_j, 1\le j\le r\rangle)$. Since $\mathbb{I}_K=\emptyset$, we obtain $K''=R_{K}^{K'}={\rm can}({\rm pur}(K'))\neq (\cdot,\langle\ \rangle)$, 
which implies that ${\rm can}({\rm pur}(K''))={\rm can}({\rm pur}(K'))\neq(\cdot,\langle\ \rangle)$. Therefore, ii) of Definition~\ref{sub-collection-def} is not satisfied.

Furthermore, since $K''\neq(\cdot,\langle\ \rangle)$, Lemma~\ref{Lemiii5} implies that either one of the following conditions holds:
\begin{itemize}
\item[a)] $\mathbb{I}_{K''}=\emptyset$ and $r\ge2$;
\item[b)] $\mathbb{I}_{K''}\neq\emptyset$.
 \end{itemize}
 We will prove by contradiction that iii) of Definition~\ref{sub-collection-def} is not satisfied.
Assume the contrary that iii) of Definition~\ref{sub-collection-def} is satisfied. 
If a) above holds,  
then $P''\neq\emptyset$. This implies $t\neq0$, because otherwise $P=\emptyset$, violating $P''\subseteq P$ in iii) of Definition~\ref{sub-collection-def}. It follows that $t=1$ and  $(P''_1\cup P''_2)\subseteq P'' \subseteq P = P_1$. Then there exist $m_1\in P''_1$ and $m_2\in P''_2$ such that $\{m_1,m_2\}\subseteq P_1$, which violates the implication in iii) of Definition~\ref{sub-collection-def}, which is a contradiction.
On the other hand, if b) above holds, then $\mathbb{I}_{K''}=\emptyset$ in iii) of Definition~\ref{sub-collection-def} is violated, again a contradiction.

Hence, we have shown that if $K'\neq (\cdot,\langle\ \rangle)$, then it does not satisfy i) to iii) of Definition~\ref{sub-collection-def}, i.e, it is not a sub-CMI of $K$. Therefore, $(\cdot,\langle\ \rangle)$ is the only sub-CMI  of $K$, proving the proposition.
\end{proof}

\begin{proposition}\label{thmin[3]}
	Let $C$ and $Q_i$ be subsets of $\mathcal{N}_n$ and $W_i$ be a subset of $Q_i$ for $1\le i\le k$, where $k\ge 0$. If $K_1=(C,\langle Q_i,\ 1\le i\le k\rangle)$ is valid, 
	then $K_2=(C,\langle {W_i}, 1\le i\le k\rangle)$ is valid.
\end{proposition}
This proposition is rudimentary and its proof is omitted.
%\begin{proof}
%If $K_1$ is valid, i.e.,
%\begin{equation}\label{Q-CMI}
%	J(X_{Q_i},1\leq i\leq k|X_C)=
%	H(X_{Q_i},1\leq i\leq k|X_C)-\sum_{i=1}^kH(X_{Q_i}|X_C)=0.
%\end{equation}
%Then consider
%	\begin{equation}\begin{array}{ll}
%			&J(X_{W_i},1\leq i\leq k|X_C)\\
%			&=\sum_{i=1}^kH(X_{W_i}|X_C)-H(X_{W_i},1\leq i\leq k|X_C)\\ &=\sum_{i=1}^kH(X_{W_i}|X_C)-(H(X_{Q_i},1\le i\le k|X_C)-H(X_{Q_i-W_i},1\leq i\leq k|X_C,X_{W_i},1\le i\le k))\\
%			&=\sum_{i=1}^kH(X_{W_i}|X_C)-
%			\left(\sum_{i=1}^k H(X_{Q_i}|X_C)-\sum_{i=1}^kH(X_{Q_i-W_i}|X_C,X_{W_j},1\le j\le k,X_{Q_l-W_l},1\le l\le i-1)\right)\\
%			&\le \sum_{i=1}^kH(X_{W_i}|X_C)-
%			\sum_{i=1}^k H(X_{Q_i}|X_C)+\sum_{i=1}^kH(X_{Q_i-W_i}|X_C,X_{W_i})\\
%			&=0.
%	\end{array}\end{equation}
%	Since $J(X_{W_i},1\leq i\leq k|X_C)\ge 0$ by Proposition \ref{mainformula},
%	we can obtain $J(X_{W_i},1\leq i\leq k|X_C)= 0$, i.e.,
%	\begin{equation}\begin{array}{ll}
%			H(X_{W_i},1\leq i\leq k|X_C) 
%			=\sum_{i=1}^kH(X_{W_i}|X_C).
%	\end{array}\end{equation}
%	The lemma is proved.
%\end{proof}

%Note that the follow lemma is a generalized version of Yeung's theorem \cite[Theorem 12.5]{Yeung2002}.

\begin{lemma}
	\label{thmin[2]}
Let $C$ and $Q_i$ be subsets of $\mathcal{N}_n$, and 
$W_i$ and $V_i$ be subsets of $Q_i$ such that $Q_i=W_i\cup V_i$ for $1\le i\le k$, where $k\ge 0$. If $K_1=(C,\langle Q_i,\ 1\le i\le k\rangle)$ is valid, 
then $K_2=((C,{V_j},1\le j\le k),\langle {W_i}, 1\le i\le k\rangle)$ is valid.
\end{lemma}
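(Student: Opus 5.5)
We need to show $J(X_{W_i},1\le i\le k\,|\,X_C,X_{V_j},1\le j\le k)=0$, given $J(X_{Q_i},1\le i\le k\,|\,X_C)=0$. Write $V=\cup_{j=1}^k V_j$ and $W=\cup_{i=1}^k W_i$. The plan is to pass to a conditional-entropy form of mutual independence and use two standard facts:
\begin{itemize}
\item[a)] mutual independence of $X_{Q_1},\ldots,X_{Q_k}$ given $X_C$ is equivalent to $H(X_{Q_i}\,|\,X_C,X_{Q_1},\ldots,X_{Q_{i-1}})=H(X_{Q_i}|X_C)$ for all $i$, by the chain rule (Proposition~\ref{prop:chain_rule_H|Y}) and the nonnegativity of each summand;
\item[b)] conditional mutual independence is preserved under adding to the conditioning variables that are functions of the individual blocks.
\end{itemize}
Since $V_i\subseteq Q_i$, the variable $X_{V_i}$ is a function of $X_{Q_i}$. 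So b) is exactly what is needed.

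\textbf{Step 1.} I would first handle the trivial cases $k=0,1$. Here $K_2$ is degenerate and hence valid by Remark~\ref{k01}. So assume $k\ge2$.

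\textbf{Step 2.} The key claim is that $X_{Q_1},\ldots,X_{Q_k}$ remain mutually independent given $(X_C,X_V)$. Write $D=(X_C,X_V)$. I would evaluate
\[
\sum_i H(X_{Q_i}|D)-H(X_{Q_1},\ldots,X_{Q_k}|D).
\]
Since $V\subseteq \cup_i Q_i$, the joint term is
\[
H(X_{Q_1},\ldots,X_{Q_k}|D)=H(X_{Q_1},\ldots,X_{Q_k}|X_C)-H(X_V|X_C).
\]
For the individual terms, split $V=V_i\cup V_{-i}$, where $V_{-i}=\cup_{j\ne i}V_j\setminus Q_i$. Then
\[
H(X_{Q_i}|D)=H(X_{Q_i}|X_C,X_{V_{-i}})-H(X_{V_i}|X_C,X_{V_{-i}}).
\]
By the validity of $K_1$, $X_{Q_i}$ is independent of $X_{V_{-i}}$ given $X_C$, since $V_{-i}$ lies in the other blocks. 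Hence both $H(X_{Q_i}|X_C,X_{V_{-i}})=H(X_{Q_i}|X_C)$ and $H(X_{V_i}|X_C,X_{V_{-i}})=H(X_{V_i}|X_C)$. The resulting quantity is therefore
\[
J(X_{Q_i},1\le i\le k\,|\,X_C)-\Big[\sum_i H(X_{V_i}|X_C)-H(X_V|X_C)\Big].
\]
The bracket equals $J(X_{V_i},1\le i\le k\,|\,X_C)$, because $V=\cup_i V_i$. This $J$ vanishes by Proposition~\ref{thmin[3]} applied with $V_i\subseteq Q_i$. Hence the displayed quantity is $0-0=0$.

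\textbf{Caveat in Step 2.} The blocks may overlap, so "$V_{-i}$ lies in the other blocks" needs care. Indices of $Q_i$ shared with other blocks are repeated indices, and these are functions of $X_C$ by Lemma~\ref{lemma-1}. To clean this up, I would first use Corollary~\ref{corofLemmaIII2} and Lemma~\ref{LemIII.2} to discard $\mathbb{I}_{K_1}$ from all sets. That reduces to the case where the $Q_i$ are disjoint. The main bookkeeping obstacle of the whole proof is checking that removing $\mathbb{I}_{K_1}$ commutes correctly with the $W_i$, $V_i$ and with the enlarged conditioning set; this holds because $H(X_{\mathbb{I}}|X_C)=0$ implies $H(X_{\mathbb{I}}|X_C,X_V)=0$.

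\textbf{Step 3.} With the new conditioning $D$, apply Proposition~\ref{thmin[3]} with $W_i\subseteq Q_i$. This gives $J(X_{W_i},1\le i\le k\,|\,D)=0$, i.e.\ $K_2$ is valid.
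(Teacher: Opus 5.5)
Your proof is correct, but it takes a different route from the paper's. The paper never goes through your intermediate claim that $X_{Q_1},\ldots,X_{Q_k}$ stay mutually independent given $(X_C,X_V)$. Instead it bounds $J(X_{W_i},1\le i\le k|X_C,X_{V_j},1\le j\le k)$ directly, in four moves:
\begin{itemize}
\item It rewrites the joint term as $H(X_{Q_i},1\le i\le k|X_C)-H(X_{V_j},1\le j\le k|X_C)$, using $\cup_i Q_i=W\cup V$.
\item It replaces the first piece by $\sum_i H(X_{Q_i}|X_C)$ (validity of $K_1$), and the second by the chain rule $\sum_i H(X_{V_i}|X_C,X_{V_1},\ldots,X_{V_{i-1}})$.
\item It applies ``conditioning does not increase entropy'' twice, to $H(X_{W_i}|X_C,X_V)$ and to $H(X_{Q_i}|X_C)$. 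Each summand is then bounded above by $H(X_{W_i}|X_C,X_{V_1},\ldots,X_{V_i})-H(X_{W_i}|X_C,X_{V_1},\ldots,X_{V_i})=0$.
\item Nonnegativity of $J$ forces equality.
\end{itemize}
This sandwich uses only $H(X_Q|X_C)=\sum_i H(X_{Q_i}|X_C)$, never independence of one block from the rest. So it is indifferent to overlaps among the $Q_i$, and it needs neither repeated indices, Lemma~\ref{lemma-1}, nor Proposition~\ref{thmin[3]}.

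Your route is in effect: prove the special case $W_i=Q_i$, then shrink the blocks with Proposition~\ref{thmin[3]}. The special case comes from the exact identity $J(X_{Q_i},1\le i\le k|X_C,X_V)=J(X_{Q_i},1\le i\le k|X_C)-J(X_{V_i},1\le i\le k|X_C)$, which holds under validity of $K_1$.

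The cost is twofold. You need the reduction to disjoint blocks, and you resolve that caveat correctly: shared indices, and indices of $\cup_{j\ne i}V_j$ lying in $Q_i\backslash V_i$, all belong to $\mathbb{I}_{K_1}$. They are therefore functions of $X_C$, and hence of any conditioning containing $X_C$. You also rely on Proposition~\ref{thmin[3]}, whose proof the paper omits.

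The gain is a transparent structural statement: conditioning on sub-vectors of the individual blocks preserves their conditional mutual independence. You obtain it through equalities rather than an inequality sandwich.
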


\begin{proof}
If $K_1$ is valid, then
\begin{equation}\label{Q-CMI}
H(X_{Q_i},1\leq i\leq k|X_C)=\sum_{i=1}^kH(X_{Q_i}|X_C).
\end{equation}

Consider
\begin{equation}\label{W-ineq1}
	\begin{array}{ll}
		&J(X_{W_i},1\leq i\leq k|X_C,X_{V_j},1\leq j\leq k)\\
		&=\sum\limits_{i=1}^kH(X_{W_i}|X_C,X_{V_j},1\leq j\leq k)-H(X_{W_i},1\leq i\leq k|X_C,X_{V_j},1\leq j\leq k)\\
		& =\sum\limits_{i=1}^kH(X_{W_i}|X_C,X_{V_j},1\leq j\leq k)-\left(H(X_{Q_i},1\leq i\leq k|X_C)-H(X_{V_j},1\leq j\leq k|X_C)\right)\\
		&\overset{\eqref{Q-CMI}}{=}\sum\limits_{i=1}^kH(X_{W_i}|X_C,X_{V_j},1\leq j\leq k)-\left(\sum\limits_{i=1}^kH(X_{Q_i}|X_C)  -\sum\limits_{i=1}^kH(X_{V_i}|X_C,X_{V_j},1\leq j\leq i-1)\right)
		\\
		& \le\sum\limits_{i=1}^kH(X_{W_i}|X_C,X_{V_j},1\leq j\leq i)  -\left(\sum\limits_{i=1}^kH(X_{Q_i}|X_C,X_{V_j},1\leq j\leq i-1) \right.\\ &  \left.\ \ \ \ \ \ \ \  -\sum\limits_{i=1}^kH(X_{V_i}|X_C,X_{V_j},1\leq j\leq i-1)\right) \\
		&=\sum\limits_{i=1}^kH(X_{W_i}|X_C,X_{V_j},1\leq j\leq i)-\sum\limits_{i=1}^kH(X_{W_i}|X_C,X_{V_j},1\leq j\leq i) \\
		& =0,
\end{array}\end{equation}

Since $J(X_{W_i},1\leq i\leq k|X_C,X_{V_i},1\leq i\leq k)\ge 0$ by Proposition \ref{mainformula},
we obtain $J(X_{W_i},1\leq i\leq k|X_C,X_{V_i},1\leq i\leq k)= 0$, i.e.,
\begin{equation}\begin{array}{ll}
	 H(X_{W_i},1\leq i\leq k|X_C,X_{V_i},1\leq i\leq k) 
	 =\sum_{i=1}^kH(X_{W_i}|X_C,X_{V_j},1\leq j\leq k),
\end{array}\end{equation}
implying that $K_2$ is valid.
The lemma is proved.
\end{proof}

\begin{theorem}\label{newlemma1}
	Let $C$ and $Q_i$ be subsets of $\mathcal{N}_n$,
$W_i\subseteq Q_i$, $1\le i\le k$, where $k\ge 0$, and let $Q=\cup_{i=1}^{k}Q_i$.  Then $K_1=(C,\langle Q_i,\ 1\le i\le k\rangle)$ is valid if and only if $K_2(R)=((C,R),\langle {W_i}, 1\le i\le k\rangle)$ is valid for any $R\subseteq Q\backslash(\cup_{i=1}^{k}W_i)$.
\end{theorem}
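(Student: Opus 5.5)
The plan is to prove the ``only if'' direction by combining Lemma~\ref{thmin[2]} and Proposition~\ref{thmin[3]}, with one well-chosen splitting of each $Q_i$. The ``if'' direction needs care about quantifiers. With the $W_i$ held fixed it cannot hold: for instance, if all $W_i=\emptyset$, then every $K_2(R)$ is degenerate while $K_1$ need not be valid. I will therefore read the statement as ``$K_1$ is valid if and only if $K_2(R)$ is valid for every choice of $W_i\subseteq Q_i$, $1\le i\le k$, and every $R\subseteq Q\backslash(\cup_{i=1}^{k}W_i)$.'' Under this reading the ``if'' direction is immediate: take $W_i=Q_i$ for all $i$ and $R=\emptyset$. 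Then $K_2(\emptyset)=(C,\langle Q_i,1\le i\le k\rangle)=K_1$, so the validity of $K_1$ is among the hypotheses.

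For ``only if'', assume $K_1$ is valid, and fix $W_i\subseteq Q_i$ and $R\subseteq Q\backslash(\cup_i W_i)$. Define
\[
V_i = Q_i\cap R, \qquad W_i^{*}=Q_i\backslash R .
\]
Then $Q_i=W_i^{*}\cup V_i$, with $W_i^{*}$ and $V_i$ both subsets of $Q_i$, which is exactly the hypothesis of Lemma~\ref{thmin[2]}. Applying that lemma to the valid $K_1$ shows that $((C,V_j,1\le j\le k),\langle W_i^{*},1\le i\le k\rangle)$ is valid. Since $R\subseteq Q=\cup_i Q_i$, we have $\cup_j V_j=\cup_j(Q_j\cap R)=R$, so the conditioning set is precisely $(C,R)$.

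It remains to shrink the groups. Because $R$ is disjoint from every $W_j$, each $W_i$ lies in $Q_i\backslash R=W_i^{*}$. Proposition~\ref{thmin[3]}, applied with conditioning set $C\cup R$, then passes from $\langle W_i^{*}\rangle$ to $\langle W_i\rangle$ and gives the validity of $K_2(R)$.

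The only real obstacle is choosing the splitting $Q_i=W_i^{*}\cup V_i$ so that the $V_j$ together give exactly $R$, rather than some other set. Everything else is set bookkeeping: the identity $\cup_j(Q_j\cap R)=R$, which uses $R\subseteq Q$, and the inclusion $W_i\subseteq Q_i\backslash R$, which uses $R\cap W_i=\emptyset$. The degenerate cases $k=0,1$ are covered automatically, since both cited results hold for all $k\ge 0$.
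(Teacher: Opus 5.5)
Your proof is correct and is essentially the paper's argument. For the ``if'' part the paper also takes $W_i=Q_i$ and $R=\emptyset$, so it tacitly uses the reading of the quantifiers that you make explicit. For the ``only if'' part it combines the same two results, Proposition~\ref{thmin[3]} and Lemma~\ref{thmin[2]}, but in the opposite order: it first shrinks $Q_i$ to $W_i\cup R_i$ with $R_i\subseteq Q_i\backslash(\cup_j W_j)$ and then conditions on the $R_i$, whereas you first condition on $Q_i\cap R$ and then shrink $Q_i\backslash R$ to $W_i$.
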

\begin{proof}
Assume that $K_2(R)=((C,R),\langle {W_i}, 1\le i\le k\rangle)$ is valid for any $R\subseteq Q\backslash(\cup_{i=1}^{k}W_i)$. In particular, if $W_i=Q_i$ for $1\le i\le k$, then $R=\emptyset$ and $K_2(R)$ becomes $K_1$. This proves the ``if'' part. 
%
%In $K_2$, let $W_i=Q_i$ for $1\le i\le k$, then $R=\emptyset$ which implies $K_1=K_2(R)$. Thus, if $K_2(R)$ is valid, then $K_1$ is valid.
%	

We now prove the ``only if'' part.
Let $R=\cup_{i=1}^{k}R_i$, where $R_i\subseteq Q_i\backslash(\cup_{j=1}^{k}W_j)$, $1\leq i\leq k$. Let $\widetilde{Q}_i=W_i\cup R_i$ for $1\le i\le k$.
Then $\widetilde{Q}_i\subseteq Q_i$.
According to Proposition \ref{thmin[3]}, if $K_1=(C,\langle Q_i,1\le i\le k\rangle)$ is valid, then $K_3=(C,\langle \widetilde{Q}_i,1\le i\le k\rangle)$ is valid. Then by Lemma \ref{thmin[2]}, if $K_3$ is valid, then $K_2(R)=((C, R_i,1\le i\le k),\langle W_i,1\le i\le k \rangle)=((C, R),\langle W_i,1\le i\le k \rangle)$ is valid.
\end{proof}

\begin{theorem}\label{newlemma1aa}
	Let $C$ and $Q_i$ be subsets of $\mathcal{N}_n$,
	$W_i\subseteq Q_i$, $1\le i\le k$, where $k\ge 0$, and let $Q=\cup_{i=1}^{k}Q_i$.  Let $G_j=\cup_{i\in A_j}W_{i}$ for $1\le j\le r\le k$, where $A_j\subseteq\mathcal{N}_{k}, 1\le j\le r$ are disjoint. Let $R$ be any subset of $Q\backslash (\cup_{j=1}^r G_j)$.
	If $K_1=(C,\langle Q_i,\ 1\le i\le k\rangle)$ is valid, then $K_2(R)=((C,R),\langle {G_j}, 1\le j\le r\rangle)$ is valid.
\end{theorem}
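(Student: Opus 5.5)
The plan is to reduce the statement to Theorem~\ref{newlemma1} and then merge groups. Only the ``grouping'' step is new. Let $W=\cup_{i=1}^{k}W_i$ and $G=\cup_{j=1}^{r}G_j=\cup_{i\in A}W_i$, where $A=\cup_{j=1}^r A_j$. For $i\notin A$ put $\widetilde W_i=\emptyset$, and for $i\in A$ put $\widetilde W_i=W_i$. Then $\widetilde W_i\subseteq Q_i$ and $\cup_i\widetilde W_i=G$. The given $R$ satisfies $R\subseteq Q\backslash G=Q\backslash(\cup_i \widetilde W_i)$. Applying Theorem~\ref{newlemma1} to $K_1$ with the subsets $\widetilde W_i$ shows that $K_3=((C,R),\langle \widetilde W_i,1\le i\le k\rangle)$ is valid. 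Dropping the empty sets, as justified earlier in the paper, this says that $X_{W_i}$, $i\in A$, are mutually independent conditioning on $X_{C\cup R}$.

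The remaining step is to show that mutual independence of the family $\{X_{W_i}: i\in A\}$ given $X_{C\cup R}$ implies mutual independence of the merged blocks $X_{G_j}=(X_{W_i}, i\in A_j)$, $1\le j\le r$, given the same conditioning. Let $D=C\cup R$. From validity of $K_3$ we have
\[
H(X_{W_i}, i\in A|X_D)=\sum_{i\in A}H(X_{W_i}|X_D).
\]
By Proposition~\ref{thm:indep_bound}, applied first to the blocks and then within each block,
\[
H(X_{G_j},1\le j\le r|X_D)\le\sum_{j=1}^r H(X_{G_j}|X_D)\le \sum_{j=1}^r\sum_{i\in A_j}H(X_{W_i}|X_D)=\sum_{i\in A}H(X_{W_i}|X_D).
\]
The left-hand side equals $H(X_{W_i},i\in A|X_D)$, because the $A_j$ are disjoint, every $i\in A$ lies in exactly one $A_j$, and so the collection $(X_{G_j},1\le j\le r)$ is the same collection of random variables as $(X_{W_i},i\in A)$. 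Hence equality holds throughout. In particular $J(X_{G_j},1\le j\le r|X_D)=0$, i.e., $K_2(R)$ is valid by Definition~\ref{Def-CMI-II}. The cases $r=0,1$ are covered automatically by Remark~\ref{k01}.

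The main point to check is the bookkeeping when the $W_i$ overlap. Overlapping $W_i$ could appear in two different $G_j$, so $G_j$'s need not be disjoint. The argument above uses only entropies of unions of variables, so overlaps cause no problem: repeated variables do not change a joint entropy. Disjointness of the index sets $A_j$ is used only to identify $\sum_j\sum_{i\in A_j}$ with $\sum_{i\in A}$. A second point to check is that $R$ is allowed to meet $W\backslash G$, i.e., sets $W_i$ with $i\notin A$. Setting $\widetilde W_i=\emptyset$ for those indices is exactly what makes Theorem~\ref{newlemma1} applicable with this $R$.
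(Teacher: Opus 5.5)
Your proof is correct and follows essentially the same route as the paper. Both invoke Theorem~\ref{newlemma1} to get mutual independence of the $X_{W_i}$, $i\in\cup_j A_j$, given $X_{C\cup R}$, and both then merge blocks with the independence bound, using disjointness of the $A_j$ to identify $\sum_j\sum_{i\in A_j}$ with $\sum_{i\in\cup_j A_j}$. Two points the paper leaves tacit are made explicit in your version: the padding with $\widetilde W_i=\emptyset$ for $i\notin\cup_j A_j$ justifies applying Theorem~\ref{newlemma1} to the subfamily, and the ``equality throughout'' sandwich makes explicit the nonnegativity of $J$ needed to upgrade $\le 0$ to $=0$.
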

\begin{proof}
Consider
\begin{align}
&\sum_{j=1}^r H(X_{G_j}|X_C,X_R)-H(X_{G_j},1\le j\le r|X_C,X_R)\nonumber\\
&=\sum_{j=1}^r H(X_{\cup_{i\in A_j}W_{i}}|X_C,X_R)-H(X_{\cup_{i\in A_j}W_{i}}, 1\le j\le r|X_C,X_R)\nonumber\\
&=\sum_{j=1}^r H(X_{W_{i}},i\in A_j|X_C,X_R)-H(X_{W_{i}},i\in A_j, 1\le j\le r|X_C,X_R)\nonumber\\
&\le \sum_{j=1}^r \sum_{i\in A_j}H(X_{W_{i}}|X_C,X_R )-H(X_{W_{i}}, i\in A_j,1\le j\le r|X_C,_R).\label{96a}
\end{align}
If $K_1=(C,\langle Q_i,\ 1\le i\le k\rangle)$ is valid, then by Theorem \ref{newlemma1}, $K_2(R)=((C,R),\langle {W_i}, i\in A_j,1\le j\le r\rangle)$, where $R\subseteq Q\backslash (\cup_{j=1}^r G_j)= Q\backslash(\cup_{j=1}^{r}\cup_{i\in A_j}W_i)$, is valid.
Thus, we have 
\begin{align}\label{IV.7-1}
\sum_{j=1}^r \sum_{i\in A_j}H(X_{W_{i}}|X_C,X_R )-H(X_{W_{i}}, i\in A_j,1\le j\le r|X_C,X_R)=0.
\end{align}
Combining \eqref{96a} and \eqref{IV.7-1}, we obtain 
\begin{align*}
\sum_{j=1}^r H(X_{G_j}|X_C,X_R)-H(X_{G_j},1\le j\le r|X_C,X_R)=0,
\end{align*}
which implies that $K_2(R)=((C,R),\langle {G_j}, 1\le j\le r\rangle)$ is valid.
\end{proof}

\begin{theorem}
	\label{SubCMI}
	%Let $K=(C, \langle Q_i, 1\le i\le k\rangle)$. 
	Let $K$ be a CMI, and $K'$ be a sub-CMI of $K$.
	If $K$ is valid, then $K'$ is valid.
\end{theorem}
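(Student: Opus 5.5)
We will argue case by case on which of the conditions i)--iii) of Definition~\ref{sub-collection-def} $K'$ satisfies, assuming throughout that $K$ is valid. By Corollary~\ref{prop2} and Theorem~\ref{k-cank}, $K$ is valid if and only if ${\rm can}({\rm pur}(K))=(C,\langle \mathbb{I}_K,\mathbb{I}_K,P_i,1\le i\le t\rangle)$ is valid. The same equivalence holds for $K'$ and ${\rm can}({\rm pur}(K'))$, so we may work with canonical forms. Case i) is trivial, since $(\cdot,\langle\ \rangle)$ is always valid.

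In case ii), $K''=R_K^{K'}$ is degenerate and $C\subseteq C'$. We then want to apply Lemma~\ref{tech}. If $K'$ is degenerate there is nothing to prove. Otherwise both CMIs are non-degenerate; if $K$ were degenerate, Proposition~\ref{PropIV4} would force $K'$ to be degenerate, so $K$ is non-degenerate as well. Lemma~\ref{tech} then identifies the $J$-expression for ${\rm can}({\rm pur}(K'))$ with the $J$-expression of $R_K^{K'}$. The latter vanishes because $R_K^{K'}$ (whose general form is equivalent to $K''$) is degenerate. Hence $K'$ is valid. This is just the ``if'' argument of Proposition~\ref{K-K'=K-KK'} 1) run under the assumption that $K$ is valid rather than that $K$ implies $K'$.

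Case iii) is the substantive one. Since $K''\neq(\cdot,\langle\ \rangle)$, the same reasoning shows $K$ is non-degenerate. We have $C\subseteq C''\subseteq C'$, because $C''=C'\backslash\mathbb{I}_K$. By Lemma~\ref{tech} it suffices to show that $K''$ is valid, and by Theorem~\ref{k-cank} it suffices to show that ${\rm can}({\rm pur}(K''))=(C'',\langle P''_j,1\le j\le r\rangle)$ is valid (recall $\mathbb{I}_{K''}=\emptyset$). We will obtain this from Theorem~\ref{newlemma1aa} applied to $K_1={\rm can}({\rm pur}(K))$, taking its collection to be $\langle P_i,1\le i\le t\rangle$. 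The $\mathbb{I}_K$ entries can be discarded: by Lemma~\ref{cor-1}, $H(X_{\mathbb{I}_K}|X_C)=0$, and by Proposition~\ref{thmin[3]} (with $W=\emptyset$ on those two entries) the CMI $(C,\langle P_i,1\le i\le t\rangle)$ is valid.

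The construction is as follows. The separation condition in iii) says that distinct $P''_j$ meet distinct $P_i$'s. Together with $P''\subseteq P$, this means each $P_i$ meets at most one $P''_j$. So we set $A_j=\{i: P_i\cap P''_j\neq\emptyset\}$; these sets are disjoint. We then set $W_i=P_i\cap P''_j$ for $i\in A_j$ and $W_i=\emptyset$ otherwise. This gives $G_j=\cup_{i\in A_j}W_i=P''_j$. For the conditioning set we take $R=C''\backslash C$. Since $C''\subseteq S\backslash P''$, we get $R\subseteq P\backslash P''=Q\backslash(\cup_j G_j)$, where $Q=P$. Theorem~\ref{newlemma1aa} then gives validity of $((C,R),\langle P''_j,1\le j\le r\rangle)$, which equals $(C'',\langle P''_j\rangle)$ because $C\subseteq C''$.

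The main obstacle is the bookkeeping in this last step. We must check that the hypotheses of Theorem~\ref{newlemma1aa} hold exactly, in particular that $R$ avoids every $G_j$, and that the passage between $K''$, ${\rm pur}(K'')$, its canonical form and the general form used in Lemma~\ref{tech} is consistent. The degenerate subcases where $u=0,1$ also need care. Here Proposition~\ref{remark-1a}~iii) is the relevant fact: since $K''$ is non-degenerate, $r\ge2$, which keeps the $J$-expressions meaningful.
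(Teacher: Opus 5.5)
Your proof is correct and follows essentially the same route as the paper. You dispose of the degenerate cases via Proposition~\ref{PropIV4} and reduce validity of $K'$ to validity of $K''=R_K^{K'}$; you call Lemma~\ref{tech} directly, whereas the paper cites Proposition~\ref{K-K'=K-KK'}, which is itself proved from that lemma. You then apply Theorem~\ref{newlemma1aa} to $(C,\langle P_i,1\le i\le t\rangle)$ with the same disjoint $A_j$, the same $W_i=P_i\cap P''_j$, and $R=C''\backslash C\subseteq P\backslash P''$; the only cosmetic difference is that you drop the $\mathbb{I}_K$ entries with Proposition~\ref{thmin[3]} rather than Theorem~\ref{newlemma1}.
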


\begin{proof}
If $K'$ is degenerate, then the theorem is trivial. If $K$ is degenerate, then by Proposition \ref{PropIV4}, $K'=(\cdot,\langle\ \rangle)$ is the only sub-CMI, and the theorem follows.

Next, we assume that both $K$ and $K'$ are non-degenerate.
Let ${\rm can}({\rm pur}(K))=(C, \langle \mathbb{I}_K, \mathbb{I}_K, P_i, 1\le i\le t\rangle)$, 
${\rm can}({\rm pur}(K'))=(C',\langle \mathbb{I}_{K'}, \mathbb{I}_{K'}, P'_j, 1\le j\le s\rangle)$, $K''=R_{K}^{K'}$,
 and ${\rm can}({\rm pur}(K''))=(C'',\langle \mathbb{I}_{K''}, \mathbb{I}_{K''}, P''_j, 1\le j\le r\rangle)$. Further let $P=\cup_{i=1}^{t}P_i$, $S=C\cup P$, and $P''=\cup_{j=1}^{r}P''_j$. 
By Proposition \ref{K-K'=K-KK'}, we obtain
%$K$ implies $K'$ if and only if one of the following conditions holds:

% \textbf{Condition i)} $K$ implies $R_{K}^{K'}$, where $R_{K}^{K'}= (\cdot,\langle\ \rangle)$ and $C\subseteq C'$,

% \textbf{Condition ii)} $K$ implies $R_{K}^{K'}$, where $R_{K}^{K'}\neq (\cdot,\langle\ \rangle)$.

\begin{list}%
{\arabic{cond})}{\usecounter{cond}}
\item
If $K''=R_{K}^{K'}= (\cdot,\langle\ \rangle)$, then $K$ implies $K'$ if and only if $C\subseteq C'$.
\item 
If $K''=R_{K}^{K'} \ne (\cdot,\langle\ \rangle)$, then $K$ implies $K'$ if and only if $K$ implies $R_{K}^{K'}$.
\end{list}

Since $K'$ is a sub-CMI of $K$ and $K'$ is non-degenerate, by Definition~\ref{sub-collection-def}, either one of the following cases holds:
\begin{list}%
{\alph{cond})}{\usecounter{cond}}
\item 
${\rm can}({\rm pur}(K''))=(\cdot,\langle\ \rangle)$ and $C\subseteq C'$.
\item 
${\rm can}({\rm pur}(K''))\neq(\cdot,\langle\ \rangle); \mathbb{I}_{K''}=\emptyset$; $P''\subseteq P$; $C\subseteq C''\subseteq S\backslash P''$; if $m_1\in P''_{j_1}$ and $m_2\in P''_{j_2}$, where $1\le j_1, j_2\le r,\ j_1\neq j_2$, then $m_1\in P_{i_1}$ and $m_2\in P_{i_2}$, where $1\le i_1, i_2\le t,\ i_1\neq i_2$.
\end{list}
If a) holds,
since ${\rm can}({\rm pur}(K''))=(\cdot,\langle\ \rangle)$, we obtain by Proposition \ref{empty-can} that $K''=(\cdot,\langle\ \rangle)$. By $C\subseteq C'$, we prove by 1) of Proposition \ref{K-K'=K-KK'} that $K$ implies $K'$.
If b) holds, we first have  $K''\neq(\cdot,\langle\ \rangle)$. By 2) of Proposition \ref{K-K'=K-KK'}, we only need to prove that if $K$ is valid, then $K''$ is valid. This will be done in the rest of the proof.

Assume $K$ is valid, which by Theorem \ref{k-cank} implies that ${\rm can}({\rm pur}(K))$ is valid, and further by Theorem \ref{newlemma1} implies that $(C, \langle P_i, 1\le i\le t\rangle)$ is valid. 

Let $V_{ji}=P''_j\cap P_i$, where $1\le j\le r$ and $1\le i\le t$, and let $A_j=\{i| V_{ji}\neq\emptyset, 1\le i\le t\}$, where $1\le j\le r$.
%To see this, consider $V_{ji}$ and $V_{j'i'}$, where $(j,i) \ne (j',i')$. 
%Then 
%\[
%V_{ji} \cap V_{j'i'} = (P''_j \cap P_i) \cap  (P''_{j'} \cap P_{i'})
%= (P''_j \cap P''_{j'}) \cap (P_i \cap P_{i'}).
%\]
%Since $(j,i) \ne (j',i')$, we have $j \ne j'$ or $i \ne i'$. If $j \ne j'$, then 
%$P''_j \cap P''_{j'} = \emptyset$, implying that $V_{ji} \cap V_{j'i'} = %\emptyset$.
%If $i \ne i'$, then 
%$P_i \cap P_{i'} = \emptyset$, implying that $V_{ji} \cap V_{j'i'} = \emptyset$.
%This proves the claim that $V_{ji}$ and $V_{j'i'}$.
Since $P''\subseteq P$ and $\{P_i,1\le i\le t\}$ is a partition of $P$, we have
$$P''_j = P''_j \cap P =\bigcup\limits_{i=1}^{t}(P''_j\cap P_i)=\bigcup\limits_{i\in A_j}V_{ji}, \ 1\le j\le r.$$
Note that for a fixed $1 \le j \le r$, the sets $V_{ji}$, $1 \le i \le t$  are disjoint because $\{P_i,1\le i\le t\}$ is a partition of $P$.
Thus we see that $\{V_{ji}, i\in A_j\}$ forms a partition of $P''_j$ for $1\le j\le r$. 

Next, we prove by contradiction that $A_j, 1\le j \le r$ are disjoint. 
If $A_j, 1\le j\le r$ are not disjoint, we assume without loss of generality that $A_1\cap A_2\neq\emptyset$, where $A_1=\{i|V_{1i}\neq\emptyset,1\le i\le t\}$ and $A_2=\{i|V_{2i}\neq\emptyset, 1\le i\le t\}$. Then there exist some $1\le i\le t$ such that $V_{1i}\neq\emptyset$ and $V_{2i}\neq\emptyset$, where $V_{1i}=P''_1\cap P_i$ and $V_{2i}=P''_2\cap P_i$. Further assume without loss of generality that $i=1$. Since $P''_1$ and $P''_2$ are disjoint, there exist distinct $m_1$ and $m_2$ such that $m_1\in P''_1$, $m_2\in P''_2$, and $m_1,m_2\in P_1$, which contradicts the implication in condition iii) of Definition~\ref{sub-collection-def}. Thus we obtain that $A_j\subseteq\mathcal{N}_{t}, 1\le j\le r$ are disjoint. 
%which implies $r\le t$.

%
For $1\le i\le t$, let
\[
W_i = \left\{ \begin{array}{ll}
	V_{ji} & \mbox{if $i\in A_{j}$ for some $1 \le j \le r$} \\
	\emptyset & \mbox{otherwise}.
\end{array} \right.
\]
Note that $W_i$ is well defined because $A_j, 1\le j\le r$ are disjoint.
According to the above definition, if $i\in A_{j}$ for some $1 \le j \le r$,
then $W_i = V_{ji} = P''_j\cap P_i$, otherwise $W_i = \emptyset$. Thus
\begin{align}\label{THMIV8-1}
W_i \subseteq P_i, \ \ \ \mbox{for  $1 \le i \le t$.}
\end{align}
Let  $G_j=\cup_{i\in A_j}W_i$. Then we have
\begin{align}\label{THMIV8-2}
G_j=\bigcup\limits_{i\in A_j}W_i=\bigcup\limits_{i\in A_j}V_{ji}=P''_j.
\end{align}
Since $C\subseteq C''\subseteq S\backslash P''$, we have $C''=C\cup R$, where $R=C''\backslash C$. Then
\begin{align}\label{THMIV8-3}
R\subseteq (S\backslash P'')\backslash C=S\backslash(C\cup P'')=(C \cup P)\backslash(C\cup P'')= P\backslash(C\cup P'')\overset{(a)}{=}P\backslash P''
=P\backslash (\cup_{j=1}^r G_j).
\end{align}
%where $P=\cup_{i=1}^t P_i$. 
Note that $(a)$ in \eqref{THMIV8-3} is due to that $C\cap P=\emptyset$ by Remark \ref{remark-1}.
By \eqref{THMIV8-2} and $\mathbb{I}_{K''}=\emptyset$ in condition iii) of Definition \ref{sub-collection-def}, we have 
\begin{align}\label{THMIV8-4}
{\rm can}({\rm pur}(K''))=(C'',\langle P''_j,1\le j\le r\rangle)=((C,R),\langle P''_j,1\le j\le r\rangle)=((C,R),\langle G_j,1\le j\le r\rangle).
\end{align}
Since $(C, \langle P_i, 1\le i\le t\rangle)$ is valid, by \eqref{THMIV8-1}, \eqref{THMIV8-3}, \eqref{THMIV8-4}, and Theorem \ref{newlemma1aa}, we obtain that ${\rm can}({\rm pur}(K''))=((C,R),\langle G_j,1\le j\le r\rangle)$ is valid.
Thus, by Theorem \ref{k-cank}, $K''$ is valid.
The proof is accomplished.
\end{proof}

\begin{define}\label{setofcollections2}
	Let $\mathcal{K}(K)$ be the set of all sub-CMIs of $K$.
\end{define}

\begin{lemma}\label{emptyiff}
	If either $K$ or $K'$ is degenerate, and $K$ implies $K'$, then $K'\in \mathcal{K}(K)$.
\end{lemma}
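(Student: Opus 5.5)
The plan is to split into the two cases of the hypothesis and reduce each to earlier results. Recall that $\mathcal{K}(K)$ is the set of sub-CMIs of $K$ (Definition~\ref{setofcollections2}). So in each case it suffices to exhibit one of conditions i)--iii) of Definition~\ref{sub-collection-def}.

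\emph{Case 1: $K$ is degenerate, i.e., $K=(\cdot,\langle\ \rangle)$.} Since $K$ implies $K'$, Lemma~\ref{emptyimplyempty} gives $K'=(\cdot,\langle\ \rangle)$. Then $K'$ satisfies condition i) of Definition~\ref{sub-collection-def}, so $K'\in\mathcal{K}(K)$. Equivalently, Proposition~\ref{PropIV4} says that $(\cdot,\langle\ \rangle)$ is the unique sub-CMI of $K$ in this case.

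\emph{Case 2: $K'$ is degenerate, i.e., $K'=(\cdot,\langle\ \rangle)$.} Here condition i) of Definition~\ref{sub-collection-def} holds directly, so $K'\in\mathcal{K}(K)$ regardless of $K$. The implication hypothesis is not even needed in this case, since a degenerate CMI is always valid.

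The only point needing care is notational. In the paper, ``$K=(\cdot,\langle\ \rangle)$'' means that $K$ is degenerate, not that it is literally the empty collection. Condition i) in Definition~\ref{sub-collection-def} is stated as ``$K'=(\cdot,\langle\ \rangle)$'', which under that same convention covers every degenerate $K'$. If one insists on reading i) literally, I would instead use Proposition~\ref{empty-can} (applied to ${\rm pur}(K')$, via Corollary~\ref{prop2}) to get ${\rm can}({\rm pur}(K'))=(\cdot,\langle\ \rangle)$. Hence $R^{K'}_K$ is degenerate, since all the $P'_j$ and $\mathbb{I}_{K'}$ are trivial. One then checks condition ii), but this requires $C\subseteq C'$. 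Because of that extra requirement, I would rely on the degenerate-notation convention and condition i) rather than on ii).
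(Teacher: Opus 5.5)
Your proposal is correct and follows essentially the same route as the paper. The paper likewise uses Lemma~\ref{emptyimplyempty} together with Proposition~\ref{PropIV4} when $K$ is degenerate, and condition i) of Definition~\ref{sub-collection-def} when $K'$ is degenerate. Your reading of $(\cdot,\langle\ \rangle)$ as ``degenerate'' is exactly the paper's convention, so the detour through condition ii) is unnecessary.
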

\begin{proof}
	If $K$ implies $K'$ and $K=(\cdot,\langle\ \rangle)$, then by Lemma \ref{emptyimplyempty}, $K'=(\cdot,\langle\ \rangle)$, which by Proposition \ref{PropIV4} implies $K'\in \mathcal{K}(K)$.
	If $K$ implies $K'$, $K\neq(\cdot,\langle\ \rangle)$, and $K'=(\cdot,\langle\ \rangle)$, then since $(\cdot,\langle\ \rangle)$ is the sub-CMI of any CMI, we have $K'\in\mathcal{K}(K)$.
\end{proof}

We now prove the main theorem of this section.

\begin{theorem}\label{Kimplication}
	Let $K$ and $K'$ be two CMIs.	Then $K$ implies $K'$ if and only if $K'$ is a sub-CMI of $K$, i.e.,
    $K'\in \mathcal{K}(K)$.
\end{theorem}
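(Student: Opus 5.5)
The proof splits into the two directions, and almost everything needed is already available. The ``if'' direction is exactly Theorem~\ref{SubCMI}. If $K'\in\mathcal{K}(K)$, then every distribution for which $K$ is valid also makes $K'$ valid, so $K$ implies $K'$. For the ``only if'' direction, I first dispose of the degenerate cases with Lemma~\ref{emptyiff}: if $K$ or $K'$ is degenerate and $K$ implies $K'$, then $K'\in\mathcal{K}(K)$. It therefore remains to treat $K$ and $K'$ both non-degenerate.

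So assume $K$ and $K'$ are non-degenerate and $K$ implies $K'$, with the canonical forms and $K''=R_K^{K'}$ notated as in Definition~\ref{sub-collection-def}. I would split on whether ${\rm can}({\rm pur}(K''))$ is degenerate.

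If ${\rm can}({\rm pur}(K''))=(\cdot,\langle\ \rangle)$, then Theorem~\ref{IVthm1} gives $C\subseteq C'$, so condition ii) of Definition~\ref{sub-collection-def} holds.

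Otherwise, Proposition~\ref{empty-can} (applied to the pure form of $K''$) shows $K''=R_K^{K'}\neq(\cdot,\langle\ \rangle)$. I then collect the necessary conditions proved earlier, in order:
\begin{itemize}
\item Proposition~\ref{remark-1a} ii) gives $\mathbb{I}_{K''}=\emptyset$.
\item Theorem~\ref{IVthm3} gives $P''\subseteq P$.
\item Theorem~\ref{IVthm3a} gives $C\subseteq C''\subseteq S\backslash P''$.
\item Theorem~\ref{IVthm5} gives the separation property: elements of distinct $P''_j$ lie in distinct $P_i$.
\end{itemize}
Together these are precisely condition iii) of Definition~\ref{sub-collection-def}, so $K'\in\mathcal{K}(K)$.

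The only step needing care is bookkeeping: making sure the hypotheses of the cited theorems match. Theorems~\ref{IVthm3}, \ref{IVthm3a}, and~\ref{IVthm5} all assume $R_K^{K'}\neq(\cdot,\langle\ \rangle)$, whereas the case split is on ${\rm can}({\rm pur}(K''))$. I would bridge this explicitly with Proposition~\ref{empty-can} and Corollary~\ref{prop2}, using that $K''\sim{\rm can}({\rm pur}(K''))$ and that degeneracy is preserved under $\sim$. Similarly, Theorem~\ref{IVthm1} is stated for non-degenerate CMIs, which holds in this branch. With these checks done, the two directions combine to give the theorem.
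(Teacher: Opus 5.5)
Your proof is correct and follows the paper's argument essentially step for step. The \emph{if} direction goes through Theorem~\ref{SubCMI}, and the degenerate cases through Lemma~\ref{emptyiff}. In the non-degenerate case, you split on ${\rm can}({\rm pur}(K''))$ while the paper splits on $R_K^{K'}$; Proposition~\ref{empty-can} bridges the two exactly as you indicate, and Theorems~\ref{IVthm1}, \ref{IVthm3}, \ref{IVthm3a}, \ref{IVthm5} and Proposition~\ref{remark-1a} then deliver conditions ii) and iii) of Definition~\ref{sub-collection-def}. You cite these as conditions ii) and iii), which is correct; the paper's own text mislabels them as i) and ii).
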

\begin{proof}
	If $K'\in \mathcal{K}(K)$, then by Theorem \ref{SubCMI}, $K$ implies $K'$.
	Next, we will prove the ``only if'' part, i.e., if $K$ implies $K'$ then $K'\in\mathcal{K}(K)$.
	
\noindent {\bf Case 1.} $K=(\cdot,\langle\ \rangle)$ or $K'=(\cdot,\langle\ \rangle)$.
	
	If $K$ implies $K'$, then by Lemma \ref{emptyiff}, $K'\in\mathcal{K}(K)$.
	
\noindent {\bf Case 2.} $K\neq(\cdot,\langle\ \rangle)$, $K'\neq(\cdot,\langle\ \rangle)$, and $R_{K}^{K'}=(\cdot,\langle\ \rangle)$.
	
	We first have $C\subseteq C'$ by Theorem \ref{IVthm1}.
	Note that ${\rm pur}(R_{K}^{K'}) = (\cdot,\langle\ \rangle)$.
	By Proposition \ref{empty-can}, we have ${\rm can}({\rm pur}(R_{K}^{K'}))=(\cdot,\langle\ \rangle)$. Thus by condition i) of Definition \ref{sub-collection-def}, $K'\in\mathcal{K}(K)$.
	
\noindent {\bf Case 3.} $K\neq(\cdot,\langle\ \rangle)$, $K'\neq(\cdot,\langle\ \rangle)$, and $R_{K}^{K'}\neq(\cdot,\langle\ \rangle)$.
		
	If $K$ implies $K'$, then we obtain $\mathbb{I}_{K''}=\emptyset$ by Proposition \ref{remark-1a}, $P''\subseteq P$ by Theorem \ref{IVthm3}, $C\subseteq C''\subseteq S\backslash P''$ by Theorem \ref{IVthm3a}, and the implication in condition ii) of Definition \ref{sub-collection-def} by Theorem \ref{IVthm5}.
	Thus, condition ii) of Definition \ref{sub-collection-def} is satisfied, which implies $K'\in\mathcal{K}(K)$.
	
	The proof is accomplished by combining Cases 1 to 3.
\end{proof}

We have proved in Proposition~\ref{equivalentCMI} that $K\sim K'$ if and only if ${\rm can}({\rm pur}(K))={\rm can}({\rm pur}(K'))$, and proved in Theorem~\ref{Kimplication} that $K$ implies $K'$ if and only if $K'\in \mathcal{K}(K)$. 
Since $K\sim K'$ if and only if ``$K$ implies $K'$'' and ``$K'$ implies $K$'',
a direct consequence is that ${\rm can}({\rm pur}(K))={\rm can}({\rm pur}(K'))$ if and only if $K'\in \mathcal{K}(K)$ and $K\in \mathcal{K}(K')$.
In the following, we give a direct proof of this result, which depends only on the definitions of the canonical form (Definition~\ref{def-Pur}), the pure form (Definition~ \ref{canonical-form-def}), and sub-CMI (Definition~\ref{sub-collection-def}). This proof verifies the consistency of our previous definitions and related theorems.

%{\color{red}
\begin{theorem}
	Let $K$ and $K'$ be two CMIs. 
	Then ${\rm can}({\rm pur}(K))={\rm can}({\rm pur}(K')$ if and only if $K'\in \mathcal{K}(K)$ and $K\in \mathcal{K}(K')$.	
\end{theorem}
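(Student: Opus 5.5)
The plan is to argue purely combinatorially from Definitions~\ref{def-Pur}, \ref{def-can}, \ref{RKK'} and \ref{sub-collection-def}, without constructing any distribution. Write ${\rm can}({\rm pur}(K))=(C,\langle \mathbb{I}_K,\mathbb{I}_K,P_i,1\le i\le t\rangle)$ and ${\rm can}({\rm pur}(K'))=(C',\langle \mathbb{I}_{K'},\mathbb{I}_{K'},P'_j,1\le j\le s\rangle)$. First I dispose of the degenerate case. If ${\rm can}({\rm pur}(K))=(\cdot,\langle\ \rangle)$, then by Proposition~\ref{empty-can} $K$ is degenerate. By Proposition~\ref{PropIV4}, $K'\in\mathcal{K}(K)$ forces $K'=(\cdot,\langle\ \rangle)$. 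Conversely, if both are degenerate, then each is a sub-CMI of the other by i) of Definition~\ref{sub-collection-def}. The same argument works with the roles of $K$ and $K'$ swapped. Hence both directions hold when either CMI is degenerate, and from here on I assume both are non-degenerate.

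\emph{``Only if''.} Assume the canonical forms coincide. By Lemma~\ref{can=}, $C=C'$, $\mathbb{I}_K=\mathbb{I}_{K'}$, and $\langle P_i\rangle=\langle P'_j\rangle$. Since the $P'_j$ are disjoint from $\mathbb{I}_K$ (Remark~\ref{remark-1}), we get $T_l=P'_l$ and $\mathbb{I}_{K'}\backslash\mathbb{I}_K=\emptyset$. So $K''=R_K^{K'}=(C,\langle P_i,1\le i\le t\rangle)$ (using $C'\backslash\mathbb{I}_K=C$), provided $t\ge2$.

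If $t\le1$, then $K''$ is degenerate and $C\subseteq C'$, so ii) of Definition~\ref{sub-collection-def} holds.

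If $t\ge2$, then $K''$ is already pure and canonical with $\mathbb{I}_{K''}=\emptyset$, $P''=P$ and $C''=C$. I then check the conditions of iii) one by one:
\begin{itemize}
\item $P''\subseteq P$ holds trivially.
\item $C\subseteq C''=C\subseteq S\backslash P$ holds because $C\cap P=\emptyset$.
\item The separation condition holds because the blocks $P''_j$ are exactly the blocks $P_i$: two elements lying in different $P''_j$ lie in different $P_i$.
\end{itemize}
By symmetry, $K\in\mathcal{K}(K')$ as well.

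\emph{``If''.} Assume $K'\in\mathcal{K}(K)$ and $K\in\mathcal{K}(K')$. The mutual sub-CMI relation first gives $C\subseteq C'$ and $C'\subseteq C$, from ii) or from $C\subseteq C''\subseteq C'$ in iii); hence $C=C'$.

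Next, Proposition~\ref{IIK} applied in both directions gives $\mathbb{I}_{K'}\backslash\mathbb{I}_K=\emptyset$ and $\mathbb{I}_K\backslash\mathbb{I}_{K'}=\emptyset$. I intend to read this off from the structure of $R_K^{K'}$ in the proof of Proposition~\ref{remark-1a}, where $\mathbb{I}_{K''}=\mathbb{I}_{K'}\backslash\mathbb{I}_K$. Thus $\mathbb{I}_K=\mathbb{I}_{K'}$, so $T_l=P'_l$ and $K''$ has blocks $\langle P'_j\rangle$ whenever $s\ge2$.

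The main obstacle is showing $\langle P_i\rangle=\langle P'_j\rangle$; here I would use the block conditions in iii) together with cardinality counting. The case split is as follows.
\begin{itemize}
\item If $s\ge2$, condition iii) gives $P'\subseteq P$, and each $P_i$ meets at most one $P'_j$. Moreover $C'\subseteq S\backslash P'$ with $C'=C$ adds nothing new, so it is the reverse relation that does the work.
\item If $t\ge2$, the reverse relation gives $P\subseteq P'$, so $P=P'$, and each $P'_j$ meets at most one $P_i$.
\end{itemize}
Combining the two bullets, the bipartite ``intersects'' relation between the two partitions of the same set $P$ is a perfect matching. Lemma~\ref{set-lem} then forces the partitions to be equal, since any pair with $P_i\cap P'_j\ne\emptyset$ and $P_i\ne P'_j$ would give some block meeting two blocks of the other partition.

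The remaining mixed case is $s\le1$ with $t\ge2$. Here $K'\in\mathcal{K}(K)$ holds via ii), but $K\in\mathcal{K}(K')$ would require iii) with $r=t\ge2$ and $P\subseteq P'$. Now $P'$ is a single block or empty, so two elements from distinct $P_i$ would lie in the same $P'_j$ (or $P'=\emptyset$ violates $P\subseteq P'$), a contradiction. Hence $t$ and $s$ are both $\le1$, or both $\ge2$. When both are $\le1$, non-degeneracy forces $t=s=0$, which is the convention in \eqref{varitionofcan}.

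Finally, Lemma~\ref{can=} concludes that the two canonical forms are equal.
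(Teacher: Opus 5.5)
Your proposal is correct and follows essentially the same route as the paper. The degenerate case goes through Propositions~\ref{empty-can} and~\ref{PropIV4}, and the ``only if'' direction uses Lemma~\ref{can=} followed by a direct check of ii)/iii) of Definition~\ref{sub-collection-def}. The ``if'' direction uses Proposition~\ref{IIK} to get $\mathbb{I}_K=\mathbb{I}_{K'}$, then mutual inclusion for $C$ and $P$, then Lemma~\ref{set-lem} with the separation condition.

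Your case split on $s,t$ is equivalent to the paper's split on whether $R_K^{K'}$ and $R_{K'}^{K}$ are degenerate. Your explicit use of both separation conditions in the final step is exactly what the paper's ``treated in exactly the same way'' relies on, since one condition alone only gives a refinement.
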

\begin{proof}
	Let 
	\begin{align*}
	{\rm can}({\rm pur}(K)) & =(C, \langle \mathbb{I}_K, \mathbb{I}_K, P_i, 1\le i \le t\rangle) \\
	{\rm can}({\rm pur}(K')) & =(C',\langle \mathbb{I}_{K'}, \mathbb{I}_{K'}, P'_j, 1\le j\le s\rangle) \\
	{\rm can}({\rm pur}(K'')) & =(C'',\langle \mathbb{I}_{K''},
	\mathbb{I}_{K''}, P''_j, 1\le j\le r_1\rangle) \\
	{\rm can}({\rm pur}(K''')) & =(C''',\langle \mathbb{I}_{K'''},
	\mathbb{I}_{K'''}, P'''_j, 1\le j\le r_2\rangle).
	\end{align*}
	where $K''=R^{K'}_{K}$ and $K''' =R^{K}_{K'}$.
	Let
	$P=\cup_{i=1}^{t}P_i$, $P'=\cup_{j=1}^{s}P'_j$,
	$S=C\cup P$, $S'=C'\cup P'$,
	$P''=\cup_{j=1}^{r_1}P''_j$, and $P'''=\cup_{j=1}^{r_2}P'''_j$.
	
	We first prove the ``only if'' part.
	If ${\rm can}({\rm pur}(K))={\rm can}({\rm pur}(K')$, by Lemma \ref{can=}, we have 
	\begin{align}\label{RemThm-1}
		C=C',\ \mathbb{I}_{K}=\mathbb{I}_{K'},\ \langle P_i, 1\le i\le t\rangle=\langle P'_j, 1\le j\le s\rangle, \ \mbox{and} \ t = s.
	\end{align}
	By Remark~\ref{remark-1}, $P_i,1\le i\le t$, $\mathbb{I}_K$ and $C$ are disjoint, and $P'_j,1\le j\le s$, $\mathbb{I}_{K'}$ and $C'$ are disjoint.
	By Definition~\ref{RKK'}, we obtain $K'' = R_{K}^{K'}=(C',\langle P'_j,1\le j\le s\rangle)$ and $K''' = R_{K'}^{K}=(C,\langle P_i,1\le i\le t\rangle)$. Thus, 
	\begin{align*}
	(C'',\langle P''_j,1\le j\le r_1\rangle) & = {\rm can}({\rm pur}(K'')) = (C',\langle P'_j,1\le j\le s\rangle) \\
	(C''',\langle P'''_j,1\le j\le r_2 \rangle) & = {\rm can}({\rm pur}(K''')) = (C,\langle P_i,1\le i\le t\rangle) ,
	\end{align*}
	which together with \eqref{RemThm-1} imply that 
	\begin{eqnarray} \label{RemThm-2}
	\begin{split}
		& C''=C'=C'''=C,\ \mathbb{I}_{K''}= \mathbb{I}_{K'''}=\emptyset, \\ 
		& \langle P''_j,1\le j\le r_1\rangle=\langle P'_j,1\le j\le s\rangle = \langle P_i,1\le i\le t\rangle = \langle P'''_j,1\le j\le r_2 \rangle  \\
		& P' = P'' = P = P''', \ \mbox{and} \ r_1 = s = t = r_2.
	\end{split}
	\end{eqnarray}
	If $s=0, 1$, then ${\rm can}({\rm pur}((K'')) = (\cdot,\langle\  \rangle)$. Together with $C=C'$, by ii) of Definition~\ref{sub-collection-def}, we have $K'\in \mathcal{K}(K)$.
	If $s\ge2$, by \eqref{RemThm-1} and \eqref{RemThm-2}, we have 
    ${\rm can}({\rm pur}(K''))\neq(\cdot,\langle\ \rangle)$,
	$\mathbb{I}_{K''}=\emptyset$, $P''\subseteq P$, $C\subseteq C''\subseteq$\footnote{Here, 
		$S\backslash P = (C \cup P) \backslash P = C$ $(= C'')$ because $C$ and $P$ are disjoint by Remark~\ref{remark-1}.} $S\backslash P = S\backslash P''$,
	 and if $m_1\in P''_{j_1}$ and $m_2\in P''_{j_2}$, where $1\le j_1,j_2\le r_1,\ j_1\neq j_2$, then $m_1\in P_{i_1}$ and $m_2\in P_{i_2}$, where $1\le i_1, i_2\le t,\ i_1\neq i_2$. Thus, iii) of Definition \ref{sub-collection-def} is satisfied, which implies $K'\in \mathcal{K}(K)$.
	In the same way, we can prove that $K\in \mathcal{K}(K')$.
	
	Now, we prove the ``if'' part. Assume that $K\in \mathcal{K}(K')$ and $K'\in \mathcal{K}(K)$.
	First, consider the case that either $K=(\cdot,\langle\  \rangle)$ or $K'=(\cdot,\langle\  \rangle)$. Without loss of generality, we assume that $K=(\cdot,\langle\  \rangle)$.
	Since $K'\in\mathcal{K}(K)$, by Proposition \ref{PropIV4}, $K'=(\cdot,\langle\  \rangle)$. Thus, ${\rm can}({\rm pur}(K))={\rm can}({\rm pur}(K')=(\cdot,\langle\  \rangle)$.
	
	Next, consider the case that both $K$ and $K'$ are non-degenerate.
	Since $K'\in\mathcal{K}(K)$ and $K\in\mathcal{K}(K')$, we obtain by Proposition~\ref{IIK} that $\mathbb{I}_{K''}=\emptyset$ and $\mathbb{I}_{K'''}=\emptyset$. From $\mathbb{I}_{K''}=\emptyset$, we have
	$\emptyset = \mathbb{I}_{K''} = \mathbb{I}_{K'} \backslash \mathbb{I}_{K}$, which implies $\mathbb{I}_{K'}\subseteq \mathbb{I}_{K}$. Similarly, $\mathbb{I}_{K'''}=\emptyset$ implies $\mathbb{I}_{K}\subseteq \mathbb{I}_{K'}$.
	 Thus, $\mathbb{I}_{K}= \mathbb{I}_{K'}$. By Remark \ref{remark-1}, $P_i,1\le i\le t$, $\mathbb{I}_K$, and $C$ are disjoint, and $P'_j,1\le j\le s$, $\mathbb{I}_{K'}$, and $C'$ are disjoint.
	Thus, we obtain
	\begin{eqnarray}\label{RemEqn-1}
	\begin{split}
		(C'',\langle P''_j,1\le j\le r_1\rangle) & = {\rm can}({\rm pur}(K''))
		 = {\rm can}({\rm pur}(R_K^{K'}))
		=(C',\langle P'_j,1\le j\le s\rangle) \\
		(C''',\langle P'''_j,1\le j\le r_2\rangle) & = {\rm can}({\rm pur}(K''')) = {\rm can}({\rm pur}(R_{K'}^{K}))
		=(C,\langle P_i,1\le i\le t\rangle),
	\end{split}
	\end{eqnarray}
	which implies $C''=C'$, $C'''=C$, $\langle P'_j,1\le j\le s\rangle=\langle P''_j,1\le j\le r_1\rangle$, $s = r_1$, $\langle P_i,1\le i\le t\rangle=\langle P'''_j,1\le j\le r_2\rangle$, and $t=r_2$.

	\medskip \noindent \textbf{Case i)}\ ${\rm can}({\rm pur}(K''))=(\cdot,\langle\  \rangle)$ or ${\rm can}({\rm pur}(K''')=(\cdot,\langle\  \rangle)$.
	
	%By ii) of Definition \ref{sub-collection-def}, we have $C\subseteq C'$ from $K'\in\mathcal{K}(K)$, and $C'\subseteq C$ from $K\in\mathcal{K}(K')$. 
    From $K'\in\mathcal{K}(K)$, by either ii) or iii) of Definition \ref{sub-collection-def} \big(i) of Definition \ref{sub-collection-def} does not apply because $K'$ is non-degenerate\big), we have $C\subseteq C'$. Likewise, from $K\in \mathcal{K}(K')$, we have $C'\subseteq C$.
    Thus, $C=C'$.
	Then, without loss of generality, we assume ${\rm can}({\rm pur}(K''))=(\cdot,\langle\  \rangle)$, which implies that $s=r_1 = 0, 1$. By Definition \ref{def-can}, we obtain ${\rm can}({\rm pur}(K'))=(C',\langle\mathbb{I}_{K'}, \mathbb{I}_{K'}\rangle)$ and $P'=\emptyset$.
	Now. we prove that ${\rm can}({\rm pur}(K'''))=(\cdot,\langle\  \rangle)$ by contradiction. Assume that 
	${\rm can}({\rm pur}(K''')) \ne (\cdot,\langle\  \rangle)$.
	Since $K\in\mathcal{K}(K')$, if ${\rm can}({\rm pur}(K'''))\neq(\cdot,\langle\  \rangle)$, then $t\ge 2$ and $P'''\neq \emptyset$ so that $P'''\nsubseteq P'=\emptyset$, which contradicts iii) of Definition \ref{sub-collection-def}. Therefore, we conclude that ${\rm can}({\rm pur}(K'''))=(\cdot,\langle\  \rangle)$, which implies $t=r_2 = 0, 1$, and so ${\rm can}({\rm pur}(K))=(C,\langle\mathbb{I}_{K}, \mathbb{I}_{K}\rangle)$. Since $\mathbb{I}_K=\mathbb{I}_{K'}$ and $C=C'$, we have proved that ${\rm can}({\rm pur}(K'))={\rm can}({\rm pur}(K))$.
	
	\medskip \noindent \textbf{Case ii)}\ ${\rm can}({\rm pur}(K''))\neq (\cdot,\langle\  \rangle)$ and ${\rm can}({\rm pur}(K''')\neq (\cdot,\langle\  \rangle)$.
	
	First, we have $s\ge2$ and $t\ge2$ from \eqref{RemEqn-1}.
	By iii) of Definition \ref{sub-collection-def}, we obtain $P'=P''\subseteq P$ from $K'\in \mathcal{K}(K)$, and $P=P'''\subseteq P'$ from $K\in \mathcal{K}(K')$. Thus, $P=P'=P''=P'''$.
	By iii) of Definition \ref{sub-collection-def} and $K'\in\mathcal{K}(K)$, we also have 
	\begin{align}
		C\subseteq C''\subseteq S\backslash P''=S\backslash P=(C\cup P)\backslash P=C,
	\end{align}
	which implies $C=C''=C'$.
	
	Now, we prove that
	$\langle P'_j,1\le j\le s\rangle=\langle P_i,1\le i\le t\rangle$, where $s\ge2$ and $t\ge2$.
	Since $\langle P'_j,1\le j\le s\rangle=\langle P''_j,1\le j\le r_1\rangle$, we only need to prove that $\langle P''_j,1\le j\le r_1\rangle=\langle P_i,1\le i\le t\rangle$.
	
	Assume that $\langle P''_j,1\le j\le r_1\rangle\neq\langle P_i,1\le i\le t\rangle$. Since $s\ge2$, $t\ge2$, and $P = P''$, by Lemma \ref{set-lem}, there exist $i_1$ and~$j_1$ such that $P_{i_1}\cap P''_{j_1}\neq\emptyset$, and $P_{i_1}\backslash P''_{j_1}\neq\emptyset$ or $P''_{j_1}\backslash P_{i_1}\neq\emptyset$. 
	Without loss of generality, we assume that $P_1\cap P''_1\neq \emptyset$ and $P_1\backslash P''_1\neq \emptyset$ (the case $P''_1\backslash P_1 \neq\emptyset$ can be treated in exactly the same way). Let $m_1\in P_1\cap P''_1$ and $m_2\in P_1\backslash P''_1$. Since $P=P''$ and $m_2 \not\in P''_1$, there exist $2\le k \le t$ such that $m_2\in P''_k$. Without loss of generality, let $m_2\in P''_2$. So, we have $m_1\in P_1$, $m_2\in P_1$, $m_1\in P''_1$ and $m_2\in P''_2$, which contradicts the condition in iii) of Definition~\ref{sub-collection-def} for $K'\in \mathcal{K}(K)$ that if $m_1\in P''_{j_1}$ and $m_2\in P''_{j_2}$, where $1\le j_1,j_2\le r_1,\ j_1\neq j_2$, then $m_1\in P_{i_1}$ and $m_2\in P_{i_2}$, where $1\le i_1, i_2\le t,\ i_1\neq i_2$. Thus, we have proved that $\langle P''_j,1\le j\le r_1\rangle=\langle P_i,1\le i\le t\rangle$.
	
	The proof is accomplished.
\end{proof}
%}

\begin{define}\label{setofcollections1}
	Let $\prod =\{K_i,1\le i\le l\}$
	be a set of CMIs. Then $\bar{K}$ is called a sub-CMI of $\prod$ if $\bar{K}$ is a sub-CMI of $K_i$ for some $1\le i\le l$.
	Let  $\mathcal{K}(\prod)$ be the set of all sub-CMIs of $\prod$.
\end{define}

Let $\prod =\{K_i,1\le i\le l\}$. By Definitions \ref{setofcollections1} and \ref{setofcollections2}, we can directly obtain 
\[\mathcal{K}\left(\prod\right)=\bigcup\limits_{i=1}^{l}\mathcal{K}(K_i).\]

\begin{define}
Let $\prod_1$ and $\prod_2$ be two sets of CMIs.	We say $\prod_1$ implies $\prod_2$ to mean that if all the CMIs in $\prod_1$ are vaild, then all the CMIs in $\prod_2$ are valid.
\end{define}

\begin{define}
Two sets of CMIs $\prod_1$ and $\prod_2$ on $X_1,\ldots,X_m$ are equivalent, denoted by $\prod_1\sim\prod_2$, if $\prod_1$ and $\prod_2$ are both valid or both invalid for every joint distribution of $X_1,\ldots,X_m$.
\end{define}

%is equiva
%By Theorem \ref{Kimplication}, we can obtain the following corollary directly.
\begin{cor}\label{II-cor1}
Let $\prod_1$ and $\prod_2$ be two sets of CMIs. If  $\prod_2\subseteq\mathcal{K}(\prod_1)$, then $\prod_1$ implies $\prod_2$.
\end{cor}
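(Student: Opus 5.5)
The plan is to argue elementwise. Assume $\prod_2\subseteq\mathcal{K}(\prod_1)$ and fix an arbitrary joint distribution of $X_1,\ldots,X_n$ under which every CMI in $\prod_1=\{K_i,1\le i\le l\}$ is valid. We must show that every CMI $\bar K\in\prod_2$ is valid for this same distribution.

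First, take any $\bar K\in\prod_2$. By hypothesis $\bar K\in\mathcal{K}(\prod_1)$. By the identity $\mathcal{K}(\prod_1)=\bigcup_{i=1}^{l}\mathcal{K}(K_i)$ following Definition~\ref{setofcollections1}, there is an index $i$ with $\bar K\in\mathcal{K}(K_i)$, i.e., $\bar K$ is a sub-CMI of $K_i$.

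Second, since $K_i\in\prod_1$, it is valid under the fixed distribution. Theorem~\ref{SubCMI} (equivalently, the ``if'' direction of Theorem~\ref{Kimplication}) then gives that $\bar K$ is valid under that distribution. Because $\bar K$ was arbitrary, all CMIs in $\prod_2$ are valid, so $\prod_1$ implies $\prod_2$.

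There is no real obstacle here. The only point needing care is that Theorem~\ref{SubCMI} is a pointwise statement, holding for each distribution, rather than merely a statement about logical implication across all distributions. This is exactly how it is stated (``If $K$ is valid, then $K'$ is valid''), so it applies directly to the fixed distribution. The degenerate cases, $\prod_1$ empty or $\bar K=(\cdot,\langle\ \rangle)$, are trivial: if $\prod_1$ is empty then $\mathcal{K}(\prod_1)$ is empty, so $\prod_2$ is empty too, and a degenerate CMI is always valid.
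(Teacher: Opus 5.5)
Your proof is correct and is essentially the paper's argument: for each $\bar K\in\prod_2$ you find some $K_i\in\prod_1$ with $\bar K\in\mathcal{K}(K_i)$, and then apply the sub-CMI validity result. The paper cites Theorem~\ref{Kimplication} at this step; its ``if'' direction is exactly Theorem~\ref{SubCMI}. Your extra remarks on pointwise validity and the empty case are harmless, since ``$K$ implies $K'$'' is by definition a statement made for each distribution separately.
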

\begin{proof}
Consider $K'\in \prod_2$. If $\prod_2\subseteq\mathcal{K}(\prod_1)$, then $K'\in\mathcal{K}(\prod_1)$, which means that $K'\in \mathcal{K}(K)$ for some $K\in \prod_1$. If $\prod_1$ are valid, then $K$ is valid, which by Theorem \ref{Kimplication} implies that $K'$ is valid. Thus, we have proved that if $\prod_2\subseteq \mathcal{K}(\prod_1)$, then $\prod_1$ implies $\prod_2$.
\end{proof}

\begin{cor}
Let $\prod_1$ and $\prod_2$ be two sets of CMIs. If  $\prod_2\subseteq\mathcal{K}(\prod_1)$ and $\prod_1\subseteq\mathcal{K}(\prod_2)$, then $\prod_1\sim\prod_2$.	
\end{cor}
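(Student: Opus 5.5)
The plan is to apply Corollary~\ref{II-cor1} twice, once in each direction, and then unwind the definition of equivalence for sets of CMIs. Nothing new about individual CMIs is needed: Theorem~\ref{Kimplication} (through Theorem~\ref{SubCMI}) has already been packaged into Corollary~\ref{II-cor1}. The remaining work is purely logical.

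First, from $\prod_2\subseteq\mathcal{K}(\prod_1)$, Corollary~\ref{II-cor1} gives that $\prod_1$ implies $\prod_2$. Symmetrically, $\prod_1\subseteq\mathcal{K}(\prod_2)$ gives that $\prod_2$ implies $\prod_1$.

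Second, fix an arbitrary joint distribution of $X_1,\ldots,X_m$ and split into two cases.
\begin{itemize}
\item If every CMI in $\prod_1$ is valid, then by the first implication every CMI in $\prod_2$ is valid, so both sets are valid.
\item If some CMI in $\prod_1$ is invalid, then $\prod_2$ cannot be entirely valid. Otherwise the second implication would force all of $\prod_1$ to be valid, a contradiction. So both sets are invalid.
\end{itemize}
Since the distribution was arbitrary, $\prod_1$ and $\prod_2$ are both valid or both invalid for every joint distribution, which is exactly $\prod_1\sim\prod_2$ by definition.

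There is no real obstacle. The only point needing care is the meaning of ``a set of CMIs is valid'' in the definition of $\prod_1\sim\prod_2$. I read it as ``all CMIs in the set are valid,'' consistent with the definition of implication between sets, and the contrapositive step above relies on this reading. Empty sets cause no trouble: they are vacuously valid, and the inclusions force the other set's members to be sub-CMIs of nothing, so that set is empty too.
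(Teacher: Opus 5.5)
Your proposal is correct and follows the paper's proof. You apply Corollary~\ref{II-cor1} in each direction to get mutual implication and conclude equivalence; your case analysis, which reads validity of a set as validity of all its members, just spells out the step from mutual implication to ``both valid or both invalid'' that the paper leaves implicit.
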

\begin{proof}
If $\prod_2\subseteq\mathcal{K}(\prod_1)$, by Corollary \ref{II-cor1}, then $\prod_1$ implies $\prod_2$.
On the other hand, if $\prod_1\subseteq\mathcal{K}(\prod_2)$, then $\prod_2$ implies $\prod_1$. The corollary is proved.
\end{proof}

\section{Conclusion and Discussion}
\label{sec-conc}

In this paper, we studied conditional mutual independence for a finite set of discrete random variables. Our focus is on how to characterize a single conditional mutual independency (CMI). Specifically, 
for two conditional mutual independencies $K$ and $K'$, we obtained complete characterizations for ``$K$ is equivalent to $K'$'' and ``$K$ implies $K'$''. To our knowledge, these are the first such characterizations in the literature.

The results in this paper belong to two types. The first type are those proved by using Shannon's information measures, and the second type are those proved by constructing the underlying probability distribution with certain required properties. In the following, we remark on the generality of these results:
\begin{itemize}
\item 
The limitation of using Shannon information measures is that the entropies of the random variables involved must be finite. Nevertheless, results of the first type can instead be proved by working directly on the underlying probability distribution, although the proofs are in general very tedious. Moreover, these results can be proved for general probability distributions, without assuming that the random variables are discrete. 
\item 
For the second type of results, with some suitable modifications, the construction of the underlying probability distribution continues to apply when the random variables are not discrete.
\end{itemize}
Hence, the results in this paper can readily be extended for general probability distributions.

\section*{Acknowledgment}
The authors would like to thank Prof.\ Cheuk Ting Li for the useful discussion.

%Previously, full conditional closed form characterizations of 
%In this paper, we obtain the new algebraic characterization of the generalized CMIs.
%Based on the new characterization, we proved the necessary and sufficient conditions for the equivalence of CMIs and the necessary and sufficient conditions for the implication of CMIs.
%The following discussion can be extended to full conditional mutual independence and Markov fields.

\renewcommand{\appendixname}{Appendix~\Alph{section}}
\appendix
\section*{Conditional Mutual Independence Versus Conditional Independence}\label{app:A}
In this appendix, we prove that a CMI is equivalent to a collection of CIs. To simply notation, we will use $X_i^{j}$ to denote $X_i,X_{i+1},\ldots,X_{j}$. If $i > j$, $X_i^{j}$ is taken to be a degenerate random variable.

\begin{proposition}
	\label{propA1}
$X_1, X_2 \ldots, X_n$ are mutually independent conditioning on $Y$ if and only if 
\begin{eqnarray}
I(X_1; X_2^n|Y) & = & 0 \label{App1a}\\
I(X_2; X_3^n|Y,X_1) & = & 0 \label{App2a}\\
I(X_3; X_4^n|Y,X_1^2) & = & 0 \label{App3a}\\
%I(X_4; X_5^n|Y,X_1^3) & = & 0 \label{App4a}\\
& \vdots & \nonumber\\
%I(X_{n-2}; X_{n-1}^n|Y,X_1^{n-3}) & = & 0\label{Appn-2a}\\
I(X_{n-1}; X_n|Y,X_1^{n-2}) & = & 0\label{Appn-1a}.
\end{eqnarray}
\end{proposition}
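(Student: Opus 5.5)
The plan is to go through the entropy characterization of conditional mutual independence and telescope it with the chain rule. By the proposition in Section~\ref{sec-Pre}, $X_1,\ldots,X_n$ are mutually independent conditioning on $Y$ if and only if
\[
H(X_1,\ldots,X_n|Y)=\sum_{i=1}^n H(X_i|Y).
\]
The chain rule for conditional entropy (Proposition~\ref{prop:chain_rule_H|Y}) gives $H(X_1^n|Y)=\sum_{i=1}^n H(X_i|Y,X_1^{i-1})$. Hence the gap $\sum_i H(X_i|Y)-H(X_1^n|Y)$ equals $\sum_{i=2}^n I(X_i;X_1^{i-1}|Y)$. Every term in this sum is nonnegative.

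Since the stated conditions are written in the ``forward'' direction $I(X_i;X_{i+1}^n|Y,X_1^{i-1})$, I would rewrite the gap in that form directly. The key identity to verify is
\[
\sum_{i=1}^n H(X_i|Y)-H(X_1^n|Y)=\sum_{i=1}^{n-1}\Big[I(X_i;X_{i+1}^n|Y,X_1^{i-1})+I(X_i;X_1^{i-1}|Y)\Big].
\]
An equivalent and cleaner route is induction on $n$. Apply the chain rule in the form
\[
H(X_1^n|Y)=H(X_1|Y)+H(X_2^n|Y,X_1).
\]
Then use $H(X_2^n|Y,X_1)=H(X_2^n|Y)-I(X_1;X_2^n|Y)$.

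For the ``only if'' direction, the argument is as follows. Mutual independence given $Y$ implies that every sub-collection is mutually independent given $Y$. In particular $X_1\perp X_2^n|Y$, which is \eqref{App1a}. For the later conditions I would use the distributional definition directly. Conditioning on $(Y,X_1^{i-1})$, the factorization $p(x_1,\ldots,x_n|y)=\prod_j p(x_j|y)$ shows that $X_i,\ldots,X_n$ remain mutually independent given $(Y,X_1^{i-1})$. This yields each $I(X_i;X_{i+1}^n|Y,X_1^{i-1})=0$. Alternatively, in entropy form: $H(X_i^n|Y,X_1^{i-1})=H(X_i^n|Y)$ and $H(X_j|Y,X_1^{i-1})=H(X_j|Y)$ both follow from the equality case of the independence bound (Proposition~\ref{thm:indep_bound}) together with ``conditioning does not increase entropy''.

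For the ``if'' direction, assume all the conditions hold. Then the chain of equalities
\[
H(X_i^n|Y,X_1^{i-1})=H(X_i|Y,X_1^{i-1})+H(X_{i+1}^n|Y,X_1^{i-1},X_i)
\]
gives $H(X_i^n|Y,X_1^{i-1})=H(X_i|Y,X_1^{i-1})+H(X_{i+1}^n|Y,X_1^{i})$, because the vanishing mutual information means $H(X_{i+1}^n|Y,X_1^{i-1})=H(X_{i+1}^n|Y,X_1^{i})$.

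The main obstacle is the ``if'' direction, namely showing $H(X_i|Y,X_1^{i-1})=H(X_i|Y)$. The mutual-information conditions only control dependence on the ``later'' block, so this step needs care. I would handle it by induction on $i$. From \eqref{App1a}, $X_1$ is independent of $X_2^n$ given $Y$, so in particular $I(X_1;X_i|Y)=0$. Combining this with \eqref{App2a} via the chain rule for mutual information gives $I(X_1,X_2;X_3^n|Y)=0$. Iterating, $I(X_1^{i-1};X_i^n|Y)=0$, and hence $I(X_1^{i-1};X_i|Y)=0$. Summing these terms shows that the gap vanishes, which proves mutual independence.
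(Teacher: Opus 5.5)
Your proof is correct, but the displayed ``key identity''
\[
\sum_{i=1}^n H(X_i|Y)-H(X_1^n|Y)=\sum_{i=1}^{n-1}\big[I(X_i;X_{i+1}^n|Y,X_1^{i-1})+I(X_i;X_1^{i-1}|Y)\big]
\]
is false and should be removed, along with the claim that it is equivalent to the induction route. By the chain rule each bracket equals $I(X_i;X_1^{i-1},X_{i+1}^n|Y)$, so the right side overcounts. For example, take $Y$ constant, $X_1,X_2$ independent uniform bits, and $X_3=X_1\oplus X_2$: the left side is $1$ bit and the right side is $2$ bits. The correct telescoped forms are the one you derived first, $\sum_{i=2}^n I(X_i;X_1^{i-1}|Y)$, and the paper's $\sum_{i=1}^{n-1} I(X_i;X_{i+1}^n|Y)$. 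Nothing downstream in your argument uses the false identity. Separately, the displayed equality in your ``if'' paragraph is just the chain rule and does not need the vanishing mutual information; that paragraph contributes nothing and can be dropped.

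With that cleaned up, your ``if'' direction is the paper's argument. You induct to get $I(X_1^{i};X_{i+1}^n|Y)=0$ from
\[
I(X_1^{i};X_{i+1}^n|Y)=I(X_1^{i-1};X_{i+1}^n|Y)+I(X_i;X_{i+1}^n|Y,X_1^{i-1}),
\]
bounding the first term by $I(X_1^{i-1};X_i^n|Y)=0$. This block bound, not merely $I(X_1;X_i|Y)=0$ as you wrote, is what the step uses. You then pass to a sub-block by monotonicity. You kill $I(X_i;X_1^{i-1}|Y)$ where the paper kills $I(X_i;X_{i+1}^n|Y)$, which is immaterial.

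Your ``only if'' direction differs from the paper's. The paper fixes $i$ and expands $H(X_1^n|Y)$ as $\sum_j H(X_j|Y)$ minus a sum of nonnegative Shannon terms that includes $I(X_i;X_{i+1}^n|Y,X_1^{i-1})$, so the equality case forces that term to vanish. You instead prove the stronger fact that $X_i,\ldots,X_n$ remain mutually independent given $(Y,X_1^{i-1})$. You get this either directly from the factorization of $p(x_1,\ldots,x_n|y)$, or in entropy form from equality throughout
\[
H(X_1^n|Y)\le \sum_{j<i}H(X_j|Y)+\sum_{j\ge i}H(X_j|Y,X_1^{i-1})\le \sum_{j=1}^n H(X_j|Y).
\]
This chain is how your brief appeal to the independence bound and ``conditioning does not increase entropy'' should be spelled out. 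Your distributional route is more elementary and gives a stronger intermediate statement. The paper's route stays within a single Shannon-measure identity, consistent with the rest of the paper.
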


%{\color{blue}
\begin{remark}
Proposition~\ref{propA1} asserts that the CMI
$\perp(X_1, X_2, \ldots, X_n) | Y$
is equivalent to the collection of CMIs
\begin{eqnarray*}
&	X_1 \perp X_2^n | Y & \\
&	X_2 \perp X_3^n | (Y, X_1)  \\
&   X_3 \perp X_4^n | (Y, X_1^2) \\
& \vdots & \\
& X_{n-1} \perp X_n | (Y, X_1^{n-2}) .
\end{eqnarray*}
If we let ${\cal A} = \{ X_1, X_2, \ldots, X_n, Y\}$, then $\perp(X_1, X_2, \ldots, X_n) | Y$ is full (with respect to $\cal A$), and so are all the CIs above. In other words, an FCMI is equivalent to a collection of FCIs. For a given $\cal A$, the set of all FCIs is evidently a subset of the set of all FCMIs. On the other hand, from the above discussion, the set of all FCMIs is also a subset of the set of all FCIs. Therefore, the set of all FCMIs is the same as the set of all FCIs.
\end{remark}
%}

\begin{proof}
We first prove the ``only if'' part.  Rewrite (\ref{App1a}) through (\ref{Appn-1a}) as
\begin{equation}
I(X_i; X_{i+1}^n | Y, X_1^{i-1}) = 0
\label{eqn*}
\end{equation}
for $1 \le i \le n-1$.
Fix $1 \le i \le n-1$ and consider
    \begin{eqnarray}
        H(X_1^n|Y)
        & = & H(X_1^i|Y) + H(X_{i+1}^n|Y,X_1^i)\nonumber   \\
        & = & H(X_1^i|Y) + H(X_{i+1}^n|Y)  -I(X_1^i;X_{i+1}^n|Y) \nonumber \\
        & = & \sum_{j=1}^i H(X_j | Y, X_1^{j-1}) + \sum_{j=i+1}^n   H(X_j| Y, X_{i+1}^{j-1})  \nonumber\\ 
        &  & ~~~- \big[ I(X_1^{i-1};X_{i+1}^n|Y) + I(X_i;X_{i+1}^n|Y,X_1^{i-1}) \big] \nonumber \\
        & = & \sum_{j=1}^i \big[H(X_j|Y) - I(X_j;X_1^{j-1}|Y)\big] + \sum_{j=i+1}^n \big[H(X_j|Y) - I(X_j;X_{i+1}^{j-1}|Y)\big]\nonumber\\ 
        &  & ~~~- I(X_1^{i-1};X_{i+1}^n|Y) - I(X_i;X_{i+1}^n|Y,X_1^{i-1}) \nonumber \\
        & = & \sum_{j=1}^n H(X_j|Y) - \sum_{j=1}^{i}I(X_j;X_1^{j-1}|Y) - \sum_{j=i+1}^n I(X_j;X_{i+1}^{j-1}|Y) \nonumber\\
        & & ~~~- I(X_1^{i-1};X_{i+1}^n|Y) - I(X_i;X_{i+1}^n|Y,X_1^{i-1}).\label{App-1}
    \end{eqnarray}
If $X_1, X_2 \ldots, X_n$ are mutually independent conditioning on $Y$, then 
$H(X_1^n|Y)=\sum_{j=1}^n H(X_j|Y)$.
Since conditional mutual information is nonnegative, from \eqref{App-1}, we obtain that $I(X_i;X_{i+1}^n|Y,X_1^{i-1})=0$, i.e., (\ref{eqn*}), where $1\le i\le n-1$. This proves the ``only if'' part.

We now prove the ``if'' part. 
Assume that (\ref{eqn*}) holds for $1 \le i \le n-1$.
Toward proving the ``if'' part, we will first prove that 
\begin{equation}
	I(X_1^i;X_{i+1}^n|Y)=0
\label{eqn**}
\end{equation}
for $1\le i\le n-1$ by induction on $i$. For $i=1$, (\ref{eqn**}) is equivalent to (\ref{eqn*}) for $i = 1$. Assume that (\ref{eqn**}) is true for $i = j-1$ for some $1 \le j \le n-2$. We now prove (\ref{eqn**}) for $i = j$. Consider
\begin{equation}
I(X_1^j; X_{j+1}^n | Y ) = 	I(X_1^{j-1}; X_{j+1}^n | Y ) + I(X_j; X_{j+1}^n | Y, X_1^{j-1} ).
\label{qr89uhf}
\end{equation}
In the above, for the first term on the right hand side, we have
\[
0 \le I(X_1^{j-1}; X_{j+1}^n | Y ) \le I(X_1^{j-1}; X_j^n | Y ) = 0, 
\]
where the equality above follows from the induction hypothesis. This implies that 
$I(X_1^{j-1}; X_{j+1}^n | Y ) = 0.$
i.e., the first term on the right hand side of (\ref{qr89uhf}) vanishes. Since we assume that (\ref{eqn*}) holds for all $1 \le i \le n-1$, in particular for $i = j$, the second term on the right hand side of (\ref{qr89uhf}) also vanishes. Therefore, 
$I(X_1^j; X_{j+1}^n | Y ) = 0,$
i.e., we have proved (\ref{eqn**}) for $i = j$. Hence, 
(\ref{eqn**}) is true for all $1 \le i \le n-1$.

Next, for $1 \le i \le n-1$, consider 
\[
0 \le I(X_i; X_{i+1}^n | Y ) \le I(X_1^i; X_{i+1}^n | Y ) = 0,
\]
where the equality above follows from (\ref{eqn**}). Then we conclude that
\begin{equation}
I(X_i; X_{i+1}^n | Y ) = 0 \ \ \ \mbox{for} \ 1 \le i \le n-1.
\label{lajtnr}
\end{equation}

%When $i=1$, by \eqref{App1a}, we directly obtain 
%\begin{align}
%I(X_1;X_2^n|Y)=0.\label{Ap5} 
%\end{align}
%
%Next, we will consider the cases for $2\le i\le n-1$.
%We first show by induction that if $I(X_i;X_{i+1}^n|Y,X_1^{i-1})=0$, then $I(X_1^{i};X_{i+1}^n|Y)=0$. 
%%$\Rightarrow I(X_i;X_1^{i-1}|Y)=0.$ 
%Assume 
%    \begin{align}
%    I(X_1^{i-1};X_i^n|Y)=0,\label{Ap1}
%    \end{align}
%    it suffices to prove $I(X_1^{i};X_{i+1}^n|Y)=0$. 
%    
%Since
%\begin{equation}
%I(X_1^{i-1};X_{i}^n|Y)=
%I(X_1^{i-1};X_{i+1}^n|Y)
%+I(X_1^{i-1};X_{i}^n|Y,X_{i+1}^n),  
%\end{equation}
%by \eqref{Ap1}, we obtain 
%\begin{align}
%I(X_1^{i-1};X_{i+1}^n|Y)=0.\label{Ap2}
%\end{align}
%
%Thus, by \eqref{Ap3}\eqref{Ap2}, we obtain 
%\begin{align}
%I(X_1^{i};X_{i+1}^n|Y)= I(X_1^{i-1};X_{i+1}^n|Y) + I(X_i;X_{i+1}^n|Y,X_1^{i-1}) =0.\label{Ap4}
%\end{align}
%
%Again since 
%\begin{align}
%I(X_1^{i};X_{i+1}^n|Y)
%=I(X_i;X_{i+1}^n|Y)+
%I(X_1^{i-1};X_{i+1}^n|Y,X_i),
%\end{align}
%and by \eqref{Ap4}, we obtain 
%\begin{align}
%I(X_i;X_{i+1}^n|Y)=0,\ 2\le i\le n-1.\label{Ap6}
%\end{align}

Finally, to complete the proof, consider
\begin{eqnarray*}
        H(X_1^n|Y)&=& H(X_2^n|Y,X_1) + H(X_1|Y)  \\
        &=& H(X_2^n|Y) -I(X_1;X_2^n|Y)+ H(X_1|Y) \\
        &=& H(X_3^n|Y,X_2)+H(X_2|Y)-I(X_1;X_2^n|Y)+H(X_1|Y)\\
        &\vdots& \\
        &=& \sum_{i=1}^n H(X_i|Y) -\sum_{i=1}^{n-1} I(X_i;X_{i+1}^{n}|Y), 
\end{eqnarray*}
which by \eqref{lajtnr} implies that 
\[
H(X_1^n|Y)=\sum_{i=1}^n H(X_i|Y),
\]
i.e., $X_1, X_2, \ldots, X_n$ are mutually independent conditioning on $Y$.
\end{proof}

\newpage

\end{document}